\let\footnote=\endnote
\newcommand{\R}{\mathbb{R}}
\newcommand{\abs}[1]{\left|#1\right|}
\newcommand{\tabincell}[2]{\begin{tabular}{@{}#1@{}}#2\end{tabular}}
\newcites{APX}{References}
\begin{document}
%%%%%%%%%%%%%%%%

% Outcomment only when entries are known. Otherwise leave as is and
%   default values will be used.
%\setcounter{page}{1}
%\VOLUME{00}%
%\NO{0}%
%\MONTH{Xxxxx}% (month or a similar seasonal id)
%\YEAR{0000}% e.g., 2005
%\FIRSTPAGE{000}%
%\LASTPAGE{000}%
%\SHORTYEAR{00}% shortened year (two-digit)
%\ISSUE{0000} %
%\LONGFIRSTPAGE{0001} %
%\DOI{10.1287/xxxx.0000.0000}%

% Author's names for the running heads
% Sample depending on the number of authors;
% \RUNAUTHOR{Jones}
% \RUNAUTHOR{Jones and Wilson}
% \RUNAUTHOR{Jones, Miller, and Wilson}
% \RUNAUTHOR{Jones et al.} % for four or more authors
% Enter authors following the given pattern:
\RUNAUTHOR{Lam and Qian}

% Title or shortened title suitable for running heads. Sample:
% \RUNTITLE{Bundling Information Goods of Decreasing Value}
% Enter the (shortened) title:
\RUNTITLE{Bounding Optimality Gap via Bagging}

% Full title. Sample:
% \TITLE{Bundling Information Goods of Decreasing Value}
% Enter the full title:
\TITLE{Bounding Optimality Gap in Stochastic Optimization via Bagging: Statistical Efficiency and Stability}

% Block of authors and their affiliations starts here:
% NOTE: Authors with same affiliation, if the order of authors allows,
%   should be entered in ONE field, separated by a comma.
%   \EMAIL field can be repeated if more than one author
\ARTICLEAUTHORS{%
\AUTHOR{Henry Lam}
\AFF{Department of Industrial Engineering and Operations Research, Columbia University, New York, NY 10027, \EMAIL{khl2114@columbia.edu}} %, \URL{}}
\AUTHOR{Huajie Qian}
\AFF{Department of Industrial Engineering and Operations Research, Columbia University, New York, NY 10027, \EMAIL{h.qian@columbia.edu}}
% Enter all authors
} % end of the block

\ABSTRACT{%
We study a statistical method to estimate the optimal value, and the optimality gap of a given solution for stochastic optimization as an assessment of the solution quality. Our approach is based on bootstrap aggregating, or bagging, resampled sample average approximation (SAA). We show how this approach leads to valid statistical confidence bounds for non-smooth optimization. We also demonstrate its statistical efficiency and stability that are especially desirable in limited-data situations, and compare these properties with some existing methods. We present our theory that views SAA as a kernel in an infinite-order symmetric statistic, which can be approximated via bagging. We substantiate our theoretical findings with numerical results.
% Enter your abstract
}%

% Sample
%\KEYWORDS{deterministic inventory theory; infinite linear programming duality;
%  existence of optimal policies; semi-Markov decision process; cyclic schedule}

% Fill in data. If unknown, outcomment the field
\KEYWORDS{stochastic optimization, optimality gap, bagging, symmetric statistics, solution validation}
% \HISTORY{This paper was
% first submitted on April 12, 1922 and has been with the authors for
% 83 years for 65 revisions.}

\maketitle
%%%%%%%%%%%%%%%%%%%%%%%%%%%%%%%%%%%%%%%%%%%%%%%%%%%%%%%%%%%%%%%%%%%%%%

% Samples of sectioning (and labeling) in OPRE
% NOTE: (1) \section and \subsection do NOT end with a period
%       (2) \subsubsection and lower need end punctuation
%       (3) capitalization is as shown (title style).
%
%\section{Introduction.}\label{intro} %%1.
%\subsection{Duality and the Classical EOQ Problem.}\label{class-EOQ} %% 1.1.
%\subsection{Outline.}\label{outline1} %% 1.2.
%\subsubsection{Cyclic Schedules for the General Deterministic SMDP.}
%  \label{cyclic-schedules} %% 1.2.1
%\section{Problem Description.}\label{problemdescription} %% 2.

% Text of your paper here

\section{Introduction}
\label{sec:intro}

Consider a stochastic optimization problem
\begin{equation}
Z^*=\min_{x\in\mathcal X}\{Z(x)=E_F[h(x,\xi)]\}\label{opt}
\end{equation}
where $\mathcal X$ is the decision space, $\xi\in\Xi$ is generated under some distribution $F$, and $E_F[\cdot]$ denotes its expectation. We focus on the situations where $F$ is not known, but instead a collection of i.i.d.~data for $\xi$, say $\bm\xi_{1:n}=(\xi_1,\ldots,\xi_n)$, are available. Obtaining a good solution for \eqref{opt} under this setting has been under active investigation both from the stochastic and the optimization communities. Common methods include the sample average approximation (SAA) (\cite{shapiro2009lectures,kleywegt2002sample,higle1996stochastic}), stochastic approximation (SA) or gradient descent (\cite{kushner2003stochastic,borkar2009stochastic,nemirovski2009robust}), and (distributionally) robust optimization (\cite{delage2010distributionally,bertsimas2014robust,wiesemann2014distributionally,ben2013robust}). These methods aim to find a solution that is nearly optimal, or in some way provide a safe approximation. Applications of the generic problem \eqref{opt} and its data-driven solution techniques span from operations research, such as inventory control, revenue management, portfolio selection (see, e.g., \cite{shapiro2009lectures,birge2011introduction}) to risk minimization in machine learning (e.g., \cite{friedman2001elements}).

This paper concerns the estimation of $Z^*$ using limited data. Moreover, given a solution, say $\hat x$, a closely related problem is to estimate the optimality gap
\begin{equation}
\mathcal G(\hat x)=Z(\hat x)-Z^*.\label{gap}
\end{equation}
This allows us to assess the quality of $\hat x$, in the sense that the smaller $\mathcal G(\hat x)$ is, the closer is the solution $\hat x$ to the true optimum in terms of achieved objective value. More precisely, we will focus on inferring a lower confidence bound for $Z^*$, and, correspondingly, an upper bound for $\mathcal G(\hat x)$ - noting that its first term $Z(\hat x)$ can be treated as a standard population mean of $h(\hat x,\xi)$ that is estimable using a sample independent of the given $\hat x$, or that $\mathcal G(\hat x)$ can be represented as the max of the expectation of $h(\hat x,\xi)-h(x,\xi)$ whose estimation is structurally the same as $Z^*$.

This problem is motivated by the fact that many state-of-the-art solution methods mentioned before are only amenable to crude, worst-case performance bounds. For instance, \cite{shapiro2005complexity} and \cite{kleywegt2002sample} provide large deviations bounds on the optimality gap of SAA in terms of the diameter or cardinality of the decision space and the maximal variance of the function $h$. \cite{nemirovski2009robust} and \cite{ghadimi2013stochastic} provide bounds on the expected value and deviation probabilities of the SA iterates in terms of the strong convexity parameters, space diameter and maximal variance. These bounds can be refined under additional structural information (e.g., \cite{shapiro2000rate}). While they are very useful in understanding the behaviors of the optimization procedures, using them as a precise assessment on the quality of an obtained solution may be conservative. Because of this, a stream of work studies approaches to validate solution performances by statistically bounding optimality gaps. \cite{mak1999monte,bayraksan2006assessing,love2015overlapping} and \cite{shapiro2003monte} investigate the use of SAA to estimate these bounds. \cite{lan2012validation} validate the performances of SA iterates by using convexity conditions. \cite{stockbridge2013probability} and \cite{partani2006jackknife} study approaches like the jackknife and probability metric minimization to reduce the bias in the resulting gap estimates. \cite{bayraksan2011sequential} utilize gap estimates to guide sequential sampling. \cite{duchi2016statistics,blanchet2016robust} and \cite{lam2017empirical} investigate the use of empirical and profile likelihoods to estimate optimal values. Our investigation in this paper follows the above line of work on solution validation, focusing on the situation when data are limited and hence the statistical efficiency becomes utmost important. We also point out a related series of work that validate feasibility under uncertain constraints (e.g., \cite{luedtke2008sample,pagnoncelli2009sample,wang2008sample,care2014fast,calafiore2017repetitive,lam2019validating,hong2021learning}), though their problem of interest is beyond the scope of this paper as we focus on deterministically constrained problems and objective value performances.

More precisely, we introduce a bootstrap aggregating, or commonly known as bagging (\cite{breiman1996bagging}), approach to estimate a lower confidence bound for $Z^*$. This comprises repeated resampling of data to construct SAAs, and ultimately averaging the resampled optimal SAA values. We demonstrate how this approach applies under very general conditions on the cost function $h$ and decision space $\mathcal X$, while enjoys high statistical efficiency and stability. Compared to procedures based on batching (e.g., \cite{mak1999monte}), which also have documented benefits in wide applicability and stability, the data recycling in our approach provably improves a tradeoff between the tightness of the resulting bound and the statistical accuracy encountered in batching. In cases where sufficient smoothness is present and central limit theorem (CLT) for SAA (e.g., \cite{shapiro2009lectures,bayraksan2006assessing}) can be directly applied, we also see that our approach gains stability regarding standard error estimation, thanks to the smoothing effect brought by bagging. Nonetheless, our approach generally requires higher computational load than these previous methods due to the need to solve many resampled programs, which can be viewed as the price to pay for all these statistical gains.
% While we focus primarily on statistical performances, towards the end of this paper we will discuss some computational implications. We will support these claims via theoretical comparisons of the resulting bounds and standard error estimates as well as numerical experiments.

% Intriguingly, both our bagging approach and the resampling approach

% regarding the standard error magnitude and the underlying distributional approximation. exhibited by the generally gives rise to tighter standard errors more efficient use of data that also applies to wide settings,

The theoretical justification of our bagging scheme comes from viewing SAA as a kernel in an infinite-order symmetric statistic (\cite{frees1989infinite}), and an established optimistic bound for SAA as its asymptotic limit. A symmetric statistic is a generalization of sample mean in which each summand consists of a function (i.e., kernel) acting on more than one observation (\cite{serfling2009approximation,lee1990u}). In particular, the size of the SAA program can be seen as precisely the kernel ``order" (or ``degree"), which depends on the data size and is consequently of an infinite-order nature. Our bagging scheme serves as a Monte Carlo approximation for this symmetric statistic. As a main methodological contribution, we analyze the asymptotic behaviors of the statistic and the resulting bounds as the SAA size grows, and translate them into efficient performances of our bagging scheme.  Finally, we note that the notion of infinite-order symmetric statistics has been used in analyzing ensemble machine learning predictors like random forests (\cite{wager2017estimation}); our SAA kernels are, from this view, in parallel to the base learners in the latter context.

Finally, we mention that \cite{eichhorn2007stochastic} has also studied the resampling of SAA programs to construct confidence intervals for the optimal values of stochastic programs. Our approach connects with, but also differs substantially from \cite{eichhorn2007stochastic} in several regards. In terms of scope of applicability, \cite{eichhorn2007stochastic} focuses on mixed-integer linear programs, while we consider cost functions that can be generally non-Donsker. In terms of methodology, \cite{eichhorn2007stochastic} utilizes the quantiles of the resampled distribution to generate confidence intervals, by observing the same limiting distribution between an original CLT and the bootstrap CLT. The resampling in \cite{eichhorn2007stochastic} applies when the optimal solution is unique, or otherwise requires a ``two-layer" extended bootstrap where each resample is drawn from a new sample of the true distribution (as opposed to most bootstrap methods that allow repeated resample from the same original sample, with the availability of a conditional bootstrap CLT). The latter requires substantial data size or resorting to subsampling. Our bagging approach, in contrast, is based on a direct use of Gaussian limit and standard error estimation in the CLT for the optimistic bound. Our burden lies on the bootstrap Monte Carlo size requirement to obtain consistent standard error estimate, and less on the data size requirement. Relatedly, there is an orthogonal line of works on resampling approaches to estimate solution errors for randomized algorithms such as stochastic gradient descent (\cite{fang2019scalable,fang2018online}) and Newton's methods (\cite{chen2020estimating,lopes2018error}). These works however treat the data as deterministic and focus on the quantification of algorithmic uncertainties. Last but not least, during the review process of this paper, \cite{chen2022software} implemented an open-source software for bootstrap estimation in stochastic programs, based on our proposed scheme as well as \cite{eichhorn2007stochastic}, and demonstrated numerical performances in extensive experiments.

% that leads to different problem structures.
% is less immerse. However, in the case of mixed-integer programs and abundant data size, the approach in \cite{eichhorn2007stochastic} can estimate the optimal value or gap with asymptotically vanishing bias and is conceivably superior to our method.

We summarize our contributions of this paper as follows:
\begin{enumerate}
\item Motivated from the challenges of existing techniques (Section \ref{sec:challenges}), we introduce a bagging procedure to estimate a lower confidence bound for $Z^*$, correspondingly an upper confidence bound for $\mathcal G(\hat x)$ (Section \ref{sec:bagging proc}). We present the idea of our procedure that views SAA as a kernel in a symmetric statistic, and an optimistic bound for SAA as its associated limiting quantity (Section \ref{sec:fixed}).
\item We analyze the asymptotic behaviors of our bagging estimator, which can be viewed as an infinite-order symmetric statistic, under three increasingly stringent sets of regularity conditions on the optimization problem: minimal smoothness requirements, Lipschitzness and additionally solution uniqueness. In the last case, we also demonstrate how our asymptotic is on par with the classical CLT on SAA. These results are presented in Section \ref{sec:growing}. The mathematical developments without smoothness conditions utilize a combination of an analysis-of-variance (ANOVA) decomposition of the symmetric statistic and an analysis of the high-order error of the Hajek projection (\cite{van2000asymptotic}), using a probabilistic coupling argument and the Efron-Stein inequality (Appendix \ref{proof:main}). The developments with smoothness conditions and the reconciliation of the classical CLT on SAA use the argmax theorem and a maximal deviation bound for empirical processes (Appendix \ref{sec:refined}).
\item Building on the above results, we demonstrate that the bounds generated from our bagging procedure exhibit asymptotically correct coverages. In deriving these guarantees, we also analyze and formulate sufficient conditions on the bootstrap Monte Carlo sizes. These developments are in Section \ref{sec:error}, with additional technical details in Appendices \ref{sec:debiased bagging}, \ref{proof:consistency} and \ref{proof:final}.
% , with an asymptotically negligible error that can be removed under an additional non-degeneracy condition that can be readily verified for convex problems (Theorem \ref{convex nondegeneracy}). These are established in Section \ref{sec:growing} through another probabilistic coupling argument and the argmax theorem from empirical process theory (Appendix \ref{sec:refined}).
% on the limiting behavior of the symmetric statistic. This non-degeneracy condition takes thanks to a transparent characterization of the limit variance in terms of the cost function and an associated Gaussian process. These are established in Section \ref{sec:growing} through another probabilistic coupling argument and the argmax theorem from empirical process theory (Appendix \ref{sec:refined}).
\item We compare our approach with both batching and the direct use of CLT. In particular, we show that our bagging estimator possesses a standard error no larger than both of these competing methods whenever applicable (i.e., bagging offers variance reduction). These developments are in Sections \ref{sec:properties}, with mathematical details in Appendix \ref{proof:se}. Tying to our beginning motivation, we then argue how bagging improves a tradeoff between the bound tightness and the statistical accuracy faced by batching, and elicits more stable standard error estimates than the direct use of CLT. We support these comparisons by our numerical experiments (Section \ref{sec:numerics}).
% . Compared to the direct use of CLT,  This efficiency gain can be seen by an asymptotic comparison of the standard error in our estimator and an interpretation using conditional Monte Carlo.
% \item We explain the stability in our generated bounds brought by the smoothing effect of bagging in estimating standard error. This compares favorably with the direct use of CLT in situations where the objective function is smooth. This property is supported by our numerical experiments (Section \ref{sec:numerics}).
\end{enumerate}

\section{Existing Challenges and Motivation}\label{sec:challenges}
We discuss some existing methods and their challenges, to motivate our investigation. We start the discussion with the direct use of asymptotics from sample average approximation (SAA).

\subsection{Using Asymptotics of Sample Average Approximation}\label{sec:SAA}
When the cost function $h$ in \eqref{opt} is smooth enough, it is known classically that a central limit theorem (CLT) governs the behavior of the estimated optimal value in SAA, namely
\begin{equation}
\hat Z_n=\min_{x\in\mathcal X}\frac{1}{n}\sum_{i=1}^nh(x,\xi_i).\label{SAA def}
\end{equation}
We first introduce the following Lipschitz condition:

\begin{assumption}[Lipschitz continuity in the decision]\label{Lipschitz:decision}
The cost function $h(x,\xi)$ is Lipschitz continuous with respect to $x$, in the sense that
\begin{equation*}
\lvert h(x_1,\xi)-h(x_2,\xi)\rvert\leq M(\xi)\Vert x_1-x_2\Vert
\end{equation*}
for any $x_1,x_2\in\mathcal X\subseteq \R^d$, where $\Vert \cdot \Vert$ denotes the $l_2$ norm and $M(\xi)$ satisfies $E[M^2(\xi)]<\infty$.
\end{assumption}

Denote ``$\Rightarrow$" as convergence in distribution. The following result is taken from \cite{shapiro2009lectures}:
\begin{theorem}[Extracted from Theorem 5.7 in \cite{shapiro2009lectures}]
% Suppose, a.s. in $\xi$, that $h(\cdot,\xi)$ is Lipschitz continuous in the sense
% $$\|h(x,\xi)-h(y,\xi)\|\leq L(\xi)\|x-y\|$$
% for any $x,y\in\mathcal X$, and $L(\xi)$ is such that $E[L(\xi)^2]<\infty$., and there is a unique optimal solution $x^*$ to problem \eqref{opt} Suppose further that there is a unique solution for \eqref{opt} given by $x^*$.
Suppose that Assumption \ref{Lipschitz:decision} holds, $E[h(\tilde x,\xi)^2]<\infty$ for some point $\tilde x\in\mathcal X$, and $\mathcal X\subseteq \R^d$ is compact. Given i.i.d. data $\bm\xi_{1:n}=(\xi_1,\ldots,\xi_n)$, consider the SAA problem \eqref{SAA def}.
The SAA optimal value $\hat Z_n$ satisfies
\begin{equation}
\sqrt n(\hat Z_n-Z^*)\Rightarrow \inf_{x\in \mathcal X^*}Y(x)\label{CLT SAA general}
\end{equation}
where $\mathcal X^*$ is the set of optimal solutions for \eqref{opt}, and $Y(x)$ is a centered Gaussian process on $\mathcal X^*$ that has a covariance structure defined by $Cov(h(x_1,\xi),h(x_2,\xi))$ between any $x_1,x_2\in \mathcal X^*$.
% In particular, if there is a unique solution, i.e., $S=\{x^*\}$,
% Then
% \begin{equation}
% \sqrt n(\hat Z_n-Z^*)\Rightarrow N(0,\sigma^2)\label{CLT SAA}
% \end{equation}
% where $\sigma^2=Var(h(x^*,\xi))$.
\label{thm:SAA}
\end{theorem}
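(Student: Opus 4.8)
The plan is to recognize this as a classical consequence of the functional delta method applied to the minimum functional, built on top of an empirical-process (Donsker) limit. First I would record the elementary consequences of the hypotheses: the Lipschitz bound together with square-integrability at $\tilde x$ yields a square-integrable envelope $H(\xi)=\abs{h(\tilde x,\xi)}+M(\xi)\,\mathrm{diam}(\mathcal X)$ dominating the whole class $\{h(x,\cdot):x\in\mathcal X\}$, so that $Z(x)=E_F[h(x,\xi)]$ is finite and itself Lipschitz (with constant $E[M(\xi)]$), hence continuous. Compactness of $\mathcal X$ then guarantees that the optimal set $\mathcal X^*$ is nonempty and compact.

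The second step is the functional central limit theorem. I would show the class $\mathcal H=\{h(x,\cdot):x\in\mathcal X\}$ is $F$-Donsker by a bracketing-entropy estimate: because $\mathcal X$ is a compact metric space and $h$ is Lipschitz in $x$ with $L_2$-integrable modulus $M$, an $\epsilon$-net of $\mathcal X$ induces $L_2(F)$-brackets of size $O(\epsilon\Vert M\Vert_{L_2})$, so the bracketing numbers of $\mathcal H$ are controlled by the covering numbers of $\mathcal X$, which for a compact finite-dimensional decision space yields a finite bracketing integral. Consequently the empirical process $\mathbb G_n(x)=\sqrt n\big(\tfrac1n\sum_{i=1}^n h(x,\xi_i)-Z(x)\big)$ converges weakly in $\ell^\infty(\mathcal X)$ to a tight centered Gaussian process $Y$ with covariance $\mathrm{Cov}(h(x_1,\xi),h(x_2,\xi))$, whose sample paths are uniformly continuous with respect to the intrinsic $L_2$ semimetric.

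With these in hand, the core argument rewrites
\[
\sqrt n(\hat Z_n-Z^*)=\min_{x\in\mathcal X}\Big\{\sqrt n\big(Z(x)-Z^*\big)+\mathbb G_n(x)\Big\}.
\]
The upper bound is immediate: evaluating on any point of $\mathcal X^*$ kills the deterministic term (since $Z=Z^*$ there), so the left side is at most $\inf_{x\in\mathcal X^*}\mathbb G_n(x)$, which converges to $\inf_{x\in\mathcal X^*}Y(x)$ by the continuous mapping theorem (the restriction to $\mathcal X^*$ followed by the infimum over this compact set is $1$-Lipschitz, hence continuous, in the uniform topology). The lower bound is where the real work lies. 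For any $\delta>0$, on the set where $Z(x)-Z^*\ge\delta$ the deterministic term forces the objective to be at least $\sqrt n\,\delta-\sup_x\abs{\mathbb G_n(x)}\to+\infty$, using that $\sup_x\abs{\mathbb G_n(x)}=O_p(1)$ by the Donsker property; hence with probability tending to one the minimum is attained in an arbitrarily small neighborhood $U_\epsilon$ of $\mathcal X^*$. On $U_\epsilon$ the nonnegative deterministic term only helps, so the minimum is at least $\inf_{x\in U_\epsilon}\mathbb G_n(x)$, and letting $\epsilon\downarrow0$ after $n\to\infty$ while invoking equicontinuity of the limiting paths recovers $\inf_{x\in\mathcal X^*}Y(x)$.

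The main obstacle, as flagged, is this localization-and-interchange in the lower bound: one must control the empirical process uniformly over a shrinking neighborhood of $\mathcal X^*$ while justifying the exchange of the $n\to\infty$ and $\epsilon\downarrow0$ limits. This is precisely where the tightness of $Y$ and the asymptotic equicontinuity furnished by the Donsker theorem become indispensable. The two-sided estimate above is in fact the concrete content of the functional delta method for the Hadamard directionally differentiable min functional, whose derivative at $Z$ in direction $g$ equals $\inf_{x\in\mathcal X^*}g(x)$; invoking that machinery packages the localization cleanly and is the route I would ultimately favor for a streamlined writeup.
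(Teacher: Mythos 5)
The paper does not prove this statement itself---it imports it verbatim from Theorem 5.7 of \cite{shapiro2009lectures}---and your proposal reconstructs essentially the argument given in that reference: a bracketing-entropy Donsker theorem for the Lipschitz-in-$x$ class $\{h(x,\cdot):x\in\mathcal X\}$ (valid since $\mathcal X$ is a compact subset of $\R^d$, the finite-dimensionality you correctly flag as needed to control covering numbers), followed by the extended delta method for the Hadamard directionally differentiable min functional with derivative $g\mapsto\inf_{x\in\mathcal X^*}g(x)$, your localization-and-equicontinuity two-sided estimate being exactly how that directional differentiability is exploited. The proposal is correct and matches the cited proof's route, so nothing further is needed.
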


% We first motivate the need for methods beyond the classical theory of SAA. z_{1-\alpha/2}

Roughly speaking, Theorem \ref{thm:SAA} stipulates that, under the depicted conditions, one can use \eqref{CLT SAA general} to obtain
\begin{equation}
\hat Z_n-\frac{\hat q}{\sqrt n}\label{CI SAA}
\end{equation}
as a valid lower confidence bound for $Z^*$ (and analogously for $\mathcal G(\hat x)$ given $\hat x$), where $\hat q$ is some suitable error term that captures the quantile of the limiting distribution in \eqref{CLT SAA general}. Indeed, in the case of estimating $\mathcal G(\hat x)$, \cite{bayraksan2006assessing} provides an elegant argument that shows that, to achieve $1-\alpha$ confidence, one can take $\hat q=z_{1-\alpha}\hat\sigma$ where $z_{1-\alpha}$ is the standard normal critical value and $\hat\sigma$ is a standard deviation estimate, regardless of whether the limit in \eqref{CLT SAA general} is a Gaussian distribution. \cite{bayraksan2006assessing} calls this the single-replication procedure. More precisely, a straightforward modification of their procedure (which focuses on bounding $\mathcal G(\hat x)$) to bounding the optimal value $Z^*$ computes the $\hat\sigma^2$ by $(1/(n-1))\sum_{i=1}^n(h(\hat x_n^*,\xi_i)-\bar h(\hat x_n^*))^2$, where $\hat x_n^*$ is the solution from \eqref{SAA def} and $\bar h(\hat x_n^*)=(1/n)\sum_{i=1}^nh(\hat x_n^*,\xi_i)$.

% $\hat\sigma^2$ is obtained from
% $$\hat\sigma^2=\frac{1}{n-1}\sum_{i=1}^n(h(\hat x,\xi_i)-h(\hat x_n^*,\xi_i)-(\bar h(\hat x)-\bar h(\hat x_n^*)))^2$$
% where $\hat x_n^*$ is the solution from \eqref{SAA def}, and $\bar h(\hat x)-\bar h(\hat x_n^*)=(1/n)\sum_{i=1}^n(h(\hat x,\xi_i)-h(\hat x_n^*,\xi_i))$.

% is the sample mean of $h(\hat x,\xi_i)-h(\hat x_n^*,\xi_i)$'s.
% of $h(\hat x,\xi)-h(x,\xi)$ evaluated at the estimated.

% This theorem is more evident when there is a unique optimal solution, in which case the limit in \eqref{CLT SAA general} is a Gaussian distribution Note that when there are multiple optimal solutions for \eqref{opt}, the limit in \eqref{CLT SAA} will become the infimum of a Gaussian process (over the set of all optimal solutions).

% Theorem \ref{thm:SAA} can be generalized in multi-stage contexts which are also related to estimating risk measures; see, e.g., \cite{dentcheva2017statistical}.

% estimate of the standard

% =(1/(n-1))\sum_{i=1}^n(h(\hat x^*,\xi_i)-\hat Z_n)^2$, with $\hat x^*$ being an optimal solution of \eqref{SAA def}, is the empirical estimate of $\sigma^2$., but this can be substantially affected by various considerations such as the cardinality of the space is not applicable to guarantee meaningful coverages.

Though Theorem \ref{thm:SAA} (and other related work, e.g., \cite{dentcheva2017statistical,kleywegt2002sample}) is very useful, if the SAA solutions have a ``jumping" behavior, namely that program \eqref{opt} has several near-optimal solutions with hugely differing objective variances, then the standard deviation estimate $\hat\sigma$ needed in the bound \eqref{CI SAA} can be unreliable. This is because $\hat\sigma$ depends heavily on $\hat x_n^*$, which can fall close to any of the possible near-optimal solutions with substantial chance and make the resulting estimation noisy. This issue is illustrated in, e.g., Examples 1 and 2 in \cite{bayraksan2006assessing}.
% there are at least two reasons why one would need more general methods:
% \begin{enumerate}
% \item When the decision space contains discrete elements (e.g., combinatorial problems), Assumption \ref{Lipschitz:decision} does not hold anymore. There is no guarantee in using the bound \eqref{CI SAA}, i.e., it may still be correct but conservative, or it may simply possess incorrect coverages. We note, however, that for some class of problems (e.g., two-stage mixed-integer linear programs), extensions to Theorem \ref{thm:SAA} and approaches such as quantile-based bootstrapping (e.g., \cite{eichhorn2007stochastic}) are useful alternatives.
% . In this case the convergence speed can exhibit a different behavior from Theorem \ref{thm:SAA} (e.g., ).

% \item
% \item When there are multiple optimal solutions, then the limit in \eqref{CLT SAA} is the infimum of a Gaussian process instead of merely a Gaussian random variable, making inference on $Z^*$ more challenging.
% \end{enumerate}

%  This is reflected in Example ? in Section \ref{sec:numerics}.
We should also mention that, as an additional issue, the bias in $\hat Z_n$ relative to $Z^*$ can be quite large in any given problem, i.e., arbitrarily close to order $1/\sqrt n$ described in the CLT, even if all the conditions in Theorem \ref{thm:SAA} hold (\cite{partani2007adaptive}). Note that this bias is in the optimistic direction (i.e., the resulting bound is still correct, but conservative), and it also appears in the ``optimistic bound" approach that we discuss next. There have been techniques such as the jackknife (\cite{partani2007adaptive,partani2006jackknife}) and probability metric minimization (\cite{stockbridge2013probability}) in reducing this bias effect.

% This bias is not reflected in the CLT and can impose substantial additional errors, as discussed in  \cite{partani2007adaptive,partani2006jackknife} and revealed in Example ? in Section \ref{sec:numerics}.
% \item Ideally, we would like a method that is more general and less biased than in using Theorem \ref{thm:SAA}.

\subsection{Batching Procedures}\label{sec:batching}
% To circumvent the concerns in Points 1 and 2 above,
An alternative approach is to use the optimistic bound (\cite{mak1999monte,shapiro2003monte,glasserman2013monte})
\begin{equation}
E[\hat Z_n]\leq Z^*\label{optimistic}
\end{equation}
where $E[\cdot]$ in \eqref{optimistic} is taken with respect to the data in constructing the SAA value $\hat Z_n$. The bound \eqref{optimistic} holds for any $n\geq 1$, as a direct consequence from Jensen's inequality in exchanging the expectation and the minimization operator in the SAA.
% and holds as long as $\bm\xi_{1:n}$ are i.i.d.

% , and in particular addressing the concerns in much greater generality than Theorem \ref{thm:SAA}
The bound \eqref{optimistic} offers a simple way to construct a lower bound for $Z^*$ under great generality. Note that the left hand side of \eqref{optimistic} is a mean of SAA. Thus, if one can ``sample" a collection of SAA values, then a lower confidence bound for $Z^*$ can be constructed readily by using a standard estimate of population mean. To ``sample" SAA values, an approach suggested by \cite{mak1999monte} is to batch i.i.d. data set $\bm\xi_{1:n}$ into say $m$ batches, each batch consisting of $k$ observations, so that $mk=n$ (we ignore rounding issues). For each $j=1,\ldots,m$, solve an SAA using the $k$ observations in the $j$-th batch; call this value $\hat Z_k^j$. Then use
\begin{equation}
\tilde Z_{n,k}-z_{1-\alpha}\frac{\tilde\sigma}{\sqrt m}\label{CI batching}
\end{equation}
where $\tilde Z_{n,k}=(1/m)\sum_{j=1}^m\hat Z_k^j$ and $\tilde\sigma^2=(1/(m-1))\sum_{j=1}^m(\hat Z_k^j-\tilde Z_{n,k})^2$ are the sample mean and variance from $\hat Z_k^j,j=1,\ldots,m$, and $z_{1-\alpha}$ is the $(1-\alpha)$-level standard normal quantile.

The bound \eqref{CI batching} does not rely on any continuity of $h$, and $\tilde\sigma/\sqrt m$ is simply the sample standard error for a sample mean. This bound largely mitigates the aforementioned unstable estimation encountered in bounds that directly use the SAA asymptotic \eqref{CLT SAA general}. Nonetheless, there is an intrinsic tradeoff between the bound tightness and statistical accuracy. On one hand, $m$ must be chosen big enough (e.g., roughly $>30$) so that one can use the CLT to justify the approximation \eqref{CI batching}. On the other hand, the larger is $k$, the closer is $E[\hat Z_k^j]$ to $Z^*$ in \eqref{optimistic}, leading to a tighter lower bound for $Z^*$. This is thanks to a monotonicity property in that $E[\hat Z_n]$ is non-decreasing in $n$ (\cite{mak1999monte,norkin1998branch}). Therefore, there is a tradeoff between the statistical accuracy controlled by $m$ (in terms of the validity of the CLT) and the tightness controlled by $k$ (in terms of the position of $E[\hat Z_k^j]$ in \eqref{optimistic}). In the batching or the so-called multiple-replication approach of \cite{mak1999monte}, this tradeoff is confined to the relation $mk=n$. There have been suggestions to improve this tradeoff, e.g., by using overlapping batches (\cite{love2015overlapping,love2011overlapping}), but their validity requires uniqueness or exponential convergence of the solution (e.g., in discrete decision space).
% , as the standard error is computed from $m$ instead of just one SAA problems.

% This procedure is shown in Algorithm \ref{batching}. (as also discussed in \eqref{mak1999monte})

% \begin{algorithm}
% \caption{The Batching Procedure from Mak et al. (1999)}
% \label{batching}
% \begin{algorithmic}
% \STATE {Given $n$ observations $\xi_1,\ldots,\xi_n$, choose $m,k$ such that $mk=n$}
% \FOR{$j=1$ \TO $m$ }
% \STATE {From $\{\xi_{(j-1)k+1},\ldots,\xi_{jk}\}$, solve
% $$\hat Z_k^j=\min_{x\in\mathcal X}\frac{1}{k}\sum_{i=1}^kh(x,\xi_{(j-1)k+i})$$
% }
% % \STATE {Compute $\bar{\psi}^b_{AV}=\frac{1}{R}\sum_{r=1}^R\tilde{\psi}_r(\widehat{F}_1^b,\ldots,\widehat{F}_m^b)$}
% \ENDFOR
% \STATE {Compute $\tilde Z_k=\frac{1}{m}\sum_{j=1}^m\hat Z_k^j$ and $\tilde\sigma^2=\frac{1}{m-1}\sum_{j=1}^m(\hat Z_k^j-\tilde Z_k)^2$
% }
% \STATE {Output $\tilde Z_k-z_{1-\alpha/2}\frac{\tilde\sigma}{\sqrt m}$}
% \end{algorithmic}
% \end{algorithm}Note that the tightness of the obtained bound in this procedure depends on the choices of $m$ and $k$. or the so-called single-replication , and thus one would expect $E[\hat Z_k^j]$, and hence $\tilde Z_k$, to be higher as $k$ increases

% Moreover, the larger is $m$, typically the smaller is the magnitude of the standard error in the second term of \eqref{CI batching}
% and the magnitude of the standard error term

\subsection{Motivation and Overview of Our Approach}
Thus, in general, when the sample size $n$ is small, the batching approach appears to necessarily settle for a conservative bound in order to retain statistical accuracy. The starting motivation for the bagging procedure that we propose next is to break free this tightness-accuracy tradeoff. In particular, we offer a bound roughly in the form
\begin{equation}
Z_{n,k}^{bag}-\frac{q^{bag}}{\sqrt n}\label{bag bound}
\end{equation}
where $Z_{n,k}^{bag}$ is a point estimate obtained from bagging many resampled SAA values, and $k$ signifies the size of the resampled SAA (i.e., the ``bags"). The quantity $q^{bag}$ relies on a standard error estimate of $Z_{n,k}^{bag}$. Table \ref{tab:comparison with existing methods} highlights the differences between our bagging bound and direct-CLT and batching bounds, which we explain further below.
\begin{table}[h]
    \centering
    \begin{tabular}{|c|c|c|c|}
    \hline
         &Direct-CLT bound &Batching bound &Bagging bound \\\hline
        \tabincell{c}{SAA size \\ (tightness)} &$n$&\tabincell{c}{$k$ s.t. \\ $mk=n$}&\tabincell{l}{$k=o(n)$ in general \\ $k$ can be $\approx n$ when smooth \\ \& unique optimum} \\\hline
        \tabincell{c}{Variance \\ reduction} &No&No&Yes \\\hline
         \tabincell{c}{Stable standard \\ error estimate}&No&Yes&Yes \\\hline
        \tabincell{c}{Problem requirements \\ except moments} &\tabincell{c}{Smooth obj. or \\ discrete decision}&None&None \\\hline
        \#SAAs to solve &$1$&$m$& \tabincell{c}{$>n$ \\ ($>\sqrt{n}$ if debiased)}\\\hline
    \end{tabular}
    \caption{Differences between bagging bound and existing bounds.}
    \label{tab:comparison with existing methods}
\end{table}

% Our method operates at a similar level of generality as batching:
Compared to batching, our method shares the same advantages that the standard error term $q^{bag}$ does not succumb to the ``jumping" solution behavior, and our bound holds regardless of the continuity to the decision. Moreover, our bagging point estimate $Z_{n,k}^{bag}$ has provably no larger variance than the batching point estimate $\tilde Z_{n,k}$ and, as described above, it allows using a resampled SAA size $k$ that is larger than the batched SAA size.
% thus gaining higher statistical precision.

% Different from  compared to the batching point estimate in \eqref{CI batching},

% In fact, this term regains the same order of precision level as the bound \eqref{CI SAA} that uses SAA asymptotics directly.

% handles the two concerns Points 1 and 2 in Section \ref{sec:SAA}
% , which attains the same level of statistical accuracy as using Theorem \ref{thm:SAA}we have gained estimation stability of $q^{bag}$ and, moreover,

Compared to direct-CLT, our bound would be almost as tight by choosing the resample size $k$ in \eqref{bag bound} to be arbitrarily close to the order of $n$. Moreover, our standard error term $q^{bag}$ is more stable than the counterpart in \eqref{CI SAA} thanks to the use of many resampled SAAs rather than a single SAA. Furthermore, our approach works under conditions more general than when \eqref{CI SAA} is applicable, and if we re-impose Lipschitz continuity on the decision required for \eqref{CI SAA}, then our bagging point estimator $Z_{n,k}^{bag}$ has no larger asymptotic variance than $\hat Z_n$ in \eqref{CI SAA}, with strictly smaller asymptotic variance in the case of multiple optima. In the case of unique optimum, the resample size $k$ is allowed to be the same order as $n$ in which case our bagging bound achieves the same level of tightness as direct-CLT.
% In the case of unique optimum, the resample size $k$ is allowed to be the same order as $n$ and the variances match, hence achieving exactly the same level of tightness as direct-CLT.

 % (i.e., Assumption \ref{Lipschitz:decision})
% On the other hand, we will show that the choice of $k$ in \eqref{bag bound}, which affects the tightness, can be taken as roughly $o(n)$ in general. Compared with the direct-CLT bound \eqref{CI SAA}, our bound appears less tight. However, we consider conditions more general than when \eqref{CI SAA} is applicable. We will see that if we re-impose Lipschitz continuity on the decision (i.e., Assumption \ref{Lipschitz:decision}), then $k$ can be set arbitrarily close to the order of $n$. This means that our approach is almost as statistically efficient as the bound \eqref{CI SAA}, with the extra benefit of stability in estimating $q^{bag}$.

Despite the above advantages, our approach requires solving a number of resampled SAA programs that is of larger order than the data size $n$ (reduced to larger order than $\sqrt{n}$ if a bias correction is applied to the variance estimator), and is thus computationally more costly than batching and direct-CLT methods. The higher computation cost is the price to pay to elicit our benefits depicted above. Our approach is thus most recommended when statistical performance is of higher concern than computation efficiency, prominently in small-sample situations.

The next section will explain our procedure in more detail. A key insight is to view SAA as a symmetric kernel and the optimistic bound \eqref{optimistic} as a limiting quantity of an associated symmetric statistic, which can be estimated by bagging.

\section{Bagging Procedure to Estimate Optimal Values}\label{sec:bagging proc}
This section presents our approach. Instead of batching the data, we uniformly resample $k$ observations from $\bm\xi_{1:n}$ for many, say $B$, times. We use each resample to form an SAA problem and solve it. We then average all these resampled SAA optimal values. The resampling can be done with or without replacement (we will discuss some differences between the two). We summarize our procedure in Algorithm \ref{bagging}.

\begin{algorithm}
\caption{Bagging Procedure for Bounding Optimal Values}
\label{bagging}
\begin{algorithmic}
\STATE {Given $n$ i.i.d. observations $\bm\xi_{1:n}=(\xi_1,\ldots,\xi_n)$, select positive integers $k$ and $B$.}

\vspace{1ex}

\FOR{$b=1$ \TO $B$ }
\STATE {Randomly sample $\bm\xi_k^b=(\xi_1^b,\ldots,\xi_k^b)$ uniformly from $\bm\xi_{1:n}$ (with or without replacement), and solve
$$\hat Z_k^b=\min_{x\in\mathcal X}\frac{1}{k}\sum_{i=1}^kh(x,\xi_i^b).$$
}
% \STATE {Compute $\bar{\psi}^b_{AV}=\frac{1}{R}\sum_{r=1}^R\tilde{\psi}_r(\widehat{F}_1^b,\ldots,\widehat{F}_m^b)$}
\ENDFOR
\STATE {Compute $\tilde Z_{n,k}^{bag}=\frac{1}{B}\sum_{b=1}^B\hat Z_k^b$ and
\begin{equation}\label{sigma_IJ}
\tilde\sigma_{IJ}^2=
\begin{cases}
\sum_{i=1}^n\widehat{Cov}_*(N_i^*,\hat Z_k^*)^2,&\text{ if resampling is with replacement}\\
\big(\frac{n}{n-k}\big)^2\sum_{i=1}^n\widehat{Cov}_*(N_i^*,\hat Z_k^*)^2,&\text{ if resampling is without replacement}
\end{cases}
\end{equation}
where
\begin{equation}
\widehat{Cov}_*(N_i^*,\hat Z_k^*)=\frac{1}{B}\sum_{b=1}^B(N_i^b-\frac{k}{n})(\hat Z_k^b-\tilde Z_{n,k}^{bag})\label{IJ}
\end{equation}
and $N_i^b$ is the number of $\xi_i$ that shows up in the $b$-th resample.
}

\vspace{1ex}

\STATE {Output $\tilde Z_{n,k}^{bag}-z_{1-\alpha}\tilde\sigma_{IJ}$.}
\end{algorithmic}
\end{algorithm}
%, $\tilde N=(1/B)\sum_{b=1}^BN_i^b$ is the sample mean of $N_i^b$'s

In the output of Algorithm \ref{bagging}, the first term $\tilde Z_{n,k}^{bag}$ is the average of many bootstrap resampled SAA values, which resembles a bagging predictor by viewing each SAA as a ``base learner" (\cite{breiman1996bagging}). The quantity $\widehat{Cov}_*(N_i^*,\hat Z_k^*)$ in \eqref{IJ} is the covariance between the count of a specific observation $\xi_i$ in a bootstrap resample, denoted $N_i^*$, and the resulting resampled SAA value $\hat Z_k^*$. The quantity $\tilde\sigma_{IJ}^2=\sum_{i=1}^n\widehat{Cov}_*(N_i^*,\hat Z_k^*)^2$ is an empirical version of the so-called infinitesimal jackknife (IJ) estimator (\cite{efron2014estimation}), which has been used to estimate the standard error of bagging schemes, including in random forests or tree ensembles (\cite{wager2014confidence}). The additional constant factor $(n/(n-k))^2$ in the second line of \eqref{sigma_IJ} is a correction specific to resampling without replacement that is required for consistency in the asymptotic regime where $k$ is of the same order as $n$.

% Although the IJ variance estimator is not affected by this factor in certain asymptotic regime, e.g., $k=o(n)$, it is necessary for ensuring  we find it significantly improves the finite-sample performance of our method.

% ensures the validity of the IJ estimator under resampling without replacement in certain asymptotic regimes that we will consider.

%regimes relating $k$ and $n$ as we will see.
% that is specific to resampling without replacement, and is. We will see that this factor does not asymptoically affect the validity of the IJ estimator in the small resample size regime $k=o(n)$, we shall see that in fact it's necessary for consistency when $k$ is of the same order as $n$.

%Although the IJ variance estimator is not affected by
%The additional constant factor $(n/(n-k))^2$ in the second line of \eqref{sigma_IJ} is a finite-sample correction specific to resampling without replacement. Although the IJ variance estimator is not affected by this factor in the asymptotic regime that we will discuss, we find it significantly improves the finite-sample performance of our method From the perspective of gradient estimation via simultaneous perturbation, this factor serves to normalize the size of random perturbation, around the empirical distribution formed by $\bm\xi_{1:n}$, induced by resampling without replacement.

% , which has also appeared in the bagging literature \cite{wager2017estimation}
\section{SAA as Symmetric Kernel}\label{sec:fixed}
We explain how Algorithm \ref{bagging} arises. In short, the $\tilde Z_{n,k}^{bag}$ in Algorithm \ref{bagging} acts as a point estimator for $E[\hat Z_k]$ in the optimistic bound \eqref{optimistic}, whereas $\tilde\sigma_{IJ}^2$ captures the standard error in using this point estimator.

To be more precise, let us introduce a functional viewpoint and write
\begin{equation}
W_k(F)=E_{F^k}[H_k(\xi_1,\ldots,\xi_k)]\label{main bound}
\end{equation}
where
$$H_k(\xi_1,\ldots,\xi_k)=\min_{x\in\mathcal X}\frac{1}{k}\sum_{i=1}^kh(x,\xi_i)$$
is the SAA value, expressed more explicitly in terms of the underlying data used. Here, the expectation $E_{F^k}[\cdot]$ is generated with respect to i.i.d. variables $(\xi_1,\ldots,\xi_k)$, i.e., $F^k$ denotes the product measure of $k$ $F$'s. For convenience, we denote $E[\cdot]$ as the expectation either with respect to $F$ or the product measure of $F$'s when no confusion arises. Also, we denote $W_k=W_k(F)$.

With these notations, the optimistic bound \eqref{optimistic} can be expressed as
$$W_k(F)\leq Z^*$$
with the best bound being $W_\infty=\lim_{k\to\infty}W_k\leq Z^*$ thanks to the monotonicity property of the expected SAA value mentioned before.

% By Mak et al., \eqref{main bound} is an optimistic, i.e., lower, bound for \eqref{opt} for any $k\geq1$. Moreover, $W_k\leq W_{k+1}$, so that the bound is getting tighter as $k$ increases, with the best bound being $W_\infty=\lim_{k\to\infty}W_k\leq Z^*$.

Suppose that we have used sampling with replacement in Algorithm \ref{bagging}. Also say we use infinitely many bootstrap replications, i.e., $B=\infty$. Then, the estimator $\tilde Z_{n,k}^{bag}$ in Algorithm \ref{bagging} becomes precisely
$$\tilde Z_{n,k}^{bag}=W_k(\hat F)$$
where $\hat F$ is the empirical distribution formed by $\bm\xi_{1:n}$, i.e., $\hat F(\cdot)=(1/n)\sum_{i=1}^n\delta_{\xi_i}(\cdot)$ where $\delta_{\xi_i}(\cdot)$ is the delta measure at $\xi_i$. If $W_k(\cdot)$ is ``smooth" in some sense, then one would expect $W_k(\hat F)$ to be close to $W_k(F)$. Indeed, when $k$ is fixed, $W_k(F)$, which is expressible as the $k$-fold expectation under $F$ in \eqref{main bound}, is multi-linear, i.e.,
$$W_k(F)=E_{F^k}[H_k(\xi_1,\ldots,\xi_k)]=\int\cdots\int H_k(\xi_1,\ldots,\xi_k)\prod_{j=1}^kdF(\xi_j)$$
and is always differentiable with respect to $F$ (in the Gateaux sense) from the theory of von Mises statistical functionals (\cite{serfling2009approximation}). This ensures that $W_k(\hat F)$ is close to $W_k(F)$ probabilistically, as elicited by a CLT (Theorem \ref{thm:symmetric} below).
% that will be made precise below.

% in approximating $W_k(F)$ using $W_k(\hat F)$.

% \frac{1}{N}\sum_{i=1}^NH_k(\xi_1,\ldots,\xi_k)$$

Note that $W_k(\hat F)$ is exactly the average of $H_k(\xi_{i_1},\ldots,\xi_{i_k})$ over all possible combinations of $\{\xi_{i_1},\ldots,\xi_{i_k}\}$ drawn with replacement from $\bm\xi_{1:n}$. This is equivalent to
\begin{equation}
V_{n,k}=\frac{1}{n^k}\sum_{i_j\in\{1,\ldots,n\},j=1,\ldots,k}H_k(\xi_{i_1},\ldots,\xi_{i_k})\label{V}
\end{equation}
which is the so-called $V$-statistic. If we have used sampling without replacement in Algorithm \ref{bagging}, we arrive at the estimator (assuming again $B=\infty$)
\begin{equation}
U_{n,k}=\frac{1}{\binom{n}{k}}\sum_{(i_1,\ldots,i_k)\in\mathcal C_k}H_k(\xi_{i_1},\ldots,\xi_{i_k})\label{U}
\end{equation}
where $\mathcal C_k$ denotes the collection of all subsets of size $k$ in $\{1,\ldots,n\}$. %the $n$ observations.
The quantity \eqref{U} is known as the $U$-statistic. The $V$ and $U$ estimators in \eqref{V} and \eqref{U} both belong to the class of symmetric statistics (\cite{serfling2009approximation,van2000asymptotic,de2012decoupling}), since the estimator is unchanged against a shuffling of the ordering of the data $\bm\xi_{1:n}$. Correspondingly, the $H_k(\cdot)$ function is known as the symmetric kernel. Symmetric statistics generalize the sample mean, the latter corresponding to the case when $k=1$.

When $B<\infty$, then $V_{n,k}$ and $U_{n,k}$ above are approximated by a random sampling of the summands on the right hand side of \eqref{V} and \eqref{U}. These are known as incomplete $V$- and $U$-statistics (\cite{lee1990u,blom1976some,janson1984asymptotic}), and are precisely our $\tilde Z_{n,k}^{bag}$. As $B$ is chosen large enough, $\tilde Z_{n,k}^{bag}$ will well approximate $V_{n,k}$ and $U_{n,k}$.

% $\tilde Z$ involves
% With only the i.i.d. data $\bm\xi_{1:n}$, we want to estimate $W_k$ by resampling in some ways. Two approaches:
% \begin{enumerate}
% \item Use $U$-statistic
% $$U_{n,k}=\frac{1}{\binom{n}{k}}\sum_{(i_1,\ldots,i_k)\in\mathcal C_k}H_k(\xi_{i_1},\ldots,\xi_{i_k})$$
% where $\mathcal C_k$ denotes the collection of all $k$-combinations from $\{1,\ldots,n\}$.%the $n$ observations.

% \item Use $V$-statistic $V_{n,k}=W_k(F_n)$ where $F_n$ is the empirical distribution from $\xi_1,\ldots,\xi_n$, i.e.,
% $$V_{n,k}=\frac{1}{n^k}\sum_{i_j\in\{1,\ldots,n\},j=1,\ldots,k}H_k(\xi_{i_1},\ldots,\xi_{i_k})$$

% \end{enumerate}

%Denoting
%$$W_k(G)=E_G[H_k(\xi_1,\ldots,\xi_n)]$$
%where $E_G[\cdot]=E_{\prod{i=1}^kG}[\cdot]$ is the expectation under $F$.

% \section{Central Limit Theorems when $k$ is Fixed}

To discuss further, we make the following assumptions:
\begin{assumption}[$L_2$-boundedness]
We have
$$E\sup_{x\in\mathcal X}|h(x,\xi)|^2<\infty$$
%where $\xi,\xi'$ are i.i.d. generated from $F$.
\label{L2}
\end{assumption}

% Assumption \ref{L2} implies that $EH_k(\xi_1,\ldots,\xi_k)^2<\infty$ for any $k$, since
% \begin{equation}
% EH_k(\xi_1,\ldots,\xi_k)^2\leq\frac{1}{k^2}E\sup_{x\in\mathcal X}\left(\sum_{i=1}^kh(x,\xi_i)\right)^2\leq E\sup_{x\in\mathcal X}|h(x,\xi)|^2<\infty\label{interim21}
% \end{equation}
% by the Minkowski inequality.We make the following assumption on $g_k$:

Denote $g_k(\xi)=E[H_k(\xi_1,\ldots,\xi_k)|\xi_1=\xi]$. Also denote $Var(\cdot)=Var_F(\cdot)$ as the variance under $F$.
\begin{assumption}[Finite non-zero variance]
We have $0<Var(g_k(\xi))<\infty$.\label{nondegeneracy}
\end{assumption}

We have the following asymptotics of $U_{n,k}$ and $V_{n,k}$ :
\begin{theorem}
Suppose $k\geq1$ is fixed, and Assumptions \ref{L2} and \ref{nondegeneracy} hold. Then
\begin{equation}
\sqrt n(U_{n,k}-W_k)\Rightarrow N(0,k^2Var(g_k(\xi)))\label{CLT fixed U}
\end{equation}
and
\begin{equation}
\sqrt n(V_{n,k}-W_k)\Rightarrow N(0,k^2Var(g_k(\xi)))\label{CLT fixed V}
\end{equation}
as $n\to\infty$, where $N(0,k^2Var(g_k(\xi)))$ is a normal distribution with mean 0 and variance $k^2Var(g_k(\xi))$.\label{thm:symmetric}
\end{theorem}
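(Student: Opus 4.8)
The plan is to treat $U_{n,k}$ and $V_{n,k}$ as classical fixed-degree symmetric statistics and invoke the H\'ajek projection / Hoeffding decomposition. First I would record that Assumption \ref{L2} makes the kernel square-integrable at any configuration of its arguments: since $|H_k(\xi_1,\ldots,\xi_k)|\le(1/k)\sum_{i=1}^k\sup_{x\in\mathcal X}|h(x,\xi_i)|$, the triangle inequality together with $E\sup_{x}|h(x,\xi)|^2<\infty$ gives $E[H_k^2]<\infty$, and also $E|H_k|<\infty$ when some arguments are forced to coincide (via Cauchy--Schwarz, each term is bounded by $E\sup_x|h(x,\xi)|$). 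This integrability is exactly what makes the variance bounds in the Hoeffding expansion finite.

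Next I would treat $U_{n,k}$. Form the H\'ajek projection
$$\hat U_n=W_k+\frac{k}{n}\sum_{i=1}^n\big(g_k(\xi_i)-W_k\big),$$
the projection of $U_{n,k}-W_k$ onto sums of functions of a single observation; here $g_k(\xi)=E[H_k\mid\xi_1=\xi]$ is precisely the first conditional projection in Assumption \ref{nondegeneracy}. The ordinary CLT applied to the i.i.d.\ centered sum $n^{-1/2}\sum_{i=1}^n(g_k(\xi_i)-W_k)$, whose summands have finite variance $Var(g_k(\xi))$ that is strictly positive by Assumption \ref{nondegeneracy}, yields $\sqrt n(\hat U_n-W_k)\Rightarrow N(0,k^2Var(g_k(\xi)))$. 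It then remains to show the projection remainder is asymptotically negligible: by the standard Hoeffding variance formula only the projection components of degree $c\ge2$ survive in $U_{n,k}-\hat U_n$, and each contributes at order $n^{-c}$, so $Var(U_{n,k}-\hat U_n)=O(n^{-2})$. Hence $\sqrt n(U_{n,k}-\hat U_n)\to0$ in $L_2$, thus in probability, and Slutsky's theorem delivers \eqref{CLT fixed U}.

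For the $V$-statistic I would compare $V_{n,k}$ with $U_{n,k}$ directly. Splitting the $n^k$ ordered index tuples in \eqref{V} into those with distinct entries and those with at least one repeat, the distinct part equals $\big(\prod_{j=0}^{k-1}(1-j/n)\big)U_{n,k}=(1-O(1/n))U_{n,k}$, while the repeated part is an average of $O(n^{k-1})$ kernel evaluations divided by $n^k$, hence $O(1/n)$ in $L_1$ by the integrability-at-coincident-arguments established above. Therefore $V_{n,k}-U_{n,k}=O_p(1/n)$, so $\sqrt n(V_{n,k}-U_{n,k})\to0$ in probability, and \eqref{CLT fixed V} follows from \eqref{CLT fixed U} by another application of Slutsky.

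The main obstacle is controlling the nonlinear remainder and the diagonal terms in a setting where $H_k$ is an infimum rather than an explicit smooth function with product structure. The CLT for the linear term is immediate; the real work concentrates on (i) justifying the $L_2$ bounds in the Hoeffding decomposition purely from Assumption \ref{L2}, and (ii) bounding the repeated-index terms of $V_{n,k}$, both of which reduce to integrability of the envelope $\sup_x|h(x,\xi)|$. Assumption \ref{nondegeneracy} plays an essential but purely qualitative role, ruling out a degenerate limit so that the leading projection term dominates.
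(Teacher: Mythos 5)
Your proposal is correct and takes essentially the same approach as the paper: the paper's proof establishes the identical envelope bound $EH_k(\xi_{i_1},\ldots,\xi_{i_k})^2\leq E\sup_{x\in\mathcal X}|h(x,\xi)|^2<\infty$ (valid even for repeated indices, via Minkowski) and then cites Theorem 12.3 of \cite{van2000asymptotic} for \eqref{CLT fixed U} and Section 5.7.3 of \cite{serfling2009approximation} for \eqref{CLT fixed V}, whose proofs are precisely the H\'ajek-projection argument and the distinct-versus-repeated-index $U$--$V$ comparison you carry out. You have simply unpacked those citations into a self-contained argument, and your details (the $O(n^{-2})$ remainder variance from the degree-$c\geq2$ Hoeffding components and the $O_p(1/n)$ diagonal contribution to $V_{n,k}$) are sound.
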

\proof{Proof.}
Assumption \ref{L2} implies that $EH_k(\xi_{i_1},\ldots,\xi_{i_k})^2<\infty$ for any (possibly identical) indices $i_1,\ldots,i_k$, since
\begin{equation}
EH_k(\xi_{i_1},\ldots,\xi_{i_k})^2\leq\frac{1}{k^2}E\sup_{x\in\mathcal X}\left(\sum_{j=1}^kh(x,\xi_{i_j})\right)^2\leq E\sup_{x\in\mathcal X}|h(x,\xi)|^2<\infty\label{interim22}
\end{equation}
by the Minkowski inequality. Then, under \eqref{interim22} and Assumption \ref{nondegeneracy}, \eqref{CLT fixed U} follows from Theorem 12.3 in \cite{van2000asymptotic}, and \eqref{CLT fixed V} follows from Section 5.7.3 in \cite{serfling2009approximation}.\Halmos
\endproof

% \begin{theorem}
% Suppose $k\geq1$ is fixed, and Assumptions \ref{L2} and \ref{nondegeneracy} hold. Then
% $$\sqrt n(V_{n,k}-W_k)\Rightarrow N(0,k^2Var(g_k(\xi)))$$
% where $N(0,k^2Var(g_k(\xi)))$ is a normal distribution with mean 0 and variance $k^2Var(g_k(\xi))$.
% \end{theorem}

% \begin{proof}
% Follows from Section 5.7.3 in Serfling and \eqref{interim22}.
% \end{proof}

Theorem \ref{thm:symmetric} is a consequence of the classical CLT for symmetric statistics. The expression $kg_k(\xi)$, as a function defined on the space $\mathcal X$, is the so-called influence function of $W_k(F)$, which can be viewed as its functional derivative with respect to $F$ (\cite{hampel1974influence}). Alternately, for a $U$-statistic $U_{n,k}$, the expression is the so-called Hajek projection (\cite{van2000asymptotic}), which is the projection of the statistic onto the subspace generated by the linear combinations of $f_i(\xi_i),i=1,\ldots,n$ for any measurable function $f_i$. It turns out that these two views coincide, and the $U$- and $V$-statistics (whose approximation uses the projection viewpoint and the functional derivative viewpoint respectively) obey the same CLT as depicted in Theorem \ref{thm:symmetric}.

The output of Algorithm \ref{bagging} is now evident given Theorem \ref{thm:symmetric}. When $B=\infty$, $\tilde Z_{n,k}^{bag}$ is precisely $U_{n,k}$ under sampling without replacement or $V_{n,k}$ under sampling with replacement. The quantity $\tilde\sigma_{IJ}^2$ in Algorithm \ref{bagging}, an empirical IJ estimator, can be shown to approximate the asymptotic variance $k^2Var(g_k(\xi))/n$ as $n,B\to\infty$, by borrowing recent results in bagging (\cite{efron2014estimation,wager2017estimation}) (Theorems \ref{consistency_IJ:U} and \ref{consistency_IJ:V} below show stronger results). Then the procedural output is the standard CLT-based lower confidence bound for $W_k$.

The discussion above holds for a fixed $k$, the sample size used in the resampled SAA. It also shows that, at least asymptotically, using with or without replacement does not matter. However, using a fixed $k$ regardless of the size of $n$ is restrictive and leads to conservative bounds. The next subsection will relax this requirement and present results on a growing $k$ against $n$, which in turn allows us to get a tighter $W_k=E[\hat Z_k]$ in the optimistic bound \eqref{optimistic}.

% one should intuits afford to use a bigger $k$ without distorting the asymptotic behavior, and this in turn allows us to get a tighter $W_k=E[\hat Z_k]$ in the optimistic bound \eqref{optimistic}. The next section shows that we could indeed do so to a certain extent.
% Moreover, our analysis reveals that without replacement is statistically more advantageous in this context.

% \begin{proof}
% It follows from Theorem 12.3 in van der Vaart and \eqref{interim21}.
% \end{proof}

% Assumption \ref{L2} also implies that $EH_k(\xi_{i_1},\ldots,\xi_{i_k})^2<\infty$ for any indices $i_1,\ldots,i_k\in\{1,\ldots,k\}$, since
% \begin{equation}
% EH_k(\xi_{i_1},\ldots,\xi_{i_k})^2\leq\frac{1}{k^2}E\sup_{x\in\mathcal X}\left(\sum_{j=1}^nh(x,\xi_{i_j})\right)^2\leq E\sup_{x\in\mathcal X}|h(x,\xi)|^2<\infty\label{interim22}
% \end{equation}
% by the Minkowski inequality again.

% \begin{theorem}
% Suppose $k\geq1$ is fixed, and Assumptions \ref{L2} and \ref{nondegeneracy} hold. Then
% $$\sqrt n(V_{n,k}-W_k)\Rightarrow N(0,k^2Var(g_k(\xi)))$$
% where $N(0,k^2Var(g_k(\xi)))$ is a normal distribution with mean 0 and variance $k^2Var(g_k(\xi))$.
% \end{theorem}

% \begin{proof}
% Follows from Section 5.7.3 in Serfling and \eqref{interim22}.
% \end{proof}

\section{Asymptotic Behaviors with Growing Resample Size}\label{sec:growing}
We first make the following strengthened version of Assumption \ref{L2}:
\begin{assumption}[$L_{2+\delta}$-bounded modulus of continuity]
We have
$$E\sup_{x\in\mathcal X}|h(x,\xi)-h(x,\xi')|^{2+\delta}<\infty$$
where $\xi,\xi'$ are i.i.d. generated from $F$.
\label{L2 strengthened}
\end{assumption}

Assumption \ref{L2 strengthened} holds quite generally, for instance under any of the following sufficient conditions:
\begin{assumption}[Uniform boundedness]
$h(\cdot,\cdot)$ is uniformly bounded over $\mathcal X\times\Xi$.\label{uniform bound}
\end{assumption}

\begin{assumption}[Uniform Lipschitz condition]
$h(x,\xi)$ is Lipschitz continuous with respect to $\xi$, where the Lipschitz constant is uniformly bounded in $x\in\mathcal X$, i.e.,
$$|h(x,\xi)-h(x,\xi')|\leq L\|\xi-\xi'\|$$
where $\|\cdot\|$ is some norm in $\Xi$. Moreover, $E\|\xi\|^{2+\delta}<\infty$.\label{Lipschitz}
\end{assumption}

\begin{assumption}[Majorization]
$$|h(x,\xi)-h(x,\xi')|\leq f(\xi)+f(\xi')$$
where $Ef(\xi)^{2+\delta}<\infty$.
\label{majorization}
\end{assumption}
That Assumption \ref{uniform bound} implies Assumption \ref{L2 strengthened} is straightforward. To see how Assumption \ref{Lipschitz} implies Assumption \ref{L2 strengthened}, note that, if the former is satisfied, we have%E\sup_{x\in\mathcal X}|h(x,\xi)-Z(x)|^{2+\delta}=E\sup_{x\in\mathcal X}|E[h(x,\xi)-h(x,\xi')|\xi]|^{2+\delta}\leq
$$E\sup_{x\in\mathcal X}|h(x,\xi)-h(x,\xi')|^{2+\delta}\leq L^{2+\delta}E\|\xi-\xi'\|^{2+\delta}<\infty$$
Similarly, Assumption \ref{majorization} implies Assumption \ref{L2 strengthened} because the former leads to%E\sup_{x\in\mathcal X}|h(x,\xi)-Z(x)|^{2+\delta}=E\sup_{x\in\mathcal X}|E[h(x,\xi)-h(x,\xi')|\xi]|^{2+\delta}\leq
$$E\sup_{x\in\mathcal X}|h(x,\xi)-h(x,\xi')|^{2+\delta}\leq E(f(\xi)+f(\xi'))^{2+\delta}<\infty$$

We have the following asymptotics:
\begin{theorem}[CLT for growing resample size under sampling without replacement]
Suppose Assumptions \ref{L2} and \ref{L2 strengthened} hold. If the resample size $k=o(n)$, then
\begin{equation*}
    \sqrt{n}(U_{n,k} - W_k) = k\sqrt{Var(g_k(\xi))}\cdot \mathcal Z_{n,k} + o_p(1)%\text{\ \ as }n\to\infty
\end{equation*}
where each $\mathcal Z_{n,k}$ is of mean $0$ and variance $1$, and every subsequence of $\{\mathcal Z_{n,k}\}$ such that
% \begin{equation*}
%     \{Z_{n,k}:\} \Rightarrow N(0,1)
% \end{equation*}
$k^2Var(g_k(\xi)) \cdot n^{\frac{\delta}{2+\delta}}\to\infty$ converges in distribution to the standard normal.\label{main thm}
\end{theorem}
% \begin{theorem}[CLT for growing resample size under sampling without replacement]
% Suppose Assumptions \ref{L2} and \ref{L2 strengthened} hold. If the resample size $k=o(n)$, then
% % $$\frac{\sqrt n(U_{n,k}-W_k)}{k\sqrt{Var(g_k(\xi))}}\Rightarrow N(0,1)$$
% for any $\alpha\in (0,1)$
% \begin{equation*}
%     \liminf_{n\to\infty} P\Big(W_k \geq U_{n,k} - z_{1-\alpha}\frac{k\sqrt{Var(g_k(\xi))}}{\sqrt n} - o_p\big(\frac{1}{\sqrt{n}}\big)\Big) \geq 1-\alpha
% \end{equation*}
% where $z_{1-\alpha}$ is the $1-\alpha$ quantile of the standard normal.\label{main thm}
% \end{theorem}
% \begin{theorem}[CLT for growing resample size under sampling without replacement]
% Suppose Assumptions \ref{L2}, \ref{L2 strengthened} and \ref{nondegeneracy strengthened} hold. If the resample size $k=o(\sqrt n)$, then
% $$\frac{\sqrt n(U_{n,k}-W_k)}{k\sqrt{Var(g_k(\xi))}}\Rightarrow N(0,1)$$
% where $N(0,1)$ is the standard normal variable.\label{main thm}
% \end{theorem}
\begin{theorem}[CLT for growing resample size under sampling with replacement]
Suppose Assumptions \ref{L2} and \ref{L2 strengthened} hold. If the resample size $k=O(n^{\gamma})$ for some constant $\gamma<\frac{1}{2}$, then the same conclusion of Theorem \ref{main thm} holds for $V_{n,k}$.\label{V thm}
\end{theorem}
%\begin{theorem}[CLT for growing resample size under sampling with replacement]
% Suppose Assumptions \ref{L2} and \ref{L2 strengthened} hold. If the resample size $k=O(n^{\gamma})$ for some constant $\gamma<\frac{1}{2}$, then there exists a sequence of normal random variables $N_{n,k}$ each with mean zero and variance $k^2Var(g_k(\xi))/n$ such that
% \begin{equation*}
%     V_{n,k} \stackrel{d}{=} W_k + N_{n,k} + o_p\big(\frac{1}{\sqrt{n}}\big)
% \end{equation*}
% where $\stackrel{d}{=}$ denotes equality in distribution.\label{V thm}
% \end{theorem}
% \begin{theorem}[CLT for growing resample size under sampling with replacement]
% Suppose Assumptions \ref{L2} and \ref{L2 strengthened} hold. If the resample size $k=O(n^{\gamma})$ for some constant $\gamma<\frac{1}{2}$, then for any $\alpha\in (0,1)$
% % $$\frac{\sqrt n(V_{n,k}-W_k)}{k\sqrt{Var(g_k(\xi))}}\Rightarrow N(0,1)$$
% \begin{equation*}
%     \liminf_{n\to\infty} P\Big(W_k \geq V_{n,k} - z_{1-\alpha}\frac{k\sqrt{Var(g_k(\xi))}}{\sqrt n} - o_p\big(\frac{1}{\sqrt{n}}\big)\Big) \geq 1-\alpha
% \end{equation*}
% where $z_{1-\alpha}$ is the $1-\alpha$ quantile of the standard normal.\label{V thm}
% \end{theorem}

Theorems \ref{main thm} and \ref{V thm} are analogs of Theorem \ref{thm:symmetric} when $k$ grows with $n$. In both theorems, we see that there is a limit in how large $k$ we can take relative to $n$, which is thresholded at roughly order $n$ and $\sqrt{n}$ for $U_{n,k}$ and $V_{n,k}$ respectively.
% (The $U$-statistic case can be generalized slight to allow $k=o(\sqrt n)$ instead of $O(n^\gamma)$ for $\gamma<1/2$).
A symmetric statistic with a growing $k$ is known as an infinite-order symmetric statistic (\cite{frees1989infinite}), and has been harnessed in analyzing random forests (\cite{mentch2016quantifying,wager2014confidence,wager2017estimation}). Theorems \ref{main thm} and \ref{V thm} give the precise conditions under which the SAA kernel results in an asymptotically converging infinite-order symmetric statistic. In particular, the requirement $k=o(n)$ in Theorem \ref{main thm} implies that, asymptotically, we can use almost the full data set to construct the resampled SAA in $U_{n,k}$.

We obtain Theorem \ref{main thm} by looking at the variance of $U_{n,k}$ via an analysis-of-variance (ANOVA) decomposition (\cite{efron1981jackknife}) of the symmetric kernel $H_k$. Thanks to the uncorrelatedness among the ANOVA terms, we can control the higher-order variance of $U_{n,k}$ at $o_p(1)$ by using a bound from \cite{wager2017estimation}. However, unlike in Theorem \ref{thm:symmetric} where a clean CLT is available as the first-order effect dominates the higher-order ones, the first-order effect $k\sqrt{Var(g_k(\xi))}\mathcal Z_{n,k}$ may become degenerate (i.e., tend to 0) as $k$ grows, and the growth conditions in Theorem \ref{main thm} allows obtaining a normality asymptotic with the $o_p(1)$ term. From Theorem \ref{main thm}, the conclusion of Theorem \ref{V thm} follows by using a relation between $U$- and $V$-statistics in the form
\begin{equation}
n^k(U_{n,k}-V_{n,k})=(n^k-{}_nP_k)(U_{n,k}-R_{n,k})\label{interim24}
\end{equation}
where ${}_nP_k=n(n-1)\cdots(n-k+1)$ and $R_{n,k}$ is the average of all $H_k(\xi_{i_1},\ldots,\xi_{i_k})$ with at least two of $i_1,\ldots,i_k$ being the same (see, e.g., Section 5.7.3 in \cite{serfling2009approximation}). By carefully controlling the difference between $U_{n,k}$ and $V_{n,k}$, one can show an asymptotic for $V_{n,k}$ under a slower growth rate of $k$. This leads to a slightly less general result for $V_{n,k}$ in Theorem \ref{V thm}. The proofs of Theorems \ref{main thm} and \ref{V thm} are both in Appendix \ref{proof:main}.
% We mention that the growth rates of $k$ in both Theorems \ref{main thm} and \ref{V thm} are sufficient conditions. In practice a $k$ of the same order as $n$ (e.g., $n/2$) still leads to correct confidence bounds, as our experiments show.

% and therefore the higher-order variance is not necessarily negligible. On the other hand, when the first-order variance $k^2Var(g_k(\xi))$ shrinks sufficiently slowly as described in Theorem \ref{main thm}, the CLT for $\mathcal{Z}_{n,k}$ follows by verifying the Lyapunov condition of the first order terms of the ANOVA decomposition.

% We will also see in the next section that, under further conditions, the growth of $k$ can be allowed bigger.
% The rate could be and could be a limitation of our analysis

% These two theorems conclude that $U_{n,k}$ and $V_{n,k}$ continue to well approximate the optimistic bound $W_k$ even as $k\to\infty$, under the depicted assumptions and bounds on the growth rate.

% The point estimator $\tilde Z_{n,k}^{bag}$ in the output of Algorithm \ref{bagging} gives precisely $U_{n,k}$ and $V_{n,k}$ when $B=\infty$. The standard error of this point estimator (when $B=\infty$) is given by $k\sqrt{Var(g_k(\xi))/n}$, which is estimated by $\tilde\sigma_{IJ}$ and will be analyzed in the next subsection.

A corollary of Theorems \ref{main thm} and \ref{V thm} is an exact CLT when the limit variance is non-degenerate:
\begin{corollary}[Exact CLT for growing resample size under non-degeneracy]
If $\liminf_{k\to\infty} k^2\allowbreak Var(g_k(\xi))>0$, then, under the same assumptions and respective growth rate of resample size $k$ in Theorem \ref{main thm} or \ref{V thm}, we have
$$\frac{\sqrt{n}(U_{n,k} - W_k)}{k\sqrt{Var(g_k(\xi))}}\Rightarrow N(0,1),\text{\ and }\frac{\sqrt{n}(V_{n,k} - W_k)}{k\sqrt{Var(g_k(\xi))}}\Rightarrow N(0,1).$$\label{exact CLT under non-degeneracy}
\end{corollary}
% Taking one step further, the following shows that bagging under sampling without replacement achieves almost the same efficiency as the direct use of CLT for SAA in \eqref{CI SAA}.
Non-degeneracy of the limit variance $\liminf_{k\to\infty} k^2\allowbreak Var(g_k(\xi))$ depends on the intricate interplay between the SAA optimal solution and the cost function, and thus may not be easily verified in general. For Lipschitz problems, however, the limit variance can be compactly characterized in terms of the cost function and minimizers of an associated Gaussian process.
\begin{theorem}[Characterization of limit variance under Lipschitzness]\label{Lipschitz characterization of limit variance}
Suppose Assumptions \ref{Lipschitz:decision}, \ref{L2} and \ref{L2 strengthened} hold, and that the decision space $\mathcal X\subseteq \R^d$ is compact. Let $Y$ be a centered Gaussian process on $\mathcal X^*$, the set of optimal solutions for \eqref{opt}, with covariance defined by $\kappa(x_1,x_2):=Cov(h(x_1,\xi),h(x_2,\xi))$ for any $x_1,x_2\in\mathcal X^*$. Then there exists a random variable $x^*_Y\in\mathcal X^*$ on the same probability space as the Gaussian process $Y$ such that $Y(x^*_Y)=\inf_{x\in\mathcal X^*}Y(x)$ almost surely and that
\begin{equation}\label{expression of limit variance for bagging}
    \lim_{k\to\infty}k^2Var(g_k(\xi)) = Var(E[h(x^*_Y,\xi)\rvert \xi])
\end{equation}
where $x_Y^*$ and $\xi$ are independent. Therefore, the non-degeneracy condition $\liminf_{k\to\infty}\allowbreak k^2 Var(g_k(\xi))>0$ holds if and only if $Var(E[h(x^*_Y,\xi)\rvert \xi])>0$. Alternatively, the limit variance can also be represented in terms of the covariance kernel, $Var(E[h(x^*_Y,\xi)\rvert \xi])\allowbreak=E[\kappa(x_Y^*, {x_Y^*}')]$, where ${x_Y^*}'$ is an independent copy of $x_Y^*$.
\end{theorem}
%To fully recover the CLT in Theorem \ref{thm:SAA}, we need a slightly stronger version of Assumption \ref{L2}:
%\begin{assumption}
%We have$$E\sup_{x\in\mathcal X}|h(x,\xi)|^{2+\delta}<\infty$$for some $\delta>0$.
%%where $\xi,\xi'$ are i.i.d. generated from $F$.
%\label{L2+delta}
%\end{assumption}
%Note that this assumption, as well as Assumption \ref{L2 strengthened}, is not required in Theorem \ref{thm:SAA}. The reason why we need those extra conditions lies in the different machineries employed to establish CLTs. Theorem \ref{thm:SAA} is directly based on a functional CLT together with an application of the delta method for which smoothness property of the SAA value is of more importance, whereas our CLTs come from Hajek projections of square integrable symmetric statistics where moments conditions  However, we do obtain more general result than , with an asymptotically negligible error in the SAA optimal value,
Theorem \ref{Lipschitz characterization of limit variance} shows that the limit variance is exactly the variance of the cost after being averaged over random minimizers of the limit Gaussian process on $\mathcal{X}^*$, or equivalently, the expected covariance kernel over a pair of independent minimizers. Theorem \ref{Lipschitz characterization of limit variance} is proved by first utilizing SAA asymptotic theories and uniform integrability of the SAA kernel to shrink the decision space from $\mathcal{X}$ to the set of optima $\mathcal{X}^*$, and then relating the limit variance to the cost function and minimizers of the limit Gaussian process through a coupling argument and an application of the argmax theorem from empirical process theory.

In order to demonstrate the generality of the non-degeneracy condition as implied by Theorem \ref{Lipschitz characterization of limit variance}, we consider general convex problems on $\R^d$ and apply Theorem \ref{Lipschitz characterization of limit variance} to derive more transparent conditions for non-degeneracy.
\begin{theorem}[Non-degeneracy for convex problems]\label{convex nondegeneracy}
Assume the conditions of Theorem \ref{Lipschitz characterization of limit variance}. Let the decision space $\mathcal{X}\subseteq \R^d$ be a compact convex set, $h(x,\xi)$ be convex in $x$ for each $\xi$, and $x_Y^*$ be the minimizer of the limit Gaussian process from Theorem \ref{Lipschitz characterization of limit variance}. Then $E[x_Y^*]$ is an optimal solution by convexity, and $Var(E[h(x^*_Y,\xi)\rvert \xi])=Var(h(E[x_Y^*],\xi))$. Therefore non-degeneracy holds if and only if $Var(h(E[x_Y^*],\xi))>0$. In particular, a sufficient condition for non-degeneracy is that $Var(h(x,\xi))>0$ for every optimal solution, or even more stringently, $Var(h(x,\xi))>0$ for every feasible solution $x\in\mathcal{X}$.
\end{theorem}
% for convex problems non-degeneracy of our bagging estimator
The last conclusion of Theorem \ref{convex nondegeneracy} stipulates that for convex problems, non-degeneracy of our bagging estimator can be guaranteed simply by having a noisy objective at every feasible solution. More generally, Theorem \ref{convex nondegeneracy} concludes that non-degeneracy is guaranteed by having a noisy objective at every optimal solution, and even more generally boils down to a noisy objective at  $E[x_Y^*]$, which can be viewed as a ``bagged optimal solution". This link between the non-degeneracy of a bagged optimal value and the non-zero objective variance at a bagged optimal solution arises from the fact that a convex objective $h(x,\xi)$ must be linear in $x$ when restricted to the (convex) set of optima $\mathcal{X}^*$, and therefore the expectation operation and the application of the cost function are exchangeable, i.e., $E[h(x_Y^*,\xi)\vert \xi] = h(E[x_Y^*],\xi)$. We illustrate the application of Theorem \ref{convex nondegeneracy} to two convex programs below. For both examples, we assume the basic required conditions (i.e., Assumptions \ref{Lipschitz:decision}, \ref{L2} and \ref{L2 strengthened}) hold.
% is approximately the expected SAA solution and thus A straightforward sufficient condition is then the objective being non-degenerate at every feasible solution. This elegant
\begin{example}\label{linear convex example}
Consider $\mathcal{X}=[-1,1]^d$, and a linear cost $h(x,\xi) = (a(\xi)+c)^Tx$, where $a(\xi)$ has mean zero and covariance matrix $\Sigma$. Then $Var(h(x,\xi))=x^T\Sigma x$, and a sufficient condition for non-degeneracy is that $x^T\Sigma x>0$ for every optimal solution $x$. For example, that $\Sigma$ is non-singular and $c\neq \mathbf{0}$ is sufficient because every optimal solution must then be on the boundary (hence nonzero) and thus have a strictly positive objective variance. If $c=\mathbf{0}$, however, the problem becomes degenerate as $E[x_Y^*]=\mathbf{0}$.
\end{example}
\begin{example}\label{center convex example}
Let $\mathcal{X}\subseteq \R^d$ be an arbitrary compact convex set, and $h(x,\xi) = f(\Vert x-\xi\Vert)$ where $f:[0,\infty)\to \R$ is a convex and strictly increasing function. Note that $h(x,\xi)$ is convex in $x$ because it is the composition of a convex and increasing function and a convex function. Theorem \ref{convex nondegeneracy} then entails that a sufficient condition for non-degeneracy is that $\xi$ is not supported on any $(d-1)$-sphere, i.e., set in the form of $\{\nu_0+r\nu:\nu\in\R^d,\Vert \nu\Vert=1\}$ for $\nu_0\in\R^d$ and $r\geq 0$, since it implies for each $x$ that $Var(\Vert x-\xi\Vert)>0$ and hence $Var(f(\Vert x-\xi\Vert))>0$ by the strict monotonicity of $f$.
\end{example}
We also point out that, since the limit variance is the same as the objective variance at the bagged optimal solution as stated in Theorem \ref{convex nondegeneracy}, it follows that the bound $\hat Z_n-z_{1-\alpha}\tilde{\sigma}_{IJ}$, where the point estimate is the full SAA optimal value from \eqref{CI SAA} and the standard error term is from our bagging bound in Algorithm \ref{bagging}, is a valid confidence bound by a similar argument from \cite{bayraksan2006assessing} for justifying the single-replication procedure. To briefly explain, we have $\hat Z_n\leq \bar{h}(E[x_Y^*])$ by optimality, where $\bar{h}(E[x_Y^*])=(1/n)\sum_{i=1}^nh(E[x_Y^*],\xi_i)$, and hence $P(\hat Z_n-z_{1-\alpha}\tilde{\sigma}_{IJ}\leq Z^*)\geq P(\bar{h}(E[x_Y^*])-z_{1-\alpha}\tilde{\sigma}_{IJ}\leq Z^*)\approx P(\bar{h}(E[x_Y^*])-z_{1-\alpha}\sqrt{Var(h(E[x_Y^*],\xi))/n}\leq E[h(E[x_Y^*],\xi)])\to 1-\alpha$. This bound combines the advantages of both bagging and direct-CLT bounds: Compared to \eqref{CI SAA}, it is conjectured to have a stabler and smaller standard error term (thanks to the discussion in the next section) and compared to our bagging bound it has a tighter point estimate. However, this bound is guaranteed to be valid only for convex problems.

Next we show yet another refinement when, in addition to Lipschitzness, the optimal solution is also unique. Under this additional assumption, Theorem \ref{Lipschitz characterization of limit variance} immediately forces the limit variance to be $Var(h(x^*,\xi))$, where $x^*$ is the unique optimum. Our bagging estimator thus elicits the same CLT as Theorem \ref{thm:SAA}.
% , and thus recovers the direct-CLT bound in \eqref{CI SAA}.
To state the next result, we define:
\begin{definition}[Essential uniqueness]\label{def: essentially unique}
We say \eqref{opt} has essentially unique optimal solution if $h(x_1,\xi)=h(x_2,\xi)$ almost surely for any two optimal solutions $x_1,x_2\in\mathcal{X}$.
\end{definition}
Essential uniqueness is more general than the usual uniqueness in that it allows multiple optimal solutions as long as they perform exactly the same under any possible scenario of the uncertain quantity, and hence enhances the applicability of our next result. More importantly, it is both a sufficient and necessary condition to ensure the SAA weak limit in \eqref{CLT SAA general} is Gaussian; otherwise, the limit is the minimum of a Gaussian process that triggers a strict variance reduction property of our approach (see Theorem \ref{compare var:asymptotic} in the sequel). The next result is as follows:
\begin{theorem}[Recovery of the classical CLT for SAA under solution uniqueness]\label{recover SAA}
Suppose Assumptions \ref{Lipschitz:decision}, \ref{L2} and \ref{L2 strengthened} hold, that the decision space $\mathcal X\subseteq \R^d$ is compact, and that \eqref{opt} has essentially unique optimal solution. Let $x^*$ be an optimal solution and assume $Var(h(x^*,\xi))>0$. We have $\sqrt n(U_{n,k}-W_k)\Rightarrow N(0,Var(h(x^*,\xi)))$ for arbitrary choices of $k\leq n$. Moreover, $W_k-Z^*=o(1/\sqrt k)$ as $k\to\infty$, and if $k\geq\epsilon n$ for some constant $\epsilon>0$ we have
\begin{equation}
\sqrt n(U_{n,k}-Z^*)\Rightarrow N(0,Var(h(x^*,\xi)))\label{new non-unique}
\end{equation}
where $N(0,Var(h(x^*,\xi)))$ is normal with mean zero and variance $Var(h(x^*,\xi))$.
\end{theorem}
Note that, compared with Theorems \ref{main thm} and \ref{V thm}, the centering quantity in \eqref{new non-unique} is changed from $W_k$ to $Z^*$. The asymptotic distribution is Gaussian with variance precisely the objective variance at $x^*$. This gives rise to the same CLT as Theorem \ref{thm:SAA} in the special case where \eqref{opt} has essentially unique optimal solution, and in particular when the set of optima is a singleton $\mathcal{X}^*=\{x^*\}$. If the essential uniqueness condition does not hold, there could be a discrepancy between the optimistic bound $W_\infty$ and $Z^*$ (This can be hinted by observing the different types of limits between Theorems \ref{main thm}, \ref{V thm} and Theorem \ref{thm:SAA}, namely Gaussian versus the minimum of a Gaussian process).

We obtain Theorem \ref{recover SAA} from a more delicate control of the high-order variance components in the ANOVA decomposition and an analysis on the negligible bias of $W_k$ with respect to the true optimal value $Z^*$, both of which are related to the maximal deviation of an empirical process generated by the centered cost function indexed by the decision, i.e., $\mathcal F:=\{h(x,\cdot)-Z(x):x\in \mathcal X\}$. The Lipschitz assumption allows us to estimate this maximal deviation using empirical process theory. Appendix \ref{sec:refined} shows the proof details for Theorems \ref{Lipschitz characterization of limit variance}, \ref{convex nondegeneracy} and \ref{recover SAA}.
% We obtain Theorem \ref{recover SAA} from a different path than Theorem \ref{main thm}, in particular by looking at the variance of $U_{n,k}$ via an analysis-of-variance (ANOVA) decomposition (\cite{efron1981jackknife}) of the symmetric kernel $H_k$. Thanks to the uncorrelatedness among the ANOVA terms, we can control the variance of $U_{n,k}$ by using a bound from \cite{wager2017estimation}, which can be shown to depend on the maximal deviation of an empirical process generated by the centered cost function indexed by the decision, i.e., $\mathcal F:=\{h(x,\cdot)-Z(x):x\in \mathcal X\}$. The Lipschitz assumption allows us to estimate this maximal deviation using empirical process theory. Appendix \ref{sec:refined} shows the proof details.

\section{Error Estimates and Coverages}\label{sec:error}
With the limit theorems in Sections \ref{sec:fixed} and \ref{sec:growing}, we now derive the coverage guarantees for the output from Algorithm \ref{bagging}. In doing so, we incorporate two additional developments. One is the analysis of the IJ estimator in approximating the standard error. Second is the analysis of the Monte Carlo error in running the bootstrap with a finite number of replications. First, we have the following consistency of the IJ variance estimator, relative to the magnitude of the target standard error:
% Together with the results in Section \ref{sec:growing}, these will give us an overall coverage guarantee
%\begin{theorem}\label{consistency_IJ:U}
%Under Assumption \ref{L2}, \ref{L2 strengthened} and \ref{nondegeneracy strengthened}, if resampling is without replacement and the resample size $k$ satisfies \eqref{range:k} as well as $k=o(n)$, then the IJ variance estimator is consistent, i.e.
%$$\frac{n^2}{(n-k)^2}\sum_{i=1}^n\mathrm{Cov}_*^2(N_i^*,H_k^*)\Big/\frac{k^2}{n}Var(g_k(\xi))\stackrel{p}{\to}1.$$
%\end{theorem}
%\begin{theorem}\label{consistency_IJ:V}
%Under Assumption \ref{L2}, \ref{L2 strengthened} and \ref{nondegeneracy strengthened}, if resampling is with replacement and the resample size $k=O(n^{\gamma})$ for some $\gamma<\frac{1}{2}$, then the IJ variance estimator is consistent, i.e.
%$$\sum_{i=1}^n\mathrm{Cov}_*^2(N_i^*,H_k^*)\Big/\frac{k^2}{n}Var(g_k(\xi))\stackrel{p}{\to}1.$$
%\end{theorem}
\begin{theorem}[Consistency of IJ estimator under resampling without replacement]\label{consistency_IJ:U}
Consider resampling without replacement. In either of the following cases:
\begin{itemize}
    \item Assumptions \ref{L2} and \ref{L2 strengthened} hold and $k=o(n)$,
    \item Assumptions \ref{Lipschitz:decision}, \ref{L2} and \ref{L2 strengthened} hold, the decision space $\mathcal{X}\subseteq \R^d$ is compact, \eqref{opt} has essentially unique optimal solution and $k\leq \theta n$ for some constant $\theta<1$,
\end{itemize}
% we have that
% In any of the following three settings:
% Under either the same conditions and resample sizes of Theorem \ref{main thm}, or the same conditions of Theorem \ref{recover SAA}
% \begin{enumerate}
% \item Assumptions \ref{L2}, \ref{L2 strengthened} and \ref{nondegeneracy strengthened} hold and $k=o(\sqrt n)$
% \item Assumptions \ref{Lipschitz:decision}, \ref{L2}, \ref{L2 strengthened} and \ref{nondegeneracy relaxed} hold, the decision space $\mathcal X$ is compact and $k=o(n)$\label{second_setting}
% \item In addition to the assumptions in $\ref{second_setting}$, further assume the problem \eqref{opt} has a unique optimal solution, and use resample size $k\leq \theta n$ for some constant $\theta<1$
% \end{enumerate}
%(1) the same conditions and resample sizes of Theorem \ref{main thm} or \ref{Lipschitz_k full}, or (2) the same conditions of Theorem \ref{recover SAA} but resample size $k\leq \theta n$ for some $\theta<1$,
the IJ variance estimator is consistent up to a negligible error, i.e.
% $$\frac{n^2}{(n-k)^2}\sum_{i=1}^n\mathrm{Cov}_*^2(N_i^*,H_k^*)\Big/\frac{k^2}{n}Var(g_k(\xi))\stackrel{p}{\to}1.$$
$$\frac{n^2}{(n-k)^2}\sum_{i=1}^n\mathrm{Cov}_*^2(N_i^*,H_k^*) = \frac{k^2}{n}Var(g_k(\xi)) + o_p\big(\frac{1}{n}\big).$$
\end{theorem}
%\begin{theorem}[Relative consistency of IJ estimator under resampling without replacement]\label{consistency_IJ:U}
%Consider resampling without replacement. Under the same conditions and resample sizes of either Theorem \ref{main thm} or \ref{Lipschitz_k full}, the IJ variance estimator is relatively consistent, i.e.
%$$\frac{n^2}{(n-k)^2}\sum_{i=1}^n\mathrm{Cov}_*^2(N_i^*,H_k^*)\Big/\frac{k^2}{n}Var(g_k(\xi))\stackrel{p}{\to}1.$$
%\end{theorem}
\begin{theorem}[Consistency of IJ estimator under resampling with replacement]\label{consistency_IJ:V}
Consider resampling with replacement. If Assumptions \ref{L2} and \ref{L2 strengthened} hold and $k=O(n^{\gamma})$ for some $\gamma < \frac{1}{2}$,
% Under the same conditions and resample sizes of Theorem \ref{V thm},
% If Assumptions \ref{L2}, \ref{L2 strengthened} and \ref{nondegeneracy strengthened} hold, and $k=O(n^{\gamma})$ for some constant $\gamma<\frac{1}{2}$,
then the IJ variance estimator is consistent up to a neglibible error, i.e.
% $$\sum_{i=1}^n\mathrm{Cov}_*^2(N_i^*,H_k^*)\Big/\frac{k^2}{n}Var(g_k(\xi))\stackrel{p}{\to}1.$$
$$\sum_{i=1}^n\mathrm{Cov}_*^2(N_i^*,H_k^*)=\frac{k^2}{n}Var(g_k(\xi))+o_p\big(\frac{1}{n}\big).$$
\end{theorem}
% The three sets of assumptions and resample size in Theorem \ref{consistency_IJ:U} are precisely those of Theorem \ref{main thm} (the general case), Theorem \ref{Lipschitz_k full} (the Lipschitz case) and Theorem \ref{recover SAA} (the Lipschitz and unique solution case) respectively, except a slight tightening on the choice of $k$ in the last case that can only be arbitrarily close to but not exactly $n$. The assumptions in Theorem \ref{consistency_IJ:V} are precisely those of Theorem \ref{V thm}. Theorem \ref{consistency_IJ:U} is justified by adopting the arguments for random forests in \cite{wager2017estimation} and a weak law of large numbers, and Theorem \ref{consistency_IJ:V} follows from analyzing the difference between $U$- and $V$-statistics as in the proof of Theorem \ref{V thm}. Appendix \ref{proof:consistency} shows the details.
Theorem \ref{consistency_IJ:U} is justified by adopting the arguments for random forests in \cite{wager2017estimation} and a weak law of large numbers, and Theorem \ref{consistency_IJ:V} follows from analyzing the difference between $U$- and $V$-statistics as in the proof of Theorem \ref{V thm}. Appendix \ref{proof:consistency} shows the details.

When a large enough bootstrap size $B$ is used in Algorithm \ref{bagging}, the Monte Carlo errors in estimating the point estimator and its variance both vanish. This gives an overall coverage guarantee for the output of our bagging procedure, as in the next theorem:
\begin{theorem}[CLT for Algorithm \ref{bagging}]\label{final_guarantee}
In the case of resampling without replacement, assume either 1) Assumptions \ref{L2} and \ref{L2 strengthened} hold and $k=o(n)$, or 2) Assumptions \ref{Lipschitz:decision}, \ref{L2} and \ref{L2 strengthened} hold, the decision space $\mathcal{X}\subseteq \R^d$ is compact, \eqref{opt} has essentially unique optimal solution and $k\leq \theta n$ for some constant $\theta<1$.
% the same conditions and resample sizes of Theorem \ref{main thm} or the same conditions of Theorem \ref{recover SAA} and resample size $k\leq \theta n$ for some constant $\theta<1$.
In the case of resampling with replacement, assume Assumptions \ref{L2} and \ref{L2 strengthened} hold and $k=O(n^{\gamma})$ for some $\gamma < \frac{1}{2}$.
% assume the same conditions and resample sizes of Theorem \ref{V thm}.
If the bootstrap size $B$ in Algorithm \ref{bagging} is such that $B/n\to\infty$, then the output of Algorithm \ref{bagging} satisfies
% $$\frac{\tilde Z_{n,k}^{bag}-W_k}{\tilde{\sigma}_{IJ}}\Rightarrow N(0,1)$$
\begin{equation*}
    \tilde Z_{n,k}^{bag} - W_k = \tilde{\sigma}_{IJ}\cdot \mathcal Z_{n,k} + o_p\big(\frac{1}{\sqrt{n}}\big)
\end{equation*}
where $\mathcal Z_{n,k}$ is the same sequence of random variables from Theorem \ref{main thm}, and the $o_p$ is with respect to the data $\bm{\xi}_{1:n}$ and the sampling randomness in Algorithm \ref{bagging} jointly.
\end{theorem}

An immediate consequence of Theorem \ref{final_guarantee} is the correct coverage of the true optimal value:
\begin{corollary}[Correct coverage from Algorithm \ref{bagging}]\label{coverage}
Under the same assumptions, growth rates of the resample size $k$ and the bootstrap size $B$ in Theorem \ref{final_guarantee}, the output of Algorithm \ref{bagging} satisfies
% $$P\left(\tilde Z_{n,k}^{bag}-z_{1-\alpha}\tilde\sigma_{IJ}\leq Z^*\right)\geq P\left(\tilde Z_{n,k}^{bag}-z_{1-\alpha}\tilde\sigma_{IJ}\leq W_k\right)\to1-\alpha$$
% \begin{eqnarray*}
%     &&\liminf_{n\to\infty} P\Big(\tilde Z_{n,k}^{bag} - z_{1-\alpha}\tilde{\sigma}_{IJ} - o_p\big(\frac{1}{\sqrt{n}}\big)\leq Z^*\Big)\\
%     &\geq&\liminf_{n\to\infty} P\Big(\tilde Z_{n,k}^{bag} - z_{1-\alpha}\tilde{\sigma}_{IJ} - o_p\big(\frac{1}{\sqrt{n}}\big)\leq W_k\Big) \geq 1-\alpha
% \end{eqnarray*}
\begin{equation}
\liminf_{n\to\infty} P\Big(\tilde Z_{n,k}^{bag} - z_{1-\alpha}\tilde{\sigma}_{IJ} - o_p\big(\frac{1}{\sqrt{n}}\big)\leq W_k\leq Z^*\Big) \geq 1-\alpha\label{coverage non-degenerate}
\end{equation}
where $z_{1-\alpha}$ is the $1-\alpha$ quantile of the standard normal, and the $o_p$ term is with respect to both the data $\bm{\xi}_{1:n}$ and the sampling randomness in Algorithm \ref{bagging} jointly. In particular, if non-degeneracy $\liminf_{k\to\infty}k^2Var(g_k(\xi))>0$ holds, then
\begin{equation}
\lim_{n\to\infty} P\Big(\tilde Z_{n,k}^{bag} - z_{1-\alpha}\tilde{\sigma}_{IJ} \leq W_k\leq Z^*\Big) = 1-\alpha.\label{coverage exact}
\end{equation}
% \begin{equation*}
%      \geq 1-\alpha.
% \end{equation*}
\end{corollary}

% \begin{corollary}[Correct coverage from Algorithm \ref{bagging}]\label{coverage2}
% Under the same assumptions, growth rates of the resample size $k$ and the bootstrap size $B$ in Theorem \ref{final_guarantee}, the output of Algorithm \ref{bagging} satisfies
% $$P\left(\tilde Z_{n,k}^{bag}-z_{1-\alpha}\tilde\sigma_{IJ}\leq Z^*\right)\geq P\left(\tilde Z_{n,k}^{bag}-z_{1-\alpha}\tilde\sigma_{IJ}\leq W_k\right)\to1-\alpha$$
% \begin{eqnarray*}
%     &&\liminf_{n\to\infty} P\Big(\tilde Z_{n,k}^{bag} - z_{1-\alpha}\tilde{\sigma}_{IJ} - o_p\big(\frac{1}{\sqrt{n}}\big)\leq Z^*\Big)\\
%     &\geq&\liminf_{n\to\infty} P\Big(\tilde Z_{n,k}^{bag} - z_{1-\alpha}\tilde{\sigma}_{IJ} - o_p\big(\frac{1}{\sqrt{n}}\big)\leq W_k\Big) \geq 1-\alpha
% \end{eqnarray*}
% \begin{equation*}
% \lim_{n\to\infty} P\Big(\tilde Z_{n,k}^{bag} - z_{1-\alpha}\tilde{\sigma}_{IJ} - o_p\big(\frac{1}{\sqrt{n}}\big)\leq W_k\leq Z^*\Big) = 1-\alpha
% \end{equation*}
% where $z_{1-\alpha}$ is the $1-\alpha$ quantile of the standard normal, and the $o_p$ term is with respect to the data, the resampling randomness in bagging and also possibly some external artificial randomness. In particular, if non-degeneracy $\liminf_{k\to\infty}k^2Var(g_k(\xi))>0$ holds, then
% \begin{equation*}
% \lim_{n\to\infty} P\Big(\tilde Z_{n,k}^{bag} - z_{1-\alpha}\tilde{\sigma}_{IJ} \leq W_k\leq Z^*\Big) = 1-\alpha.
% \end{equation*}
% % \begin{equation*}
% %      \geq 1-\alpha.
% % \end{equation*}
% \end{corollary} for the inaccuracy of the standard error term $z_{1-\alpha}\tilde{\sigma}_{IJ}$

Theorem \ref{final_guarantee} and Corollary \ref{coverage} show a correct asymptotic coverage of our bagging bound for the optimistic bound $W_k$ and in turn the true optimal value $Z^*$. This guarantee holds regardless of degeneracy. To explain in more detail, the $o_p(1/\sqrt n)$ error in \eqref{coverage non-degenerate} stipulates that our generated confidence bound is accurate up to order $1/\sqrt n$, which is an accuracy level stemming from the canonical $1/\sqrt n$-rate of the CLTs. When degeneracy occurs, it is possible that $z_{1-\alpha}\tilde\sigma_{IJ}$ is of order smaller than $1/\sqrt n$. In this case, our generated confidence bound is not refined enough to deliver correct coverage, but at the same time the amount needed to adjust $\tilde Z_{n,k}^{bag}$ to generate a valid bound is super-canonically small, i.e., of smaller order than $1/\sqrt n$. In other words, $\tilde Z_{n,k}^{bag}$ alone is already very close to delivering a confidence bound. Moreover, in this degeneracy situation, little is known about the distribution of the bagging estimator or its weak limit (if there is any), e.g., it may be discontinuous and thus not every coverage level can be exactly attained, which leads to the looseness, i.e., $\geq1-\alpha$ instead of $=1-\alpha$, in \eqref{coverage non-degenerate}.

% When the latter occurs, the $o_p(1/\sqrt{n})$ error and the looseness of the coverage in Corollary \ref{coverage} are related to the possible presence of degeneracy: The former stipulates that
%The latter arises because and the exactness of the later depends on the discrepancy between $W_k$ and $Z^*$. This is thanks to the standard normal limit.  under the same setting as when the classical SAA CLT has a normal limit

On the other hand, when non-degeneracy holds, our confidence bound in \eqref{coverage exact} delivers an exact asymptotic coverage for $W_k$ and in turn a correct coverage for the true optimal value $Z^*$. The exactness of our bound for $Z^*$ depends on the discrepancy between $W_k$ and $Z^*$. For instance, Theorem \ref{recover SAA} provides conditions under which this discrepancy vanishes and which hints that our bound is close to having exact coverage for $Z^*$.

Lastly, note that $B$ needs to be taken to have order greater than $n$ to wash away the Monte Carlo error in the IJ variance estimator $\tilde{\sigma}_{IJ}^2$ under the considered conditions. Notably, the requirement for $B$ is independent of the resample size $k$ (thanks to the diminishing variance of the SAA kernel $H_k$ implied by the Efron-Stein inequality). Thus, to achieve the best result regarding the tightness of the bound, we would choose $k$ as large as allowed regardless of how we choose $B$. In fact, the required bootstrap size $B$ can be further reduced if a bias correction is applied to the IJ variance estimator, as the major source of the Monte Carlo error is the upward bias that is introduced by squaring the noisy covariance estimates when constructing $\tilde{\sigma}_{IJ}^2$ in Algorithm \ref{bagging}. Similar computation reduction has been achieved by debiasing IJ variance estimators for random forests (\cite{wager2014confidence}). We describe a debiased variant of Algorithm \ref{bagging} in Appendix \ref{sec:debiased bagging} along with an informal analysis that suggests a required bootstrap size $B$ of order $\sqrt{n}$. Our experiments show that with $B=500$ the debiased variant consistently delivers satisfactory performances in practice for data sizes as large as several thousands.

\section{Statistical Properties of Bagging Bounds and Comparisons with Batching and Single-Replication Procedures}\label{sec:properties}
% , the standard error of $U_{n,k}$ and $V_{n,k}$, namely   Next, the following bounds the magnitude of the resulting asymptotic variance of $U_{n,k}$ and $V_{n,k}$, or in other words the standard error for $\tilde Z_{n,k}^{bag}$ in Algorithm \ref{bagging} which $\tilde\sigma_{IJ}^2$ there estimates:
We analyze the properties of our confidence bounds. Here, we focus on the statistical issues, rather than computational, i.e., assume $B=\infty$. In this case, our bounds can be viewed as consisting of a point estimator $U_{n,k}$ or $V_{n,k}$ and a standard error $k\sqrt{Var(g_k(\xi))/n}$. We compare these estimators with the bound \eqref{CI SAA} given by the single-replication procedure and the bound \eqref{CI batching} given by the batching procedure. We first show that in general the standard errors of all these bounds have the same order $1/\sqrt n$.
\begin{proposition}[Magnitude of the standard error]\label{bound se}
Recall that $\tilde Z_{n,k},\hat Z_n$ are the point estimators by the batching and single-replication procedures respectively. Under Assumption \ref{L2}, $Var(U_{n,k})$, $Var(\tilde{Z}_{n,k})$ and $Var(\hat Z_n)$ are all of order $O(1/n)$ regardless of how $k$ grows with $n$. $Var(V_{n,k})$ is also $O(1/n)$ if $k$ is chosen as described in Theorem \ref{V thm}.
\end{proposition}
The proof of Proposition \ref{bound se} applies the Efron-Stein inequality to control the total variance $Var(H_k)$ of the SAA kernel and the ANOVA decomposition of $H_k$ that contains $kVar(g_k(\xi))$ as the first-order variance component. The proof details are in Appendix \ref{proof:se}. Although the standard errors share the same order of magnitude,
%  the coupling argument in bounding the variance that appears in the proof of Theorem \ref{main thm}
% Note that Proposition \ref{bound se} is quite general in that it does not impose any growth rate restriction on $k$
% We also note that, under conditions that provide a CLT for the SAA (i.e., Theorem \ref{thm:SAA}), the $\tilde\sigma$ in the batching bound \eqref{CI batching} can be of order $O(1/\sqrt k)$ as the data size per batch $k$ grows, and thus the resulting error term there can be controlled to be $O(1/\sqrt n)$ like ours (and also the direct-CLT bound \eqref{CI SAA}). Nonetheless, Proposition \ref{bound se} is free of such type of assumptions.
% As an analog of Proposition \ref{compare var} in the asymptotic regime,
the following result shows the higher statistical efficiency of our bagging procedure than batching and single-replication procedures by a constant factor:
\begin{theorem}[Asymptotic variance reduction]\label{compare var:asymptotic}
Recall that $\tilde Z_{n,k},\hat Z_n$ are the point estimators by the batching and single-replication procedures respectively. Suppose Assumptions \ref{Lipschitz:decision}, \ref{L2} and \ref{L2 strengthened} hold, and the decision space $\mathcal{X}\subseteq \R^d$ is compact. Suppose the resample size $k=o(n)$ in the case of resampling without replacement, or $k=O(n^{\gamma})$ for some $\gamma < \frac{1}{2}$ in the case of resampling with replacement.
% Suppose the conditions of Theorem \ref{Lipschitz characterization of limit variance} hold, and the resample size $k$ is chosen as described in either Theorem \ref{main thm} in the case of resampling without replacement, or Theorem \ref{V thm} in the case of resampling with replacement.
We have
\begin{equation*}
    \begin{aligned}
        nVar(U_{n,k}),nVar(V_{n,k})&\to Var(E[h(x^*_Y,\xi)\rvert \xi])\\
        nVar(\tilde Z_{n,k}),nVar(\hat Z_n)&\to Var(Y(x^*_Y))
    \end{aligned}
\end{equation*}
as $n,k\to\infty$, where $Y$ and $x^*_Y$ are the Gaussian process and its minimizer from Theorem \ref{Lipschitz characterization of limit variance}. Moreover, we have $Var(E[h(x^*_Y,\xi)\rvert \xi])\leq Var(Y(x^*_Y))$, and in particular
% If the limit variance from Theorem \ref{Lipschitz characterization of limit variance} satisfies $Var(E[h(x^*_Y,\xi)\rvert \xi])>0$.
% With the same batch size and resample size, both denoted by $k$, we define the asymptotic ratios of variance
% \begin{equation}\label{asymptotic_efficiency}
% r_U:=\limsup_{n,k\to\infty}\frac{Var(U_{n,k})}{Var(\tilde{Z}_{n,k})},\;r_V:=\limsup_{n,k\to\infty}\frac{Var(V_{n,k})}{Var(\tilde{Z}_{n,k})}.
% \end{equation}
% We have $r_U=r_V=\frac{Var(E[h(x^*_Y,\xi)\rvert \xi])}{Var(Y(x^*_Y))}$ , in particular
\begin{itemize}
\item if \eqref{opt} has essentially unique optimum, then the SAA limit \eqref{CLT SAA general} is Gaussian and $Var(E[h(x^*_Y,\xi)\rvert \xi])=Var(Y(x^*_Y))$,
\item if optima of \eqref{opt} are not essentially unique, then the SAA limit \eqref{CLT SAA general} is non-Gaussian and $Var(E[h(x^*_Y,\xi)\rvert \xi])<Var(Y(x^*_Y))$.
% unless $Var(E[h(x^*_Y,\xi)\rvert \xi])=Var(Y(x^*_Y))\allowbreak=0$.
\end{itemize}
% \begin{itemize}
% \item $Var(E[h(x^*_Y,\xi)\rvert \xi])=Var(Y(x^*_Y))$, if \eqref{opt} has essentially unique optimum,
% \item $Var(E[h(x^*_Y,\xi)\rvert \xi])<Var(Y(x^*_Y))$ unless $Var(E[h(x^*_Y,\xi)\rvert \xi])=Var(Y(x^*_Y))\allowbreak=0$, if optima are not essentially unique.
% \end{itemize}
\end{theorem}
% The second case $r_U=r_V<1$ in Theorem \ref{compare var:asymptotic} corresponds to the case of multiple optimal solutions in $\mathcal X^*$, which induces a weak scaled limit of the SAA value $H_k$ as the infimum of a Gaussian process that is in general non-Gaussian.
The following example shows that, when there are multiple optimal solutions, the limit variance ratio between the bagging estimator and batching/single-replication estimator not only is strictly less than $1$ but also can be arbitrarily close to $0$.
%When the cost function is Lipschitz continuous in the decision and the decision space is compact,
\begin{example}\label{efficiency_example}
% Consider the cost function
% \begin{equation*}
% h(x,\xi)=
% \begin{cases}
% (2-x)\xi_1+(x-1)\xi_{2}&\text{ if }1\leq x\leq 2\\
% \hspace{10ex}\vdots&\hspace{8ex}\vdots\\
% (j+1-x)\xi_j+(x-j)\xi_{j+1}&\text{ if }j< x\leq j+1\\
% \hspace{10ex}\vdots&\hspace{8ex}\vdots\\
% (d-x)\xi_{d-1}+(x-(d-1))\xi_{d}&\text{ if }d-1< x\leq d
% \end{cases}
% \end{equation*}
Consider the stochastic linear program
\begin{equation*}
\begin{aligned}
    &\min_{x\in \R^d}&&E[\xi^Tx]\\
    &\text{s.t.}&&\sum_{i=1}^dx_i=1\\
    &&&\ x_i\geq 0\text{ for }i=1,\ldots,d
\end{aligned}
\end{equation*}
with the uncertain quantity $\xi=(\xi_1,\ldots,\xi_d)$ where $\xi_j,j=1,\ldots,d$ are independent standard normal variables. The expected objective is thus constantly zero so every feasible solution is optimal. The limit Gaussian process $Y(x)=\xi^Tx$ in distribution, hence $x^*_Y$ is uniformly distributed over $\{e_1,e_2,\ldots,e_d\}$ where each $e_i\in\R^d$ is the $i$-th canonical basis vector, and $Y(x^*_Y)=\min_{j=1,\ldots,d}\xi_j$ in distribution. A direct application of Corollary 1.9 in \cite{ding2015multiple} leads to $Var(Y(x^*_Y))\geq C/\log d$ for some universal constant $C>0$, whereas $Var(E[h(x^*_Y,\xi)\rvert \xi])=Var((1/d)\sum_{j=1}^d\xi_j)=1/d$. Therefore $Var(E[h(x^*_Y,\xi)\rvert \xi]) / Var(Y(x^*_Y))\leq \log d/(Cd)$ which shrinks to zero as $d$ grows.
\end{example}

%$$Var(g_{k,c}(\xi_1,\ldots,\xi_c))\leq\frac{Mc^2}{k^2}$$
%for some constant $M>0$.

% Besides the zero-bias property, another advantage of using
Furthermore, the following shows that the point estimator under sampling without replacement always has a smaller variance than the batching estimator, for any $n$ and $k$:
% This is illustrated in the following proposition.
\begin{theorem}[Variance reduction over batching under any finite sample]\label{compare var}
Recall that $\tilde Z_{n,k}$ is the point estimator by the batching procedure. Denote $\{\xi_1,\ldots,\xi_n\}$ as the (unordered) collection of values of the data set $\xi_1,\ldots,\xi_n$. With the same batch size and resample size, both denoted by $k$, we have
\begin{equation*}
Var(\tilde Z_{n,k})=Var(U_{n,k})+E[Var(\tilde Z_{n,k}\vert \{\xi_1,\ldots,\xi_n\})]
\end{equation*}
and hence $Var(\tilde Z_{n,k})\geq Var(U_{n,k})$ for any $k\geq1$.
\end{theorem}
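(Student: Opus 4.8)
The plan is to recognize the claimed identity as an instance of the law of total variance, where the conditioning $\sigma$-field is the one generated by the order statistics, $\mathcal O := \sigma(\xi_{(1)},\ldots,\xi_{(n)})$ (equivalently, by the unordered sample). The crux is a single Rao--Blackwell-type identity:
$$
E[\tilde Z_k \mid \mathcal O] = U_{n,k}.
$$
Once this is in hand, the decomposition and the inequality follow immediately: the law of total variance gives
$$
Var(\tilde Z_k) = E[Var(\tilde Z_k \mid \mathcal O)] + Var\big(E[\tilde Z_k \mid \mathcal O]\big) = E[Var(\tilde Z_k \mid \mathcal O)] + Var(U_{n,k}),
$$
and since the first summand is nonnegative we obtain $Var(\tilde Z_k) \geq Var(U_{n,k})$ for every $k\geq 1$.

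To prove the key identity, first note that $U_{n,k}$ is a symmetric function of $\xi_1,\ldots,\xi_n$, hence $\mathcal O$-measurable, so it suffices to show $E[\tilde Z_k\mid\mathcal O]=U_{n,k}$. Recall that the batching estimator is $\tilde Z_k = \frac{1}{m}\sum_{j=1}^m H_k(\text{batch } j)$, where the $m=n/k$ batches partition the index set $\{1,\ldots,n\}$ into blocks of size $k$. The main observation is that, because the data are i.i.d., conditional on $\mathcal O$ the vector $(\xi_1,\ldots,\xi_n)$ is distributed as a uniformly random permutation of the values $\xi_{(1)},\ldots,\xi_{(n)}$. Consequently, for each fixed block of positions, the corresponding tuple of values is marginally a uniformly drawn $k$-subset of the sample, so by the symmetry of $H_k$
$$
E[H_k(\text{batch } j)\mid\mathcal O] = \frac{1}{\binom{n}{k}}\sum_{S\in\mathcal C_k}H_k(\xi_S) = U_{n,k}.
$$
Averaging over $j=1,\ldots,m$ then yields $E[\tilde Z_k\mid\mathcal O]=U_{n,k}$. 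This is simply the classical Hoeffding representation of a $U$-statistic as the conditional expectation, given the order statistics, of any particular incomplete (here, block-wise) average of the kernel.

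I expect the main, if mild, obstacle to be the \emph{conditional-permutation step}, namely verifying rigorously that conditioning an i.i.d.\ sample on its order statistics renders the positions exchangeable, so that each batch is a uniform $k$-subset in conditional law. This is standard exchangeability, but care is needed to phrase it measure-theoretically and to dispatch ties: under a continuous $F$ the values are almost surely distinct, and in general one can work with the empirical measure (the unordered multiset) and note that $H_k$ depends only on the multiset of its arguments, so the displayed identity is unaffected by ties. A secondary point worth stating explicitly is integrability, so that all conditional expectations and variances above are well defined: Assumption~\ref{L2} via \eqref{interim22} guarantees $EH_k^2<\infty$.
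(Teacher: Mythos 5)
Your proposal is correct and follows essentially the same route as the paper: the law of total variance conditioned on the order statistics, combined with the identity $E[\tilde Z_k\mid \xi_{(1)},\ldots,\xi_{(n)}]=U_{n,k}$. The only difference is that you spell out the conditional-exchangeability argument behind that identity (which the paper simply asserts), and your treatment of it, including the remarks on ties and integrability, is sound.
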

% \begin{proof}
\proof{Proof.}
By the law of total variance we have
\begin{equation*}
Var(\tilde Z_{n,k})=E[Var(\tilde Z_{n,k}\vert \{\xi_1,\ldots,\xi_n\})]+Var(E[\tilde Z_{n,k}\vert \{\xi_1,\ldots,\xi_n\}]).
\end{equation*}
The desired conclusion follows from noticing that $E[\tilde Z_{n,k}\vert \{\xi_1,\ldots,\xi_n\}]\allowbreak=U_{n,k}$.\Halmos
\endproof

Theorem \ref{compare var} reinforces the smaller standard error in bagging compared to batching from asymptotic to \emph{any} finite sample, provided that we use sampling without replacement. The key reasoning behind Theorem \ref{compare var} is that the batching estimate depends on the ordering of the data; if the data are reordered, then the batching estimate changes. Bagging eliminates the variability due to the ordering of the data by averaging over all the possible combinations. Alternately, one can also interpret bagging as a conditional Monte Carlo scheme applied on the batching estimator given the unordered collection of values realized by the data.
% In other words, we are able to reduce the variance of the estimator at the cost of solving more SAA problems. However, it's not clear whether similar variance reduction holds true for the estimator $V_{n,k}$ because of resampling with replacement.

Theorems \ref{compare var:asymptotic} and \ref{compare var} focus on comparison of the point estimators, and now we compare the standard error terms in the bounds. In both the batching and bagging bounds \eqref{CI batching} and \eqref{bag bound}, the standard error terms are constructed from consistent estimates of variances of the respective point estimates, therefore the smaller variance of the bagging point estimator translates to a smaller standard error term and hence an overall tighter confidence bound than in batching. The single-replication procedure \eqref{CI SAA} as well as its variants such as the independent two-replication procedure and the averaged two-replication procedure proposed in \cite{bayraksan2006assessing}, however, follows a different rationale in that the standard error term $\hat{q}/\sqrt{n}$ in \eqref{CI SAA} is judiciously constructed using the sample variance of the objective at an SAA solution potentially inconsistent with the variance of the point estimate $\hat Z_n$. We argue that standard error terms computed this way are still larger than the bagging error terms. To see this, under certain conditions one can expect that the sample variance $\hat{\sigma}^2(x):=\frac{1}{n}\sum_{i=1}^n(h(x,\xi_i)-\overline{h}(x))^2$ where $\overline{h}(x)=\frac{1}{n}\sum_{i=1}^nh(x,\xi_i)$ converges to the true variance $Var(h(x,\xi))$ uniformly for all $x\in\mathcal{X}$, and that the SAA optimal solution $\hat x_n^*\Rightarrow x_Y^*$, therefore the sample variance $\hat{\sigma}^2(\hat x_n^*)=\frac{1}{n}\sum_{i=1}^n(h(\hat{x}_n^*,\xi_i) - \bar{h}(\hat{x}_n^*))^2$ used in the single-replication bound and its variants has an expected value $E[\hat\sigma^2(\hat x_n^*)]\to Var(h(x_Y^*,\xi))$, larger than the bagging limit variance $Var(E[h(x_Y^*,\xi)\vert \xi])$ by the law of total variance. Moreover, we can expect that the bagging limit variance is strictly smaller when optimal solutions are not essentially unique.

With the above comparisons, we now reason more precisely our beginning claim that we can improve the tradeoff between bound tightness and statistical accuracy faced by batching. This is based on two perspectives: First is that bagging allows using a larger resampled  SAA size $k$ than the batched SAA size that is confined by $mk=n$, thus utilizing a tighter optimistic bound. Second, even assuming we use the same resampled SAA size as the batched SAA size, Theorems \ref{compare var:asymptotic} and \ref{compare var} conclude that the bagging standard error is no worse than batching, which also translates to a bound that can only be tighter. Note that this latter benefit is attributed to the use of many SAAs instead of fewer in batching. In fact, we also conjecture that the variance of the standard error estimator, not only the point estimator, in bagging is also no larger than that in batching, by a similar reasoning that the bagging standard error estimator is also constructed from many SAAs. Nonetheless, checking whether such a claim indeed holds would be more suited for future work.

Our another beginning motivation, compared to direct-CLT or the single-replication procedure, is that we can alleviate the instability of standard error estimator stemming from the ``jumping" behavior of nearly optimal solutions. Theorem \ref{compare var:asymptotic} and the discussions above argue that bagging possesses a smaller standard error than single-replication, especially when the optimal solutions are not essentially unique. This is conceptually related to our motivational claim. Nonetheless, our theorems do not reveal the behaviors pertinent to near optimality, and our numerical experiments next would cover this investigation.

We close this section with a discussion on the biases of $U_{n,k}$ and $V_{n,k}$. We have the following result:
\begin{theorem}[Bias]
If $E[\sup_{x\in\mathcal{X}}\lvert h(x,\xi) \rvert]<\infty$, then $U_{n,k}$ is unbiased in estimating $W_k$, i.e., $E[U_{n,k}]=W_k$, whereas $V_{n,k}$ is downward biased, i.e., $E[V_{n,k}]\leq W_k$. If Assumptions \ref{L2} and \ref{L2 strengthened} further hold, and the resample size $k=O(n^{\gamma})$ for some $\gamma < \frac{1}{2}$,
% Under the same assumptions and resample sizes as Theorems \ref{main thm} and \ref{V thm}
then we have $W_k - E[V_{n,k}]= O((k^2/n)^l+k/n)$ as $n\to\infty$, where $l$ is any fixed positive integer.\label{thm bias}
\end{theorem}

The zero-bias property of $U_{n,k}$ is trivial: Each summand in its definition is an SAA value with distinct i.i.d. data, and thus has mean exactly $W_k$. On the other hand, the summands in $V_{n,k}$ are SAA values constructed from potentially repeated observations, which induces bias relative to $W_k$. The proof of the latter requires the development of a generalized monotonicity result on the optimistic bound for showing downward biasedness and the relation \eqref{interim24} for bounding the bias, and is left to Appendix \ref{proof:se}.

From Theorem \ref{thm bias}, we see that on average $U_{n,k}$ is a tighter bound than $V_{n,k}$ due to the downward biasedness of $V_{n,k}$. When $k$ is fixed, such an advantage for $U_{n,k}$ is relatively mild, since the bias of $V_{n,k}$ in estimating the optimistic bound $W_k$ is of order $1/n$. However, as $k$ grows, this advantage becomes more significant, and the bias of $V_{n,k}$ can be arbitrarily close to $O(1)$ (when $k\approx\sqrt n$).

Theorems \ref{main thm}, \ref{compare var} and \ref{thm bias} together justify that the bagging bound $U_{n,k}$ without replacement is more advantageous in terms of both standard error and bias. However, in practice the bound $V_{n,k}$ with replacement is similarly tight as $U_{n,k}$ and at the same time possesses a more robust coverage performance as our experiments show in Section \ref{sec:numerics}, and hence is the more recommendable choice for our bagging procedure.

Lastly, we should mention that the biases depicted in Theorem \ref{thm bias} concern the estimators of $W_k$, but do not capture the discrepancy between $W_k$ and $Z^*$. The latter quantity is of separate interest. As discussed at the end of Section \ref{sec:SAA}, it can be reduced by existing methods like the jackknife or probability metric minimization (\cite{partani2006jackknife,stockbridge2013probability}).

\section{Numerical Experiments}\label{sec:numerics}
In this section we provide numerical tests to demonstrate the validity of our bagging procedures with and without replacement, called ``BagV'' and ``BagU'' respectively, and compare them to four existing methods:
\begin{itemize}
    \item BatchP: The batching procedure given in \eqref{CI batching}, also known as the multiple-replication procedure.
    \item SRP: The single-replication procedure given in \eqref{CI SAA}.
    \item A2RP: The averaged two-replication procedure from \cite{bayraksan2006assessing}. Given a data set of size $n$, A2RP equally splits the data into two portions and computes a lower confidence bound for the optimal value in the form of $(\hat Z_1+\hat Z_2)/2 - z_{1-\alpha}\sqrt{(\hat{\sigma}_1^2+\hat{\sigma}_2^2)/2}/\sqrt{n}$, where $\hat Z_i,i=1,2$ is the optimal value of the SAA formed by the $i$-th portion only, and $\hat{\sigma}_i^2,i=1,2$ is the sample variance of the objective at an SAA optimal solution computed using the $i$-th portion only.
    \item I2RP: The independent two-replication procedure from \cite{bayraksan2006assessing}. Like A2RP, I2RP also equally splits the data, but computes the lower confidence bound as $\hat Z_1 - z_{1-\alpha}\hat{\sigma}_2/\sqrt{n/2}$ with the point estimate and the standard error estimate computed using different portions of the data.
\end{itemize}
A2RP and I2RP are proposed to improve the finite-sample performance of SRP by reducing the correlation between the point estimate and the standard error estimate. Note that SRP, I2RP and A2RP are originally designed to bound optimality gaps of given solutions, but can as well be used to bound optimal values with straightforward modifications. Also, when referring to direct-CLT bounds we include all of SRP, A2RP and I2RP, as they are based on the form of the CLT-induced confidence bound \eqref{CI SAA}.

Four stochastic optimization problems are tested. The first problem is a portfolio optimization problem that minimizes the $95\%$-level CVaR risk measure of a portfolio subject to the expected return exceeding a target level, described as
%\begin{mini*}|s|
%	{c, x}{c + \frac{1}{\alpha}\mathbb{E}(-\xi^Tx - c)_+}
%	{}{}
%	\addConstraint{\mathbb{E}\xi^Tx \geq b}
%	\addConstraint{x_i \geq 0}
%	\addConstraint{\sum_{i}x_i = 1}{}
%\end{mini*}
\begin{equation}\label{min_cvar}
\begin{aligned}
&\min_{c, x}&&c + \frac{1}{0.05}E[(-\xi^Tx - c)_+]\\
&\text{ s.t.}&&\mu^Tx \geq 3\\
&&&\sum_{i=1}^5x_i = 1\\
&&&x_i \geq 0\text{ for }i=1,\ldots,5
\end{aligned}
\end{equation}
where $\xi=(\xi_1,\ldots,\xi_5)^T$ is the vector of random returns of five different assets, $x=(x_1,\ldots,x_5)^T$ are the holding proportions of the assets, and the target return level is $3$. In particular, $\xi$ follows a multivariate normal $N(\mu, \Sigma)$ where the mean $\mu = (1, 2, 3, 4,5)^T$ is known and the covariance $\Sigma$ is randomly generated. Note that the cost function here is Lipschitz continuous, and the optimal solution is unique. Therefore we expect all the methods to perform well for this problem.

% Note that, to avoid feasibility complications that divert our focus, in \eqref{min_cvar} we assume knowledge of the expected return $\mu$ so the constraint becomes $\mu^Tx\geq b$.
% As a side note, one issue in applying our method to CVaR risk minimization is that evaluation of the expected return constraint requires the (unknown) mean of $\xi$ whereas in all existing methods the decision space is supposed to be known. To make the methods applicable

To describe the second problem, suppose there are ten different items labeled as $\#1$ through $\#10$ each of which incurs a random loss $\xi_i$, and the decision-maker is required to pick at least one out of the ten items and at most two items among $\#7,\#8,\#9,\#10$ in such a way that the total expected loss is minimized. Mathematically, the problem can be formulated as the following stochastic integer program
\begin{equation}\label{IP}
\begin{aligned}
&\min_{x}&&E[\xi^Tx]\\
&\text{ s.t.}&&Ax\leq b\\
&&&x_i\in\{0,1\}\text{ for }i=1,2,\ldots,10
\end{aligned}
\end{equation}
where $\xi$ follows $N(\mu,\Sigma)$ with mean $\mu=(-1,-7/9,-5/9,\ldots,7/9,1)^T\in \R^{10}$ and covariance $\Sigma$ randomly generated, $b=(-1,2)^T$ and
% $A=(-e_1,e_2)^T$ with $e=(0,0,0,0,0,0,1,1,1,1)^T$.
\begin{equation*}
   A=\left[\begin{matrix} % or pmatrix or bmatrix or Bmatrix or ...
      -1 &-1&-1&-1&-1&-1&-1&-1&-1&-1  \\
      0&0 &0&0&0&0&1&1&1&1 \\
   \end{matrix}\right].
\end{equation*}
It is straightforward to see that picking the items with negative expected losses, i.e., $\#1$ through $\#5$, gives the minimum total loss, hence the unique optimal solution is $x^*=(1,1,1,1\allowbreak,1,0,0,0,0,0)^T$.
% Because of the integrality requirement the single-replication procedure is not theoretically justified and can exhibit incorrect coverage.
% When implementing the methods
We solve the SAA by a direct enumeration (feasible thanks to the relatively low dimensionality).

The third optimization problem is the following simple stochastic linear program
%\begin{align}\label{binary}
%h(x,\xi)=
%\begin{cases}
%0&\text{ if }x=-1\\
%-0.1+4\xi&\text{ if }x=1\\
%\end{cases}
%\end{align}
\begin{equation}\label{binary}
\begin{aligned}
&\min_{x}&&E[-0.05 x+(3-2x)\xi]\\
&\text{ s.t.}&&-1\leq x\leq 1\\
\end{aligned}
\end{equation}
where the uncertain quantity $\xi$ is a standard normal and the decision $x$ is a scalar. It is clear that the optimal solution is $x^*=1$. This problem, as well as the stochastic integer program, serves to highlight that past methods may give subpar finite-sample performances due to a delicate interplay between the objective variance and the jumping behavior of the estimated solution. It then illustrates how bagging can be a resolution in such a scenario.

The last problem we consider is another stochastic linear optimization over a probability simplex
\begin{equation}\label{simplex}
\begin{aligned}
&\min_{x}&&E[\xi^Tx]\\
&\text{ s.t.}&&\sum_{i=1}^{10}x_i=1\\
&&&x_i\geq 0\text{ for }i=1,2,\ldots,10\\
\end{aligned}
\end{equation}
where $\xi_i,i=1,2,\ldots,10$ are independent normal variables with $\xi_i \sim N(0,1)$ for $i\leq 5$ and $\xi_i \sim N(0.1,1)$ for $i \geq 6$. Every feasible solution such that $x_i=0$ for all $i\geq 6$ is optimal. This example with multiple optimal solutions serves to demonstrate the advantage of our bagging procedures in variance reduction.

% such as the single-replication procedure exhibits rather bad finite-sample performance.

\subsection{Practical Algorithmic Configurations}\label{sec:optimal configurations}
We first provide some practical guidelines on the algorithmic configurations for our bagging procedure. More precisely, we study two elements: The bootstrap size $B$, and the bias correction to the IJ variance estimator.

 % required for ensuring a satisfactory performance and how it's affected by

\begin{figure}[h]
    \centering
    \begin{subfigure}{0.49\textwidth}
         \centering
         \includegraphics[width=\textwidth]{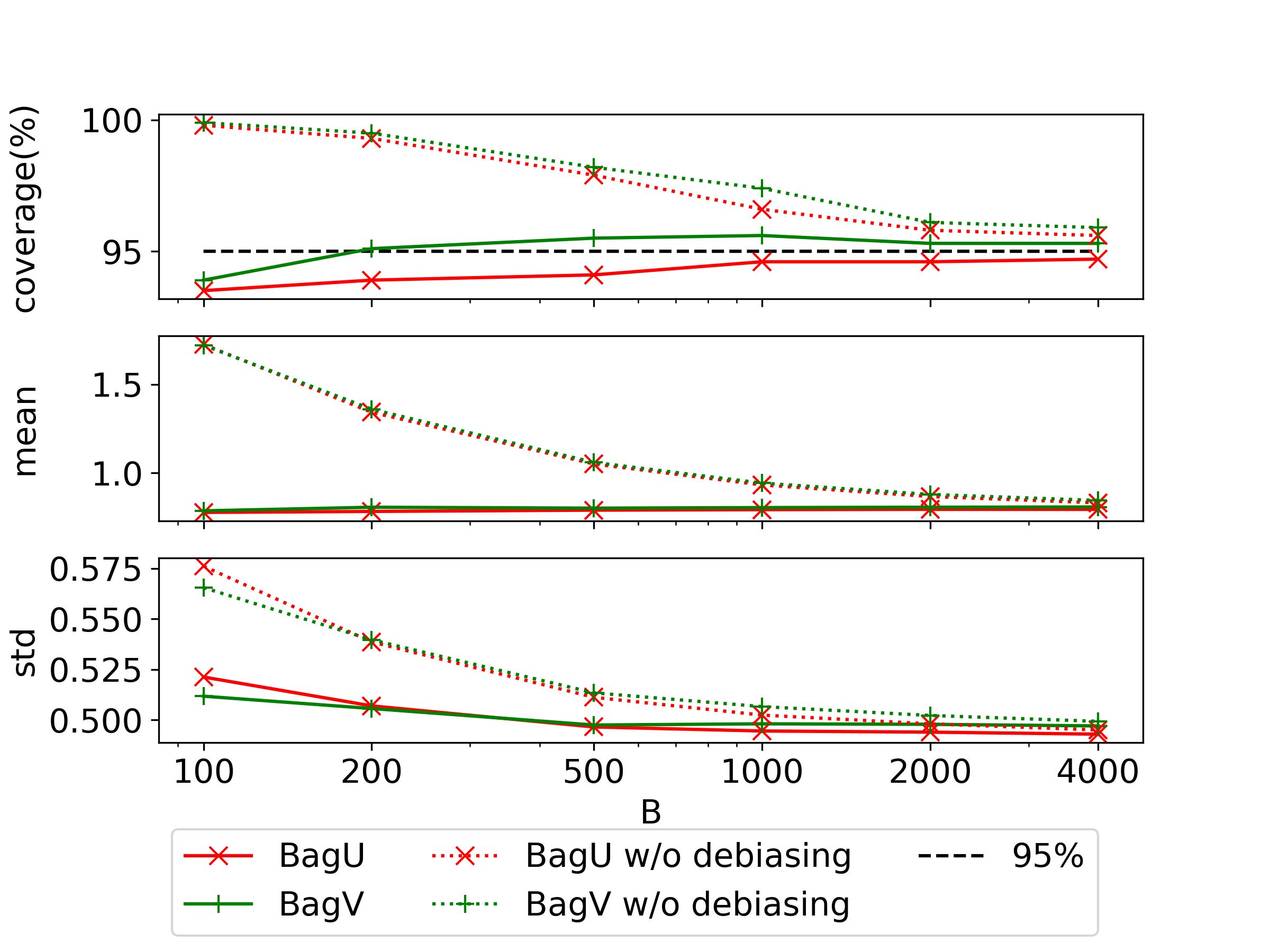}
         \caption{Integer program \eqref{IP}.}
         \label{fig:integer optval varying B}
     \end{subfigure}
     \hfill
    \begin{subfigure}{0.49\textwidth}
         \centering
         \includegraphics[width=\textwidth]{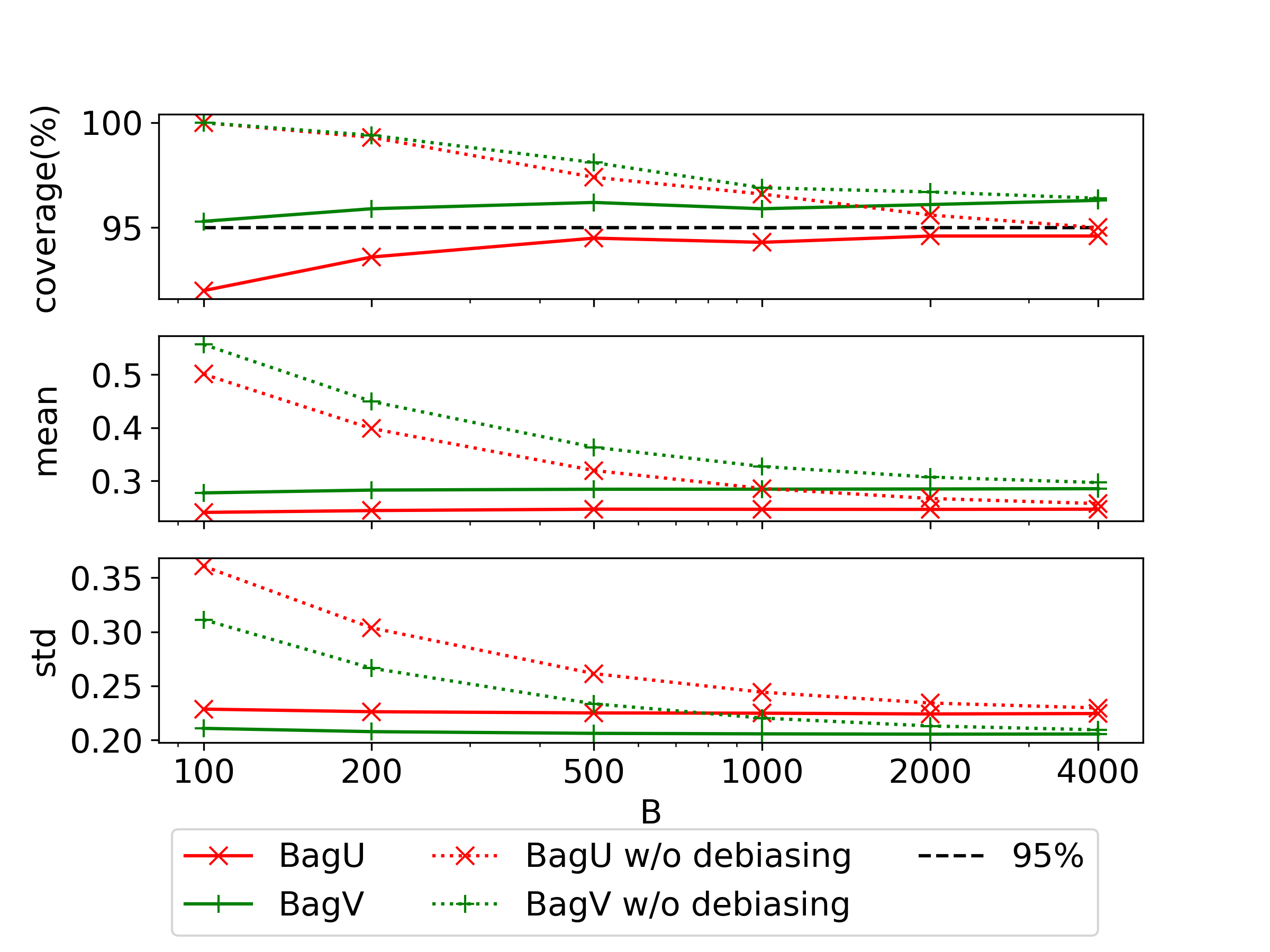}
         \caption{Simple linear program \eqref{binary}.}
         \label{fig:binary optval varying B}
     \end{subfigure}
    \caption{Comparison of Algorithm \ref{bagging} and its debiased variant Algorithm \ref{debiased bagging} under data size $n=400$ and varying bootstrap sizes.}
    \label{fig:opt val varying B}
\end{figure}

\begin{figure}[h]
    \centering
     \begin{subfigure}{0.49\textwidth}
         \centering
         \includegraphics[width=\textwidth]{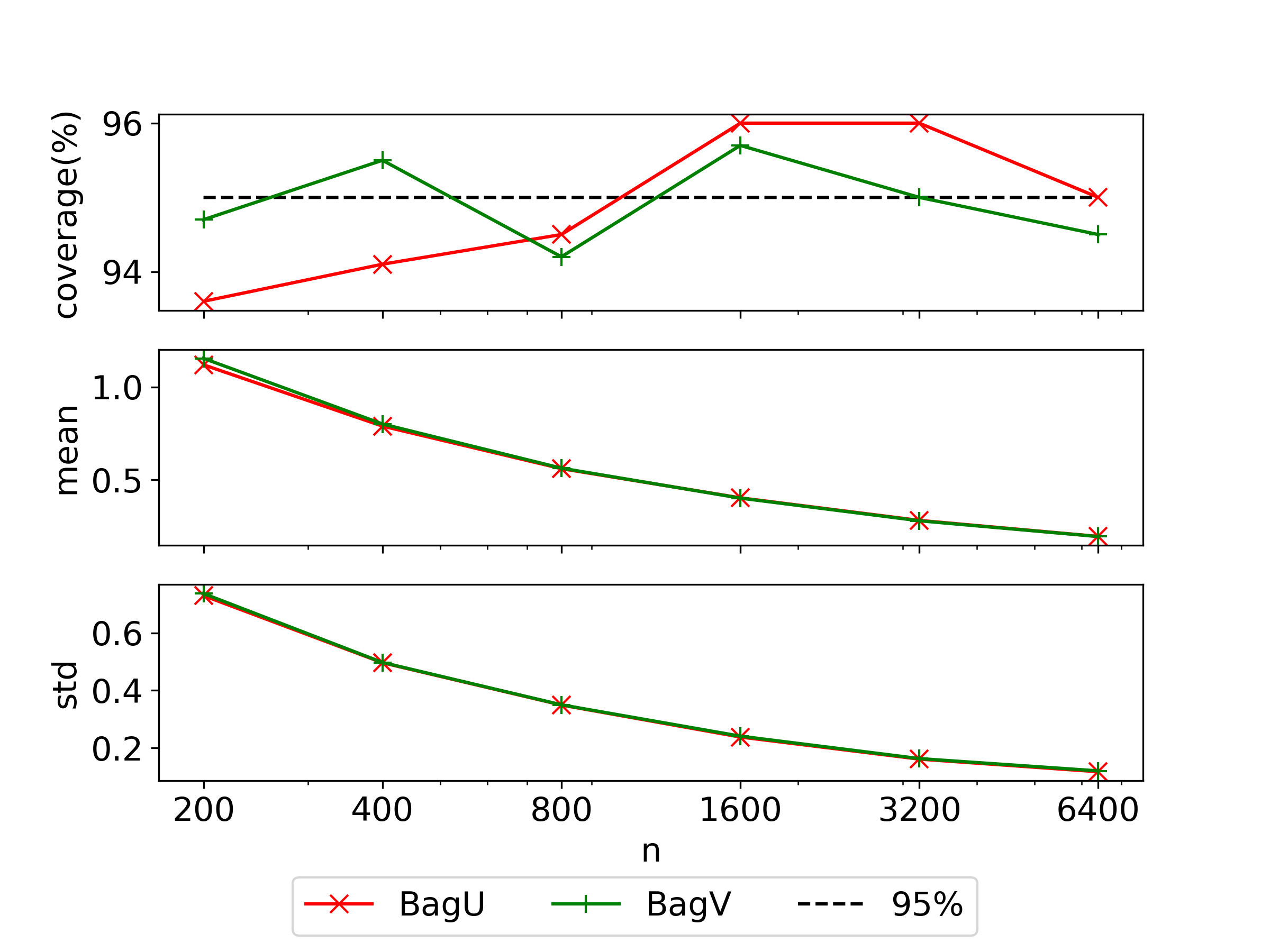}
         \caption{Integer program \eqref{IP}.}
         \label{fig:integer optval varying N}
     \end{subfigure}
     \hfill
         \begin{subfigure}{0.49\textwidth}
         \centering
         \includegraphics[width=\textwidth]{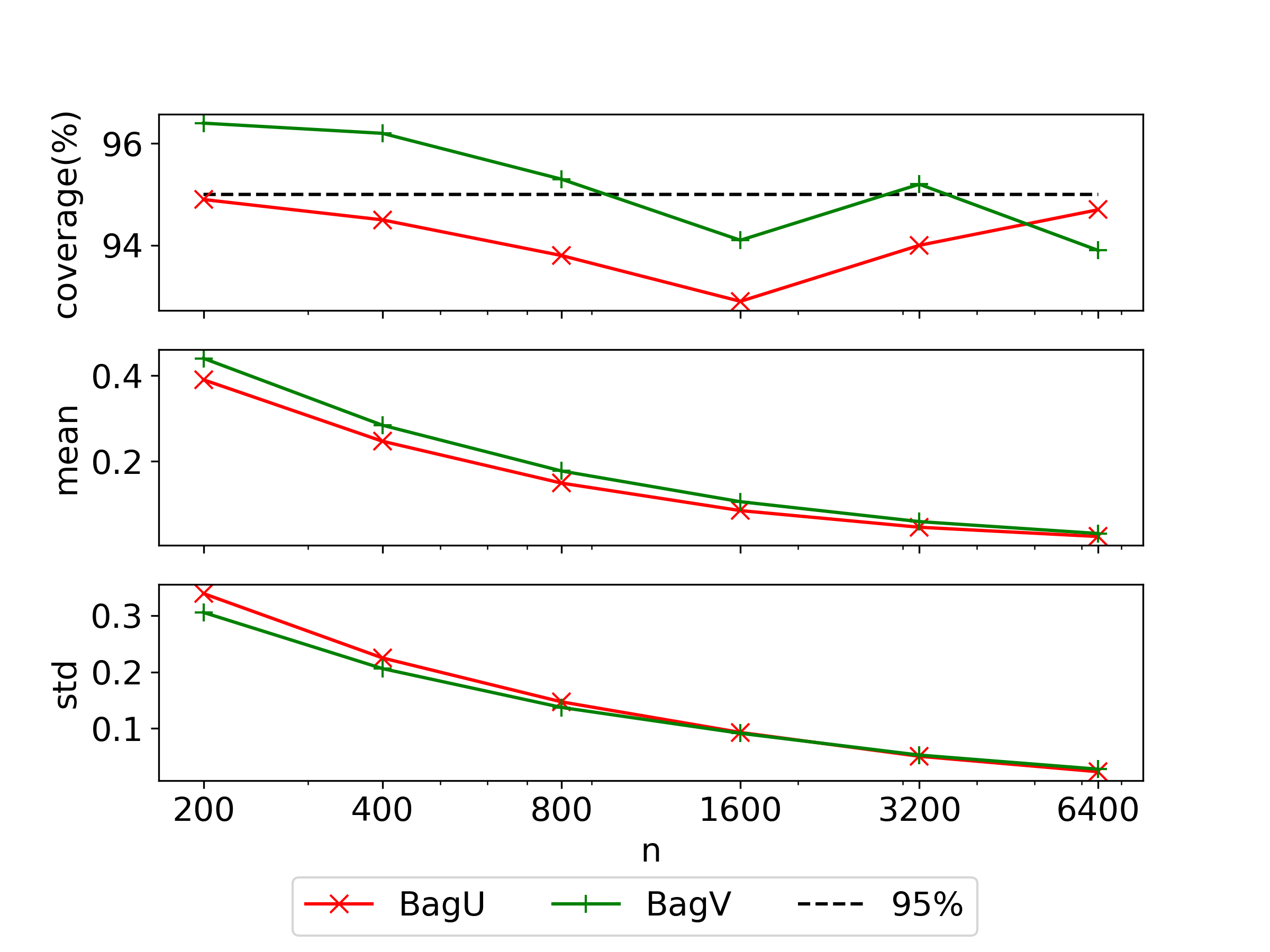}
         \caption{Simple linear program \eqref{binary}.}
         \label{fig:binary optval varying N}
     \end{subfigure}
    \caption{Bounds of optimal values with fixed $B=500$ and growing data sizes.}
    \label{fig:opt val varying N}
\end{figure}

We simulate an i.i.d.~data set $\xi_1,\ldots,\xi_n$ of size $n$, and use Algorithm \ref{bagging} and its debiased variant Algorithm \ref{debiased bagging} to compute $95\%$-level lower confidence bounds for the optimal value $Z^*=\min_{x\in\mathcal X}Z(x)$. To highlight the algorithmic difference, Algorithm \ref{debiased bagging} further subtracts from the IJ variance estimator $\tilde{\sigma}_{IJ}^2$ a correction term $k/B^2 \cdot \sum_{b=1}^B(\hat Z_k^b-\tilde Z_{n,k}^{bag})^2$ for resampling with replacement, or $kn/(B^2(n-k))\cdot \sum_{b=1}^B(\hat Z_k^b-\tilde Z_{n,k}^{bag})^2$ for resampling without replacement, in order to remove the bias. We compare the performances of the two algorithms when the data size is fixed at $n=400$, the resample size is fixed at $k=280$, and the bootstrap size $B$ varies from a small size $100$ to $4000$, a sufficiently large size relative to $n$ as required by Theorem \ref{final_guarantee} for Algorithm \ref{bagging}.

Figure \ref{fig:opt val varying B} shows the results on problems \eqref{IP} and \eqref{binary}. Each plot in Figure \ref{fig:opt val varying B} consists of three panels, where the top panel shows the estimated coverage probabilities of the constructed bounds based on $1000$ independent runs and contains a dashed horizon line at the nominal level $95\%$ for benchmarking the coverage performance, the middle panel shows the mean of the bounds after being offset by the true optimal value (hence smaller values mean tighter bounds), and the bottom panel shows the standard deviation of the bounds across the $1000$ runs. The legends ``BagV w/o debiasing'' and ``BagU w/o debiasing'' refer to Algorithm \ref{bagging} with and without replacement respectively, whereas ``BagV'' and ``BagU'' refer to the counterparts of the debiased variant.

The results clearly show that both methods deliver bounds with correct coverage probabilities that are close to or higher than $95\%$ when the bootstrap size $B$ is relatively large (e.g., above $500$), and all the three metrics gradually converge as the Monte Carlo error diminishes when $B$ increases. However, the debiased variant, for both with and without replacement, seems to outperform Algorithm \ref{bagging} in several ways under small and moderate bootstrap sizes. Firstly, the bounds generated with bias correction are tighter and less variable as evidenced by their smaller mean and standard deviation, and at the same time have less conservative and yet still accurate coverage probabilities around $95\%$. This is consistent with the fact that the IJ variance estimator in Algorithm \ref{bagging} is upward biased and may overestimate the true variance. Secondly, the performance of the debiased version is much less sensitive to the bootstrap size in the sense that the coverage, mean, and standard deviation of the bounds does not vary as drastically as Algorithm \ref{bagging} under different bootstrap sizes. Based on the these observations, we recommend the debiased variant for general use of our bagging procedure. Comparing with and without replacement, we see that BagV has slightly higher coverages than BagU on both problems and generates slightly looser but less variable bounds for the linear program \eqref{binary}.

% that bagging without debiasing generally produces more loose bounds than that with debiasing especially when the bootstrap size $B$ is relatively small.

For the choice of the bootstrap size, Figure \ref{fig:opt val varying B} suggests that $B=500$ is large enough for the debiased variant as further increasing it does not result in higher coverage accuracy or more stable bounds. To consolidate this choice, we test our debiased bagging procedure against increasing data sizes. Specifically, we fix $B=500$, increase the data size from $200$ to $6400$, a much larger size than $500$, and use a resample size $k=0.7n$. Figure \ref{fig:opt val varying N} summarizes the results on the same two problems. Although in theory the bootstrap size $B$ is required to grow with the data size, the results show that for fixed $B=500$ the coverage probabilities are consistently close to $95\%$ across all data sizes, and that the bounds get tighter and more stable as the data size grows, which is in accordance with the tighter optimistic bound and the smaller variance of the bagging estimate. All these demonstrate that the required bootstrap size of our debiased variant depends lightly on the data size and that using $B=500$ delivers satisfactory performances for data sizes of high thousands. Throughout the rest of our experiments, the debiased variant with a fixed bootstrap size of $500$ will be used for our bagging procedures. Although not presented in the following subsections, our Algorithm \ref{bagging} is found to perform similarly as the debiased variant if a larger bootstrap size that grows proportionally with the data size is used, e.g., $B=10n$. This can also be seen from Figure \ref{fig:opt val varying B} where the results of the two procedures, whether with or without replacement, closely match under large bootstrap sizes. Lastly, the size $B=500$ admittedly could still be expensive for some problems. Further computation reduction is potentially achievable using recent cheap bootstrap approaches (\cite{lam2022,lam2022cheap}); we leave the full investigation in this direction to future work.

\subsection{Lower Bounds of Optimal Values}\label{sec:numerics optimal}
In this subsection we compare our bagging procedures with four other existing methods in computing $95\%$-level lower confidence bounds for optimal values.
% . This is in accordance with our focus on statistical efficiencies, under the presumed adequate resources in solving SAA problems.
For BatchP we use the critical value of $t$-distribution with $m-1$ degrees of freedom when the number of batches $m<30$, so as to enhance finite-sample performances as suggested in \cite{mak1999monte}, whereas in other methods the normal critical value is used in the standard error term of the bound.

%The four methods are We vary the batch size of the batching procedure and the resample size of Algorithm \ref{bagging}, both denoted by $k$, to investigate how they affect the performances.

\begin{figure}[h]
    \centering
    \begin{subfigure}{0.49\textwidth}
         \centering
         \includegraphics[width=\textwidth]{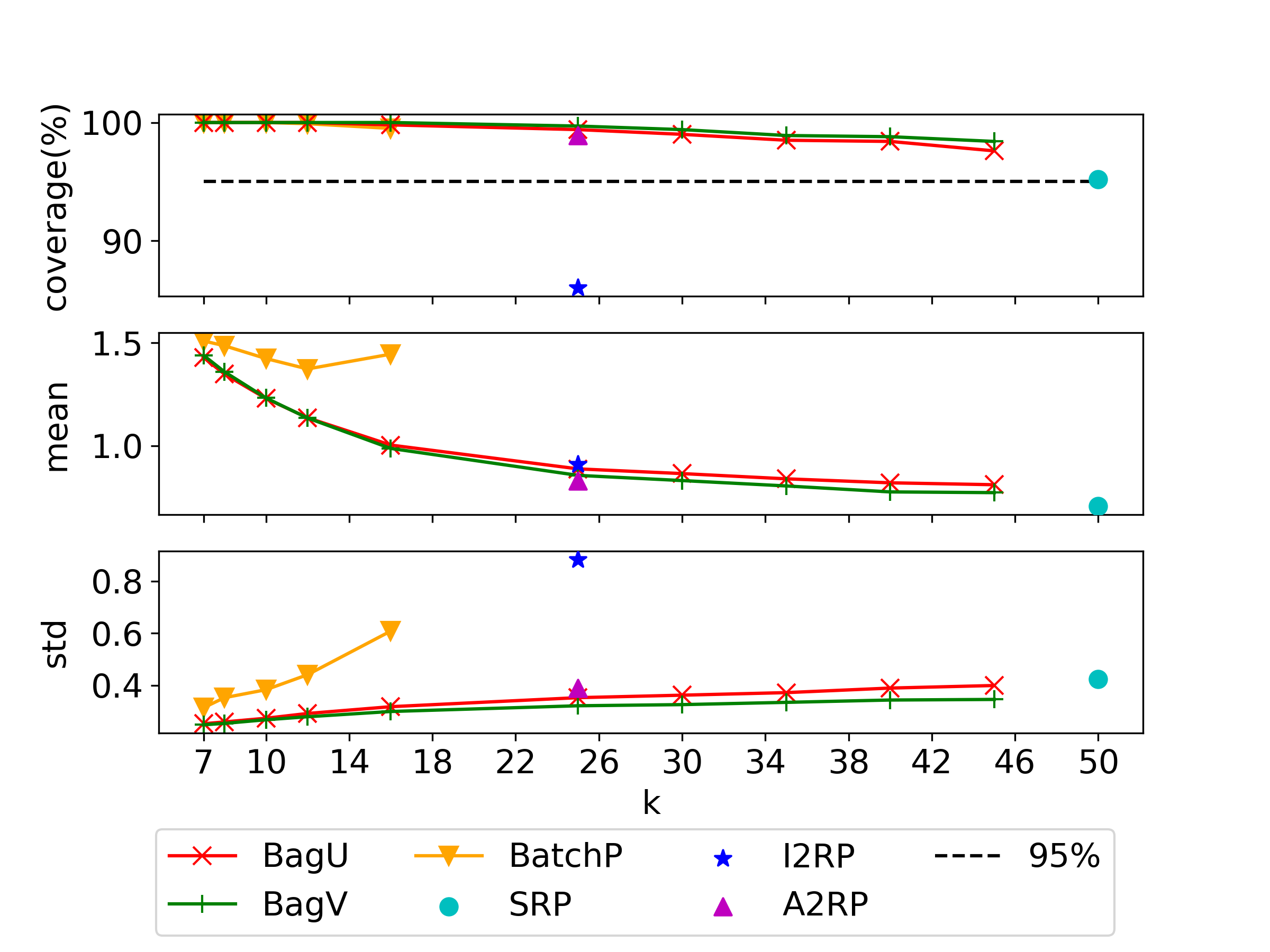}
         \caption{$n=50$}
         \label{fig:portfolio optval 50}
     \end{subfigure}
     \hfill
     \begin{subfigure}{0.49\textwidth}
         \centering
         \includegraphics[width=\textwidth]{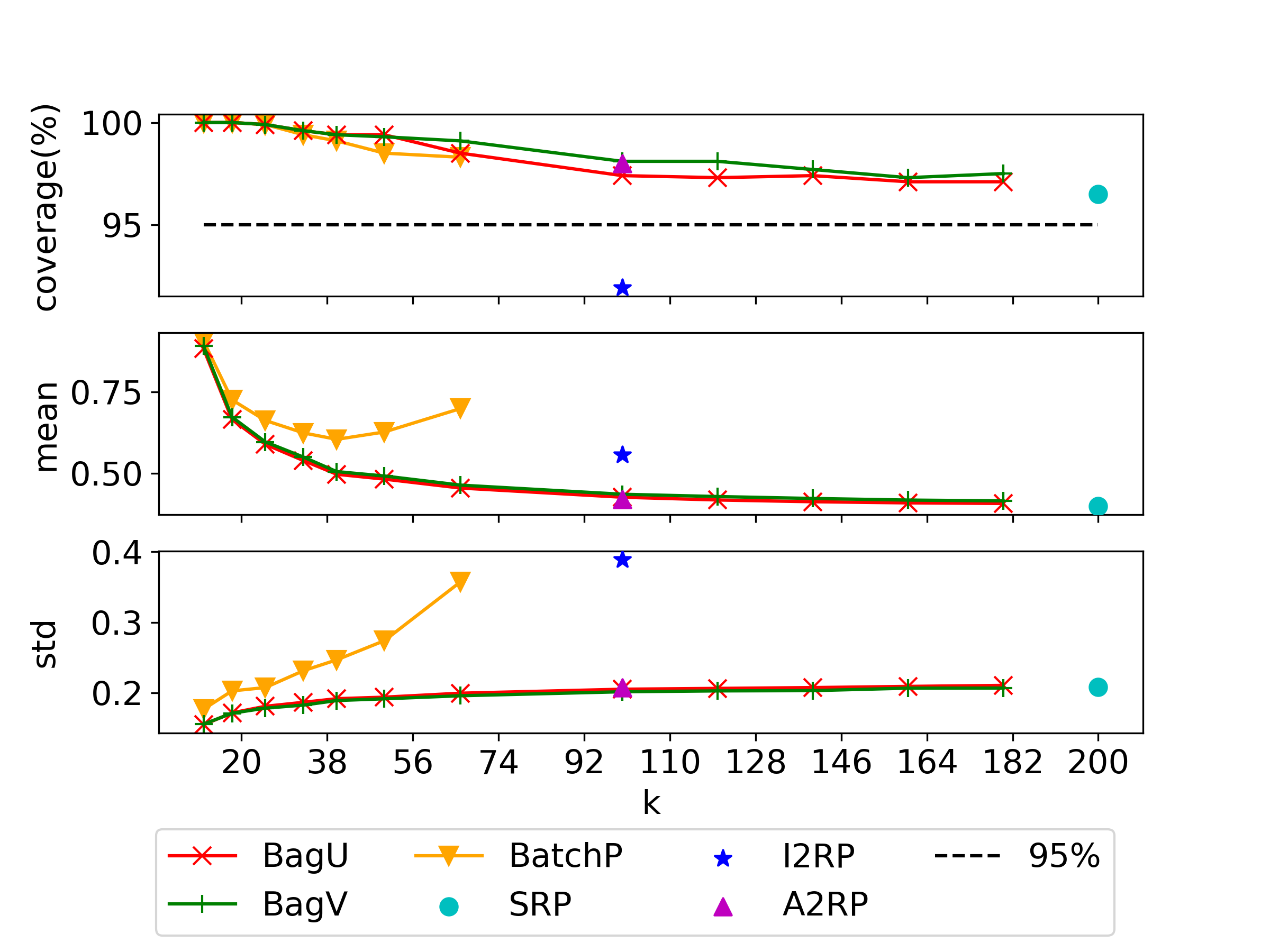}
         \caption{$n=200$}
         \label{fig:portfolio optval 200}
     \end{subfigure}
    \caption{Portfolio problem \eqref{min_cvar}. Bounds of the optimal value.}
    \label{fig:portfolio optval}
\end{figure}

\begin{figure}[h]
    \centering
    \begin{subfigure}{0.49\textwidth}
         \centering
         \includegraphics[width=\textwidth]{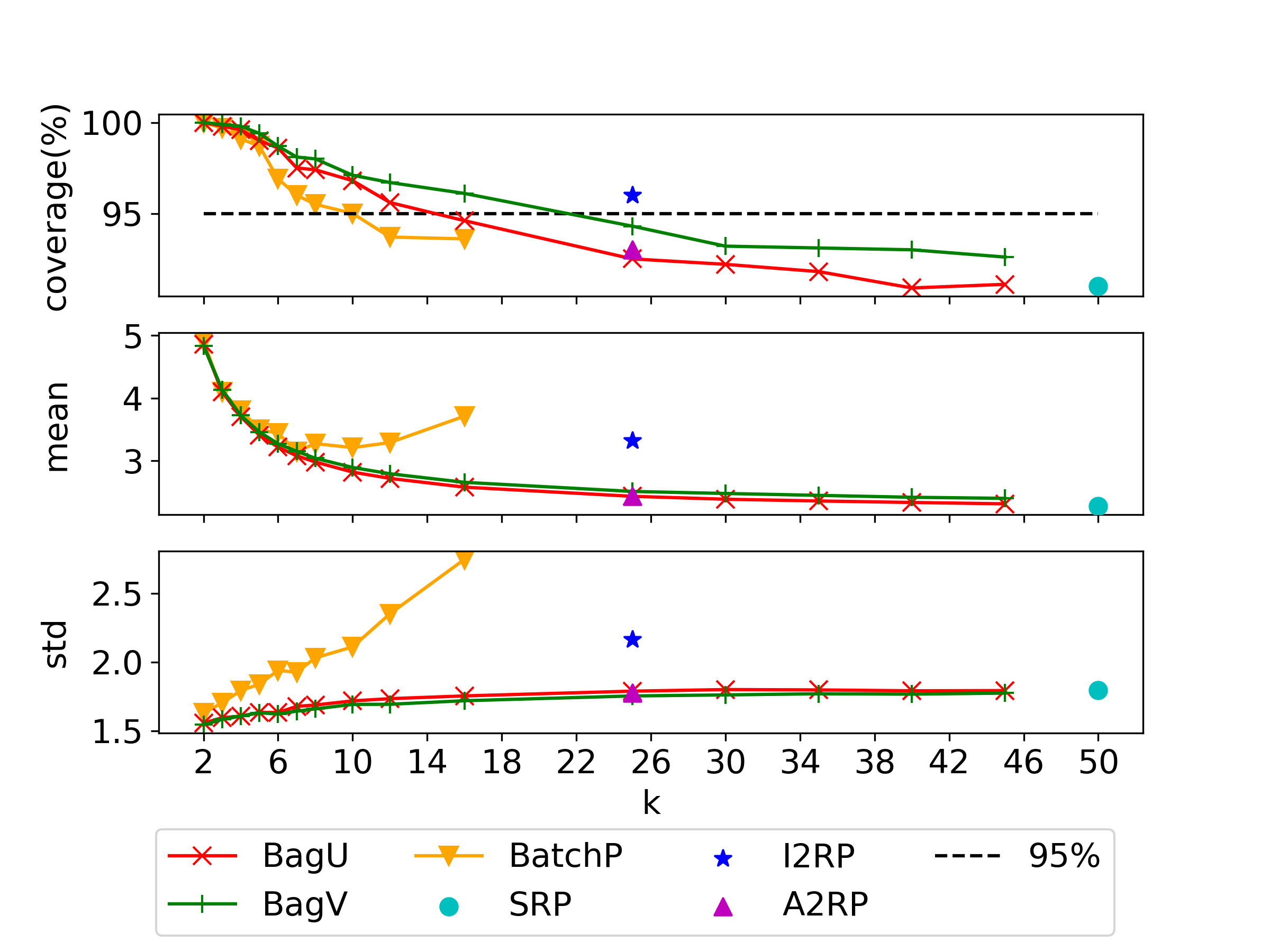}
         \caption{$n=50$}
         \label{fig:integer optval 50}
     \end{subfigure}
     \hfill
     \begin{subfigure}{0.49\textwidth}
         \centering
         \includegraphics[width=\textwidth]{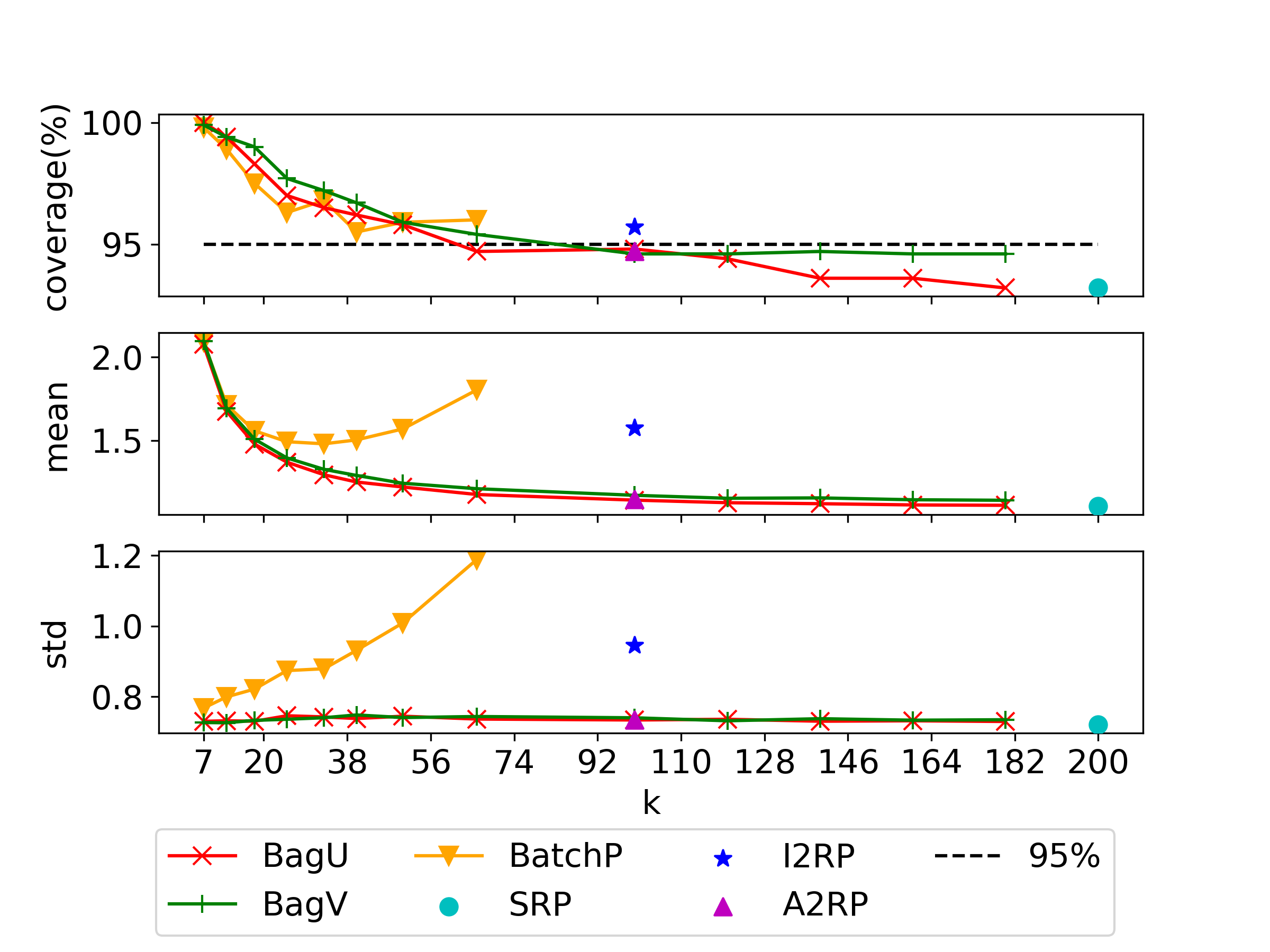}
         \caption{$n=200$}
         \label{fig:integer optval 200}
     \end{subfigure}
    \caption{Integer program \eqref{IP}. Bounds of the optimal value.}
    \label{fig:integer optval}
\end{figure}

\begin{figure}[h]
    \centering
    \begin{subfigure}{0.49\textwidth}
         \centering
         \includegraphics[width=\textwidth]{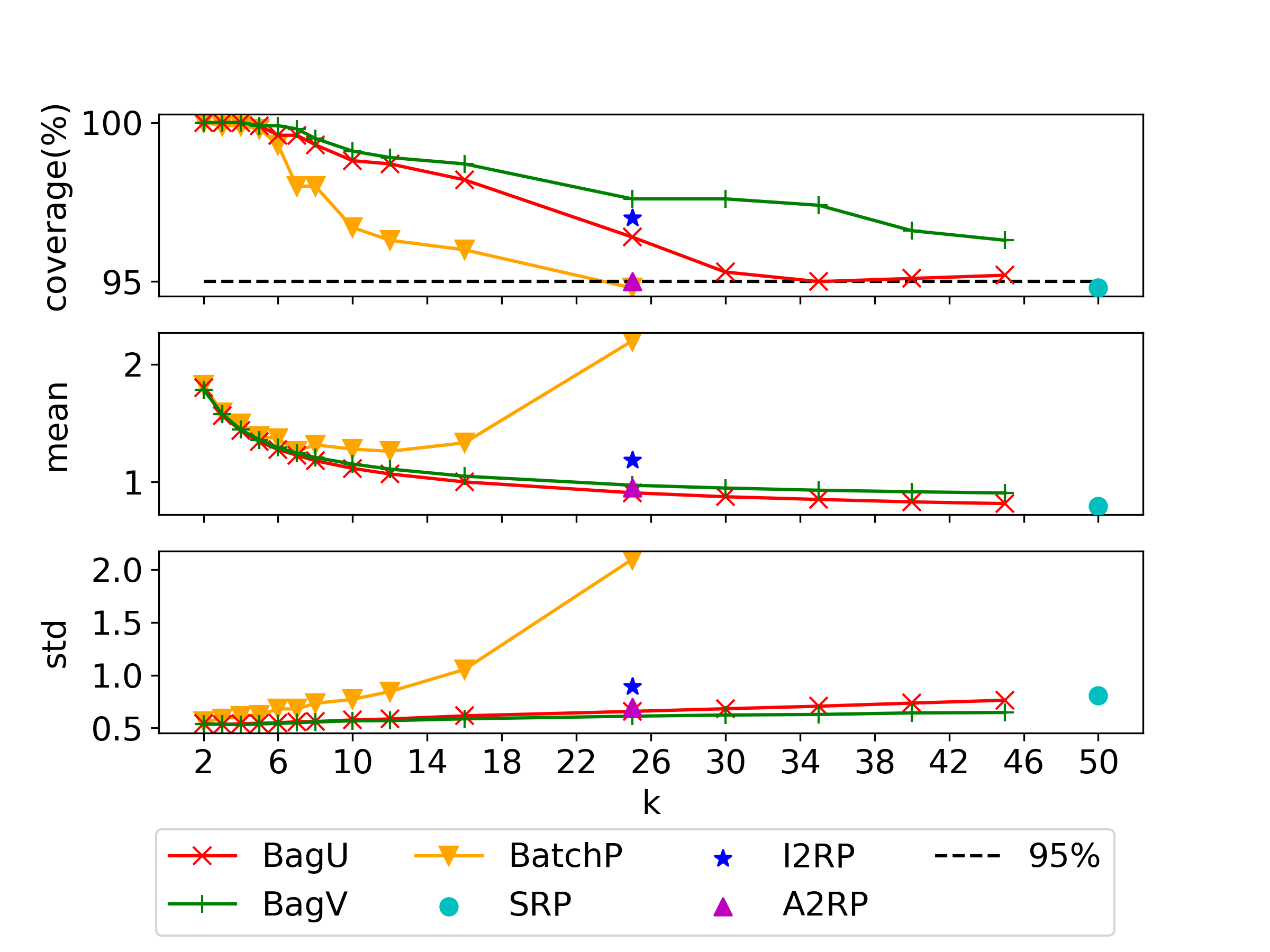}
         \caption{$n=50$}
         \label{fig:binary optval 50}
     \end{subfigure}
     \hfill
     \begin{subfigure}{0.49\textwidth}
         \centering
         \includegraphics[width=\textwidth]{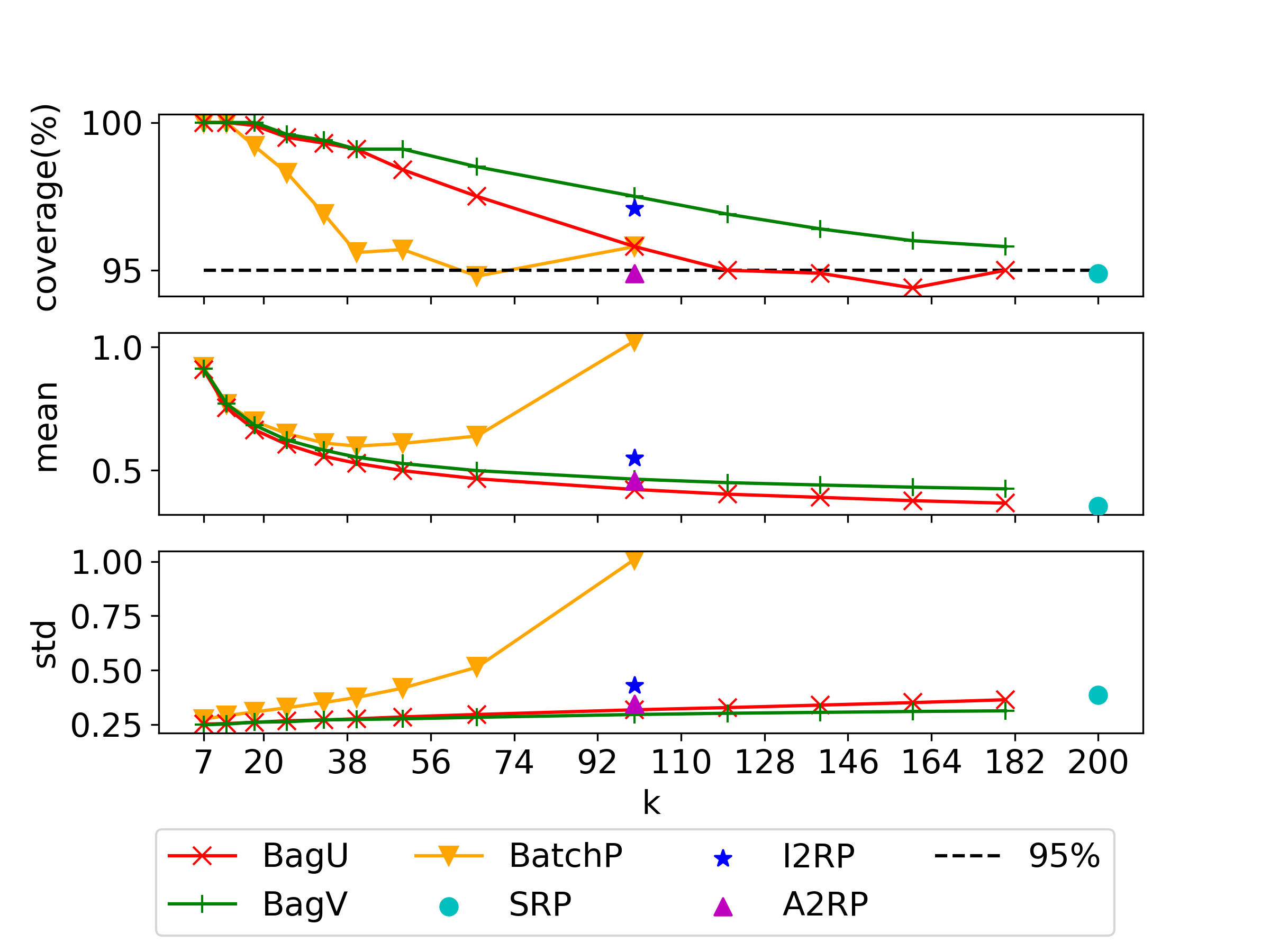}
         \caption{$n=200$}
         \label{fig:binary optval 200}
     \end{subfigure}
    \caption{Simple linear program \eqref{binary}. Bounds of the optimal value.}
    \label{fig:binary optval}
\end{figure}

%Note that for the batching procedure, the parameter $k$ refers to the batch size, and the number of batches $m$ is automatically set to $n/k$ once given $n$ and $k$.

Figures \ref{fig:portfolio optval}, \ref{fig:integer optval} and \ref{fig:binary optval} summarize the results for problems \eqref{min_cvar}, \eqref{IP} and \eqref{binary} respectively. Each figure contains two plots, one for data size $n=50$ and the other for $n=200$, and each plot shows the estimated coverage probabilities, the mean and standard deviation of the bound after being offset by the optimal value (hence smaller values mean tighter bounds) under different batch sizes for BatchP or resample sizes for BagU and BagV, both denoted by $k$. The SRP uses all the data to construct the SAA, hence its result corresponds to a point at $k=n$, whereas I2RP and A2RP use half of the data to construct SAAs and their results are plotted at $k=n/2$.

Figures \ref{fig:portfolio optval} and \ref{fig:binary optval} show that almost all the considered methods, over a wide range of resample sizes roughly from $2$ to $90\%$ of the data size for BagU and BagV, generate statistically valid bounds for problems \eqref{min_cvar} and \eqref{binary} in the sense that the coverage probabilities are equal to or above the nominal value $95\%$. The only exception is I2RP that undercovers for problem \eqref{min_cvar}. Correspondingly, the I2RP bounds also have a high variability in this example. For problem \eqref{IP}, Figure \ref{fig:integer optval} shows that BagU and BagV, as well as A2RP and SRP, undercover when the resample size is chosen large. However, BagV and A2RP undercover more mildly than BagU and SRP, e.g., the coverage is approximately $93\%$ for A2RP and BagV with $k$ close to $n$ and $91\%$ for BagU and SRP in Figure \ref{fig:integer optval 50} when $n=50$. This undercoverage is potentially due to the discrete nature of the integer program and gets improved as the data size grows from $50$ to $200$.

% However, with the same $k$, BagU and BagV have slightly more accurate coverage probabilities than BatchP,
% and similar coverages as A2RP and I2RP at $k=n/2$ and as SRP for $k$ close to $n$.
Besides coverage, the considered methods differ in tightness and stability. We observe that BagU and BagV consistently output tighter (measured by the mean of the bound offset by the optimal value) and more stable (measured by the standard deviation) bounds than BatchP when the batch size in BatchP and the resample size in bagging are set the same. This difference in tightness and stability becomes more noticeable as $k$ increases. Compared to direct-CLT bounds, our bagging bounds also appear tighter and more stable than I2RP (either when the resample size $k=n/2$ or $k$ is close to $n$) on all three problems, whereas A2RP and SRP bounds are similarly tight and stable as our bagging bounds in all the cases.

% These suggest that our bagging bound generates relatively tight bounds for the optimal value and in the meantime retains good statistical accuracy, by using a resample size $k$ that is close to the data size.

% Relatedly, the results of BagU and BagV across different values of $k$ verify the monotonicity between the resample size and tightness of the optimistic bound \eqref{optimistic}, i.e., as the resample size $k$ grows the bagging bounds get increasingly tighter and reaches the same level as SRP for $k$ close to $n$.
% These are in accordance with the benefit of variance reduction brought by bagging as illustrated by Theorems \ref{compare var:asymptotic} and \ref{compare var}
% In each of Tables \ref{cvar_50}-\ref{binary_lower}, under the same batch size or resample size $k$, the bounds given by bagging (with or without replacement) are always larger in terms of the mean, and meanwhile less variable as measured by the standard deviation, than those by batching.

% can be attributed manifests the statistical inefficiency of the batching procedure. one has to trade tightness of the bound for accuracy due to the intrinsic tradeoff, to which
Figures \ref{fig:portfolio optval}-\ref{fig:binary optval} also show the tradeoff between tightness and statistical accuracy in BatchP and how it is improved by bagging. The monotonicity relation between the batch size and the optimistic bound entails that the bound should become tighter as $k$ increases. However, the bound by BatchP gets tighter at first under relatively small batch sizes but then becomes looser instead as the size further increases. This non-monotonic behavior appears since, as the batch size gets large, too few batches are available for the procedure to maintain the desired coverage accuracy. To mitigate this issue, we resort to using $t$ critical value in place of the normal one which loosens the bound in exchange for better coverages. Such a tradeoff is significantly improved in our bagging procedures due to the many resampled SAAs, as evidenced by the monotonically improving tightness and accurate coverages of the bounds across a wide range of resample sizes.

To summarize, for bounding optimal values our bagging bounds are generally as competitive as A2RP and SRP and outperform BatchP and I2RP in terms of coverage, tightness and stability, whereas between the two bagging bounds BagV exhibits a more reliable coverage performance than BagU while performing similarly in tightness and stability. Our next experiment on bounding optimality gaps will further reveal the performance differences among bagging bounds, A2RP and SRP.

\subsection{Upper Bounds of Optimality Gaps}\label{sec:numerics optimality gap}
Now we test our methods in bounding optimality gaps of solutions. We first solve the SAA formed by $n_1$ data points $\xi_1,\ldots,\xi_{n_1}$ to obtain a solution $\hat x$, then generate $n_2$ independent data points $\xi_{n_1+1},...,\xi_{n_1+n_2}$. These (and possibly the first $n_1$ data points as well) are then used to compute an upper confidence bound for the optimality gap $\mathcal G(\hat x)=Z(\hat x)-Z^*$. For convenience we denote $n=n_1+n_2$ as the total sample size.

We consider two approaches to bounding the gap, one reusing the first $n_1$ data points, and the other not. The first approach is to use the Bonferroni Correction (BC). Specifically, we use the second group of $n_2$ data to compute $U=\bar h+z_{0.975}\hat\sigma/\sqrt{n_2}$ as a $97.5\%$ upper confidence bound of $Z(\hat x)$, where $\bar h,\hat\sigma^2$ are the sample mean and variance of $h(\hat x,\xi_{n_1+1}),\ldots,h(\hat x,\xi_{n})$, and compute a $97.5\%$ lower confidence bound $L$ of the true optimal value $Z^*$ using all the $n$ data as in the previous section. By BC we know
\begin{equation*}
P(U-L\geq Z(\hat x)-Z^*)\geq P(U\geq Z(\hat x))+P(L\leq Z^*)-1
\end{equation*}
so that if $P(U\geq Z(\hat x))$ and $P(L\leq Z^*)$ are both asymptotically at least $97.5\%$, then $U-L$ is an asymptotically valid $95\%$ confidence bound for the gap $\mathcal G(\hat x)$.

The second approach is a Common Random Numbers (CRN) variance-reduction technique proposed by \cite{mak1999monte}. This approach generates upper bounds of the gap via computing lower bounds for the optimal value of the modified objective $E[h(x,\xi)-h(\hat x,\xi)]$ where $\hat x$ is viewed as fixed. Specifically, given $\hat x$ we use the second group of $n_2$ data to compute a $95\%$ lower confidence bound for this new objective, and then negate the lower bound to obtain a valid upper bound for $\mathcal G(\hat x)$.

\begin{figure}[h]
    \centering
    \begin{subfigure}{0.49\textwidth}
         \centering
         \includegraphics[width=\textwidth]{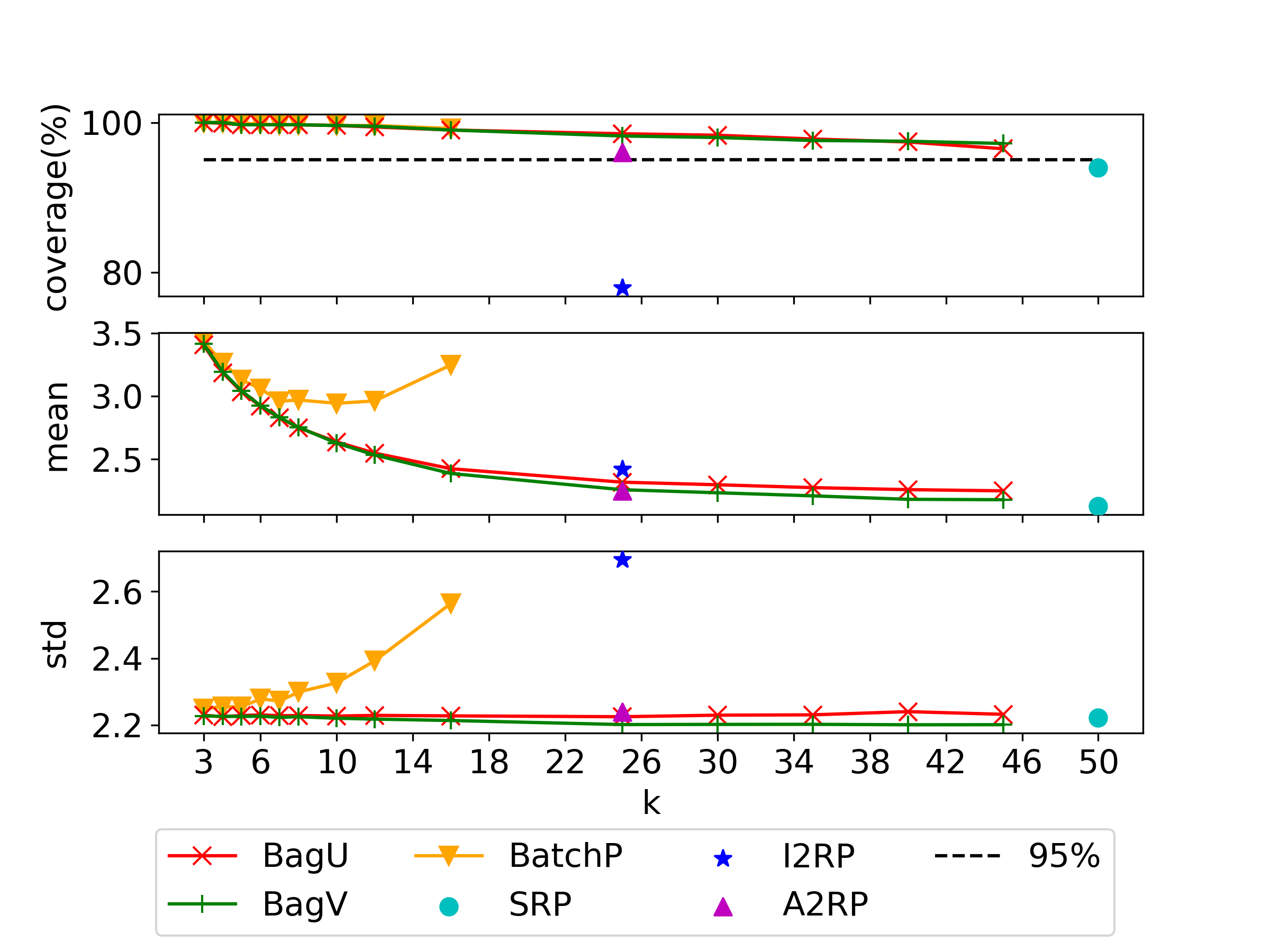}
         \caption{BC, $n=50, n_1=30, n_2=20$}
         \label{fig:portfolio gap BC 50}
     \end{subfigure}
     \hfill
    \begin{subfigure}{0.49\textwidth}
         \centering
         \includegraphics[width=\textwidth]{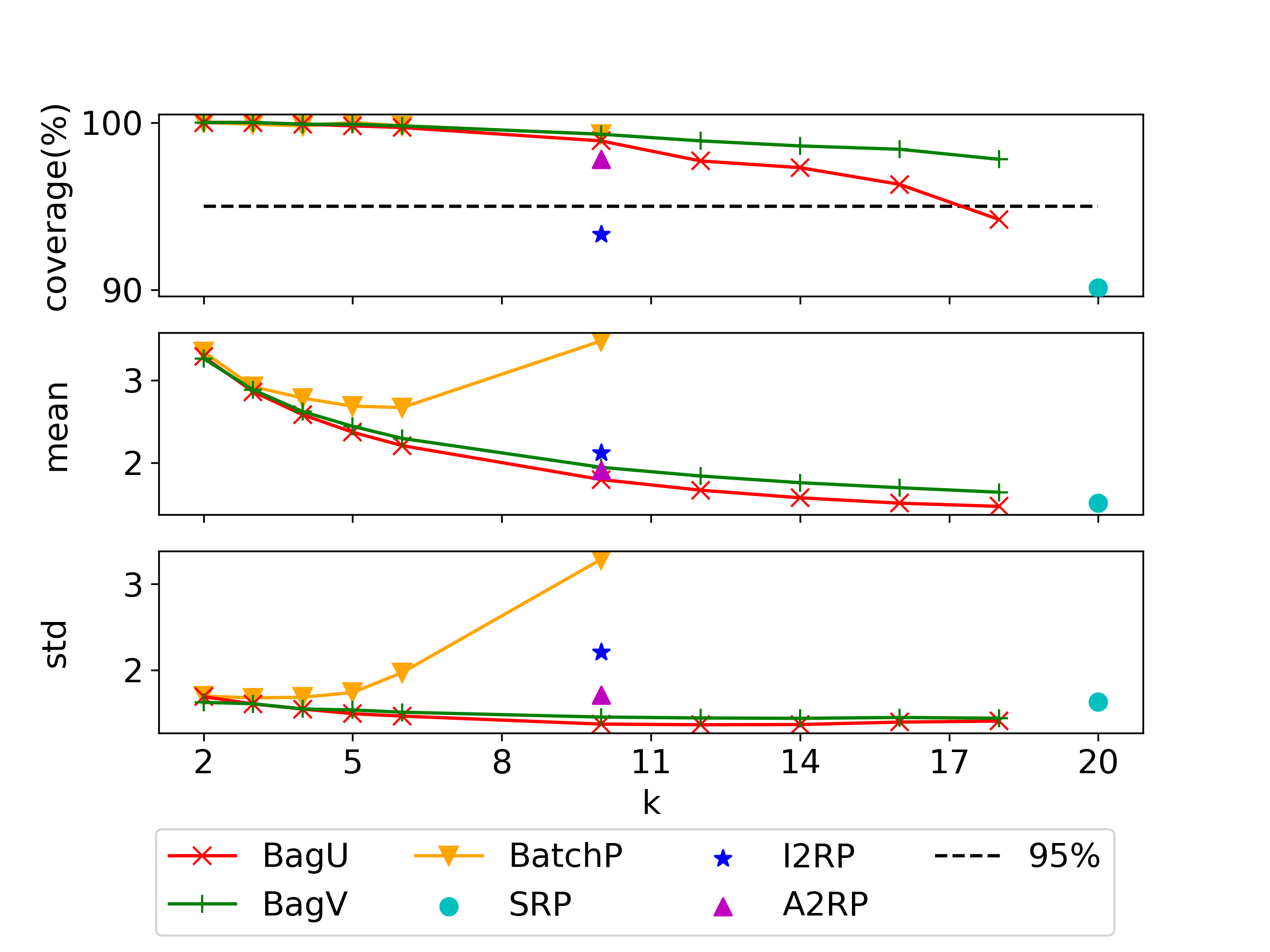}
         \caption{CRN, $n=50, n_1=30, n_2=20$}
         \label{fig:portfolio gap CRN 50}
    \end{subfigure}
    \caption{Portfolio problem \eqref{min_cvar}. Bounds of optimality gaps.}
    \label{fig:portfolio gap n=50}
\end{figure}

\begin{figure}[h]
    \centering
     \begin{subfigure}{0.49\textwidth}
         \centering
         \includegraphics[width=\textwidth]{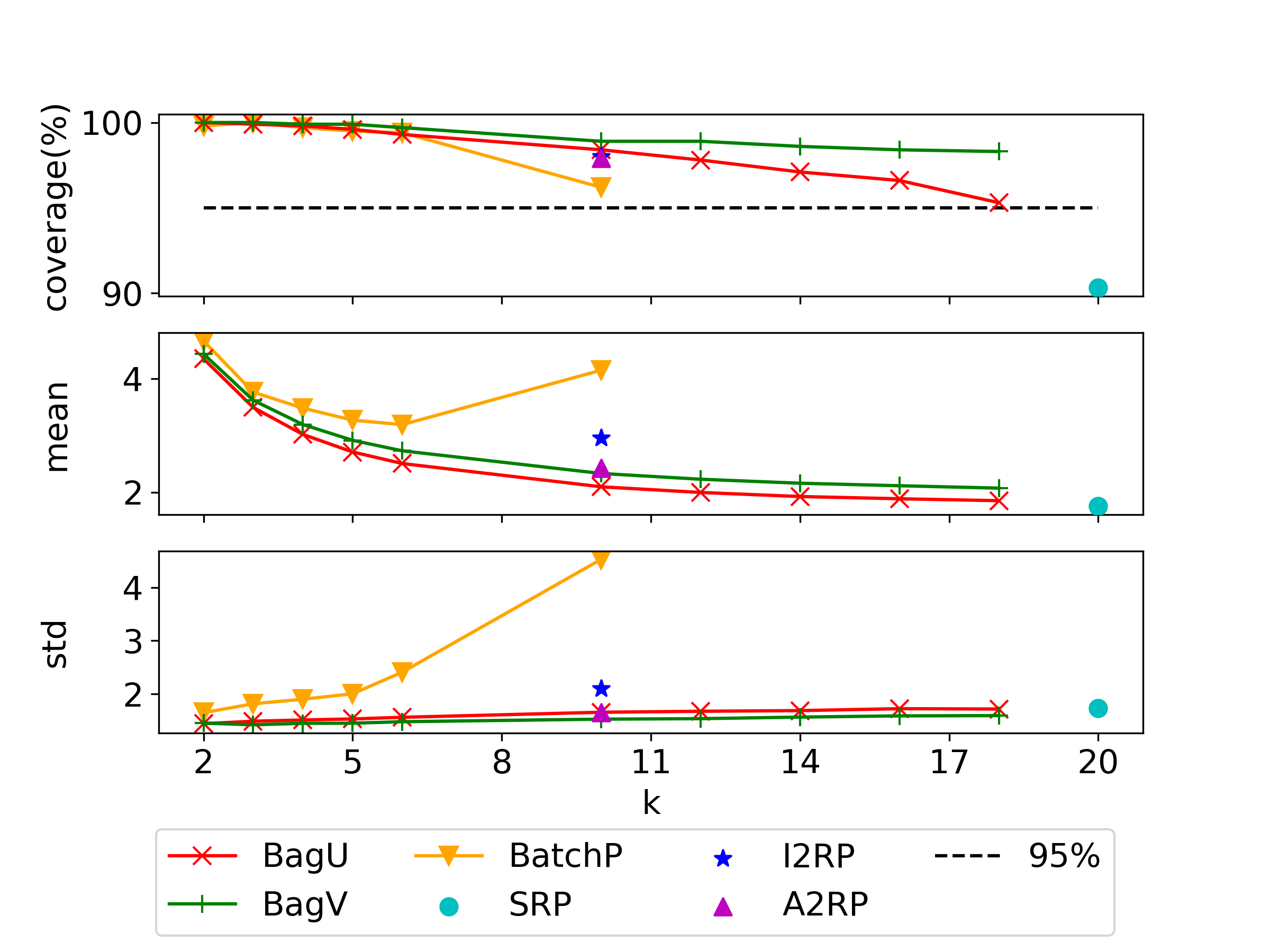}
         \caption{CRN, $n=50, n_1=30, n_2=20$}
         \label{fig:integer gap CRN 50}
     \end{subfigure}
     \hfill
    \begin{subfigure}{0.49\textwidth}
         \centering
         \includegraphics[width=\textwidth]{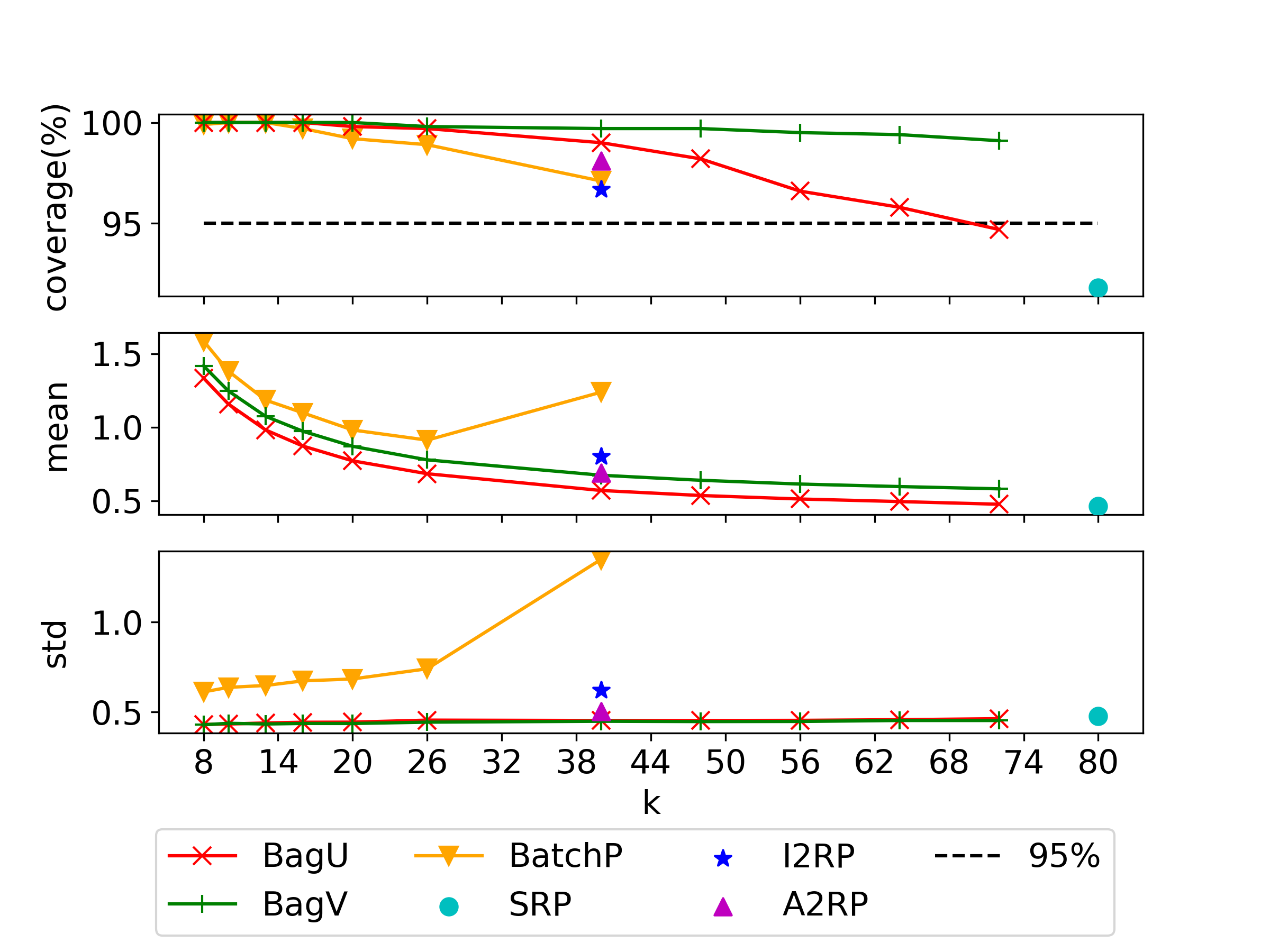}
         \caption{CRN, $n=200, n_1=120, n_2=80$}
         \label{fig:integer gap CRN 200}
     \end{subfigure}
    \caption{Integer problem \eqref{IP}. Bounds of optimality gaps.}
    \label{fig:integer gap CRN}
\end{figure}

\begin{figure}[h]
    \centering
    \begin{subfigure}{0.49\textwidth}
         \centering
         \includegraphics[width=\textwidth]{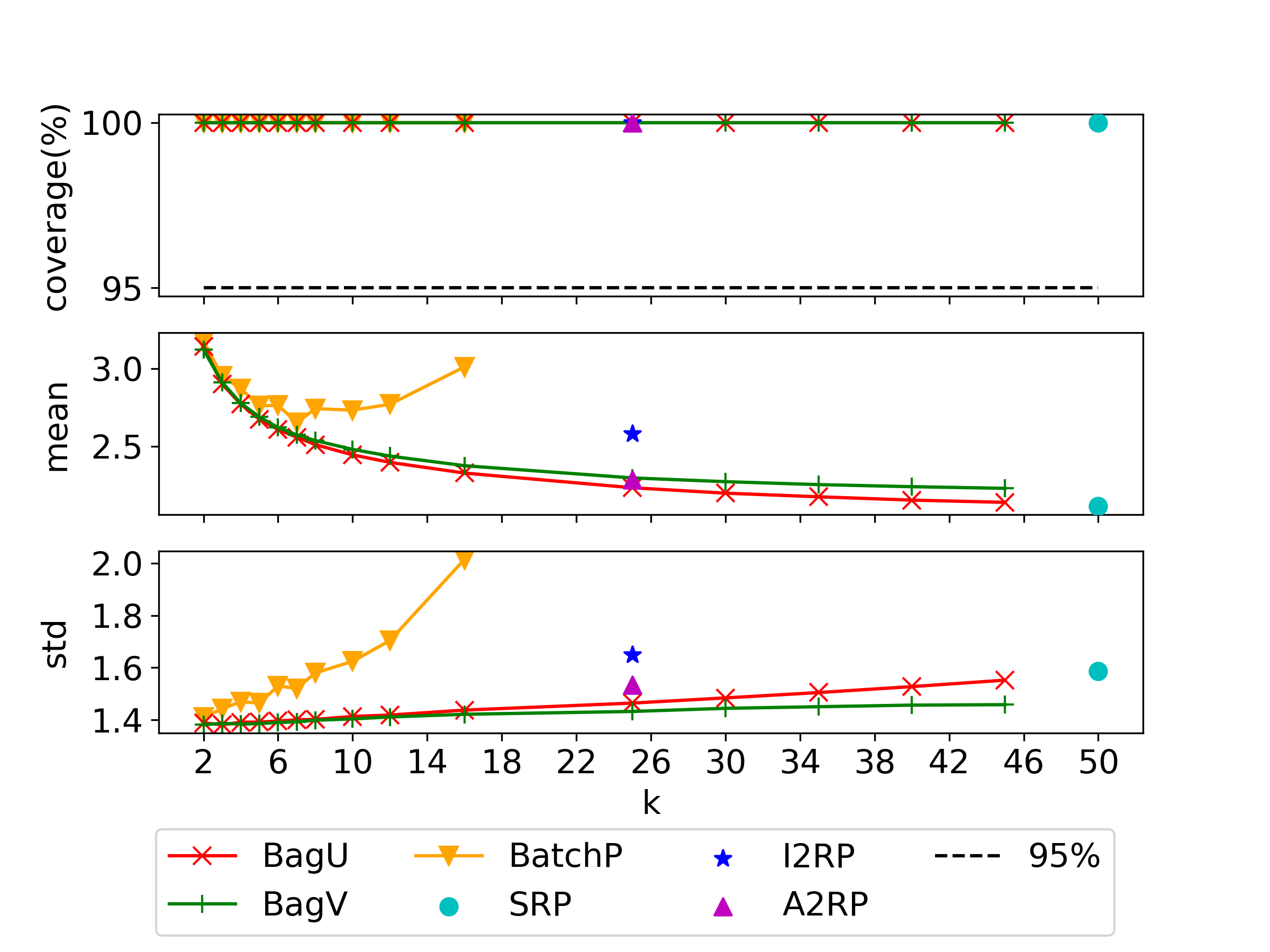}
         \caption{BC, $n=50, n_1=30, n_2=20$}
         \label{fig:binary gap BC 50}
     \end{subfigure}
     \hfill
     \begin{subfigure}{0.49\textwidth}
         \centering
         \includegraphics[width=\textwidth]{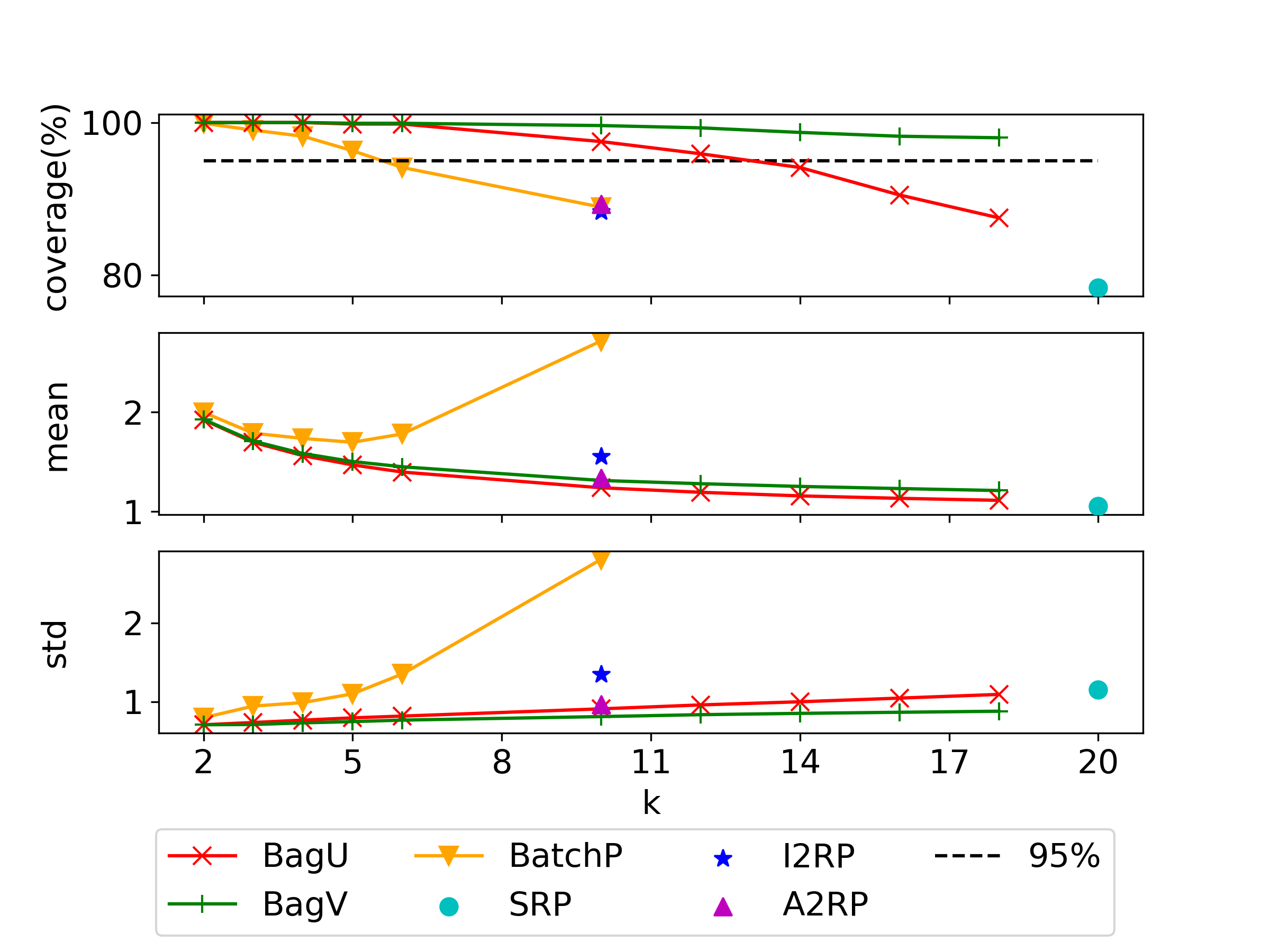}
         \caption{CRN, $n=50, n_1=30, n_2=20$}
         \label{fig:binary gap CRN 50}
     \end{subfigure}
    \caption{Simple linear problem \eqref{binary}. Bounds of optimality gaps.}
    \label{fig:binary gap n=50}
\end{figure}

Figures \ref{fig:portfolio gap n=50}, \ref{fig:integer gap CRN} and \ref{fig:binary gap n=50} summarize the results for problems \eqref{min_cvar}, \eqref{IP} and \eqref{binary} respectively in a similar fashion as in Section \ref{sec:numerics optimal}. Each plot in these figures shows again the estimated coverage probabilities, the mean and the standard deviation of the upper bounds across $1000$ independent runs. In each experiment, whether BC or CRN, the data set is split into $60\%$ and $40\%$, i.e., $n_1=0.6n$ and $n_2=0.4n$ for each total size $n$.

%  therefore from the practical point of view one would invest most data in obtaining as good a solution as possible, and leave just enough data for assessing the quality of the obtained solution.
We see a few similar observations as in Section \ref{sec:numerics optimal}. The two bagging procedures generate statistically valid upper bounds in almost all the cases. The only exception is Figure \eqref{fig:binary gap CRN 50} where all methods but BagV undercovers for problem \eqref{binary} under relatively large resample sizes. BatchP also possesses the desired coverage probability in most cases, but the generated bounds are looser and less stable than those by other methods. In particular, when the CRN approach is used (i.e., Figures \ref{fig:portfolio gap CRN 50}, \ref{fig:integer gap CRN} and \ref{fig:binary gap CRN 50}) the tightest bound by BatchP across different batch sizes can be twice of that by BagU and BagV. For direct-CLT bounds, A2RP continues to exhibit competitive performances on all the three problems except that in Figure \ref{fig:binary gap CRN 50} it undercovers on problem \eqref{binary} whereas BagV still maintains an accurate coverage with similar tightness and stability as A2RP when $k$ is chosen close to $n$. In the same example BagU also has an accurate coverage for resample sizes around $n/2$ but starts to undercover like A2RP when $k$ further approaches $n$. Compared to I2RP, our bagging bounds continue to generate tighter and stabler bounds in all cases and sometimes have more accurate coverages (Figure \ref{fig:portfolio gap n=50}).

Some new observations are as follows. First, we see that SRP suffers from severe undercoverage issues (e.g., under $80\%$ in Figure \ref{fig:binary gap CRN 50}) on all the three problems when the CRN approach is adopted. We comment that this undercoverage of SRP with CRN is not a coincidence as the objective variance used in SRP tends to frequently underestimate the true variance if the candidate solution $\hat x$ to bound the gap for is obtained from an SAA. To explain, when $\hat x$ is also generated from an SAA then it is expected to have a similar distribution as the the solution $\hat x_{n_2}$ obtained from the SAA formed by the second portion of $n_2$ data in the CRN approach, and therefore if $\hat x$ lies in a high-density area of this distribution, which is likely to happen, then $\hat x_{n_2}$ can very well be closer to $\hat x$ (or even exactly $\hat x$ in the case of discrete decision space) than to the true optimal solution $x^*$, in which case the objective variance $Var(h(\hat x_{n_2}^*,\xi)-h(\hat x,\xi))$ used in SRP becomes significantly smaller than the true variance $Var(h(x^*,\xi) - h(\hat x,\xi))$ on a relative scale. This has also been observed and discussed in length in \cite{bayraksan2006assessing} where a strategy that uses a suboptimal SAA solution in place of the exact solution $\hat x_{n_2}$ is proposed to reduce the chance of the two solutions being close. A2RP and I2RP can also mitigate this issue by using two instead of one SAA solution, whereas our bagging procedures push this further by estimating the variance using many resampled SAA solutions. As evidenced in Figures \ref{fig:portfolio gap n=50}, \ref{fig:integer gap CRN} and \ref{fig:binary gap n=50}, A2RP and our bagging bounds have significantly more accurate coverages than SRP, and in particular BagV is the only method that has a correct coverage in Figure \ref{fig:binary gap CRN 50} and also appears tighter and stabler than A2RP when the resample size is set close to the full data size.

Second, we observe that the coverage of BagV is less sensitive to the resample size than BagU, and that when SRP undercovers (e.g., Figures \ref{fig:portfolio gap CRN 50}, \ref{fig:integer gap CRN} and \ref{fig:binary gap CRN 50}) the coverage of BagU starts to resemble that of SRP while BagV does not as the resample size approaches the full data size. This reveals that in practice a larger resample size can be used with BagV than with BagU in maintaining an accurate coverage. The resemblance between BagU and SRP under large $k$ and the robust coverage of BagV can be explained based on the amount of variability brought by different resampling methods. To explain, for resampling with replacement the variability (standard deviation) of the resampled SAA objective decays at a canonical $1/\sqrt{k}$ rate as the resample size $k$ grows towards $n$ since the sampling is i.i.d. and uniform over the data, whereas for resampling without replacement the variability can be calculated to decay at the rate $\sqrt{n-k}/\sqrt{kn}$ which behaves like $1/\sqrt{k}$ for moderate $k$ but decays more quickly as $1/k$ for $k$ close to $n$. In other words, when $k$ is close to $n$ the resampled SAAs and their optimal solutions in BagU are significantly more similar to the original full SAA than those in BagV, and therefore BagU resembles SRP while BagV still has stable and accurate coverages. Although our theory does not directly capture these phenomena, the smaller resampling variability of BagU can be hinted by its additional factor $n^2/(n-k)^2$ in the IJ variance estimator in Algorithm \ref{bagging} that compensates for the variability of BagU under large resample sizes in order to match the correct magnitude of the variance. In light of this key difference between BagU and BagV, we recommend that the resample size should be no larger than $0.7n$ for BagU in limited-data situations.

% modified objective used in the CRN approach usually makes the objective variance highly variable near the optimum and the standard error estimate in SRP is most affected by the jumping SAA solution

% To explain, if the solution $\hat x$ in the modified objective $h(x,\xi) - h(\hat x,\xi)$ is of decent quality and close to the true optimum, then the SAA solution from the second portion of $n_2$ data can very well be closer to $\hat x$ than to the true optimum $x^*$, in which case the objective variance used in SRP becomes almost zero and hence severely underestimates the target variance $Var(h(x^*,\xi) - h(\hat x,\xi))$ on a relative scale.

% In problem \eqref{IP} this can be attributed to the integrality requirement on the decision.
% We find that with high probability the candidate solution $\hat x$ is $-1$ (with optimality gap $0.1$), and, given $\hat x=-1$, solving the SAA associated with the new cost function $h(x,\xi)-h(-1,\xi)$ gives the solution $-1$ again with high probability. However this way the estimated variance $\hat\sigma^2$ will be zero (because the new cost function is constantly $0$ at $x=-1$) which causes the under-cover issue.
% Similar observations have been discussed in Section 6 of \cite{bayraksan2006assessing}.

Third, in general the CRN approach generates tighter and stabler confidence bounds than the BC approach thanks to variance reduction. In particular, Figures \ref{fig:portfolio gap n=50} and \ref{fig:binary gap n=50} shows that with the same split of the data, the bounds by CRN can be up to twice tighter than those by BC as measured by the mean of the generated bounds, and the standard deviation of the bounds can be reduced by up to $30\%$ as can be seen from Figure \ref{fig:binary gap n=50}. We also observe that the BC approach tends to overcover the optimality gap, potentially because of the looseness of the union bound.

% By comparing Table \ref{IP_BC} with Table \ref{IP_CRN} or Table \ref{binary_BC} with Table \ref{binary_CRN}, we see that this benefit of CRN becomes more significant when one invests more data in obtaining $\hat x$, i.e.~when $n_1$ is chosen larger. This is because, the closer the estimated solution $\hat x$ gets to the true optimum $x^*$, the smaller is the variance of the gap function $h(x,\xi)-h(\hat x,\xi)$ at the optimum (i.e.,~$x^*$) due to the continuity of its variance (as a function of $x$), which in turn leads to a smaller standard error.

\subsection{Variance Reduction for Problems with Multiple Optima}\label{sec:numerics variance reduction}
\begin{figure}[h]
    \centering
    \begin{subfigure}{0.49\textwidth}
         \centering
         \includegraphics[width=\textwidth]{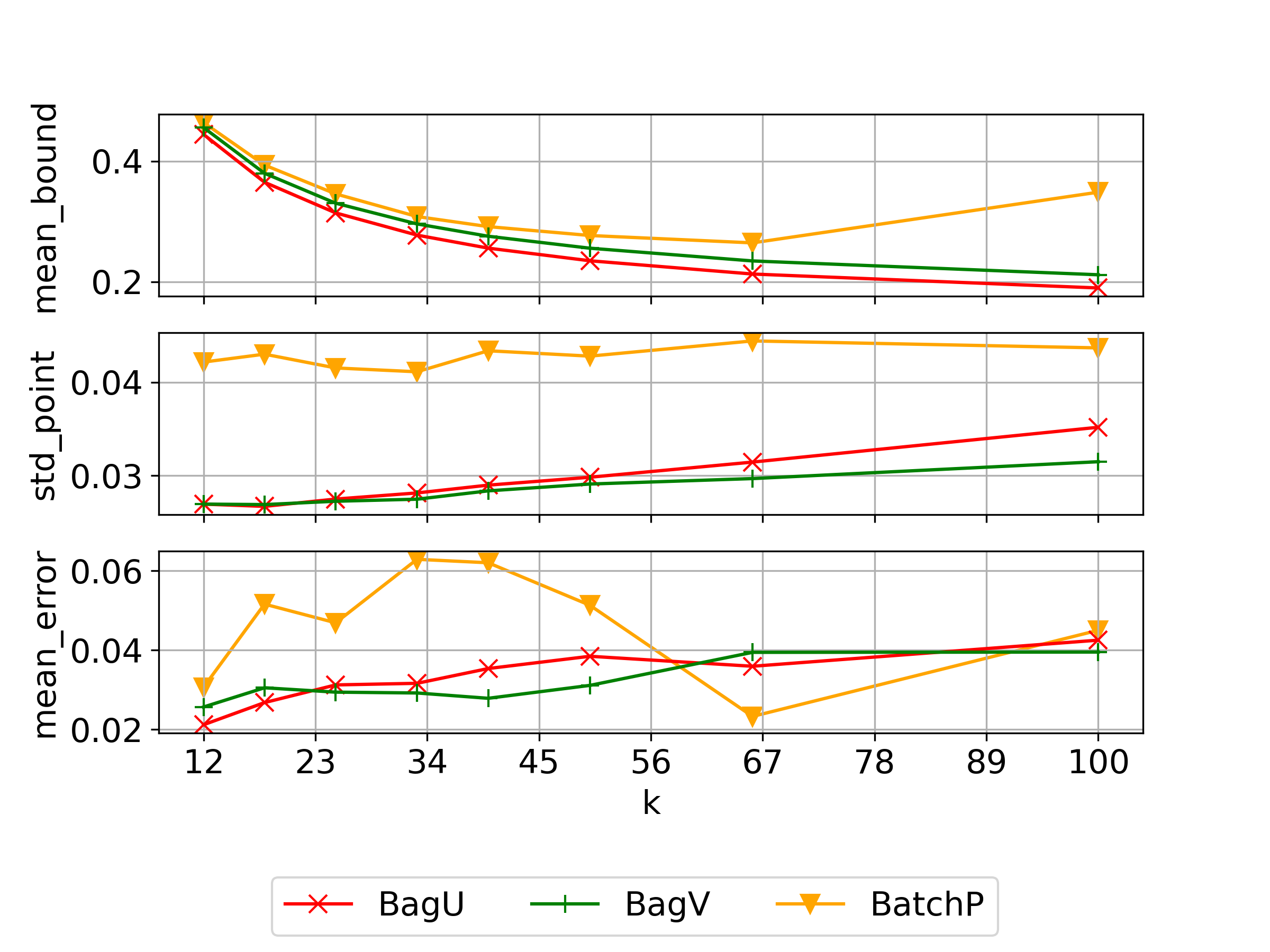}
         \caption{Bagging vs. BatchP, $n=200$}
         \label{fig:simplex with batching 200}
    \end{subfigure}
    \hfill
    \begin{subfigure}{0.49\textwidth}
         \centering
         \includegraphics[width=\textwidth]{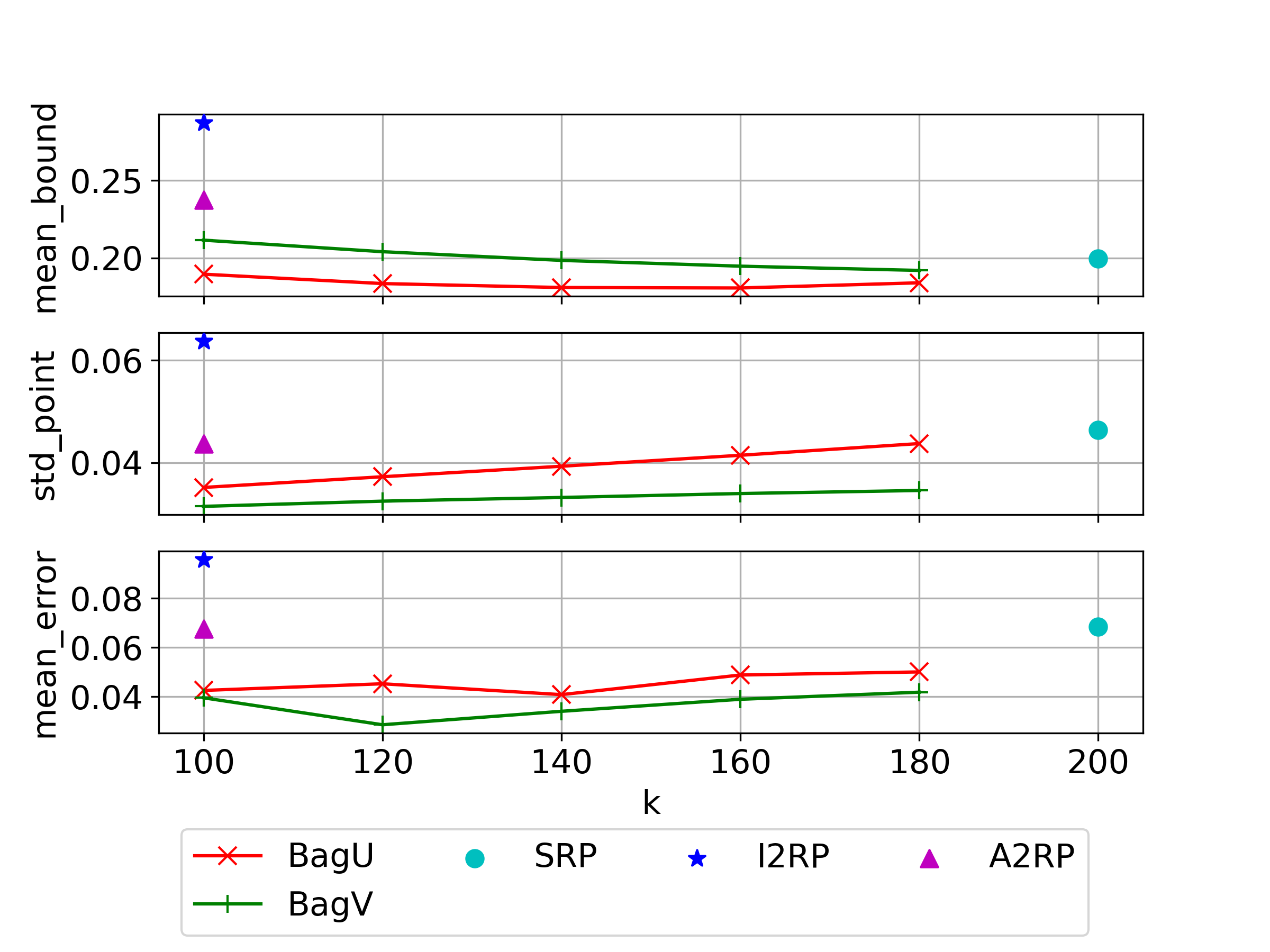}
         \caption{Bagging vs. SRP/I2RP/A2RP, $n=200$}
         \label{fig:simplex with SRP 200}
     \end{subfigure}
    \caption{Variance comparison on linear program \eqref{simplex}.}
    \label{fig:simplex opt val n=200}
\end{figure}

Our theory suggests that when there are multiple optimal solutions the bagging estimate has a smaller variance than both BatchP and direct-CLT estimates, and in this subsection we compare the variances of these estimates using problem \eqref{simplex}. We set the data size $n=200$, and compute $95\%$ lower confidence bounds for the optimal value using different resample sizes for bagging or batch sizes for BatchP. Results are summarized in Figure \ref{fig:simplex opt val n=200}, where the the top panel of each plot shows the mean of the bound (offset by the true optimal value) under different batch sizes or resample sizes, the middle panel plots the standard deviation of the point estimate of each bound, and the bottom panel shows the average standard error estimate in each bound. The coverage probabilities of all the methods are above $95\%$ and hence omitted from the plots.

The standard deviation of our bagging point estimates are consistently smaller than that of the BatchP point estimate ($0.03$ versus $0.04$) regardless of the choice for the batch size or the resample size $k$, as shown in Figure \ref{fig:simplex with batching 200}. Compared to SRP, A2RP and I2RP, Figure \ref{fig:simplex with SRP 200} also shows consistently smaller standard deviations from our bagging point estimates (especially BagV). The larger standard deviation of the I2RP estimate is expected since its point estimate uses only half of the data. All these are aligned with the variance reduction benefit of bagging as described in Theorem \ref{compare var:asymptotic}. Besides variance of the point estimates, the mean standard error term in our bagging bounds are also smaller than that of SRP, I2RP, and A2RP ($0.04$ versus $0.06$) and BatchP at most choices of $k$. This again verifies the reduced variance of our bagging point estimates. Comparing BagU and BagV, we see that the BagV bound has a slightly smaller variance in the point estimate and also a smaller standard error term than BagU, but the overall bound is slightly looser. This relative looseness of BagV is consistent with our theory that the BagU point estimate is unbiased with respect to the optimistic bound of the optimal value whereas BagV is downward biased.

\subsection{Summary and Recommendation}\label{sec:experiments summary}
We summarize our experimental findings. Our bagging bounds, especially BagV, in general appear as competitive as existing methods in terms of coverage attainment, tightness and stability, while outperform them in different specific cases. More precisely, compared to BatchP, our bagging bounds are significantly tighter and stabler in almost all cases thanks to the improved tradeoff between tightness and statistical accuracy. Compared to direct-CLT bounds, BagU and BagV have more accurate coverages than I2RP (Figures \ref{fig:portfolio optval}, \ref{fig:portfolio gap n=50}, \ref{fig:binary gap CRN 50}) and SRP (Figures \ref{fig:integer optval}, \ref{fig:integer gap CRN} and \ref{fig:binary gap CRN 50}) especially when bounding optimality gaps, and generate tighter and stabler bounds than I2RP in all the cases. A2RP shows similarly competitive performances as bagging methods in almost all the cases, but our BagV with a resample size close to the data size appears to have more accurate coverages on problem \eqref{binary} (Figure \ref{fig:binary gap CRN 50}), generates stabler bounds in some cases (Figures \ref{fig:portfolio gap CRN 50} and \ref{fig:binary gap BC 50}), and in general is slightly tighter (e.g., Figure \ref{fig:portfolio optval 50}, \ref{fig:portfolio gap n=50}, \ref{fig:integer gap CRN} and \ref{fig:binary gap CRN 50}). In the case of multiple optima, our bagging bounds are tighter than all existing bounds (Figure \ref{fig:simplex opt val n=200}) when the resample size is chosen close to the data size.

For algorithmic configurations of our bagging procedures, we recommend the debiased variant (Algorithm \ref{debiased bagging}) with a fixed bootstrap size $B=500$ for higher accuracy in the IJ variance estimate. The resample size $k$ can be chosen close to the data size for BagV to generate tight bounds, but no larger than $70\%$ of the data size for BagU under limited data to prevent its behavior mimicking SRP and maintain an accurate coverage.

Regarding the choice between BagU and BagV, we note that BagU generates slightly tighter bounds than BagU in most cases except Figures \ref{fig:portfolio optval 50} and \ref{fig:portfolio gap BC 50} due to the downward bias of $V_{n,k}$. However, bounds by BagV have slightly smaller standard deviations than those by BagU, and BagV's coverage performance is less correlated with that of SRP under large resample sizes due to the larger variability in the resampled SAA than BagU as explained in Section \ref{sec:numerics optimality gap}, making BagV less prone to coverage issues. In fact, the only case that BagV has minor undercoverage is Figure \ref{fig:integer optval 50} where BagU undercovers more severely. From these observations, we recommend BagV based on its superior stability and safer coverage attainment, despite its slight looseness compared to BagU. However, if bound tightness is of importance, then BagU could be preferred. On a final note, results presented in this section are a representative part of our experiments, and additional results can be found in Appendix \ref{sec:additional experiments}.
% Ultimately, the choice depends on the user's preference between bound tightness and coverage attainment: If obtaining a tight bound is the primary goal, then BagU is the preferred choice; Otherwise, if there is a stringent requirement on the coverage of the bound, BagV is the safer one.

% Regarding the choice between BagU and BagV in practice, note that in all the results except Figures \ref{fig:portfolio optval 50} and \ref{fig:portfolio gap BC 50} the bounds by BagV are always slightly looser but have higher coverages than those by BagU.

% \section{Conclusion}
% We have studied a bagging approach to estimate bounds for the optimal value, and consequently the optimality gap for a given solution in stochastic optimization. We demonstrate how our approach works under minimal regularity conditions, including for non-smooth problems, and exhibits competitive statistical efficiency and stability. Compared to batching, our approach generates a new tradeoff between bound tightness and statistical accuracy that is especially beneficial in small-sample situations. Compared to approaches based on direct SAA asymptotics, our approach requires less smoothness conditions on the objectives and gives more stable estimates thanks to the smoothing effect of bagging. These benefits, however, are offset by the price of more computation in repeatedly solving SAA programs. We have developed the theoretical properties of our approach by viewing SAA as a kernel in infinite-order symmetric statistics, and have illustrated our findings with numerical results.

%%
% \theendnotes

% Acknowledgments here
\ACKNOWLEDGMENT{We gratefully acknowledge support from the National Science Foundation under grants CMMI-1542020, CMMI-1523453 and CAREER CMMI-1653339/1834710.
We thank David Morton and David Woodruff for the greatly helpful suggestions and communications on this work.
A preliminary conference version of this work, \cite{lq18}, has appeared in the Proceedings of the Winter Simulation Conference 2018.}

% References here (outcomment the appropriate case)

% CASE 1: BiBTeX used to constantly update the references
%   (while the paper is being written).
\bibliographystyle{informs2014} % outcomment this and next line in Case 1
\bibliography{references} % if more than one, comma separated

%% Here starts the e-companion (EC)
%%%%%%%%%%%%%%%%%%%%%%%%%%%%%%%%%%%%%%%%%%%%%%%%%%%%%%%%%%
\ECSwitch

%%% Main head for the e-companion
\ECHead{The Debiased Bagging Procedure, Proofs of Statements and Additional Experimental Results}
\section{A Debiased Variant of Algorithm \ref{bagging}}\label{sec:debiased bagging}
The major source of Monte Carlo error in $\tilde{\sigma}_{IJ}^2$ from Algorithm \ref{bagging} is the bias $E_*[\sum_{i=1}^n(\widehat{Cov}_i^2 - Cov_i^2)]$ (to be shown in the proof of Theorem \ref{final_guarantee} in Section \ref{proof:final}), and here we consider applying a bias correction to reduce the Monte Carlo error as in \citeAPX{wager2014confidence}. Specifically, for resampling without replacement, when $k$ and $n$ are large $N_i^*$ and $\hat{Z}_k^*$ are approximately independent \citepAPX{wager2014confidence}, therefore we have for each $i$
\begin{equation*}
    Var_*(\widehat{Cov}_*(N_i^*,\hat{Z}_k^*))\approx \frac{1}{B}Var_*(N_i^*)Var_*(\hat{Z}_k^*)= \frac{k}{Bn}(1-\frac{k}{n})Var_*(\hat{Z}_k^*)
\end{equation*}
which suggests that a good correction for the overall bias is
\begin{equation*}
    \frac{k}{B^2}(1-\frac{k}{n})\sum_{b=1}^B(\hat Z_k^b-\tilde Z_{n,k}^{bag})^2.
\end{equation*}
Similarly, in the case of resampling with replacement
\begin{equation*}
    Var_*(\widehat{Cov}_*(N_i^*,\hat{Z}_k^*))\approx \frac{k}{Bn}Var_*(\hat{Z}_k^*)
\end{equation*}
suggesting the bias correction
\begin{equation*}
    \frac{k}{B^2}\sum_{b=1}^B(\hat Z_k^b-\tilde Z_{n,k}^{bag})^2.
\end{equation*}
The full details of our debiased bagging procedure are provided in Algorithm \ref{debiased bagging}.
\begin{algorithm}
\caption{Debiased Bagging Procedure for Bounding Optimal Values}
\label{debiased bagging}
\begin{algorithmic}
\STATE {Given $n$ i.i.d. observations $\bm\xi_{1:n}=(\xi_1,\ldots,\xi_n)$, select positive integers $k$ and $B$.}

\vspace{1ex}

\STATE {Perform the same steps as in Algorithm \ref{bagging} except that the IJ variance estimates are now computed as
\begin{equation*}%\label{sigma_IJ}
\tilde\sigma_{IJ}^2=
\sum_{i=1}^n\widehat{Cov}_*(N_i^*,\hat Z_k^*)^2 - \frac{k}{B^2}\sum_{b=1}^B(\hat Z_k^b-\tilde Z_{n,k}^{bag})^2
\end{equation*}
if resampling is with replacement, or
\begin{equation*}%\label{sigma_IJ}
\tilde\sigma_{IJ}^2=\big(\frac{n}{n-k}\big)^2\big[\sum_{i=1}^n\widehat{Cov}_*(N_i^*,\hat Z_k^*)^2 - \frac{k}{B^2}(1-\frac{k}{n})\sum_{b=1}^B(\hat Z_k^b-\tilde Z_{n,k}^{bag})^2\big]
\end{equation*}
if resampling is without replacement.}

\vspace{1ex}

\STATE {Output $\tilde Z_{n,k}^{bag}-z_{1-\alpha}\tilde\sigma_{IJ}$.}
\end{algorithmic}
\end{algorithm}

We study the bootstrap size $B$ required by Algorithm \ref{debiased bagging} by following an informal Monte Carlo error analysis for IJ variance estimator from \citeAPX{wager2014confidence}. Consider the case of resampling with replacement, for which equation (13) in \citeAPX{wager2014confidence} provides approximate first and second order moments for the IJ variance estimator without debiasing
\begin{eqnarray}
    \notag &&E_*\big[\sum_{i=1}^n\widehat{Cov}_*(N_i^*,\hat{Z}_k^*)^2\big] - \sum_{i=1}^nCov_*(N_i^*,\hat{Z}_k^*)^2\approx \frac{k}{B}\hat{\sigma}^2_*(\hat{Z}_k^*)\\
    &&Var_*\big(\sum_{i=1}^n\widehat{Cov}_*(N_i^*,\hat{Z}_k^*)^2\big)\approx 2\frac{k^2\hat{\sigma}^4_*(\hat{Z}_k^*)}{nB^2} + 4\frac{k\hat{\sigma}^2_*(\hat{Z}_k^*)\sum_{i=1}^nCov_*(N_i^*,\hat{Z}_k^*)^2}{nB}\label{approximate variance of IJ estimator}
\end{eqnarray}
where $\hat{\sigma}^2_*(\hat{Z}_k^*)=1/B\cdot \sum_{b=1}^B(\hat Z_k^b-\tilde Z_{n,k}^{bag})^2$. Therefore, after applying the bias correction, the dominating Monte Carlo error is the variance \eqref{approximate variance of IJ estimator}. Since it's shown in the proof of Theorem \ref{final_guarantee} that $Var_*(\hat{Z}_k^*)=O_p(1/k)$, and the IJ variance estimator $\sum_{i=1}^nCov_*(N_i^*,\hat{Z}_k^*)^2=O_p(1/n)$, we see that our debiased IJ variance estimator $\tilde{\sigma}_{IJ}^2$ from Algorithm \ref{debiased bagging} has an approximate mean squared error of order
\begin{equation*}
    \frac{1}{nB^2} + \frac{1}{n^2B}.
\end{equation*}
Using a $B$ such that $B/\sqrt{n}\to\infty$, we can control the mean squared error of the debiased $\tilde{\sigma}_{IJ}^2$ at the order of $o_p(1/n^2)$, leading to a negligible standard error of order $o_p(1/n)$.

\section{Preliminaries for the Proofs}\label{proof:preliminaries}
We provide in this section several key technical tools and results to be used in the proof of the main Theorems \ref{main thm} and \ref{V thm}. We first present a useful technical tool, the ANOVA decomposition of a symmetric statistic, and several related concepts and preparatory results for the main proof.
\begin{lemma}[ANOVA decomposition, adapted from \citeAPX{efron1981jackknife}]\label{anova}
For any symmetric function $T:\Xi^k\to\R$ and i.i.d.~random elements $\xi_1,\ldots,\xi_k\in\Xi$ such that $Var[T(\xi_1,\ldots,\xi_k)]<\infty$, there exist functions $T_1,\ldots,T_k$ such that
\begin{eqnarray*}
T(\xi_1,\ldots,\xi_k)=E[T]+\sum_{i=1}^kT_1(\xi_i)+\sum_{i_1<i_2}T_2(\xi_{i_1},\xi_{i_2})+\cdots+T_k(\xi_1,\ldots,\xi_k).
\end{eqnarray*}
Moreover, all the $2^k-1$ random variables on the right hand side have mean zero and are mutually uncorrelated.
\end{lemma}
% For any random variable in the form $T=T(\xi_1,\ldots,\xi_n)$, we also use the notation $\mathring T$ to denote the Hajek projection, namely, the projection of $T$ onto the space spanned by
% $\sum_{i=1}^nf_i(\xi_i)$ where $f_i$'s are any measurable functions. By \citeAPX{van2000asymptotic}, we know that, if $\xi_1,\ldots,\xi_n$ are i.i.d. and $T$ has finite second moment,
% $$\mathring T=\sum_{i=1}^nE[T|\xi_i]-(n-1)ET$$
Note that $T_1(x)$ must be $E[T(\xi_1,\ldots,\xi_k)\vert \xi_1=x]-E[T]$ by the zero mean and uncorrelatedness, and the total variance of a symmetric statistic $T(\xi_1,\ldots,\xi_k)$ can be decomposed as $Var(T)=\sum_{s=1}^k\binom{k}{s}V_s$, where $V_s:=Var(T_s(\xi_1,\ldots,\xi_s))$. The Hajek projection of $T$ is defined as
\begin{equation*}
    \mathring{T}:=E[T]+\sum_{i=1}^kT_1(\xi_i)=E[T]+\sum_{i=1}^k\big(E[T(\xi_1,\ldots,\xi_k)\vert \xi_i]-E[T]\big)
\end{equation*}
i.e., the first order effect in the ANOVA decomposition. Recall that $H_k(\xi_1,\ldots,\xi_k)$ is the SAA kernel and $g_k(\xi):=E[H_k(\xi_1,\ldots,\xi_k)\vert \xi_1=\xi]$, therefore the Hajek projections of the symmetric kernel $H_k$ and the symmetric statistic $U_{n,k}$ are
\begin{align*}
\mathring{H}_{k}&=W_k+\sum_{i=1}^k(g_k(\xi_i)-W_k)\\
\mathring{U}_{n,k}&=W_k+\frac{k}{n}\sum_{i=1}^n(g_k(\xi_i)-W_k)
\end{align*}
% As discussed in Section \ref{sec:growing}, we will use the ANOVA decomposition (\citeAPX{efron1981jackknife}) of the symmetric kernel $H_k$ to allow for a larger resample size $k$ in obtaining Theorem \ref{Lipschitz_k full}.
respectively and the remainder $H_k - \mathring{H}_{k}$ or $U_{n,k} - \mathring{U}_{n,k}$ contains all the high-order errors of the corresponding symmetric statistic. A key result we use is the following variance bound from \citeAPX{wager2017estimation} in analyzing random forests:
\begin{lemma}[Adapted from Lemma 3.3 of \citeAPX{wager2017estimation}]\label{anova:variance}
Under Assumption \ref{L2}, for any $k\leq n$ it holds
\begin{equation*}
E[(U_{n,k}-\mathring{U}_{n,k})^2]\leq \frac{k^2}{n^2}E[(H_k-\mathring{H}_k)^2].
\end{equation*}
\end{lemma}
\proof{Proof.}
\citeAPX{wager2017estimation} proves this bound in the context of random forests where $H_k$ is a regression tree and $U_{n,k}$ is the random forest obtained from aggregating the resampled trees (without replacement). Although the context they focus on is different from ours, their proof works for general symmetric kernels and U-statistics including the SAA values considered in this paper. Note that in Lemma 3.3 of \citeAPX{wager2017estimation} the right hand side is the total variance $Var(H_k)$ instead of $E[(H_k-\mathring{H}_k)^2]$, however this comes from upper bounding $E[(H_k-\mathring{H}_k)^2]$ by $Var(H_k)$ in their proof so the bound remains valid with $Var(H_k)$ replaced by $E[(H_k-\mathring{H}_k)^2]$.\Halmos
\endproof

A direct consequence of Lemma \ref{anova:variance} is the following result regarding the order of magnitude of the high-order errors under an additional assumption on the resample size $k$:
\begin{lemma}\label{new_clt:U}
Under Assumptions \ref{L2}, if the resample size $k$ is chosen such that
\begin{equation}\label{range:k}
k^2E[(H_k-\mathring{H}_k)^2]=o(n)
\end{equation}
then $E[(U_{n,k}-\mathring{U}_{n,k})^2]=o\big(\frac{1}{n}\big)$ and hence $U_{n,k}-\mathring{U}_{n,k}=o_p\big(\frac{1}{\sqrt{n}}\big)$.
\end{lemma}
% This allows us to derive a CLT under an additional assumption on $k$:
% \begin{theorem}\label{new_clt:U}
% Under Assumptions \ref{L2}, \ref{L2 strengthened} and \ref{nondegeneracy strengthened}, if the resample size $k$ is chosen such that
% \begin{equation}\label{range:k}
% k^2E(H_k-\mathring{H}_k)^2=o(n)
% \end{equation}
% then
% $$\frac{\sqrt n(U_{n,k}-W_k)}{k\sqrt{Var(g_k(\xi))}}\Rightarrow N(0,1)$$
% where $N(0,1)$ is the standard normal.
% \end{theorem}
% \begin{proof}
% According to the proof of Theorem \ref{main thm}, we only need to verify that $E(U_{n,k}-\mathring{U}_{n,k})^2/Var(U_{n,k})\to 0$, or equivalently $E(U_{n,k}-\mathring{U}_{n,k})^2/Var(\mathring{U}_{n,k})\to 0$. Under the choice of $k$ we have $E(U_{n,k}-\mathring{U}_{n,k})^2=o(1/n)$ due to Proposition \ref{anova:variance}, whereas $Var(\mathring{U}_{n,k})=k^2Var(g_k(\xi))/n\geq \epsilon/n$ for $k$ large enough. This completes the proof.
% \end{proof}

% We need the following result from \citeAPX{van2000asymptotic}:
% \begin{theorem}[Theorem 11.2 in \citeAPX{van2000asymptotic}]
% Let $\mathcal L_n$ be a linear space of random variables with finite second moment that contains the constants. Let $T_n$ be a random variable with projection $S_n$ onto $\mathcal L_n$. If
% $$\frac{Var(T_n)}{Var(S_n)}\to1\text{\ \ as\ }n\to\infty$$
% then
% $$\frac{T_n-ET_n}{sd(T_n)}-\frac{S_n-ES_n}{sd(S_n)}\stackrel{p}{\to}0\text{\ \ as\ }n\to\infty$$
% where $sd(\cdot)$ denotes the standard deviation.\label{projection}
% \end{theorem}

Our plan for proving Theorem \ref{main thm} is to show that in the decomposition $U_{n,k} = \mathring{U}_{n,k} + U_{n,k}-\mathring{U}_{n,k}$ the high-order error $U_{n,k}-\mathring{U}_{n,k}$ is controlled by Lemma \ref{new_clt:U} and the Hajek projection $\mathring{U}_{n,k}$ gives rise to the main term $W_k+k\sqrt{Var(g_k(\xi))/n}\cdot \mathcal Z_{n,k}$. To proceed, we define
$$g_{k,c}(\tilde\xi_1,\ldots,\tilde\xi_c)=E[H_k(\xi_1,\ldots,\xi_k)|\xi_1=\tilde\xi_1,\ldots,\xi_c=\tilde\xi_c]$$
as the conditional expectation of $H_k$ given the first $c$ variables. In particular, by our definition before, $g_k(\xi)=g_{k,1}(\xi)$ and $H_k(\xi_1,\ldots,\xi_k)=g_{k,k}(\xi_1,\ldots,\xi_k)$.

In order to use Lemma \ref{new_clt:U} to control the high-order error $U_{n,k}-\mathring{U}_{n,k}$, we need bounds for the high-order error of the SAA kernel $H_k-\mathring{H}_k$. To this end we derive two useful results. One is on bounding the difference of $H_k$ and $g_{k,c}$, and the other is a variance bound for the SAA kernel $H_k$. The bounds for $H_k$ and $g_{k,c}$ are as follows:% on the implication of Assumption \ref{L2 strengthened}:
% \begin{lemma}
% %Assumption \ref{L2 strengthened} implies
% Suppose Assumption \ref{L2} holds.
% For $\xi_1,\ldots,\xi_c,\xi_1',\ldots,\xi_c'\stackrel{i.i.d.}{\sim}F$, we have
% $$|g_{k,c}(\xi_1',\ldots,\xi_c')-E[g_{k,c}(\xi_1,\ldots,\xi_c)]|\leq\frac{1}{k}\sum_{i=1}^cE\left[\sup_{x\in\mathcal X}|h(x,\xi_i')-h(x,\xi_i)|\Bigg|\xi_i'\right]$$
% %$$Var(g_{k,c}(\xi_1,\ldots,\xi_c))\leq\frac{Mc^2}{k^2}$$
% %for some constant $M>0$.
% \label{bound lemma}
% \end{lemma}
\begin{lemma}
%Assumption \ref{L2 strengthened} implies
For every $\xi_1,\ldots,\xi_c,\xi_1',\ldots,\xi_c'\in\Xi$ with $c\leq k$, we have
\begin{eqnarray*}
    &&\sup_{\xi_{c+1},\ldots,\xi_k\in\Xi}\lvert H_k(\xi_1',\ldots,\xi_c',\xi_{c+1},\ldots,\xi_k) - H_k(\xi_1,\ldots,\xi_c,\xi_{c+1},\ldots,\xi_k)\rvert\\
    &\leq& \frac{1}{k}\sum_{i=1}^c\sup_{x\in\mathcal X}\lvert h(x,\xi_i')-h(x,\xi_i)\rvert
\end{eqnarray*}
and therefore if $\xi_1,\ldots,\xi_c,\xi_1',\ldots,\xi_c'\stackrel{i.i.d.}{\sim}F$ we have
\begin{equation*}
    \lvert g_{k,c}(\xi_1',\ldots,\xi_c')-E[g_{k,c}(\xi_1,\ldots,\xi_c)]\rvert\leq\frac{1}{k}\sum_{i=1}^cE\left[\sup_{x\in\mathcal X}\lvert h(x,\xi_i')-h(x,\xi_i)\rvert\Bigg|\xi_i'\right].
\end{equation*}
% $$Var(g_{k,c}(\xi_1,\ldots,\xi_c))\leq\frac{Mc^2}{k^2}$$
%for some constant $M>0$.
\label{bound lemma}
\end{lemma}
\proof{Proof.}
For any $\xi_{c+1},\ldots,\xi_k\in\Xi$, we consider bounding the absolute difference $\lvert H_k(\xi_1',\ldots,\allowbreak\xi_c',\xi_{c+1},\ldots,\xi_k) - H_k(\xi_1,\ldots,\xi_c,\xi_{c+1},\ldots,\xi_k)\rvert$. If $H_k(\xi_1',\ldots,\xi_c',\xi_{c+1},\ldots,\xi_k) \geq H_k(\xi_1,\ldots,\xi_c,\allowbreak\xi_{c+1},\ldots,\xi_k)$ then
\begin{eqnarray*}
    &&\lvert H_k(\xi_1',\ldots,\xi_c',\xi_{c+1},\ldots,\xi_k) - H_k(\xi_1,\ldots,\xi_c,\xi_{c+1},\ldots,\xi_k)\rvert\\
    &=&H_k(\xi_1',\ldots,\xi_c',\xi_{c+1},\ldots,\xi_k) - H_k(\xi_1,\ldots,\xi_c,\xi_{c+1},\ldots,\xi_k)\\
    &=&\min_{x\in\mathcal X}\left\{\frac{1}{k}\sum_{i=1}^ch(x,\xi_i')+\frac{1}{k}\sum_{i=c+1}^kh(x,\xi_i)\right\} - \min_{x\in\mathcal X}\left\{\frac{1}{k}\sum_{i=1}^ch(x,\xi_i)+\frac{1}{k}\sum_{i=c+1}^kh(x,\xi_i)\right\}\\
    &\leq& \min_{x\in\mathcal X}\left\{\frac{1}{k}\sum_{i=1}^ch(x,\xi_i')+\frac{1}{k}\sum_{i=c+1}^kh(x,\xi_i)\right\} -\left(\frac{1}{k}\sum_{i=1}^ch(x_{\epsilon},\xi_i)+\frac{1}{k}\sum_{i=c+1}^kh(x_{\epsilon},\xi_i)\right) + \epsilon\\
    &&\text{\ \ where $x_{\epsilon}$ is an $\epsilon$-optimal solution of $\min_{x\in\mathcal X}\frac{1}{k}\sum_{i=1}^kh(x,\xi_i)$}\\
    &\leq& \left(\frac{1}{k}\sum_{i=1}^ch(x_{\epsilon},\xi_i')+\frac{1}{k}\sum_{i=c+1}^kh(x_{\epsilon},\xi_i)\right) -\left(\frac{1}{k}\sum_{i=1}^ch(x_{\epsilon},\xi_i)+\frac{1}{k}\sum_{i=c+1}^kh(x_{\epsilon},\xi_i)\right) + \epsilon\\
    &=&\frac{1}{k}\sum_{i=1}^c(h(x_{\epsilon},\xi_i') - h(x_{\epsilon},\xi_i)) + \epsilon\\
    &\leq& \frac{1}{k}\sum_{i=1}^c\sup_{x\in\mathcal X}\lvert h(x,\xi_i')-h(x,\xi_i)\rvert + \epsilon.
\end{eqnarray*}
Since the $\epsilon>0$ is arbitrary, it must hold that $\lvert H_k(\xi_1',\ldots,\xi_c',\xi_{c+1},\ldots,\xi_k) - H_k(\xi_1,\ldots,\xi_c,\xi_{c+1},\allowbreak\ldots,\xi_k)\rvert\leq \frac{1}{k}\sum_{i=1}^c\sup_{x\in\mathcal X}\lvert h(x,\xi_i')-h(x,\xi_i)\rvert$. Similarly, if $H_k(\xi_1',\ldots,\xi_c',\xi_{c+1},\ldots,\xi_k) \leq H_k(\xi_1,\allowbreak\ldots,\xi_c,\xi_{c+1},\ldots,\xi_k)$ the same bound can be shown to be valid by symmetry. This proves our first bound on the difference.

The second bound then follows by applying Jensen's inequality. Specifically, by the definition of $g_{k,c}$ we have
\begin{eqnarray*}
&&\lvert g_{k,c}(\xi_1',\ldots,\xi_c')-E[g_{k,c}(\xi_1,\ldots,\xi_c)]\rvert\\
&=&\lvert E[H_k(\xi_1',\ldots,\xi_c',\xi_{c+1},\ldots,\xi_k)-H_k(\xi_1,\ldots,\xi_c,\xi_{c+1},\ldots,\xi_k)\vert \xi_1',\ldots,\xi_c']\rvert\\
&&\text{\ \ where each $\xi_i,\xi_i'\stackrel{i.i.d.}{\sim}F$}\\
&\leq&E[\lvert H_k(\xi_1',\ldots,\xi_c',\xi_{c+1},\ldots,\xi_k)-H_k(\xi_1,\ldots,\xi_c,\xi_{c+1},\ldots,\xi_k)\rvert\big\vert \xi_1',\ldots,\xi_c']\text{\ by Jensen's inequality}\\
&\leq& E[\frac{1}{k}\sum_{i=1}^c\sup_{x\in\mathcal X}\lvert h(x,\xi_i')-h(x,\xi_i)\rvert\big\vert \xi_1',\ldots,\xi_c']\text{\ by the first bound}\\
&=&\frac{1}{k}\sum_{i=1}^cE\left[\sup_{x\in\mathcal X}\lvert h(x,\xi_i')-h(x,\xi_i)\rvert\Bigg|\xi_i'\right].
\end{eqnarray*}
This completes the proof.\Halmos
\endproof
To state the other result, we need the Efron-Stein inequality:
\begin{lemma}[Efron-Stein inequality, Theorem 3.1 in \citeAPX{boucheron2013concentration}]\label{efron-stein inequality}
Let $X_1,\ldots,X_k$ be $k$ independent random elements and $f(X_1,\ldots,X_k)$ a function such that $E[(f(X_1,\ldots,X_k))^2]<\infty$, then
\begin{eqnarray*}
    &&Var(f(X_1,\ldots,X_k))\\
    &\leq& \frac{1}{2}\sum_{i=1}^kE[(f(X_1,\ldots,X_{i-1},X_i,X_{i+1},\ldots,X_k) - f(X_1,\ldots,X_{i-1},X'_i,X_{i+1},\ldots,X_k))^2]
\end{eqnarray*}
where $X'_i$ is an independent copy of $X_i$ for each $i=1,\ldots,k$.
\end{lemma}
Setting $X_i=\xi_i$ and $f$ to be the SAA kernel $H_k$ in Lemma \ref{efron-stein inequality} gives rise to the following variance bound for the SAA optimal value:
\begin{proposition}[Variance bound for SAA kernel]\label{var bound for SAA kernel}
Under Assumptions \ref{L2} and \ref{L2 strengthened}, for $\xi_1,\ldots,\xi_k\stackrel{i.i.d.}{\sim}F$ we have
\begin{equation*}
    Var(H_k(\xi_1,\ldots,\xi_k))=O\big(\frac{1}{k}\big).
\end{equation*}
\end{proposition}
\proof{Proof.}
Assumption \ref{L2} implies that $E[(H_k(\xi_1,\ldots,\xi_k))^2]<\infty$ as argued in the proof of Theorem \ref{thm:symmetric}, therefore by Lemma \ref{efron-stein inequality} we can write
\begin{eqnarray*}
Var(H_k(\xi_1,\ldots,\xi_k))&\leq& \frac{1}{2}\sum_{i=1}^kE[(H_k(\xi_1,\ldots,\xi_{i-1},\xi'_i,\xi_{i+1},\ldots,\xi_k) - H_k(\xi_1,\ldots,\xi_{i-1},\xi_i,\xi_{i+1},\ldots,\xi_k))^2]\\
&\leq& \frac{1}{2}\sum_{i=1}^kE[\frac{1}{k^2}\sup_{x\in\mathcal{X}}\lvert h(x,\xi'_i) - h(x,\xi_i)\rvert^2]\text{\ \ by Lemma \ref{bound lemma}}\\
&\leq& \frac{1}{2k}E[\sup_{x\in\mathcal{X}}\lvert h(x,\xi') - h(x,\xi)\rvert^2]\text{\ \ where }\xi',\xi\stackrel{i.i.d.}{\sim}F.
\end{eqnarray*}
Since Assumption \ref{L2 strengthened} implies that $E[\sup_{x\in\mathcal{X}}\lvert h(x,\xi') - h(x,\xi)\rvert^2]<\infty$, the $O(1/k)$ bound immediately follows.\Halmos
\endproof
\section{Proof of Theorems \ref{main thm} and \ref{V thm}}\label{proof:main}
We first provide the most general form of the central limit theorem for $U_{n,k}$ which will be cited at several places throughout the paper.
\begin{theorem}[General central limit theorem for $U_{n,k}$]\label{meta clt}
If Assumptions \ref{L2} and \ref{L2 strengthened} hold and $k$ is chosen such that \eqref{range:k} holds, then
\begin{equation*}
    \sqrt{n}(U_{n,k} - W_k) = k\sqrt{Var(g_k(\xi))}\cdot \mathcal Z_{n,k} + o_p(1)
\end{equation*}
where each $\mathcal Z_{n,k}$ is of mean $0$ and variance $1$, and every subsequence of $\{\mathcal Z_{n,k}\}$ such that
$k^2Var(g_k(\xi)) \cdot n^{\frac{\delta}{2+\delta}}\to\infty$ converges in distribution to the standard normal.
\end{theorem}
\proof{Proof of Theorem \ref{meta clt}.}
Lemma \ref{new_clt:U} implies that $U_{n,k} - \mathring{U}_{n,k}=o_p(1/\sqrt{n})$. Therefore we can write
\begin{eqnarray*}
\sqrt{n}(U_{n,k} - W_k)&=&\sqrt{n}(\mathring{U}_{n,k} - W_k) +\sqrt{n}(U_{n,k} - \mathring{U}_{n,k})\\
&=&\sqrt{n}\cdot \frac{k}{n}\sum_{i=1}^n(g_k(\xi_i)-W_k) + o_p(1).
% &=&k\sqrt{Var(g_k(\xi))}\cdot \mathcal{Z}_{n,k}\\
% &=&k\sqrt{Var(g_k(\xi))}\cdot \mathcal{Z}_{n,k}
\end{eqnarray*}
Define
\begin{equation*}
    \mathcal{Z}_{n,k}:=
    \begin{cases}
    \frac{\sum_{i=1}^n(g_k(\xi_i)-W_k)}{\sqrt{nVar(g_k(\xi))}}&\text{if }Var(g_k(\xi))>0\\
    \text{an arbitrary standard normal variable}&\text{if }Var(g_k(\xi))=0
    \end{cases}
\end{equation*}
then we have
\begin{equation*}
    \sqrt{n}(U_{n,k} - W_k)=k\sqrt{Var(g_k(\xi))}\mathcal{Z}_{n,k}+o_p(1)
\end{equation*}
for every $n$ and $k$. Note that by definition $\mathcal{Z}_{n,k}$ always has mean zero and variance one in either case. It remains to show the central limit convergence for qualified subsequences of $\{\mathcal{Z}_{n,k}\}$.

To show the central limit convergence, we verify the Lyapunov condition. By Lemma \ref{bound lemma}, denoting $\xi,\xi'\stackrel{i.i.d.}{\sim}F$, we have
\begin{eqnarray*}
E[|g_k(\xi)-W_k|^{2+\delta}]&\leq& E\left(\frac{1}{k}E\left[\sup_{x\in\mathcal X}|h(x,\xi')-h(x,\xi)|\Bigg|\xi\right]\right)^{2+\delta}\\
&\leq&\frac{1}{k^{2+\delta}}E\left[\sup_{x\in\mathcal X}|h(x,\xi')-h(x,\xi)|^{2+\delta}\right]\text{\ \ by Jensen's inequality}\\
&\leq&\frac{\tilde M}{k^{2+\delta}}
\end{eqnarray*}
for some $\tilde M<\infty$ by Assumption \ref{L2 strengthened}. Moreover, by the condition that $k^2Var(g_k(\xi))\cdot n^{\delta/(2+\delta)}\to\infty$ we have $Var(g_k(\xi))>0$ and hence $\mathcal{Z}_{n,k}=\sum_{i=1}^n(g_k(\xi_i)-W_k)/\sqrt{nVar(g_k(\xi))}$ for sufficiently large $n$. The Lyapunov condition can be verified as
\begin{equation*}
\frac{nE[|g_k(\xi)-W_k|^{2+\delta}]}{(nVar(g_k(\xi)))^{1+\delta/2}}\leq\frac{n\tilde M/k^{2+\delta}}{(nVar(g_k(\xi)))^{1+\delta/2}}=\frac{\tilde M}{(n^{\delta/(2+\delta)}\cdot k^2Var(g_k(\xi)))^{1+\delta/2}}\to 0
\end{equation*}
% $$\frac{nE|g_k(\xi)-W_k|^{2+\delta}}{(nVar(g_k(\xi)))^{1+\delta/2}}\leq\frac{n\tilde M/k^{2+\delta}}{(n\epsilon/k^2)^{1+\delta/2}}=\frac{\tilde M}{n^{\delta/2}\epsilon^{1+\delta/2}}\to0$$
as $n\to\infty$. The Lyapunov condition then implies the central limit theorem for every subsequence of $\{\mathcal{Z}_{n,k}\}$ such that $k^2Var(g_k(\xi))\cdot n^{\delta/(2+\delta)}\to\infty$.\Halmos
\endproof

We now prove Theorem \ref{main thm}:
\proof{Proof of Theorem \ref{main thm}.}
We only need to verify \eqref{range:k}. Since Assumptions \ref{L2} and \ref{L2 strengthened} hold, Proposition \ref{var bound for SAA kernel} states that $Var(H_k)=O(1/k)$, therefore with $k=o(n)$ we have $k^2E[(H_k-\mathring{H}_k)^2]\leq k^2Var(H_k)=k^2\cdot O(1/k)=O(k)=o(n)$ and hence the desired result follows from Theorem \ref{meta clt}.\Halmos
\endproof
% \proof{Proof of Theorem \ref{main thm}.}

%
\proof{Proof of Theorem \ref{V thm}.}
Let $c(n,k,s)$ count the number of mappings $\phi:\{1,2,\ldots,k\}\to\{1,2,\ldots,n\}$ such that $\lvert\phi(\{1,2,\ldots,k\})\rvert=s$, or equivalently, count the number of $\xi_{i_1},\ldots,\xi_{i_k}$ such that $i_1,\ldots,i_k$ covers $s$ distinct indices, and let $A_{n,s}$ be the average of all $H_k(\xi_{i_1},\ldots,\xi_{i_k})$ with $s$ distinct indices. In particular, $A_{n,k}=U_{n,k}$. The V-statistic can be expressed for a fixed $l\geq 0$ as
\begin{equation*}
n^kV_{n,k}=\sum_{s=k-l}^kc(n,k,s)A_{n,s}+\big(n^k-\sum_{s=k-l}^kc(n,k,s)\big)R_{n,l}
\end{equation*}
where $R_{n,l}$ is the average of all $H_k(\xi_{i_1},\ldots,\xi_{i_k})$ with at most $k-l-1$ distinct indices. We have
\begin{eqnarray}
\nonumber n^k(U_{n,k}-V_{n,k})&=&n^kU_{n,k}-\sum_{s=k-l}^kc(n,k,s)(U_{n,k}+A_{n,s}-U_{n,k})-\big(n^k-\sum_{s=k-l}^kc(n,k,s)\big)R_{n,l}\\
\nonumber&=&\big(n^k-\sum_{s=k-l}^kc(n,k,s)\big)(U_{n,k}-R_{n,l})-\sum_{s=k-l}^{k-1}c(n,k,s)(A_{n,s}-U_{n,k})\\
&=&\big(\sum_{s=1}^{k-l-1}c(n,k,s)\big)(U_{n,k}-R_{n,l})-\sum_{s=k-l}^{k-1}c(n,k,s)(A_{n,s}-U_{n,k}).\label{UV_decompose}
\end{eqnarray}
We want to show that
\begin{equation}\label{diff of U and V statistics}
E[(U_{n,k}-V_{n,k})^2]=o\big(\frac{1}{n}\big)
\end{equation}
so that $\sqrt{n}(U_{n,k} - V_{n,k})=o_p(1)$ and we can conclude $\sqrt{n}(V_{n,k}-W_k) = k\sqrt{Var(g_k(\xi))}\cdot \mathcal{Z}_{n,k}+o_p(1)$ based on Theorem \ref{main thm}. It suffices to show that the two terms in \eqref{UV_decompose} both have a second moment of order $o(n^{2k-1})$. To this end, we let
\begin{equation}\label{max_order}
l=\big\lfloor\frac{1}{2(1-2\gamma)}\big\rfloor
\end{equation}
the reason for which shall be clear later.

To bound the first term in \eqref{UV_decompose}, note that $c(n,k,s)$ can be written as
\begin{equation*}
c(n,k,s)=S(k,s)n(n-1)\cdots(n-s+1)
\end{equation*}
where $S(k,s)$ is the Stirling number of the second kind with parameters $k,s$, which is the number of partitions of a set of size $k$ into $s$ non-empty subsets. It's shown in \citeAPX{rennie1969stirling} that for $k\geq 2$ and $1\leq s\leq k-1$
\begin{equation}\label{bound_stirling}
S(k,s)\leq \frac{1}{2}\binom{k}{s}s^{k-s}.
\end{equation}
Hence
\begin{equation*}
\sum_{s=1}^{k-l-1}c(n,k,s)\leq\frac{1}{2} \sum_{s=1}^{k-l-1}\binom{k}{s}s^{k-s}n^s.
\end{equation*}
Note that the ratio between two neighboring $\binom{k}{s}s^{k-s}n^s$ is
\begin{equation*}
\binom{k}{s-1}(s-1)^{k-s+1}n^{s-1}\Big/\binom{k}{s}s^{k-s}n^s=\frac{(s-1)^{k-s+1}}{(k-s+1)s^{k-s-1}n}\leq \frac{s^2}{n}\leq \frac{k^2}{n}=o(1),
\end{equation*}
therefore
\begin{eqnarray*}
\sum_{s=1}^{k-l-1}c(n,k,s)&\leq&\frac{1}{2}\big(1+\sum_{s=1}^{k-l-2}\big(\frac{k^2}{n}\big)^s\big)\binom{k}{l+1}(k-l-1)^{l+1}n^{k-l-1}\\
   &\leq &\frac{1}{2(1-k^2/n)}\binom{k}{l+1}(k-l-1)^{l+1}n^{k-l-1}=O(k^{2l+2}n^{k-l-1})=O\big(\big(\frac{k^2}{n}\big)^{l+1}n^k\big).
\end{eqnarray*}
%\begin{equation*}
%\sum_{s=1}^{k-l-1}c(n,k,s)\leq\frac{1}{2} \left({\begin{matrix} % or pmatrix or bmatrix or Bmatrix or ...
%      k \\
%      l+1 \\
%   \end{matrix}}\right)(k-l-1)^{l+2}n^{k-l-1}=O(k^{2l+3}n^{k-l-1})=O(n^{k+(2\gamma-1)l+3\gamma-1}).
%\end{equation*}
For the particular choice of $l$ shown in \eqref{max_order}, the above bound is $o(n^{k-1/2})$. Under Assumption \ref{L2}, $U_{n,k}$ and $R_{n,l}$ satisfy $E[U_{n,k}^2](\text{ or }E[R_{n,l}^2])\leq E[\sup_{x\in\mathcal{X}}\lvert h(x,\xi) \rvert^2]<\infty$ by Minkowski inequality, therefore the first term in \eqref{UV_decompose} has second moments of order $o(n^{2k-1})$.

For the second term in \eqref{UV_decompose}, it suffices to show that for each $k-l\leq s\leq k-1$ it holds $c(n,k,s)^2E[(A_{n,s}-U_{n,k})^2]=o(n^{2k-1})$ since there are only $l$ of them. Since $l$ is now viewed as a constant, from the upper bound \eqref{bound_stirling} for $s\geq k-l$ it follows that $S(k,s)=O(k^{2(k-s)})$, resulting in $c(n,k,s)=O(k^{2(k-s)}n^{s})$. If we can argue that $E[(A_{n,s}-U_{n,k})^2]=O(k^{-2})$, then the second moment of each summand can be bounded as
\begin{equation*}
O(k^{4(k-s)-2}n^{2s})=O(n^{4\gamma(k-s)-2\gamma+2s})=O(n^{2k+2\gamma-2})
\end{equation*}
where the last equality holds because $\gamma<1/2$ hence $4\gamma(k-s)-2\gamma+2s$ increases in $s$. This implies a second moment of order $o(n^{2k-1})$ for each summand because $\gamma<1/2$. Now we show $E[(A_{n,s}-U_{n,k})^2]=O(k^{-2})$ by a coupling argument. The value of $A_{n,s}$ can be computed from the same resamples $\xi_{i_1},\ldots,\xi_{i_k}$ (with $k$ distinct data points) used to compute $U_{n,k}$, by first removing $k-s$ of the $k$ distinct resampled data points and then drawing from the remaining $s$ data points to fill in the $k-s$ positions. To be specific, we use $I_k=(I(1),\ldots,I(k))$ to represent a sequence of length $k$ where $I(j)\in\{1,\ldots,n\}$ for each $j\leq k$, define $\abs{I_k}$ to be the number of distinct indices in $I_k$. For convenience we denote by $I_k(j_1:j_2)=(I_k(j_1),\ldots,I_k(j_2))$ the sub-sequence for $1\leq j_1\leq j_2\leq k$ and $\bm\xi_{I_k}=(\xi_{I_k(1)},\ldots,\xi_{I_k(k)})$. Then for each without-replacement resample of size $k$ represented by $I_k$ with $\abs{I_k}=k$, we consider removing the last $k-s$ data points of the resample and replacing each of them with one of the first $s$ data points to obtain a new resample $I'_k$, so that $A_{n,s}$ can be computed as
\begin{equation*}
A_{n,s}=\frac{(n-k)!}{n!}\sum_{\abs{I_k}=k}\frac{1}{s^{k-s}}\sum_{\abs{I'_k}=s,I'_k(1:s)=I_k(1:s)}H_k(\bm\xi_{I'_k}).
\end{equation*}
This leads to
\begin{eqnarray}
\notag\lvert A_{n,s}-U_{n,k}\rvert&\leq &\frac{(n-k)!}{n!}\sum_{\abs{I_k}=k}\frac{1}{s^{k-s}}\sum_{\abs{I'_k}=s,I'_k(1:s)=I_k(1:s)}\abs{H_k(\bm\xi_{I'_k})-H_k(\bm\xi_{I_k})}\\
% &\leq &\frac{(n-k)!}{n!}\sum_{\abs{I_k}=k}\frac{1}{s^{k-s}}\sum_{\abs{I'_k}=s,I'_k(1:s)=I_k(1:s)}\sup_{x\in \mathcal X}\abs{\sum_{j=s+1}^k\frac{1}{k}(h(x,\xi_{I'_k(j)})-h(x,\xi_{I_k(j)}))}\\
\notag&\leq &\frac{(n-k)!}{n!}\sum_{\abs{I_k}=k}\frac{1}{s^{k-s}}\sum_{\abs{I'_k}=s,I'_k(1:s)=I_k(1:s)}\frac{1}{k}\sum_{j=s+1}^k\sup_{x\in \mathcal X}\abs{h(x,\xi_{I'_k(j)})-h(x,\xi_{I_k(j)})}\\
\notag&&\text{\ \ \ by Lemma \ref{bound lemma}}\\
\notag&\leq &\frac{1}{k}\sum_{j=s+1}^k\frac{(n-k)!}{n!s^{k-s}}\sum_{\abs{I_k}=k}\sum_{\abs{I'_k}=s,I'_k(1:s)=I_k(1:s)}\sup_{x\in \mathcal X}\abs{h(x,\xi_{I'_k(j)})-h(x,\xi_{I_k(j)})}\\
&=&\frac{k-s}{k}\frac{2}{n(n-1)}\sum_{1\leq i_1<i_2\leq n}\sup_{x\in\mathcal X}\lvert h(x,\xi_{i_1})-h(x,\xi_{i_2})\rvert\label{count distinct indices}\\
\notag&\leq &\frac{k-s}{k}\frac{2}{n(n-1)}\sum_{1\leq i_1<i_2\leq n}\big(\sup_{x\in\mathcal X}\lvert h(x,\xi_{i_1})\rvert + \sup_{x\in\mathcal{X}}\lvert h(x,\xi_{i_2})\rvert\big)\\
\notag&=&\frac{k-s}{k}\frac{2}{n}\sum_{i=1}^n\sup_{x\in\mathcal X}\lvert h(x,\xi_i)\rvert
\end{eqnarray}
where the equality \eqref{count distinct indices} holds because $I'_k(j)$ and $I_k(j)$ are distinct indices and the gross sum over $I_k,I'_k$ puts equal weights on each pair $(i_1,i_2)$. Due to Assumption \ref{L2}, we have
\begin{align*}
E[(A_{n,s}-U_{n,k})^2]\leq 4\Big(\frac{k-s}{k}\Big)^2E[\sup_{x\in\mathcal X}\lvert h(x,\xi)\rvert^2]=O\big(\frac{l^2}{k^2}\big)=O\big(\frac{1}{k^2}\big).
\end{align*}
by Minkowski inequality. This completes the proof.\Halmos
\endproof

\section{Proof of Theorems \ref{Lipschitz characterization of limit variance}, \ref{convex nondegeneracy} and \ref{recover SAA}}\label{sec:refined}
The proof of Theorems \ref{Lipschitz characterization of limit variance} and \ref{recover SAA} heavily rely on theories of empirical processes which we first present in Subsection \ref{subsec:empirical process theory}. The proof of Theorem \ref{Lipschitz characterization of limit variance} also involves non-trivial measurability issues of the optimum functional of the limit Gaussian process and/or the SAA which we deal with in Subsection \ref{subsec:measurability}. The main proofs are deferred to Subsection \ref{subsec:main proof for Lipschitz case}.
\subsection{Empirical Process Theory and Preparatory Results}\label{subsec:empirical process theory}
We introduce concepts in empirical processes and some notations. Denote by
\begin{equation}\label{centered function class}
\mathcal F:=\{h(x,\cdot)-Z(x):x\in \mathcal X\}
\end{equation}
the family of centered cost functions indexed by the decision $x\in\mathcal X$. Note that for centered functions the Lipschitz condition holds with a slightly larger constant than $M(\xi)$
\begin{equation*}
\abs{h(x_1,\xi)-Z(x_1)-(h(x_2,\xi)-Z(x_2))}\leq (M(\xi)+EM(\xi))\Vert x_1-x_2\Vert.
\end{equation*}
We define $l^{\infty}(\mathcal{X})$ as the metric space of all bounded function from $\mathcal{X}$ to $\R$, with the supremum distance $\Vert f-g\Vert_{\infty}:=\sup_{x\in\mathcal{X}}\lvert f(x)-g(x) \rvert$ for $f,g\in l^{\infty}(\mathcal{X})$. The stochastic process
\begin{equation}\label{empirical process}
    \mathbb{G}_k(\cdot):=\sqrt{k}\big(\frac{1}{k}\sum_{i=1}^kh(\cdot,\xi_i)-Z(\cdot)\big)\in l^{\infty}(\mathcal{X})
\end{equation}
indexed by the decision $x\in\mathcal{X}$, where $\xi_1,\ldots,\xi_k$ are independent and follow the distribution $F$, is called the empirical process. The function class $\mathcal{F}$ is called $F$-Donsker if
\begin{equation*}
    \mathbb{G}_k\Rightarrow \mathbb{G}\text{\ as }k\to\infty
\end{equation*}
where $\mathbb{G}\in l^{\infty}(\mathcal{X})$ is a centered Guassian process on $\mathcal{X}$ with covariance structure defined by $Cov(\mathbb{G}(x_1), \mathbb{G}(x_2))=Cov(h(x_1,\xi), h(x_2,\xi))$ for any $x_1,x_2\in\mathcal{X}$. Moreover, the Gaussian process $\mathbb{G}$ almost surely has uniformly continuous sample paths with respect to the intrinsic semimetric
\begin{equation*}
\rho(x_1,x_2):=\sqrt{Var(h(x_1,\xi) - h(x_2,\xi))}.
\end{equation*}
Note that, with Lipschitz continuity as stated in Assumption \ref{Lipschitz:decision}, the paths of $\mathbb{G}$ are also uniformly continuous with respect to the Euclidean distance. When the cost function $h$ is Lipschitz and the decision space $\mathcal{X}$ is compact, the function class $\mathcal{F}$ is well konwn to be $F$-Donsker, as the following proposition states:
\begin{proposition}[From page 17 of \citeAPX{kosorok2008introduction}]\label{donskerness}
If Assumption \ref{Lipschitz:decision} holds and the decision space $\mathcal{X}\subseteq\R^d$ is compact, then the function class defined in \eqref{centered function class} is $F$-Donsker.
\end{proposition}
Another convergence theorem we need is the argmax theorem that allows the translation of weak convergence from the empirical process to minimizers of its sample paths:
\begin{lemma}[Argmax theorem from Theorem 3.2.2 of \citeAPX{van1996weak}]\label{argmax theorem}
Let $\mathbb{M}_k,\mathbb{M}$ be stochastic processes indexed by a metric space $E$ such that $\mathbb{M}_k\Rightarrow \mathbb{M}$ in $l^{\infty}(K)$ for every compact $K\subseteq E$. Suppose that almost surely the sample path $\mathbb{M}(e),e\in E$ is lower semicontinuous and possesses a unique minimizer at a (random) point $e^*$, which as a random variable in $E$ is tight. If a sequence of random variables $e^*_k\in E$ is uniformly tight and satisfies $\mathbb{M}_k(e^*_k)\leq \inf_{e\in E}\mathbb{M}_k(e)+o_p(1)$, then $e^*_k\Rightarrow e^*$ in $E$.
\end{lemma}

Apart from the convergence of the empirical process $\mathbb{G}_k$, we will also need its moment bounds which we derive next through a series of results. For a vector $x\in R^d$, let $\Vert x\Vert$ be its $l_2$ norm, and for a random variable $X$ we define $\Vert X\Vert_p:=(E\abs{X}^p)^{1/p}$ for $p\geq 1$. We equip the function space $\mathcal F$ defined above with the norm $\Vert\cdot\Vert_2$. We denote by $N(\epsilon,\mathcal X,\Vert\cdot\Vert)$ the covering number, with ball size $\epsilon$, of the decision space, and by $N_{[\,]}(\epsilon,\mathcal F,\Vert\cdot\Vert_2)$ the bracketing number, with bracket size $\epsilon$, of the function space $\mathcal F$.

We need a few results adapted from \citeAPX{van1996weak}. The first result connects the complexity of the function space $\mathcal F$ to that of the decision space $\mathcal X$:
\begin{lemma}[Adapted from Theorem 2.7.11 of \citeAPX{van1996weak}]\label{bracket_by_cover}
Suppose Assumption \ref{Lipschitz:decision} holds and the decision space $\mathcal X\subseteq \R^d$ is compact, then for any $\epsilon>0$
\begin{equation*}
N_{[\,]}(4\epsilon\Vert M(\xi)\Vert_2,\mathcal F,\Vert\cdot\Vert_2)\leq N(\epsilon,\mathcal X,\Vert\cdot\Vert).
\end{equation*}
\end{lemma}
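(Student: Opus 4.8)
The plan is to follow the classical construction underlying Theorem 2.7.11 of \cite{van1996weak}, which reduces a bracketing cover of a Lipschitz-indexed function class to an ordinary cover of the index set through the pointwise Lipschitz envelope. First I would record the centered Lipschitz bound already noted just above the lemma, namely
$$\abs{h(x_1,\xi)-Z(x_1)-(h(x_2,\xi)-Z(x_2))}\leq (M(\xi)+EM(\xi))\Vert x_1-x_2\Vert,$$
and abbreviate $F(\xi):=M(\xi)+EM(\xi)$ as the envelope controlling the modulus of continuity of the map $x\mapsto h(x,\cdot)-Z(x)$. Under Assumption \ref{Lipschitz:decision} we have $EM^2(\xi)<\infty$, so by Minkowski's inequality $\Vert F\Vert_2<\infty$.

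The core step is the covering construction. Let $p=N(\epsilon,\mathcal X,\Vert\cdot\Vert)$ and choose a minimal $\epsilon$-net $x_1,\ldots,x_p$ of $\mathcal X$, which is finite since $\mathcal X$ is compact. For any $x\in\mathcal X$ pick $x_j$ with $\Vert x-x_j\Vert\leq\epsilon$; the centered Lipschitz bound then yields, pointwise in $\xi$,
$$\abs{h(x,\xi)-Z(x)-(h(x_j,\xi)-Z(x_j))}\leq\epsilon F(\xi).$$
Hence $h(x,\cdot)-Z(x)$ lies in the bracket $[\,h(x_j,\cdot)-Z(x_j)-\epsilon F,\ h(x_j,\cdot)-Z(x_j)+\epsilon F\,]$, whose $\Vert\cdot\Vert_2$-width equals $\Vert 2\epsilon F\Vert_2=2\epsilon\Vert F\Vert_2$. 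Since every element of $\mathcal F$ falls into one of the $p$ brackets centered at the net points, we obtain
$$N_{[\,]}(2\epsilon\Vert F\Vert_2,\mathcal F,\Vert\cdot\Vert_2)\leq N(\epsilon,\mathcal X,\Vert\cdot\Vert).$$

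Finally I would convert the envelope norm $\Vert F\Vert_2$ back to $\Vert M(\xi)\Vert_2$. Since $M(\xi)\geq 0$, Jensen's inequality gives $EM(\xi)\leq\Vert M(\xi)\Vert_2$, so by Minkowski $\Vert F\Vert_2=\Vert M(\xi)+EM(\xi)\Vert_2\leq\Vert M(\xi)\Vert_2+EM(\xi)\leq 2\Vert M(\xi)\Vert_2$, whence $2\epsilon\Vert F\Vert_2\leq 4\epsilon\Vert M(\xi)\Vert_2$. Because the bracketing number is non-increasing in the bracket size, $N_{[\,]}(4\epsilon\Vert M(\xi)\Vert_2,\mathcal F,\Vert\cdot\Vert_2)\leq N_{[\,]}(2\epsilon\Vert F\Vert_2,\mathcal F,\Vert\cdot\Vert_2)$, and combining with the previous display concludes the proof. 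I do not anticipate a genuine obstacle: the argument is a textbook covering construction, and the only points requiring care are bookkeeping ones, namely using the centered rather than the raw Lipschitz constant and tracking the factor of two that produces the stated $4\epsilon\Vert M(\xi)\Vert_2$ bracket size.
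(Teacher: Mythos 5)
Your proof is correct and follows essentially the same route as the paper: the paper gives no written proof, relying on the bracketing-by-covering construction of Theorem 2.7.11 in \cite{van1996weak} applied to the centered class with Lipschitz envelope $M(\xi)+EM(\xi)$, which is exactly what you reconstruct. Your bookkeeping — the brackets of width $2\epsilon\Vert M(\xi)+EM(\xi)\Vert_2$ around the net points, the bound $EM(\xi)\leq\Vert M(\xi)\Vert_2$ yielding the stated factor $4\epsilon\Vert M(\xi)\Vert_2$, and the monotonicity of bracketing numbers — matches the intended adaptation.
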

The second result gives an upper bound of the covering number of the decision space $\mathcal X$, hence an upper bound of the bracketing number of $\mathcal F$ because of the first result.
\begin{lemma}\label{cover_bd}
Let $D_{\mathcal X}$ be the diameter of the decision space $\mathcal X\subseteq \R^d$ with respect to the $L_2$ norm $\Vert\cdot\Vert$, then $N(\epsilon,\mathcal X,\Vert\cdot\Vert)\leq \big(3D_{\mathcal X}/\epsilon\big)^d$ for all $\epsilon\leq D_{\mathcal X}$.
\end{lemma}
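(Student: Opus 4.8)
The plan is a standard volume-comparison (packing) argument in $\R^d$, with $\Vert\cdot\Vert$ the Euclidean norm. First I would pass from covering to packing: take a maximal subset $\{x_1,\ldots,x_N\}\subseteq\mathcal X$ whose points are pairwise more than $\epsilon$ apart, i.e.~$\Vert x_i-x_j\Vert>\epsilon$ for $i\neq j$. By maximality no point of $\mathcal X$ can lie at distance $>\epsilon$ from every $x_i$ (otherwise it could be adjoined to the set, contradicting maximality), so $\{x_1,\ldots,x_N\}$ is itself an $\epsilon$-net for $\mathcal X$. Hence $N(\epsilon,\mathcal X,\Vert\cdot\Vert)\leq N$, and it suffices to bound the packing size $N$.

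To bound $N$, I would compare Lebesgue volumes. Since the centers are separated by more than $\epsilon$, the open balls $B(x_i,\epsilon/2)$ are pairwise disjoint. On the other hand, for any $y\in B(x_i,\epsilon/2)$ we have $\Vert y-x_1\Vert\leq\Vert y-x_i\Vert+\Vert x_i-x_1\Vert<\epsilon/2+D_{\mathcal X}$, using that the diameter of $\mathcal X$ is $D_{\mathcal X}$; thus every such ball is contained in $B(x_1,D_{\mathcal X}+\epsilon/2)$. Writing $c_d$ for the volume of the unit Euclidean ball, disjointness and containment give
\begin{equation*}
N\,c_d\,(\epsilon/2)^d\;\leq\;c_d\,(D_{\mathcal X}+\epsilon/2)^d,
\end{equation*}
so that $N\leq\big((2D_{\mathcal X}+\epsilon)/\epsilon\big)^d=\big(2D_{\mathcal X}/\epsilon+1\big)^d$.

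Finally I would simplify under the hypothesis $\epsilon\leq D_{\mathcal X}$, which forces $1\leq D_{\mathcal X}/\epsilon$ and hence $2D_{\mathcal X}/\epsilon+1\leq 3D_{\mathcal X}/\epsilon$. Combining with the two displays above yields $N(\epsilon,\mathcal X,\Vert\cdot\Vert)\leq N\leq(3D_{\mathcal X}/\epsilon)^d$, as claimed. There is no substantive obstacle here: the dimension-dependent constant $c_d$ cancels, so the only points requiring minor care are the packing-to-covering reduction and the constant bookkeeping needed to land exactly on the factor $3$ rather than a larger constant. The hypothesis $\epsilon\leq D_{\mathcal X}$ is precisely what makes the last step clean.
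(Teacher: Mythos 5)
Your proof is correct and follows essentially the same route as the paper: the paper simply cites Problem 6 in Section 2.1 of \cite{van1996weak} for the bound $(3R/\epsilon)^d$ on the $\epsilon$-packing number of a Euclidean ball of radius $R$, combined with the observations that covering numbers are dominated by packing numbers and that $\mathcal X$ is contained in a ball of radius $D_{\mathcal X}$. Your maximal-separated-set reduction and volume comparison is precisely the standard argument underlying that cited packing bound (with the hypothesis $\epsilon\leq D_{\mathcal X}$ absorbing the $+1$ into the constant $3$), so you have in effect made the paper's citation self-contained.
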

\proof{Proof.}
Problem 6 in Section 2.1 of \citeAPX{van1996weak} states that the $\epsilon$-packing number of a Euclidean ball of radius $R$ in $\R^d$ is bounded above by $(3R/\epsilon\big)^d$, and the lemma follows from the fact that the covering number is always no more than the packing number and that $\mathcal X$ can be contained in a Euclidean ball of radius $D_{\mathcal X}$.\Halmos
\endproof

The third result relates the first order moment of the maximum deviation to the bracketing number of $\mathcal{F}$.
\begin{lemma}[Adapted from Theorem 2.14.2 of \citeAPX{van1996weak}]\label{deviation:first_order}
Let $\tilde{h}(\xi)=\sup_{x\in\mathcal X}\abs{h(x,\xi)-Z(x)}$. We have for all $k$
\begin{equation*}
\sqrt{k}E\Big[\sup_{x\in \mathcal X}\big\lvert\frac{1}{k}\sum_{i=1}^kh(x,\xi_i)-Z(x)\big\rvert\Big]\leq C\Vert\tilde{h}(\xi)\Vert_2\int_{0}^1\sqrt{1+\log N_{[\,]}(\epsilon \Vert\tilde{h}(\xi)\Vert_2,\mathcal F,\Vert\cdot\Vert_2)}d\epsilon
\end{equation*}
where $C$ is a universal constant.
\end{lemma}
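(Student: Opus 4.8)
The plan is to recognize the left-hand side as the expected supremum of a centered empirical process and then invoke the maximal inequality Theorem 2.14.2 of \cite{van1996weak} directly. Writing $f_x(\cdot)=h(x,\cdot)-Z(x)$, the class $\mathcal F=\{f_x:x\in\mathcal X\}$ consists of functions with mean zero under $F$, so that $\frac1k\sum_{i=1}^kh(x,\xi_i)-Z(x)=\frac1k\sum_{i=1}^kf_x(\xi_i)$ is a centered empirical average. Multiplying by $\sqrt k$ turns this into the empirical process $\mathbb G_kf_x:=\frac1{\sqrt k}\sum_{i=1}^kf_x(\xi_i)$, whence the quantity on the left of the claimed inequality is exactly $E\Vert\mathbb G_k\Vert_{\mathcal F}:=E\sup_{x\in\mathcal X}|\mathbb G_kf_x|$.

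Next I would identify $\tilde h$ as an envelope for $\mathcal F$: by definition $|f_x(\xi)|=|h(x,\xi)-Z(x)|\leq\sup_{x'\in\mathcal X}|h(x',\xi)-Z(x')|=\tilde h(\xi)$ for every $x$. To ensure the right-hand side is meaningful I would check $\Vert\tilde h\Vert_2<\infty$, which follows from Assumption \ref{L2}: since $\sup_x|Z(x)|\leq E\sup_x|h(x,\xi)|\leq(E\sup_x|h(x,\xi)|^2)^{1/2}<\infty$, the triangle inequality gives $\tilde h(\xi)\leq\sup_x|h(x,\xi)|+\sup_x|Z(x)|$, so $E\tilde h^2<\infty$. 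With $F=\tilde h$ as envelope and the bracketing integral taken over $[0,1]$, Theorem 2.14.2 of \cite{van1996weak} yields precisely
$$E\Vert\mathbb G_k\Vert_{\mathcal F}\leq C\Vert\tilde h(\xi)\Vert_2\int_0^1\sqrt{1+\log N_{[\,]}(\epsilon\Vert\tilde h(\xi)\Vert_2,\mathcal F,\Vert\cdot\Vert_2)}\,d\epsilon,$$
which is the asserted bound and holds for every $k$ because the constant $C$ in that theorem is universal.

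The one point requiring care is measurability: Theorem 2.14.2 is stated with outer expectations, so to replace $E^*$ by ordinary $E$ I would verify that $\sup_x|\mathbb G_kf_x|$ is a genuine random variable. This holds because, under Assumption \ref{Lipschitz:decision} and compactness of $\mathcal X$, the map $x\mapsto f_x(\xi)$ is continuous, so the supremum over $\mathcal X$ agrees almost surely with the supremum over a fixed countable dense subset of $\mathcal X$; thus $\mathcal F$ is pointwise separable and the supremum is measurable. This measurability step is the only subtlety; the remainder is a direct translation of the target expression into the hypotheses of the cited maximal inequality.
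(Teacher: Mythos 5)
Your proposal is correct and matches the paper's treatment exactly: the paper states this lemma without proof as a direct adaptation of Theorem 2.14.2 of \cite{van1996weak}, which is precisely the maximal inequality you invoke, with $\tilde h$ as the envelope of $\mathcal F=\{h(x,\cdot)-Z(x):x\in\mathcal X\}$ and the left-hand side recognized as $E\sup_{x\in\mathcal X}|\mathbb G_k f_x|$. Your added checks---integrability of the envelope via Assumption \ref{L2} and measurability of the supremum via Lipschitz continuity in $x$ and compactness of $\mathcal X$ (which hold in the setting where the lemma is applied)---are sound housekeeping that the paper leaves implicit.
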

We also need the following result that translates an upper bound of the first order moment to one for higher order moments:
\begin{lemma}[Adapted from Theorem 2.14.5 of \citeAPX{van1996weak}]\label{deviation:high_order}
For any $p\geq 2$ it holds
\begin{equation*}
\sqrt k\Big(E\Big[\sup_{x\in \mathcal X}\big\lvert\frac{1}{k}\sum_{i=1}^kh(x,\xi_i)-Z(x)\big\rvert^p\Big]\Big)^{\frac{1}{p}}\leq C\Big(\sqrt{k}E\Big[\sup_{x\in \mathcal X}\big\lvert\frac{1}{k}\sum_{i=1}^kh(x,\xi_i)-Z(x)\big\rvert\Big]+k^{\frac{1}{p}-\frac{1}{2}}\Vert\tilde{h}(\xi)\Vert_p\Big)
\end{equation*}
where $C$ is a constant depending only on $p$, and $\tilde h$ is the same as in Lemma \ref{deviation:first_order}.
\end{lemma}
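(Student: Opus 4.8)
The plan is to recognize the left-hand side as the $L_p$ norm of the supremum of a centered empirical process, and then invoke Theorem 2.14.5 of \cite{van1996weak} essentially verbatim, with the sample size there playing the role of the resample size $k$. First I would introduce the empirical process indexed by the class $\mathcal F=\{h(x,\cdot)-Z(x):x\in\mathcal X\}$,
$$\mathbb G_k f=\sqrt k\Bigl(\frac1k\sum_{i=1}^k f(\xi_i)-E[f(\xi)]\Bigr),\qquad f\in\mathcal F.$$
The key observation is that each $f=h(x,\cdot)-Z(x)$ is already centered, since $E[f(\xi)]=Z(x)-Z(x)=0$. Hence $\mathbb G_k f=\sqrt k\bigl(\frac1k\sum_{i=1}^k h(x,\xi_i)-Z(x)\bigr)$, and
$$\Vert\mathbb G_k\Vert_{\mathcal F}=\sup_{x\in\mathcal X}\sqrt k\,\Bigl|\frac1k\sum_{i=1}^k h(x,\xi_i)-Z(x)\Bigr|.$$
Pulling the deterministic factor $\sqrt k$ out of the $L_p$ norm shows that the left-hand side of the claimed bound is exactly $\bigl\Vert\,\Vert\mathbb G_k\Vert_{\mathcal F}\,\bigr\Vert_p$.

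Next I would identify the natural envelope of $\mathcal F$ as $\tilde h(\xi)=\sup_{x\in\mathcal X}|h(x,\xi)-Z(x)|$, so that $\Vert F\Vert_p=\Vert\tilde h(\xi)\Vert_p$. With these identifications in place, Theorem 2.14.5 of \cite{van1996weak}, applied to the i.i.d.\ sample $\xi_1,\ldots,\xi_k$, gives
$$\bigl\Vert\,\Vert\mathbb G_k\Vert_{\mathcal F}\,\bigr\Vert_p\leq C\Bigl(\bigl\Vert\,\Vert\mathbb G_k\Vert_{\mathcal F}\,\bigr\Vert_1+k^{-1/2}\bigl\Vert\max_{1\le i\le k}\tilde h(\xi_i)\bigr\Vert_p\Bigr),$$
with $C$ depending only on $p$. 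The maximal summand is then controlled by the envelope via the elementary bound $\bigl\Vert\max_{i\le k}\tilde h(\xi_i)\bigr\Vert_p\leq\bigl(\sum_{i=1}^k E\tilde h(\xi_i)^p\bigr)^{1/p}=k^{1/p}\Vert\tilde h(\xi)\Vert_p$, turning the second summand into $k^{-1/2+1/p}\Vert\tilde h(\xi)\Vert_p$. Finally, unwinding $\bigl\Vert\,\Vert\mathbb G_k\Vert_{\mathcal F}\,\bigr\Vert_1=\sqrt k\,E\bigl[\sup_{x\in\mathcal X}|\frac1k\sum_{i=1}^k h(x,\xi_i)-Z(x)|\bigr]$ recovers precisely the stated inequality.

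Since the statement is a direct transcription of a standard empirical-process moment inequality, I do not anticipate a genuine analytic obstacle; the work is essentially bookkeeping to align the two notational conventions. The only points demanding care are (i) verifying the centering $E[f(\xi)]=0$ on $\mathcal F$, so that the scaled empirical mean coincides with $\mathbb G_k$, and (ii) matching the envelope $\tilde h$ (and its $L_p$ norm) with the envelope term appearing in \cite{van1996weak}, together with the maximal bound that converts $\Vert\max_i\tilde h(\xi_i)\Vert_p$ into $k^{1/p}\Vert\tilde h(\xi)\Vert_p$. The right-hand side is meaningful (finite) whenever $\Vert\tilde h(\xi)\Vert_p<\infty$, which holds for the value of $p$ used downstream under the paper's moment hypotheses (e.g.\ Assumption \ref{L2 strengthened} when $p=2+\delta$); otherwise the inequality is trivially true.
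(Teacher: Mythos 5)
Your proposal is correct and matches the paper's justification exactly: the paper gives no written proof for this lemma, citing it as a direct adaptation of Theorem 2.14.5 of \cite{van1996weak}, and your bookkeeping (centering of $\mathcal F$, identifying the envelope $\tilde h$, and bounding $\Vert\max_{i\le k}\tilde h(\xi_i)\Vert_p\le k^{1/p}\Vert\tilde h(\xi)\Vert_p$) is precisely the intended translation between the two notations. The only cosmetic caveat is that \cite{van1996weak} works with outer expectations to sidestep measurability of the supremum, a point the paper also suppresses.
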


Now we can derive moment bounds for the maximum deviation of the empirical process generated by the cost function. Specifically, we show that they can be controlled at the canonical rate $1/\sqrt k$ in the case of Lipschitz continuous cost function. We have:
\begin{proposition}\label{Lipschitz_k}
Suppose Assumptions \ref{Lipschitz:decision}, \ref{L2} and \ref{L2 strengthened} hold, and that the decision space $\mathcal X\subseteq\R^d$ is compact, then we have
\begin{equation*}
\sqrt{k}\Big(E\Big[\sup_{x\in \mathcal X}\big\lvert\frac{1}{k}\sum_{i=1}^kh(x,\xi_i)-Z(x)\big\rvert^{2+\delta}\Big]\Big)^{\frac{1}{2+\delta}}=O(1)\ \text{ as }k\to\infty
\end{equation*}
where $\delta$ is the same constant from Assumption \ref{L2 strengthened}.
\end{proposition}
\proof{Proof.}
First we conclude the following upper bound of the expected maximum deviation
\begin{eqnarray}
\notag&&\sqrt{k}E\Big[\sup_{x\in \mathcal X}\big\lvert\frac{1}{k}\sum_{i=1}^kh(x,\xi_i)-Z(x)\big\rvert\Big]\notag\\
\notag&\leq &C\Vert\tilde{h}(\xi)\Vert_2\int_{0}^1\sqrt{1+\log N\big(\frac{\epsilon \Vert \tilde h(\xi)\Vert_2}{4\Vert M(\xi)\Vert_2},\mathcal X,\Vert\cdot\Vert\big)}d\epsilon\text{\ \ by Lemmas \ref{deviation:first_order} and \ref{bracket_by_cover}}\notag\\
\notag &\leq &C\Vert\tilde{h}(\xi)\Vert_2\Big(1+\int_{0}^1\sqrt{\log N\big(\frac{\epsilon \Vert \tilde h(\xi)\Vert_2}{4\Vert M(\xi)\Vert_2},\mathcal X,\Vert\cdot\Vert\big)}d\epsilon\Big)\text{\ \ since\ }\sqrt{a+b}\leq \sqrt a+\sqrt b\notag\\
&\leq &C\Vert\tilde{h}(\xi)\Vert_2\Big(1+\int_{0}^{\frac{4D_{\mathcal X}\Vert M(\xi)\Vert_2}{\Vert \tilde h(\xi)\Vert_2}\wedge 1}\sqrt{d\log \frac{12D_{\mathcal X}\Vert M(\xi)\Vert_2}{\epsilon \Vert \tilde h(\xi)\Vert_2}}d\epsilon\Big)\\
\notag&&\text{\ \ \ by Lemma \ref{cover_bd} and }N(\epsilon,\mathcal X,\Vert\cdot\Vert)=1\text{ for }\epsilon\geq D_{\mathcal X}\notag\\
\notag&= &C\Vert\tilde{h}(\xi)\Vert_2+12CD_{\mathcal X}\Vert M(\xi)\Vert_2\int_{0}^{\frac{1}{3}\wedge \frac{\Vert \tilde h(\xi)\Vert_2}{12D_{\mathcal X}\Vert M(\xi)\Vert_2}}\sqrt{d\log \frac{1}{\epsilon}}d\epsilon\notag\\
&\leq &C'\Big(\Vert\tilde{h}(\xi)\Vert_2+\sqrt{d\log\big(3\vee\frac{12D_{\mathcal X}\Vert M(\xi)\Vert_2}{\Vert \tilde h(\xi)\Vert_2} \big)}(4D_{\mathcal X}\Vert M(\xi)\Vert_2\wedge \Vert\tilde{h}(\xi)\Vert_{2})\Big)<\infty\label{first_order_bd}
\end{eqnarray}
where $C'$ is another universal constant, and $\Vert\tilde{h}(\xi)\Vert_2<\infty$ because of Assumption \ref{L2}. Then we apply Lemma \ref{deviation:high_order} with $p=2+\delta$ to get
\begin{eqnarray*}
\sqrt{k}\Big(E\Big[\sup_{x\in \mathcal X}\big\lvert\frac{1}{k}\sum_{i=1}^kh(x,\xi_i)-Z(x)\big\rvert^{2+\delta}\Big]\Big)^{\frac{1}{2+\delta}}&\leq& C(\sqrt{k}E\Big[\sup_{x\in \mathcal X}\big\lvert\frac{1}{k}\sum_{i=1}^kh(x,\xi_i)-Z(x)\big\rvert\Big]+\Vert\tilde{h}(\xi)\Vert_{2+\delta})\\
&<&\infty
\end{eqnarray*}
where $\Vert\tilde{h}(\xi)\Vert_{2+\delta} < \infty$ because
\begin{eqnarray*}
\Vert\tilde{h}(\xi)\Vert_{2+\delta}^{2+\delta}&=&E_{\xi}\big[\sup_{x\in\mathcal X}\abs{h(x,\xi)-Z(x)}^{2+\delta}\big]\\
&\leq&E_{\xi}\big[\sup_{x\in\mathcal X}E_{\xi'}\abs{h(x,\xi)-h(x,\xi')}^{2+\delta}\big]\text{\ \ by Jensen's inequality}\\
&\leq&E_{\xi}E_{\xi'}\big[\sup_{x\in\mathcal X}\abs{h(x,\xi)-h(x,\xi')}^{2+\delta}\big]<\infty\text{\ \  by Assumption \ref{L2 strengthened}}.
\end{eqnarray*}
This concludes Proposition \ref{Lipschitz_k}.\Halmos
\endproof

\subsection{Measurability of the Optimum Functional}\label{subsec:measurability}
The statement of Theorem \ref{Lipschitz characterization of limit variance} involves the optimum functional $x^*_Y$ that is a minimizer of the limit Gaussian process, and our proof in Subsection \ref{subsec:main proof for Lipschitz case} also involves minimizers of the SAA. In this subsection we briefly certify their measurability for completeness using measurable selection theorems.

To introduce measurable selection, let $(\Omega, \mathcal{P}, P)$ be a probability space, $T$ be a compact metric space endowed with the Borel $\sigma$-algebra, and $\mathcal{G}$ be a set-valued function on $\Omega$ that maps each $\omega\in\Omega$ to a subset of $T$, then a measurable selection for $\mathcal{G}$ is a random variable (i.e., measurable function) $g:\Omega\to T$ such that $g(\omega)\in \mathcal{G}(\omega)$. We provide the following measurable selection theorem:
\begin{lemma}[Measurable selection]\label{measurable selection}
Let $(\Omega, \mathcal{P}, P)$ be a probability space, $T$ be a compact metric space endowed with the Borel $\sigma$-algebra. Let a function $v(t,\omega):T\times \Omega\to \R$ be such that $v(t,\cdot)$ is a random variable (i.e., measurable function) on $\Omega$ for each $t\in T$ and that the function $v(\cdot,\omega):T\to \R$ is continuous for each $\omega\in\Omega$. Then there exists a measurable function $g:\Omega \to T$ such that $v(g(\omega),\omega)=\min_{t\in T}v(t,\omega)$.
\end{lemma}
\proof{Proof.}
The proof is based on a classical measurable selection theorem, Theorem 5.3.1 from \citeAPX{srivastava2008course}. Once we show that $v(t,\omega)$ is measurable with respect to the product measure on the product space $T\times \Omega$, the lemma immediately follows from Theorem 5.3.1 in \citeAPX{srivastava2008course}. We thus prove product measurability. Since the space $T$ is compact, for any $\delta>0$ there exists a finite partition $\{T_{\delta}^1,T_{\delta}^2,\ldots \allowbreak, T_{\delta}^{M_{\delta}}\}$ of the space $T$ such that (1) $\sup_{t,t'\in T_{\delta}^m}d(t,t') < \delta$ for all $m=1,\ldots,M_{\delta}$, where $d(t,t')$ denotes the distance between $t$ and $t'$; (2) all $T_{\delta}^m$'s are measurable sets and disjoint. We choose $t_m\in T_{\delta}^m$ for each $m$, and then approximate $v$ via
$$\hat{v}_{\delta}(t,\omega)=\sum_{m=1}^{M_{\delta}}\mathbf{1}\{t\in T_{\delta}^m\}v(t_m,\omega).$$
Since a continuous function on a compact space is uniformly continuous, it is to see that $\lim_{\delta\to 0}\hat{v}_{\delta}(t,\omega)=v(t,\omega)$ for each $t\in T$ and $\omega\in\Omega$ on one hand. On the other hand, each $\hat{v}_{\delta}$ is measurable with respect to the product measure on $T\times \Omega$. Therefore, as the limit of $\hat{v}_{\delta}$, $v$ is also measurable with respect to the product measure on $T\times \Omega$.\Halmos
\endproof
In our setting, the metric space $T$ will be the compact decision space $\mathcal{X}$ or its quotient space (as defined later in Subsection \ref{subsec:main proof for Lipschitz case}), and the function $v$ from Lemma \ref{measurable selection} can be the SAA objective or the Gaussian process $Y$ on $\mathcal{X}^*$. Since both the SAA objective and the sample path of the Gaussian process are continuous with respect to the decision $x\in\mathcal{X}^*$, Lemma \ref{measurable selection} immediately ensures the existence of a measurable optimum of both the SAA and the Gaussian process.

\subsection{Main Proofs for Theorems \ref{Lipschitz characterization of limit variance}, \ref{convex nondegeneracy} and \ref{recover SAA}}\label{subsec:main proof for Lipschitz case}
\proof{Proof of Theorem \ref{Lipschitz characterization of limit variance}.}
The proof consists of three steps: We first restrict the optimization domain of the SAA from the whole decision space $\mathcal{X}$ to the set of optima $\mathcal{X}^*$ and show that the error incurred in the SAA and the limit variance is asymptotically negligible, then simplify the limit variance (with the full SAA replaced by the restricted SAA) that involves a growing size of data as the variance of a conditional expectation of the cost function through a probabilistic coupling argument, and finally use the argmax theorem and uniform integrability to conclude the desired convergence of the limit variance.

Before getting to the three steps, we define the so-call quotient space of the set of optima $\mathcal{X}^*$, denoted by $\overline{\mathcal{X}}^*$, that will be used in place of $\mathcal{X}^*$ to make the intrinsic semimetric $\rho(x_1,x_2)$ a metric in our setting. Formally, consider the equivalence relation $\sim$ on the set $\mathcal{X}^*$ defined by almost sure equality, i.e., $x_1\sim x_2$ if and only if $h(x_1,\xi)=h(x_2,\xi)$ almost surely. The set $\mathcal{X}^*$ can then be divided into disjoint equivalence classes such that for any $x_1,x_2\in\mathcal{X}^*$ we have $x_1\sim x_2$ if and only if they belong to the same equivalence class. The quotient space $\overline{\mathcal{X}}^*$ is then defined as the set of all equivalence classes of $\mathcal{X}^*$ with the metric
\begin{equation*}
    \overline{\rho}(\overline{x}_1, \overline{x}_2):=\rho(x_1,x_2)\text{ where }x_1\in\overline{x}_1\text{ and }x_2\in\overline{x}_2
\end{equation*}
for any $\overline{x}_1,\overline{x}_2\in \overline{\mathcal{X}}^*$. Note that the value of $\overline{\rho}(\overline{x}_1, \overline{x}_2)$ does not depend on the choice of $x_1$ and $x_2$ as long as they belong to $\overline{x}_1$ and $\overline{x}_2$ respectively, and that $\overline{\rho}(\overline{x}_1, \overline{x}_2)=0$ if and only if $\overline{x}_1$ and $\overline{x}_2$ are the same equivalence class. Therefore, $\overline{\mathcal{X}}^*$ is a metric space. Further more, it can be shown to be a compact space:
\begin{lemma}
The quotient space $\overline{\mathcal{X}}^*$ defined above as a metric space is compact.
\end{lemma}
\proof{Proof.}
Since $\overline{\mathcal{X}}^*$ is a metric space, it suffices to show that it is sequentially compact, i.e., every sequence in $\overline{\mathcal{X}}^*$ has a convergent subsequence with a limit in $\overline{\mathcal{X}}^*$. Let $\overline{x}_n$ be a sequence in $\overline{\mathcal{X}}^*$ and $x_n$ be a corresponding sequence in $\mathcal{X}^*$ such that $x_n\in\overline{x}_n$ for all $n$. Since $\mathcal{X}^*\subseteq \mathcal{X}$ is closed due to the Lipschitz continuity of $E[h(\cdot, \xi)]$ and $\mathcal{X}$ is compact, $\mathcal{X}^*$ is also compact, therefore there exists a subsequence $x_{n_i}$ of $x_n$ converging to some limit $x_{\infty}\in\mathcal{X}^*$. Let $\overline{x}_{\infty}\in\overline{\mathcal{X}}^*$ be the equivalence class containing $x_{\infty}$, then the subsequence $\overline{x}_{n_i}$ of $\overline{x}_n$ satisfies
% \begin{equation*}
$$\overline{\rho}(\overline{x}_{n_i}, \overline{x}_{\infty})=\rho(x_{n_i},x_{\infty})\to 0$$
% \end{equation*}
therefore converges to $\overline{x}_{\infty}$. This concludes the compactness of $\overline{\mathcal{X}}^*$.\Halmos
\endproof

We are now ready to present the main proof:
\vspace{5pt}

\textbf{Step One: Shrink the decision space of SAA from $\mathcal{X}$ to $\mathcal{X}^*$.} We define an approximation of the SAA optimal value
\begin{equation*}
    H_k^*(\xi_1,\ldots,\xi_k) = \min_{x\in\mathcal{X}^*}\frac{1}{k}\sum_{i=1}^kh(x,\xi_i)
\end{equation*}
and the corresponding $g_k^*(\xi)=E[H_k^*(\xi_1,\ldots,\xi_k)\vert \xi_1=\xi]$, where the original decision space $\mathcal{X}$ is replaced by the set of optima $\mathcal{X}^*$. We want to show that
\begin{equation}\label{neglibile error of limit variance}
    k\sqrt{Var(g_k(\xi))} - k\sqrt{Var(g_k^*(\xi))}=o(1) \text{ as }k\to\infty
\end{equation}
so that we can work with $k^2Var(g_k^*(\xi))$ instead without affecting the limit. According to the SAA asymptotics from \citeAPX{shapiro2009lectures} (equation (5.24) in Theorem 5.7) we have $\sqrt{k}(H_k-H^*_k)=o_p(1)$. We show that $k(H_k-H^*_k)^2$ is uniformly integrable, so that
\begin{equation}\label{diminishing gap between full and restricted SAA}
    kE[(H_k-H^*_k)^2]\to 0.
\end{equation}
To show uniform integrability, we state a simple lemma:
\begin{lemma}\label{continuity of SAA value}
We have $\max\{\abs{H_k-Z^*}, \abs{H_k^*-Z^*}\}\leq \sup_{x\in \mathcal X}\big\lvert\frac{1}{k}\sum_{i=1}^kh(x,\xi_i)-Z(x)\big\rvert$.
\end{lemma}
\proof{Proof.}
Let $x^*$ be an optimal solution of the original optimization \eqref{opt}, and $x^*_k$ be an optimal solution of the SAA formed by $\xi_1,\ldots,\xi_k$ on the original decision space $\mathcal{X}$. If $H_k\leq Z^*$, since $Z(x^*_k)\geq Z^*$, we have $\lvert H_k-Z^*\rvert \leq \lvert H_k-Z(x^*_k)\rvert\leq \sup_{x\in \mathcal X}\big\lvert\frac{1}{k}\sum_{i=1}^kh(x,\xi_i)-Z(x)\big\rvert$. Otherwise, if $H_k> Z^*$, then obviously $Z^*<H_k\leq \frac{1}{k}\sum_{i=1}^kh(x^*,\xi_i)$, hence again $\lvert H_k-Z^*\rvert \leq \lvert\frac{1}{k}\sum_{i=1}^kh(x^*,\xi_i)-Z(x^*)\rvert\leq \sup_{x\in \mathcal X}\big\lvert\frac{1}{k}\sum_{i=1}^kh(x,\xi_i)-Z(x)\big\rvert$. Since $x^*\in\mathcal{X}^*$, the inequality for $H_k^*$ follows from the exactly same argument with $\mathcal{X}$ replaced by $\mathcal{X}^*$.\Halmos
\endproof
Therefore Lemma \ref{continuity of SAA value} immediately forces that $\lvert H_k - H_k^*\rvert \leq 2\sup_{x\in\mathcal{X}}\lvert \frac{1}{k}\sum_{i=1}^kh(x,\xi_i) - Z(x) \rvert$, and hence $E[(\sqrt{k}\lvert H_k - H_k^* \rvert)^{2+\delta}]\leq E[2^{2+\delta}k^{1+\frac{\delta}{2}}\sup_{x\in\mathcal{X}}\lvert \frac{1}{k}\sum_{i=1}^kh(x,\xi_i) - Z(x) \rvert^{2+\delta}]=O(1)$, where the $O(1)$ bound comes from Proposition \ref{Lipschitz_k}, certifying the uniform integrability of $k(H_k-H^*_k)^2$. Now that we have $kE[(H_k-H^*_k)^2]\to 0$, we can write
\begin{eqnarray*}
\big\lvert \sqrt{Var(g_k(\xi))} - \sqrt{Var(g_k^*(\xi))} \big\rvert &\leq &\sqrt{Var(g_k(\xi) - g_k^*(\xi))} \text{\ \ by Cauchy-Schwarz inequality}\\
&\leq &\sqrt{Var(E[H_k(\xi_1,\ldots,\xi_k)-H_k^*(\xi_1,\ldots,\xi_k)\vert \xi_1=\xi])}\\
&= &\sqrt{\frac{1}{k}Var(\mathring{H_k-H_k^*})}\\
&&\text{\ \ where }\mathring{H_k-H_k^*}\text{ is the Hajek projection of }H_k-H_k^*\\
&\leq &\sqrt{\frac{1}{k}Var(H_k-H_k^*)}\\
&\leq &\sqrt{\frac{1}{k}E[(H_k-H_k^*)^2]}\\
&=&o\big(\frac{1}{k}\big).
\end{eqnarray*}
This proves \eqref{neglibile error of limit variance}.
\vspace{5pt}

\textbf{Step Two: Conditional expectations of the cost function $h$ as lower and upper bounds for $g_k^*(\xi)$.} We do so using a coupling argument that is similar to the one used in the proof of Lemma \ref{bound lemma}. To proceed, we have
\begin{eqnarray}
\notag k(g_k^*(\xi'_1) - E[g_k^*(\xi'_1)])&=&kE[H_k^*(\xi'_1,\ldots,\xi_k)\vert \xi'_1] - kE[H_k^*(\xi_1,\ldots,\xi_k)]\\
&&\text{\ \ where $\xi_1,\xi_1',\xi_2,\ldots,\xi_k$ are all independent}\notag\\
&=&E\left[\min_{x\in\mathcal X^*}\left\{h(x,\xi_1')+\sum_{i\neq1}h(x,\xi_i)\right\}\Bigg|\xi_1'\right]-E\left[\min_{x\in\mathcal X^*}\sum_{i=1}^kh(x,\xi_i)\right]\notag\\
&=&E\left[\min_{x\in\mathcal X^*}\left\{h(x,\xi_1')+\sum_{i\neq1}h(x,\xi_i)\right\}-\min_{x\in\mathcal X^*}\sum_{i=1}^kh(x,\xi_i)\Bigg|\xi_1'\right]\notag\\
\notag&\geq&E\left[h(x^*(\bm\xi'),\xi_1')+\sum_{i\neq1}h(x^*(\bm\xi'),\xi_i)-\sum_{i=1}^kh(x^*(\bm\xi'),\xi_i)\Bigg|\xi_1'\right]\\
\notag&&\text{\ \ where $x^*(\bm\xi')$ is a measurable optimum of the first SAA on }\mathcal{X}^*\\
\notag&=&E\left[h(x^*(\bm\xi'),\xi_1')-h(x^*(\bm\xi'),\xi_1)\big|\xi_1'\right]\\
\notag&=&E\left[h(x^*(\bm\xi'),\xi_1')-Z(x^*(\bm\xi'))\big|\xi_1'\right]\\
&=&E\left[h(x^*(\bm\xi'),\xi_1')-Z^*\big|\xi_1'\right]\text{\ \ since }x^*(\bm\xi')\in\mathcal{X}^*.
\end{eqnarray}
This gives a lower bound of $kg_k^*(\xi'_1)$. To obtain a similar upper bound, we use an optimal solution of the second SAA to write
\begin{eqnarray}
\notag k(g_k^*(\xi'_1)-E[g_k^*(\xi'_1)])&\leq&E\left[h(x^*(\bm\xi),\xi_1')+\sum_{i\neq1}h(x^*(\bm\xi),\xi_i)-\sum_{i=1}^kh(x^*(\bm\xi),\xi_i)\Bigg|\xi_1'\right]\\
\notag&&\text{\ \ where $x^*(\bm\xi)$ is a measurable optimum of the second SAA on }\mathcal{X}^*\\
\notag&=&E\left[h(x^*(\bm\xi),\xi_1')-h(x^*(\bm\xi),\xi_1)\big|\xi_1'\right].
\end{eqnarray}
Denoting by $\underline{g}_k(\xi'_1)$ and $\overline{g}_k(\xi'_1)$ the lower and upper bound functions derived above respectively, our plan is to show that
\begin{equation}\label{limit variance convergence}
    E[(\underline{g}_k(\xi'_1))^2]\to Var(E[h(x^*_Y,\xi)\vert \xi])\text{ as }k\to\infty
\end{equation}
and
\begin{equation}\label{diminishing gap of limit variance}
    E[(\overline{g}_k(\xi'_1) - \underline{g}_k(\xi'_1))^2]\to 0\text{ as }k\to\infty
\end{equation}
in Step three below, and then the conclusion of the theorem immediately follows because
\begin{eqnarray*}
k\sqrt{Var(g_k^*(\xi))} &=& \sqrt{E[(k(g_k^*(\xi'_1)-E[g_k^*(\xi'_1)]))^2]}\\
&\leq &\sqrt{E[(\underline{g}_k(\xi'_1))^2]} + \sqrt{E[(k(g_k^*(\xi'_1)-E[g_k^*(\xi'_1)]) - \underline{g}_k(\xi'_1))^2]}\text{\ \ by Minkowski inequality}\\
&\leq &\sqrt{E[(\underline{g}_k(\xi'_1))^2]} + \sqrt{E[(\overline{g}_k(\xi'_1) - \underline{g}_k(\xi'_1))^2]}\\
&\to& \sqrt{Var(E[h(x^*_Y,\xi)\vert \xi])}
\end{eqnarray*}
and similarly
\begin{eqnarray*}
k\sqrt{Var(g_k^*(\xi))}&\geq &\sqrt{E[(\underline{g}_k(\xi'_1))^2]} - \sqrt{E[(k(g_k^*(\xi'_1)-E[g_k^*(\xi'_1)]) - \underline{g}_k(\xi'_1))^2]}\\
&\geq &\sqrt{E[(\underline{g}_k(\xi'_1))^2]} - \sqrt{E[(\overline{g}_k(\xi'_1) - \underline{g}_k(\xi'_1))^2]}\\
&\to& \sqrt{Var(E[h(x^*_Y,\xi)\vert \xi])}
\end{eqnarray*}
hence $\lim_{k\to\infty}k\sqrt{Var(g_k(\xi))}=\lim_{k\to\infty}k\sqrt{Var(g_k^*(\xi))}= \sqrt{Var(E[h(x^*_Y,\xi)\vert \xi])}$. The equality between $Var(E[h(x^*_Y,\xi)\vert \xi])$ and $E[\kappa(x_Y^*,{x_Y^*}')]$ comes from rewriting the variance as
\begin{eqnarray*}
Var(E[h(x^*_Y,\xi)\vert \xi])&=&E[(E[h(x^*_Y,\xi)-Z^*\vert \xi])^2]\\
&=&E[E[(h(x^*_Y,\xi)-Z^*)(h({x_Y^*}',\xi)-Z^*)\vert \xi]]\text{\ \ by independence of $x_Y^*,{x_Y^*}'$}\\
&=&E[E[(h(x^*_Y,\xi)-Z^*)(h({x_Y^*}',\xi)-Z^*)\vert x_Y^*,{x_Y^*}']]\\
&=&E[\kappa(x_Y^*,{x_Y^*}')]
\end{eqnarray*}
where $Z^*=E[h(x_Y^*,\xi)]$ is the optimal value.
\vspace{5pt}

\textbf{Step Three: Prove \eqref{limit variance convergence} and \eqref{diminishing gap of limit variance}. }We need to work with the quotient space $\overline{\mathcal{X}}^*$ instead, and for a given $x\in\mathcal{X}^*$ we denote by $\overline{x}\in\overline{\mathcal{X}}^*$ the equivalence class that contains $x$. For convenience, we shall abuse the notation a bit by writing $h(\overline{x},\xi):=h(x,\xi)$. The lower and upper bounding functions can then be written in the form
\begin{equation*}
\begin{aligned}
    \underline{g}_k(\xi'_1) &= E\left[h(\overline{x}^*(\bm\xi'),\xi_1')-Z^*\big|\xi_1'\right]\\
    \overline{g}_k(\xi'_1) &= E\left[h(\overline{x}^*(\bm\xi),\xi_1')-h(\overline{x}^*(\bm\xi),\xi_1)\big|\xi_1'\right]
\end{aligned}
\end{equation*}
where $\overline{x}^*(\bm\xi'),\overline{x}^*(\bm\xi)\in\overline{\mathcal{X}}^*$. We first apply the argmax theorem to conclude the weak convergence of $\overline{x}^*(\bm\xi'),\overline{x}^*(\bm\xi)$. We need a result on the uniqueness of Gaussian process optimum:
\begin{lemma}[Uniqueness of Guassian process optimum, Lemma 2.6 in \citeAPX{kim1990cube}]\label{uniqueness of GP optimum}
Let $G$ be a Guassian process indexed by a compact metric space $T$ such that $G$ has continuous sample paths and $Var(G(t_1)-G(t_2))>0$ for every $t_1,t_2\in T$ and $t_1\neq t_2$. Then, with probability one, a sample path of $G$ achieves minimum at a unique point in $T$.
\end{lemma}
Recall that, when confined to $\mathcal{X}^*$, the empirical process $\mathbb{G}_k$ from \eqref{empirical process} is a scaled SAA objective, the limit Gaussian process $\mathbb{G}$ is the Gaussian process $Y$. The way the quotient space $\overline{\mathcal{X}}^*$ is constructed ensures that, for every $\overline{x}_1,\overline{x}_2\in \overline{\mathcal{X}}^*$ such that $\overline{x}_1\neq \overline{x}_2$, we have $Var(h(\overline{x}_1,\xi) - h(\overline{x}_2,\xi))>0$, therefore Lemma \eqref{uniqueness of GP optimum} implies that $Y$ has a unique (measurable) minimizer $\overline{x}_Y^*$ on $\overline{\mathcal{X}}^*$. This verifies the uniqueness condition of the minimizer of the limit process that is required in Lemma \ref{argmax theorem}. For a fixed $\xi'_1$, the optimum $\overline{x}^*(\bm{\xi}')$ is a solution for the scaled SAA objective for $\overline{x}\in\overline{\mathcal{X}}^*$ defined as
\begin{eqnarray*}
    \mathbb{G}_k(\xi'_1)&:=&\sqrt{k}\Big(\frac{1}{k}\big(h(\overline{x},\xi'_1)+\sum_{i\neq 1}h(\overline{x},\xi_i)\big) - Z(\overline{x})\Big)\\
    &=&\frac{1}{\sqrt{k}}h(\overline{x},\xi'_1) + \frac{\sqrt{k}}{\sqrt{k-1}}\cdot \sqrt{k-1}\Big(\frac{1}{k-1}\sum_{i\neq 1}h(\overline{x},\xi_i) - Z(\overline{x})\Big).
\end{eqnarray*}
Note that since all other $\xi_i,i\neq 1$ are independent of $\xi'_1$, the term $\sqrt{k-1}\big(1/(k-1)\cdot \sum_{i\neq 1}h(\overline{x},\xi_i) - Z(\overline{x})\big)$ is an empirical process with the same weak limit $Y$. The term $1/\sqrt{k}\cdot h(\overline{x},\xi'_1)\Rightarrow 0$ and $\sqrt{k}/\sqrt{k-1}\to 1$, therefore by Slutsky's theorem $\mathbb{G}_k(\xi'_1)\Rightarrow Y$ on $\overline{\mathcal{X}}^*$ for every fixed $\xi'_1$. By Lemma \ref{argmax theorem} we have $\overline{x}^*(\bm{\xi}')\Rightarrow \overline{x}_Y^*$ for every fixed $\xi'_1$. Since the cost function is Lipschitz continuous and $\mathcal{X}^*$ is compact, the cost function $h(\cdot, \xi'_1)$ is also continuous on $\overline{\mathcal{X}}^*$ with respect to the metric $\overline{\rho}$. Therefore by the continuous mapping theorem we have \begin{equation*}
    h(\overline{x}^*(\bm\xi'),\xi_1')\Rightarrow h(\overline{x}_Y^*, \xi_1')\text{ as $k\to\infty$ for every fixed }\xi_1'.
\end{equation*}
Similarly we can show that
\begin{equation*}
    h(\overline{x}^*(\bm\xi),\xi_1') - h(\overline{x}^*(\bm\xi),\xi_1)\Rightarrow h(\overline{x}_Y^*, \xi_1')-h(\overline{x}_Y^*, \xi_1)\text{ as $k\to\infty$ for every fixed }\xi_1',\xi_1.
\end{equation*}
Since $h(\overline{x}^*(\bm\xi'),\xi_1')$ and $h(\overline{x}^*(\bm\xi),\xi_1') - h(\overline{x}^*(\bm\xi),\xi_1)$ for fixed $\xi_1,\xi_1'$ are bounded due to the continuity of $h$ on $\overline{\mathcal{X}}^*$ and compactness, uniform integrability holds and we immediately have that
% \begin{eqnarray*}
%     \underline{g}_k(\xi'_1) &\to& E\left[h(\overline{x}_Y^*,\xi_1')-Z^*\big|\xi_1'\right]\\
%     \overline{g}_k(\xi'_1) &\to& E\left[h(\overline{x}_Y^*,\xi_1')-h(\overline{x}_Y^*,\xi_1)\big|\xi_1'\right]=E\left[h(\overline{x}_Y^*,\xi_1')-Z^*\big|\xi_1'\right]
% \end{eqnarray*}
\begin{equation*}
\begin{aligned}
    \underline{g}_k(\xi'_1) &\to E\left[h(\overline{x}_Y^*,\xi_1')-Z^*\big|\xi_1'\right]\\
    \overline{g}_k(\xi'_1) &\to E\left[h(\overline{x}_Y^*,\xi_1')-h(\overline{x}_Y^*,\xi_1)\big|\xi_1'\right]=E\left[h(\overline{x}_Y^*,\xi_1')-Z^*\big|\xi_1'\right]
\end{aligned}
\end{equation*}
almost surely as $k\to\infty$, where $\overline{x}_Y^*$ is independent of $\xi_1,\xi_1'$. We now want to prove uniform integrability of $\underline{g}_k^2(\xi'_1)$ and $(\overline{g}_k(\xi'_1) -\underline{g}_k(\xi'_1))^2$ to conclude \eqref{limit variance convergence} and \eqref{diminishing gap of limit variance}. To this end we write
\begin{eqnarray*}
E\big[\lvert \underline{g}_k(\xi_1')\rvert^{2+\delta}\big] &=& E\big[\lvert E\big[h(\overline{x}^*(\bm{\xi}'),\xi_1')-Z^*\vert \xi_1'\big] \rvert^{2+\delta}\big]\\
&\leq & E\big[E\big[\lvert h(\overline{x}^*(\bm{\xi}'),\xi_1')-Z^*\rvert^{2+\delta} \vert \xi_1'\big]\big]\text{\ \ by Jensen's inequality}\\
&\leq &E\big[E\big[\lvert h(\overline{x}^*(\bm{\xi}'),\xi_1')-h(\overline{x}^*(\bm{\xi}'),\xi_1)\rvert^{2+\delta} \vert \xi_1'\big]\big]\text{\ \ again by Jensen's inequality}\\
&\leq &E\big[E\big[\sup_{\overline{x}\in\overline{\mathcal{X}}^*}\lvert  h(\overline{x},\xi_1')-h(\overline{x},\xi_1)\rvert^{2+\delta} \vert \xi_1'\big]\big]\\
&\leq &E\big[\sup_{x\in\mathcal{X}}\lvert  h(x,\xi_1')-h(x,\xi_1)\rvert^{2+\delta}\big]\leq \infty\text{\ \ by Assumption \ref{L2 strengthened}}.
\end{eqnarray*}
Hence $\underline{g}_k^2(\xi'_1)$ is uniformly integrable. The same argument also proves uniform integrability of the upper bounding function $\overline{g}_k^2(\xi_1')$, and hence that of $(\overline{g}_k(\xi_1')-\underline{g}_k(\xi_1'))^2$. We can therefore conclude \eqref{limit variance convergence} and \eqref{diminishing gap of limit variance} by noting that $h(x_Y^*,\xi)=h(\overline{x}_Y^*,\xi)$. This completes the proof.\Halmos
\endproof

\proof{Proof of Theorem \ref{recover SAA}.}
We have shown in the proof of Theorem \ref{Lipschitz characterization of limit variance} that $E[(H_k - H_k^*)^2] = o(1/k)$ (see equation \ref{diminishing gap between full and restricted SAA}), and we have $Var(H_k^*)=(1/k)\cdot Var(h(x^*,\xi))$ when the optimum is essentially unique. This allows us to bound the high-order variance of the SAA kernel as
\begin{eqnarray*}
E(H_k-\mathring{H}_k)^2&=&Var(H_k)-Var(\mathring{H}_k)\\
&\leq& Var(H_k - H_k^*) + 2\sqrt{Var(H_k - H_k^*)Var(H_k^*)} + Var(H_k^*) - Var(\mathring{H}_k)\\
&&\text{\ \ \ by Cauchy-Schwarz inequality}\\
&\leq& E[(H_k - H_k^*)^2] + 2\sqrt{E[(H_k - H_k^*)^2]Var(H_k^*)} + Var(H_k^*) - Var(\mathring{H}_k)\\
&=& o\big(\frac{1}{k}\big) + 2\sqrt{o\big(\frac{1}{k}\big)O\big(\frac{1}{k}\big)} + \frac{1}{k}\cdot Var(h(x^*,\xi)) - kVar(g_k(\xi))\\
&=&o\big(\frac{1}{k}\big) + \frac{Var(h(x^*,\xi)) - k^2Var(g_k(\xi))}{k}\\
&=&o\big(\frac{1}{k}\big)\text{\ \ by Theorem \ref{Lipschitz characterization of limit variance}}.
\end{eqnarray*}
Therefore \eqref{range:k} holds with $k\leq n$ and hence the central limit theorem $\sqrt{n}(U_{n,k} - W_k)\Rightarrow N(0, Var(h(x^*,\xi)))$ follows from Theorem \ref{meta clt} and that $k^2Var(g_k(\xi))\to Var(h(x^*,\xi))$ as ensured by Theorem \ref{Lipschitz characterization of limit variance}.

The bias $W_k-Z^*$ can be bounded as
\begin{eqnarray*}
\lvert W_k - Z^*\rvert &=& \lvert E[H_k - H_k^*]\rvert\text{\ \ since }H_k^*=\frac{1}{k}\sum_{i=1}^k h(x^*,\xi_i)\\
&\leq &E[\lvert H_k - H_k^*\rvert]\text{\ \ by Jensen's inequality}\\
&\leq &\sqrt{E[(H_k - H_k^*)^2]}\\
&=&o\big(\frac{1}{\sqrt{k}}\big).
\end{eqnarray*}
The central limit theorem $\sqrt{n}(U_{n,k}-Z^*)\Rightarrow N(0, Var(h(x^*,\xi)))$ with $k\geq \epsilon n$ then follows from the bias being $W_k-Z^*=o(1/k)=o(1/\sqrt{\epsilon n})=o(1/\sqrt{n})$.\Halmos
\endproof

\proof{Proof of Theorem \ref{convex nondegeneracy}.}
Under Assumption \ref{Lipschitz:decision} the expected cost function $Z(x)$ is continuous on $\mathcal{X}$, which together with the condition that $\mathcal{X}$ is convex and compact implies that the set of optima $\mathcal{X}^*$ is convex and compact.

We argue that, for almost every $\xi$, $h(x,\xi)$ must be affine in $x$ on $\mathcal{X}^*$. Since $h(x,\xi)$ is convex in $x$, we have $h(\lambda x_1 + (1-\lambda) x_2, \xi) \leq \lambda h(x_1,\xi) + (1-\lambda) h(x_2,\xi)$ for every $\lambda\in[0, 1]$ and every $x_1,x_2\in\mathcal{X}^*$. The convexity of $\mathcal{X}^*$ implies that $\lambda x_1 + (1-\lambda) x_2 \in \mathcal{X}^*$ and hence $E[h(\lambda x_1 + (1-\lambda) x_2, \xi)] = \lambda E[h(x_1,\xi)] + (1-\lambda) E[h(x_2,\xi)]=Z^*$, which implies that for each fixed $(x_1,x_2,\lambda)\in \mathcal{X}^*\times \mathcal{X}^* \times [0,1]$ we have $h(\lambda x_1 + (1-\lambda) x_2, \xi) = \lambda h(x_1,\xi) + (1-\lambda) h(x_2,\xi)$ for almost every $\xi$. Since $\mathcal{X}^*$ as a compact set in $\R^d$ is separable, i.e., $\mathcal{X}^*$ has a countable dense (with respect to the standard Euclidean distance) subset, and so does the closed interval $[0,1]$, therefore the product space $\mathcal{X}^*\times \mathcal{X}^* \times [0,1]$ is also separable with a countable dense subset $\mathcal{S}\subseteq \mathcal{X}^*\times \mathcal{X}^* \times [0,1]$. By the countability of $\mathcal{S}$ we have for almost every $\xi$ that $h(\lambda x_1 + (1-\lambda) x_2, \xi) = \lambda h(x_1,\xi) + (1-\lambda) h(x_2,\xi)$ for every $(x_1,x_2,\lambda)\in \mathcal{S}$. Since both sides of the equality are continuous with respect to $x_1,x_2,\lambda$, equality on a dense subset implies global equality, i.e., for almost every $\xi$ we have $h(\lambda x_1 + (1-\lambda) x_2, \xi) = \lambda h(x_1,\xi) + (1-\lambda) h(x_2,\xi)$ for every $(x_1,x_2,\lambda)\in \mathcal{X}^*\times \mathcal{X}^* \times [0,1]$. This proves that $h(x,\xi)$ is affine on $\mathcal{X}^*$.

By affineness, there exists an $a(\xi)\in \R^d$ and a $b(\xi)\in \R$ such that $h(x,\xi)=a(\xi)^Tx + b(\xi)$ for $x\in\mathcal{X}^*$, and in particular, $h(x_Y^*,\xi)=a(\xi)^Tx_Y^* + b(\xi)$. Therefore the limit variance from Theorem \ref{Lipschitz characterization of limit variance} can be expressed as
\begin{eqnarray*}
Var(E[h(x_Y^*,\xi)\vert \xi])&=&Var(E[a(\xi)^Tx_Y^* + b(\xi)\vert \xi])\\
&=&Var(a(\xi)^TE[x_Y^*] + b(\xi))\\
&=&Var(h(E[x_Y^*], \xi)).
\end{eqnarray*}
This proves the theorem.\Halmos
\endproof

\section{Proof of Theorems \ref{consistency_IJ:U} and \ref{consistency_IJ:V}}\label{proof:consistency}
\proof{Proof of Theorem \ref{consistency_IJ:U}.}
\citeAPX{wager2017estimation} provides a proof in the context of random forests. Since their proof can be adapted to our optimization context, we shall directly borrow some intermediate results there which hold for general symmetric kernels and U-statistics, and only focus on parts that rely on the particular SAA kernel considered here. Readers are referred to the proof of Theorem 9 in \citeAPX{wager2017estimation} for explanations of the borrowed results.

%  Theorems \ref{main thm} and \ref{recover SAA} can be viewed as special cases of Theorem \ref{meta clt} where $E(H_k-\mathring{H}_k)^2$ is $O(1)$, $O(1/k)$ and $o(1/k)$ respectively.
% setting under Theorem \ref{new_clt:U} and, if not implied by \eqref{range:k}, the additional requirement $k\leq \theta n$ for some $\theta<1$

Note that, in both Theorems \ref{main thm} and \ref{recover SAA}, the resample size $k$ is chosen such that \eqref{range:k} holds, therefore it suffices to prove the theorem under the relaxed condition that \eqref{range:k} holds and that $k\leq \theta n$ for some $\theta<1$. The IJ variance estimator now can be expressed as
\begin{eqnarray}
\nonumber\frac{n^2}{(n-k)^2}\sum_{i=1}^n\mathrm{Cov}_*^2(N_i^*,H_k^*)&=&\frac{n^2}{(n-k)^2}\sum_{i=1}^n(E_*[H_k^*\sum_{j=1}^k\mathbf{1}(\xi_{i_j}=\xi_i)]-E_*[N_i^*]E_*[H_k^*])^2\\
\nonumber&=&\frac{n^2}{(n-k)^2}\sum_{i=1}^n(kE_*[H_k^*\mathbf{1}(\xi_{i_1}=\xi_i)]-\frac{k}{n}U_{n,k})^2\\
&=&\frac{n^2}{(n-k)^2}\frac{k^2}{n^2}\sum_{i=1}^n(E_*[H_k^*|\xi_{i_1}=\xi_i]-U_{n,k})^2\label{IJ:U}\\
\nonumber&=&\frac{k^2}{(n-k)^2}\sum_{i=1}^n(A_i+R_i)^2
\end{eqnarray}
where $\xi_{i_1},\ldots,\xi_{i_k}$ are resampled from $\xi_1,\ldots,\xi_n$ without replacement, and
\begin{align*}
A_i&=E_*[\mathring{H}_k^*|\xi_{i_1}=\xi_i]-E_*[\mathring{H}_k^*]\\
R_i&=E_*[H_k^*-\mathring{H}_k^*|\xi_{i_1}=\xi_i]-E_*[H_k^*-\mathring{H}_k^*].
\end{align*}
We aim to show that
\begin{equation}\label{A+R}
\frac{k^2}{(n-k)^2}\sum_{i=1}^nA_i^2 = \frac{k^2Var(g_k(\xi))}{n}+o_p\big(\frac{1}{n}\big),\;\;\frac{k^2}{(n-k)^2}\sum_{i=1}^nR_i^2=o_p\big(\frac{1}{n}\big).
\end{equation}
Since $k^2Var(g_k(\xi))=kVar(\mathring{H}_k)\leq kVar(H_k)=O(1)$ by Proposition \ref{var bound for SAA kernel}, we see that $k^2/(n-k)^2\cdot\sum_{i=1}^nA_i^2=O_p(1/n)$ and hence the cross term $k^2/(n-k)^2\cdot \sum_{i=1}^n2A_iR_i=o_p(1/n)$. Therefore the desired conclusion immediately follows once \eqref{A+R} is proved.

First we deal with $R_i$'s. Lemma 13 in \citeAPX{wager2017estimation} shows that
\begin{align*}
ER_i^2=\sum_{s=2}^{k}(a_s+b_s)V_s^H
\end{align*}
where
\begin{equation*}
\begin{aligned}
a_s&=\binom{n-1}{s-1}\Big(\binom{k-1}{s-1}\Big/\binom{n-1}{s-1}-\binom{k}{s}\Big/\binom{n}{s}\Big)^2\\
b_s&=\binom{n-1}{s}\Big(\binom{k-1}{s}\Big/\binom{n-1}{s}-\binom{k}{s}\Big/\binom{n}{s}\Big)^2
\end{aligned}
\end{equation*}
with $b_k=0$, and $V_s^H$ is the variance of the $s$-th order function in the ANOVA decomposition of $H_k$ (see the discussion after Lemma \ref{anova}). Note that $Var(H_k)=\sum_{s=1}^k\binom{k}{s}V_s^H$ and $Var(\mathring{H}_k)=kV_1^H$. Some basic algebra shows that
\begin{equation*}
\frac{a_{s+1}/\binom{k}{s+1}}{a_s/\binom{k}{s}}=\frac{(s+1)(k-s)}{s(n-s)},\;\frac{b_{s+1}/\binom{k}{s+1}}{b_s/\binom{k}{s}}=\frac{(s+1)^2(k-s)}{s^2(n-s-1)}.
\end{equation*}
Therefore, if $k\leq \theta n$ for $\theta<1$, the above two ratios are both less than one when $s\geq s^*:=\max\{2,\lceil\sqrt \theta/(1-\sqrt\theta)\rceil\}$, meaning that the maximum of $a_s/\binom{k}{s}$ or $b_s/\binom{k}{s}$ over $s$ is attained at some $s\leq s^*$. Moreover, by upper bounding $(k-s)/(n-s-1)< 1$ we have for all $s\leq s^*$ that $b_s/\binom{k}{s}/\big(b_2/\binom{k}{2}\big)\leq s^2/4\leq s^{*2}/4$ and that $a_s/\binom{k}{s}/\big(a_2/\binom{k}{2}\big)\leq s/2\leq s^*/2\leq s^{*2}/4$. Hence
\begin{align*}
ER_i^2&\leq\frac{s^{*2}}{4}\frac{a_2+b_2}{\binom{k}{2}}\sum_{s=2}^{s^*}\binom{k}{s}V_s^H+\sum_{s=s^*+1}^{k}\frac{a_s+b_s}{\binom{k}{s}}\binom{k}{s}V_s^H\\
&\leq \frac{s^{*2}}{4}\frac{a_2+b_2}{\binom{k}{2}}\sum_{s=2}^{k}\binom{k}{s}V_s^H\leq C(\theta)\frac{(n-k)^2}{n^3}E(H_k-\mathring{H}_k)^2
\end{align*}
where $C(\theta)$ is a constant that only depends on $\theta$. This bound implies
%$ER_i^2\leq CE(H_k-\mathring{H}_k)^2/n$ for some universal constant $C$. Note that here the expectation is taken with respect to the original data $\xi_1,\ldots,\xi_n$ rather than the resampled data. In that lemma they only conclude the weaker result $ER_i^2\leq CVar(H_k)/n$, however, it is obvious from their proof that $Var(H_k)$ can be replaced by $E(H_k-\mathring{H}_k)^2$ since $R_i$ contains only high order ANOVA terms of $H_k$. This bound implies
\begin{equation}\label{IJ:remainder}
E\big[\frac{k^2}{(n-k)^2}\sum_{i=1}^nR_i^2\big]=O\big(\frac{k^2}{n^2}E(H_k-\mathring{H}_k)^2\big)=o\big(\frac{1}{n}\big)
\end{equation}
where the second equality follows from the condition \ref{range:k}.

Now we analyze the $A_i$'s. Lemma 12 in \citeAPX{wager2017estimation} shows that
\begin{equation*}
A_i=\big(1-\frac{k}{n}\big)(g_k(\xi_i)-W_k)+\big(\frac{k-1}{n-1}-\frac{k}{n}\big)\sum_{j\neq i}(g_k(\xi_j)-W_k)
\end{equation*}
therefore one can write
\begin{equation*}
\frac{(n-1)^2k^2}{n^2(n-k)^2}\sum_{i=1}^nA_i^2=\frac{k^2}{n}\Big(\frac{1}{n}\sum_{i=1}^n(g_k(\xi_i)-W_k)^2-(\bar g_k-W_k)^2\Big),\text{ where }\bar g_k=\frac{1}{n}\sum_{i=1}^ng_k(\xi_i).
\end{equation*}
Since $E[k^2(\bar g_k-W_k)^2/n]=k^2Var(g_k(\xi))/n^2=O(1/n^2)=o(1/n)$ it suffices to prove
\begin{equation}\label{g_k:LLN}
\frac{1}{n}\sum_{i=1}^n(g_k(\xi_i)-W_k)^2=Var(g_k(\xi))+o_p\big(\frac{1}{k^2}\big)
\end{equation}
in order to justify the first equality in \eqref{A+R}. To proceed, we write
\begin{eqnarray*}
    \frac{1}{n}\sum_{i=1}^n(g_k(\xi_i)-W_k)^2 &=& \frac{1}{n}\sum_{i=1}^n(g_k(\xi_i)-W_k)^2\cdot \mathbf{1}\{k^2Var(g_k(\xi))>\frac{1}{n^{\delta/(4+2\delta)}}\} +\\
    &&\hspace{2em}\frac{1}{n}\sum_{i=1}^n(g_k(\xi_i)-W_k)^2\cdot \mathbf{1}\{k^2Var(g_k(\xi))\leq \frac{1}{n^{\delta/(4+2\delta)}}\}
\end{eqnarray*}
where $\delta$ is the constant from Assumption \ref{L2 strengthened}, we aim to show \eqref{g_k:LLN} in either case. When $k^2Var(g_k(\xi))>1/n^{\delta/(4+2\delta)}$, we can calculate the expected value
\begin{equation*}
    E\Big[\frac{1}{n}\sum_{i=1}^n(g_k(\xi_i)-W_k)^2\Big] = Var(g_k(\xi))=O\big(\frac{1}{k^2n^{\delta/(4+2\delta)}}\big)
\end{equation*}
and therefore
\begin{eqnarray*}
\frac{1}{n}\sum_{i=1}^n(g_k(\xi_i)-W_k)^2-Var(g_k(\xi))&=&O_p\big(\frac{1}{k^2n^{\delta/(4+2\delta)}}\big)-Var(g_k(\xi))\\
&=&O_p\big(\frac{1}{k^2n^{\delta/(4+2\delta)}}\big)-O\big(\frac{1}{k^2n^{\delta/(4+2\delta)}}\big)\\
&=&o_p\big(\frac{1}{k^2}\big).
\end{eqnarray*}
To prove \eqref{g_k:LLN} when $k^2Var(g_k(\xi))>1/n^{\delta/(4+2\delta)}$, we need the following weak law of large numbers:
\begin{lemma}[Theorem 2.2.9 from \citeAPX{durrett2010probability}]\label{WLLN}
For each $n$ let $Y_{n,i},1\leq i\leq n$ be independent. Let $b_n>0$ with $b_n\to\infty$, and let $\bar{Y}_{n,i}=Y_{n,i}\mathbf{1}(\abs{Y_{n,i}}\leq b_n)$. Suppose that, as $n\to \infty$, $\sum_{i=1}^nP(\abs{Y_{n,i}}>b_n)\to 0$ and $b_n^{-2}\sum_{i=1}^nE\bar{Y}_{n,i}^2\to 0$, then
\begin{equation*}
\frac{\sum_{i=1}^nY_{n,i}-\sum_{i=1}^nE\bar{Y}_{n,i}}{b_n}\stackrel{p}{\to}0.
\end{equation*}
\end{lemma}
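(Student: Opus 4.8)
The plan is to prove this triangular-array weak law by the classical truncation-plus-Chebyshev argument (this is the route taken in Durrett). Write $S_n=\sum_{i=1}^n Y_{n,i}$ and $\bar S_n=\sum_{i=1}^n \bar Y_{n,i}$. The first step is to replace $S_n$ by its truncated version $\bar S_n$ at negligible probabilistic cost. Since $S_n$ and $\bar S_n$ can differ only when at least one summand exceeds the threshold, a union bound gives
\begin{equation*}
P(S_n\neq \bar S_n)\leq \sum_{i=1}^n P(\abs{Y_{n,i}}>b_n)\to 0
\end{equation*}
by the first hypothesis. Hence it suffices to establish the stated convergence with $\bar S_n$ in place of $S_n$, since the two resulting quantities coincide on an event whose probability tends to one.

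The second step controls the centered truncated sum. Because each $\bar Y_{n,i}$ is a (Borel) function of the independent $Y_{n,i}$, the variables $\bar Y_{n,i}$ are themselves independent for each fixed $n$, so
\begin{equation*}
\mathrm{Var}\!\left(\frac{\bar S_n}{b_n}\right)=\frac{1}{b_n^2}\sum_{i=1}^n \mathrm{Var}(\bar Y_{n,i})\leq \frac{1}{b_n^2}\sum_{i=1}^n E\bar Y_{n,i}^2\to 0
\end{equation*}
by the second hypothesis, using $\mathrm{Var}(\bar Y_{n,i})\leq E\bar Y_{n,i}^2$. Applying Chebyshev's inequality to $(\bar S_n-E\bar S_n)/b_n$, with $E\bar S_n=\sum_{i=1}^n E\bar Y_{n,i}$, then yields $(\bar S_n-\sum_{i} E\bar Y_{n,i})/b_n\stackrel{p}{\to}0$.

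Finally I would combine the two steps through the elementary bound, valid for every $\varepsilon>0$,
\begin{equation*}
P\!\left(\abs{\frac{S_n-\sum_{i=1}^n E\bar Y_{n,i}}{b_n}}>\varepsilon\right)\leq P(S_n\neq \bar S_n)+P\!\left(\abs{\frac{\bar S_n-\sum_{i=1}^n E\bar Y_{n,i}}{b_n}}>\varepsilon\right),
\end{equation*}
both terms of which tend to zero by the two preceding steps. This is a textbook result, so there is no serious obstacle; the only point deserving care is the centering. One must center at $\sum_{i} E\bar Y_{n,i}$, the mean of the truncated sum, rather than at $\sum_{i} E Y_{n,i}$, since the latter need not exist under the stated hypotheses. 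The lemma is deliberately stated with this truncated centering precisely so that it can be invoked for \eqref{g_k:LLN} using only the moment control already available there, without imposing any further integrability on the summands.
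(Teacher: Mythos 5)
Your proof is correct and is precisely the standard truncation-plus-Chebyshev argument of Durrett's Theorem 2.2.9; the paper itself gives no proof of this lemma, simply citing Durrett, so your argument coincides with the source proof the paper defers to. Your closing remark about centering at $\sum_{i}E\bar{Y}_{n,i}$ rather than $\sum_{i}EY_{n,i}$ is also the right observation, since no integrability of the untruncated $Y_{n,i}$ is assumed.
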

We apply the weak law to $Y_{n,i}=(g_k(\xi_i)-W_k)^2/Var(g_k(\xi))$ with $b_n=n$. We verify the two conditions $\sum_{i=1}^nP(\abs{Y_{n,i}}>b_n)\to 0$ and $b_n^{-2}\sum_{i=1}^nE\bar{Y}_{n,i}^2\to 0$. The first condition can be verified as
\begin{eqnarray*}
nP\big(\frac{(g_k(\xi_i)-W_k)^2}{Var(g_k(\xi))}>n\big)&=&nP(\abs{g_k(\xi_i)-W_k}^{2+\delta}>(nVar(g_k(\xi)))^{1+\frac{\delta}{2}})\\
&\leq&\frac{n}{(nVar(g_k(\xi)))^{1+\frac{\delta}{2}}}E\abs{g_k(\xi_i)-W_k}^{2+\delta}\text{\ by Markov inequality}\\
&\leq&\frac{n}{(nVar(g_k(\xi)))^{1+\frac{\delta}{2}}}\frac{\tilde M}{k^{2+\delta}}\text{\ by the proof of Theorem \ref{meta clt}}\\
&=&\frac{\tilde M}{n^{\frac{\delta}{2}}(k^2Var(g_k(\xi)))^{1+\frac{\delta}{2}}}=O(n^{-\frac{\delta}{4}})\to 0
\end{eqnarray*}
and the second condition is verified as
\begin{eqnarray*}
\frac{1}{n}E\left[\frac{(g_k(\xi_i)-W_k)^4}{(Var(g_k(\xi)))^2}\mathbf{1}\big(\frac{(g_k(\xi_i)-W_k)^2}{Var(g_k(\xi))}\leq n\big)\right]&\leq&\frac{1}{n}E\left[\frac{\abs{g_k(\xi_i)-W_k}^{2+\delta}}{(Var(g_k(\xi)))^{1+\frac{\delta}{2}}}n^{1-\frac{\delta}{2}}\mathbf{1}\big(\frac{(g_k(\xi_i)-W_k)^2}{Var(g_k(\xi))}\leq n\big)\right]\\
&\leq&\frac{1}{n^{\frac{\delta}{2}}}E\left[\frac{\abs{g_k(\xi_i)-W_k}^{2+\delta}}{(Var(g_k(\xi)))^{1+\frac{\delta}{2}}}\right]\\
&\leq &\frac{\tilde M}{n^{\frac{\delta}{2}}(k^2Var(g_k(\xi)))^{1+\frac{\delta}{2}}}=O(n^{-\frac{\delta}{4}})\to 0.
\end{eqnarray*}
The weak law of large number thus applies, and it remains to show that each $E\bar{Y}_{n,i}\to0$. This is proved as
\begin{eqnarray*}
&&\abs{1-E\left[\frac{(g_k(\xi_i)-W_k)^2}{Var(g_k(\xi))}\mathbf{1}\big(\frac{(g_k(\xi_i)-W_k)^2}{Var(g_k(\xi))}\leq n\big)\right]}\\
&=&\abs{E\left[\frac{(g_k(\xi_i)-W_k)^2}{Var(g_k(\xi))}\mathbf{1}\big(\frac{(g_k(\xi_i)-W_k)^2}{Var(g_k(\xi))}> n\big)\right]}\\
&\leq&\left(E\left[\frac{\abs{g_k(\xi_i)-W_k}^{2+\delta}}{(Var(g_k(\xi)))^{1+\frac{\delta}{2}}}\right]\right)^{\frac{2}{2+\delta}}\left(P\big(\frac{(g_k(\xi_i)-W_k)^2}{Var(g_k(\xi))}> n\big)\right)^{\frac{\delta}{2+\delta}}\text{\ \ by Holder's inequality}\\
&\leq &\left(\frac{\tilde M}{(k^2Var(g_k(\xi)))^{1+\frac{\delta}{2}}}\right)^{\frac{2}{2+\delta}}\left(\frac{1}{n}\right)^{\frac{\delta}{2+\delta}}=O\big(n^{-\frac{\delta}{4+2\delta}}\big)\to 0\text{\ \ by Markov inequality}.
\end{eqnarray*}
With all these conditions verified, we can conclude
\begin{eqnarray*}
    \frac{1}{n}\sum_{i=1}^n(g_k(\xi_i)-W_k)^2&=&Var(g_k(\xi))(1+o_p(1))\\
    &=&Var(g_k(\xi))+o_p(Var(g_k(\xi)))\\
    &=&Var(g_k(\xi))+o_p\big(\frac{1}{k^2}\big)
\end{eqnarray*}
from Lemma \ref{WLLN} in the case that $k^2Var(g_k(\xi))>1/n^{\delta/(4+2\delta)}$. Combining the two cases $k^2Var(g_k(\xi))\leq 1/n^{\delta/(4+2\delta)}$ and $k^2Var(g_k(\xi))>1/n^{\delta/(4+2\delta)}$ proves \eqref{g_k:LLN}, and hence completes the proof.\Halmos
\endproof

\proof{Proof of Theorem \ref{consistency_IJ:V}.}
Given Theorem \ref{consistency_IJ:U}, it suffices to show that the IJ variance estimator under resampling with replacement differs by only $o_p(1/n)$ from the one without replacement. Since quantities under both resampling with and without replacement will be involved in this proof, we attach $*$ to quantities under resampling without replacement, and $\tilde *$ to those with replacement. Note that $k=O(n^{\gamma})$ for some $\gamma<1/2$ which implies $n^2/(n-k)^2\to 1$, so the without-replacement IJ variance estimate without the factor $n^2/(n-k)^2$, i.e.~$\sum_{i=1}^n\mathrm{Cov}_*^2(N_i^*,H_k^*)$, is also consistent. We have
\begin{equation}\label{IJ:V}
\sum_{i=1}^n\mathrm{Cov}_{\tilde*}^2(N_i^{\tilde*},H_k^{\tilde*})=\frac{k^2}{n^2}\sum_{i=1}^n(E_{\tilde*}[H_k^{\tilde*}|\xi_{i_1}=\xi_i]-V_{n,k})^2
\end{equation}
where $\xi_{i_1},\ldots,\xi_{i_k}$ are resampled from $\xi_1,\ldots,\xi_n$ with replacement. By comparing \eqref{IJ:U} (without $n^2/(n-k)^2$) and \eqref{IJ:V} and using Cauchy Schwartz inequality
\begin{eqnarray*}
&&\abs{\sum_{i=1}^n\mathrm{Cov}_{\tilde*}^2(N_i^{\tilde*},H_k^{\tilde*})-\sum_{i=1}^n\mathrm{Cov}_*^2(N_i^*,H_k^*)}\\
&\leq& \frac{k^2}{n^2}\sum_{i=1}^n(v_i-u_i)^2+2\sqrt{\sum_{i=1}^n\mathrm{Cov}_{*}^2(N_i^{*},H_k^{*})\cdot\frac{k^2}{n^2}\sum_{i=1}^n(v_i-u_i)^2}
\end{eqnarray*}
where $v_i=E_{\tilde*}[H_k^{\tilde*}|\xi_{i_1}=\xi_i]-V_{n,k}$ and $u_i=E_{*}[H_k^*|\xi_{i_1}=\xi_i]-U_{n,k}$. If we show that $E(V_{n,k}-U_{n,k})^2=o(1/n)$ and $E(E_{\tilde*}[H_k^{\tilde*}|\xi_{i_1}=\xi_i]-E_*[H_k^*|\xi_{i_1}=\xi_i])^2=o(1/n)$, then $E[\sum_{i=1}^n(v_i-u_i)^2]=o(1)$ and under the condition $k=O(n^{\gamma})$ with $\gamma<1/2$ we have
\begin{equation*}
\sum_{i=1}^n\mathrm{Cov}_{\tilde*}^2(N_i^{\tilde*},H_k^{\tilde*})-\sum_{i=1}^n\mathrm{Cov}_*^2(N_i^*,H_k^*)=\frac{k^2}{n^2}o_p(1)+\sqrt{o_p\big(\frac{1}{n}\cdot\frac{k^2}{n^2}\big)}=o_p\big(\frac{1}{n}\big)
\end{equation*}
which concludes the theorem.

The first error $E(V_{n,k}-U_{n,k})^2=o(1/n)$ has been proved in the proof of Theorem \ref{V thm} (equation \eqref{diff of U and V statistics}). The second error $E(E_{\tilde*}[H_k^{\tilde*}|\xi_{i_1}=\xi_i]-E_*[H_k^*|\xi_{i_1}=\xi_i])^2=o(1/n)$ needs some further analysis. We study $E(E_{\tilde*}[H_k^{\tilde*}|\xi_{i_1}=\xi_1]-E_*[H_k^*|\xi_{i_1}=\xi_1])^2$ without loss of generality. Given that the first resampled data point $\xi_{i_1}$ is $\xi_1$, for any fixed integer $l\geq 0$ we obtain the following decomposition of $E_{\tilde*}[H_k^{\tilde*}|\xi_{i_1}=\xi_1]$ similar to that in the proof of Theorem \ref{V thm}
\begin{equation*}
n^{k-1}E_{\tilde*}[H_k^{\tilde*}|\xi_{i_1}=\xi_1]=\sum_{s=k-1-l}^{k-1}c(n-1,k-1,s)A_{s}+(n^{k-1}
-\sum_{s=k-1-l}^{k-1}c(n-1,k-1,s))R_l
\end{equation*}
where $A_s$ is the average of all $H_k(\xi_1,\xi_{i_2},\ldots,\xi_{i_k})$'s where $\xi_{i_2},\ldots,\xi_{i_k}$ contain exactly $s$ distinct data and none of them is $\xi_1$, and $R_l$ is the average of all other $H_k(\xi_1,\xi_{i_2},\ldots,\xi_{i_k})$'s. Note that, in particular, $A_{k-1}=E_{*}[H_k^*|\xi_{i_1}=\xi_1]$. We have the following analog of \eqref{UV_decompose}
\begin{eqnarray*}
&&n^{k-1}(E_{*}[H_k^*|\xi_{i_1}=\xi_1]-E_{\tilde*}[H_k^{\tilde*}|\xi_{i_1}=\xi_1])\\
&=&(n^{k-1}-\sum_{s=k-1-l}^{k-1}c(n-1,k-1,s))(A_{k-1}-R_l)-\sum_{s=k-1-l}^{k-2}c(n-1,k-1,s)(A_{s}-A_{k-1}).
\end{eqnarray*}
Note that the coefficient of the first term does not match the form of \eqref{UV_decompose}, but we have
\begin{equation*}
n^{k-1}-\sum_{s=k-1-l}^{k-1}c(n-1,k-1,s)=n^{k-1}-(n-1)^{k-1}+\sum_{s=1}^{k-l-2}c(n-1,k-1,s).
\end{equation*}
Like in the proof of Theorem \ref{V thm}
\begin{align*}
&\sum_{s=1}^{k-l-2}c(n-1,k-1,s)=O\big(\big(\frac{k^2}{n}\big)^{l+1}(n-1)^{k-1}\big),\;E(A_{k-1}-R_l)^2=O(1)\\
&c(n-1,k-1,s)=O(k^{2(k-1-s)}n^s)\text{ and }E(A_{s}-A_{k-1})^2=O\big(\frac{1}{k^2}\big)\text{ for }s\geq k-1-l.
\end{align*}
Moreover by Bernoulli's inequality $(1+x)^r\geq 1+rx$ for any integer $r\geq 0$ and real $x\geq -1$
\begin{equation*}
n^{k-1}-(n-1)^{k-1}=n^{k-1}(1-(1-\frac{1}{n})^{k-1})\leq n^{k-2}(k-1).
\end{equation*}
With all these bounds and Minkowski inequality we get
\begin{eqnarray*}
&&E(E_{*}[H_k^*|\xi_{i_1}=\xi_1]-E_{\tilde*}[H_k^{\tilde*}|\xi_{i_1}=\xi_1])^2\\
&=&O\left(\big(\frac{k}{n}+\big(\frac{k^2}{n}\big)^{l+1}\big)^2E(A_{k-1}-R_l)^2+\sum_{s=k-1-l}^{k-2}\big(\frac{k^2}{n}\big)^{2(k-1-s)}E(A_{s}-A_{k-1})^2\right)\\
&=&O\left(\big(\frac{k}{n}+\big(\frac{k^2}{n}\big)^{l+1}\big)^2+\frac{k^2}{n^2}\right)=o\big(\frac{1}{n}\big)
\end{eqnarray*}
when $l$ is chosen according to \eqref{max_order}.\Halmos
\endproof

\section{Proof of Theorem \ref{final_guarantee} and Corollary \ref{coverage}}\label{proof:final}
\proof{Proof of Theorem \ref{final_guarantee}.}
We denote by $\sigma^2_{IJ}$ the infinitesimal jackknife (IJ) variance estimate $n^2/(n-k)^2\sum_{i=1}^n\mathrm{Cov}_*^2(N_i^*,H_k^*)$ in the case of resampling without replacement, or $\sum_{i=1}^n\mathrm{Cov}_*^2(N_i^*,H_k^*)$ in the case of resampling with replacement. We prove the following three statements:
\begin{align}
    &\tilde Z_{n,k}^{bag}-U_{n,k}=o_p\big(\frac{1}{\sqrt{n}}\big),\tilde Z_{n,k}^{bag}-V_{n,k}=o_p\big(\frac{1}{\sqrt{n}}\big)\label{point estimate error}\\
    &\tilde{\sigma}^2_{IJ}=\sigma^2_{IJ}+o_p\big(\frac{1}{n}\big)\label{var estimate error}\\
    &\sqrt{\tilde{\sigma}^2_{IJ}+o_p\big(\frac{1}{n}\big)}=\tilde{\sigma}_{IJ} + o_p\big(\frac{1}{\sqrt{n}}\big).\label{sqrt var error}
\end{align}
Once we have these three results, the desired conclusion follows from the representation from Theorem \ref{main thm} as
\begin{eqnarray*}
\tilde Z_{n,k}^{bag}-W_k&=& U_{n,k}(\text{or }V_{n,k})-W_k+o_p\big(\frac{1}{\sqrt{n}}\big)\\
&=&\frac{k\sqrt{Var(g_k(\xi))}}{\sqrt{n}}\mathcal{Z}_{n,k}+o_p\big(\frac{1}{\sqrt{n}}\big)\text{\ \ by Theorem \ref{main thm} or \ref{V thm}}\\
&=&\sqrt{\sigma^2_{IJ}+o_p\big(\frac{1}{n}\big)}\mathcal{Z}_{n,k}+o_p\big(\frac{1}{\sqrt{n}}\big)\text{\ \ by Theorem \ref{consistency_IJ:U} or \ref{consistency_IJ:V}}\\
&=&\sqrt{\tilde \sigma^2_{IJ}+o_p\big(\frac{1}{n}\big)}\mathcal{Z}_{n,k}+o_p\big(\frac{1}{\sqrt{n}}\big)\text{\ \ by \eqref{var estimate error}}\\
&=&\big(\tilde \sigma_{IJ}+o_p\big(\frac{1}{\sqrt{n}}\big)\big)\mathcal{Z}_{n,k}+o_p\big(\frac{1}{\sqrt{n}}\big)\text{\ \ by \eqref{sqrt var error}}\\
&=&\tilde \sigma_{IJ}\mathcal{Z}_{n,k}+o_p\big(\frac{1}{\sqrt{n}}\big).
\end{eqnarray*}

% One is that $\tilde Z_{n,k}^{bag}-U_{n,k}=o_p(1/\sqrt n)$ when resampling without replacement, or $\tilde Z_{n,k}^{bag}-V_{n,k}=o_p(1/\sqrt n)$ with replacement, so that by Slutsky's theorem the CLTs still hold with $U_{n,k}$ or $V_{n,k}$ replaced by their estimate $\tilde Z_{n,k}^{bag}$.

% The first task is relatively easy. Note that $\tilde Z_{n,k}^{bag}$ is unbiased (for estimating $U_{n,k}$ and $V_{n,k}$ respectively) in either case, and
% \begin{equation}\label{bound:H_k^2}
% Var_*(\tilde Z_{n,k}^{bag})=\frac{1}{B}Var_*(H_k^{*})\leq \frac{1}{B}E_*H_k^{*2}\leq \frac{1}{Bn}\sum_{i=1}^n\sup_{x\in\mathcal X}\abs{h(x,\xi_i)}^2
% \end{equation}
% where the last inequality follows from the argument used in \eqref{interim22}. Due to Assumption \ref{L2} and the strong law of large numbers $\sum_{i=1}^n\sup_{x\in\mathcal X}\abs{h(x,\xi_i)}^2/n\stackrel{p}{\to}E\sup_{x\in\mathcal X}\abs{h(x,\xi)}^2<\infty$, hence $Var_*(\tilde Z_{n,k}^{bag})=O_p(1/B)$. If $B/(kn)\to\infty$ we have
% \begin{equation*}
% E_*(\tilde Z_{n,k}^{bag}-U_{n,k})^2=o_p\big(\frac{1}{kn}\big)=o_p\big(\frac{1}{n}\big),\;E_*(\tilde Z_{n,k}^{bag}-V_{n,k})^2=o_p\big(\frac{1}{kn}\big)=o_p\big(\frac{1}{n}\big)
% \end{equation*}
% For a non-negative random variable, if its conditional expectation is of order $o_p(1)$, then itself is also $o_p(1)$. Therefore $(\tilde Z_{n,k}^{bag}-U_{n,k})^2=o_p(1/n)$ and $(\tilde Z_{n,k}^{bag}-V_{n,k})^2=o_p(1/n)$.

We first prove \eqref{var estimate error}. We need to show
\begin{equation}\label{IJ var error in terms of cov}
    \abs{\sum_{i=1}^n\widehat{Cov}^2_*(N_i^*,\hat Z_k^*)-\sum_{i=1}^nCov_*^2(N_i^*,H_k^*)}=o_p(1/n).
\end{equation}
Note that showing this error also proves \eqref{var estimate error} for resampling without replacement, because the condition $k\leq \theta n$ for some $\theta<1$ implies $1\leq n^2/(n-k)^2\leq1/(1-\theta)^2$, hence the error remains $o_p(1/n)$ after multiplying the factor $n^2/(n-k)^2$ on both sides.

We first deal with resampling without replacement. By Cauchy Schwartz inequality the Monte Carlo error can be bounded as
\begin{equation*}
\abs{\sum_{i=1}^n\widehat{Cov}^2_*(N_i^*,\hat Z_k^*)-\sum_{i=1}^nCov_*^2(N_i^*,H_k^*)}\leq \sum_{i=1}^n(\widehat{Cov}_i-Cov_i)^2+2\sqrt{\sum_{i=1}^nCov_i^2\sum_{i=1}^n(\widehat{Cov}_i-Cov_i)^2}
\end{equation*}
where $Cov_i=Cov_*(N_i^*,H_k^*)$ and $\widehat{Cov}_i=\widehat{Cov}^2_*(N_i^*,\hat Z_k^*)$ for short. Since $\sum_{i=1}^nCov_i^2$ is the desired IJ variance of order $O_p(1/n)$, we only need to show $\sum_{i=1}^n(\widehat{Cov}_i-Cov_i)^2=o_p(1/n)$. By computing variances of the sample covariances one can get
\begin{eqnarray}
\nonumber&&E_*\big[\sum_{i=1}^n(\widehat{Cov}_i-Cov_i)^2\big]\\
\nonumber&\leq& \sum_{i=1}^n\left(\frac{1}{B}E_*[(H_k^*-E_*H_k^*)^2(N_i^*-\frac{k}{n})^2]+\frac{1}{B^2}Var_*(H_k^*)Var_*(N_i^*)+\frac{2}{B}Cov_i^2\right)\\
&\leq&\frac{1}{B}E_*[(H_k^*-E_*H_k^*)^2\sum_{i=1}^n(N_i^*-\frac{k}{n})^2]+\frac{1}{B^2}Var_*(H_k^*)\sum_{i=1}^nVar_*(N_i^*)+\frac{2}{B}\sum_{i=1}^nCov_i^2.\label{cov_bd}
\end{eqnarray}
Note that $\sum_{i=1}^nCov_i^2=O_p(1/n)$, and $\sum_{i=1}^n(N_i^*-\frac{k}{n})^2=k(n-k)/n,Var_*(N_i^*)=k(n-k)/n^2$ since $N_i^*=0$ or $1$ and $\sum_{i=1}^nN_i^*=k$. To bound $Var_*(H_k^*)$, we consider bounding its expected value
\begin{eqnarray*}
E[Var_*(H_k^*)] &=& E[E_*[H_k^{*2}]] - E[U_{n,k}^2]\\
&=&E\Big[\frac{1}{n(n-1)\cdots (n-k+1)}\sum_{i_1< i_2<\cdots< i_k}H_k^2(\xi_{i_1},\xi_{i_2},\ldots,\xi_{i_k})\Big] - E[U_{n,k}^2] \\
&=&\frac{1}{n(n-1)\cdots (n-k+1)}\sum_{i_1< i_2<\cdots< i_k}E[H_k^2(\xi_{i_1},\xi_{i_2},\ldots,\xi_{i_k})] - E[U_{n,k}^2]\\
&=&W_k^2 + Var(H_k) - (W_k^2+Var(U_{n,k}))\\
&\leq &Var(H_k)=O\big(\frac{1}{k}\big)\text{\ \ by Proposition \ref{var bound for SAA kernel}}.
\end{eqnarray*}
Therefore $Var_*(H_k^*)=O_p(1/k)$. With all these bounds, we have from \eqref{cov_bd} that
\begin{equation*}
E_*\big[\sum_{i=1}^n(\widehat{Cov}_i-Cov_i)^2\big]=O_p\big(\frac{1}{B}+\frac{1}{B^2}+\frac{1}{Bn}\big)=O_p\big(\frac{1}{B}\big).
\end{equation*}
If $B/n\to \infty$, then $E_*\big[\sum_{i=1}^n(\widehat{Cov}_i-Cov_i)^2\big]=o_p(1/n)$, which implies $\sum_{i=1}^n(\widehat{Cov}_i-Cov_i)^2=o_p(1/n)$, i.e., \eqref{IJ var error in terms of cov}.

In the case of resampling with replacement, we have the same bound as \eqref{cov_bd}, where $Var_{*}(N_i^{*})=k(n-1)/n^2$ and $\sum_{i=1}^nCov_i^2=O_p(1/n)$. To bound $Var_*(H_k^{*})$, note that resampling with replacement is essentially i.i.d. sampling from the uniform distribution over $\{\xi_1,\ldots,\xi_n\}$, therefore proceeding as in the proof of Proposition \ref{var bound for SAA kernel} gives
\begin{equation*}
    Var_*(H_k^{*})\leq \frac{1}{2kn^2}\sum_{i_1\neq i_2}\sup_{x\in\mathcal{X}}\lvert h(x,\xi_{i_1}) - h(x,\xi_{i_2}) \rvert^2
\end{equation*}
and hence $E[Var_*(H_k^{*})]\leq 1/(2kn^2)\cdot n(n-1)E\big[\sup_{x\in\mathcal{X}}\lvert h(x,\xi) - h(x,\xi') \rvert^2\big]=O(1/k)$ by Assumption \ref{L2 strengthened}. This shows that $Var_*(H_k^{*})=O_p(1/k)$. We also need to bound $E_*[(H_k^*-E_*H_k^*)^2\sum_{i=1}^n(N_i^*-\frac{k}{n})^2]$, which requires a more careful analysis than for replacement without replacement. For each $i=1,\ldots,n$ and $j=1,\ldots, k$, define $\eta_{i,j}=1$ if the $\xi_i$ is the $j$-th resampled data and $0$ otherwise, and then we can write $N_i^*=\sum_{j=1}^k\eta_{i,j}$ and
\begin{eqnarray}
\notag E_*[(H_k^*-E_*H_k^*)^2\sum_{i=1}^n(N_i^*-\frac{k}{n})^2]&=&E_*[(H_k^*-E_*H_k^*)^2\big(\frac{k(n-k)}{n}+\sum_{i=1}^n\sum_{j_1\neq j_2}\eta_{i,j_1}\eta_{i,j_2}\big)]\\
\notag &=&\frac{k(n-k)}{n}\cdot Var_*(H_k^*) + \sum_{i=1}^n\sum_{j_1\neq j_2}E_*[(H_k^*-E_*H_k^*)^2\eta_{i,j_1}\eta_{i,j_2}]\\
\notag &=&O_p(1) + \sum_{i=1}^n\sum_{j_1\neq j_2}E_*[(H_k^*-E_*H_k^*)^2\eta_{i,j_1}\eta_{i,j_2}]\\
&=&O_p(1) + k(k-1)\sum_{i=1}^nE_*[(H_k^*-E_*H_k^*)^2\eta_{i,1}\eta_{i,2}]\label{bound term star}
\end{eqnarray}
where the last equality holds because of the symmetry of $H_k^*$.
% We analyze the second term in \eqref{bound term star} using Lemma \ref{bound lemma} with the distribution of $\xi$ being the uniform distribution over $\{\xi_1,\ldots,\xi_n\}$. For any $\tilde \xi_1,\tilde \xi_2$ let $g_{k,2}^*(\tilde \xi_1,\tilde \xi_2)=E_*[H_k(\xi_{i_1},\ldots,\xi_{i_k})\vert \xi_{i_1}=\tilde \xi_1,\xi_{i_2}=\tilde \xi_2]$, where each $i_j$ is uniform in $\{1,2,\ldots,n\}$ for $j\leq k$. We can write
% \begin{eqnarray*}
% E_*[(H_k^*-E_*H_k^*)^2\eta_{i,1}\eta_{i,2}]&=&E_*[(H_k^*-E_*H_k^*)^2\cdot \mathbf{1}\{i_1=i_2=i\}]\\
% &=& \frac{1}{n^2}((g_{k,2}^*(\xi_i,\xi_i)-E_*H_k^*)^2+Var_*(H_k^*\vert i_1=i_2=i))\\
% &\leq& \frac{4}{n^2k^2}\Big(\frac{1}{n}\sum_{i'=1}^n\sup_{x\in\mathcal{X}}\lvert h(x,\xi_i)-h(x,\xi_i')\rvert\Big)^2+\frac{Var_*(H_k^*\vert i_1=i_2=i))}{n^2}\\
% &&\text{\ \ by Lemma \ref{bound lemma}}\\
% \end{eqnarray*}
$H_k^*=H_k(\xi_{i_1},\xi_{i_2},\xi_{i_3},\ldots,\xi_{i_k})$ and ${H_k^*}'=H_k(\xi_{i'_1},\xi_{i'_2},\xi_{i_3},\ldots,\xi_{i_k})$ are two resampled SAAs where only the first two data points can be different. We then have the bound
\begin{equation*}
\lvert H_k^*-{H_k^*}'\rvert\leq \frac{1}{k}\cdot \big(\sup_{x\in\mathcal{X}}\lvert h(x,\xi_{i_1})-h(x,\xi_{i'_1}) \rvert+\sup_{x\in\mathcal{X}}\lvert h(x,\xi_{i_2})-h(x,\xi_{i'_2}) \rvert\big)
\end{equation*}
from Lemma \ref{bound lemma} and hence by Minkowski inequality we can write
\begin{eqnarray*}
\sqrt{E_*[(H_k^*-{H_k^*}')^2\eta_{i,1}\eta_{i,2}]}&=&\sqrt{E_*[\big(\lvert H_k^*-{H_k^*}'\rvert\cdot \mathbf{1}\{i_1=i_2=i\}\big)^2]}\\
&\leq& \frac{1}{k}\sqrt{E_*[\big(\sup_{x\in\mathcal{X}}\lvert h(x,\xi_{i_1})-h(x,\xi_{i'_1}) \rvert\cdot \mathbf{1}\{i_1=i_2=i\}\big)^2]}+\\
&&\hspace{2em}\frac{1}{k}\sqrt{E_*[\big(\sup_{x\in\mathcal{X}}\lvert h(x,\xi_{i_2})-h(x,\xi_{i'_2}) \rvert\cdot \mathbf{1}\{i_1=i_2=i\}\big)^2]}\\
&=&\frac{2}{k}\sqrt{E_*[\big(\sup_{x\in\mathcal{X}}\lvert h(x,\xi_{i_1})-h(x,\xi_{i'_1}) \rvert\cdot \mathbf{1}\{i_1=i_2=i\}\big)^2]}\\
&&\text{\ \ by the equivalence of $i_1,i_2$}\\
&=&\frac{2}{kn}\sqrt{E_*[\big(\sup_{x\in\mathcal{X}}\lvert h(x,\xi_i)-h(x,\xi_{i'_1}) \rvert\big)^2]}\\
&=&\frac{2}{kn}\sqrt{\frac{1}{n}\sum_{i'=1}^n\big(\sup_{x\in\mathcal{X}}\lvert h(x,\xi_i)-h(x,\xi_{i'}) \rvert\big)^2}
\end{eqnarray*}
Applying Minkowski inequality again we can write
\begin{eqnarray*}
\sqrt{E_*[(H_k^*-E_*H_k^*)^2\eta_{i,1}\eta_{i,2}]}&\leq&\sqrt{E_*[({H_k^*}'-E_*H_k^*)^2\eta_{i,1}\eta_{i,2}]}+\sqrt{E_*[(H_k^*-{H_k^*}')^2\eta_{i,1}\eta_{i,2}]}\\
&\leq&\sqrt{E_*[({H_k^*}'-E_*H_k^*)^2]E_*[\eta_{i,1}\eta_{i,2}]} + \frac{2}{kn}\sqrt{\frac{1}{n}\sum_{i'=1}^n\big(\sup_{x\in\mathcal{X}}\lvert h(x,\xi_i)-h(x,\xi_{i'}) \rvert\big)^2}\\
&\leq&\frac{1}{n}\sqrt{Var_*(H_k^*)} + \frac{2}{kn}\sqrt{\frac{1}{n}\sum_{i'=1}^n\big(\sup_{x\in\mathcal{X}}\lvert h(x,\xi_i)-h(x,\xi_{i'}) \rvert\big)^2}
\end{eqnarray*}
Substituting this bound into \eqref{bound term star} we get
\begin{eqnarray*}
&&\sum_{i=1}^nE_*[(H_k^*-E_*H_k^*)^2\eta_{i,1}\eta_{i,2}]\\
&\leq&\sum_{i=1}^n\Big(\frac{1}{n}\sqrt{Var_*(H_k^*)} + \frac{2}{kn}\sqrt{\frac{1}{n}\sum_{i'=1}^n\big(\sup_{x\in\mathcal{X}}\lvert h(x,\xi_i)-h(x,\xi_{i'}) \rvert\big)^2}\Big)^2\\
&\leq&\sum_{i=1}^n\Big(\frac{2}{n^2}Var_*(H_k^*) + \frac{8}{k^2n^3}\sum_{i'=1}^n\big(\sup_{x\in\mathcal{X}}\lvert h(x,\xi_i)-h(x,\xi_{i'}) \rvert\big)^2\Big)\text{\ \ by Young's inequality}\\
&=&\frac{2}{n}Var_*(H_k^*) + \frac{8}{k^2n^3}\sum_{i,i'=1}^n\big(\sup_{x\in\mathcal{X}}\lvert h(x,\xi_i)-h(x,\xi_{i'}) \rvert\big)^2\\
&=&O_p\big(\frac{1}{nk}\big)+O_p\big(\frac{1}{nk^2}\big)=O_p\big(\frac{1}{nk}\big).
\end{eqnarray*}
Since $k\leq n$, this shows that \eqref{bound term star} is overall $O_p(1)$ and hence $E_*[(H_k^*-E_*H_k^*)^2\sum_{i=1}^n(N_i^*-\frac{k}{n})^2]=O_p(1)$. With all these bounds, we have from \eqref{cov_bd} for resampling with replacement that
\begin{equation*}
E_*\big[\sum_{i=1}^n(\widehat{Cov}_i-Cov_i)^2\big]=O_p\big(\frac{1}{B}+\frac{1}{B^2}+\frac{1}{Bn}\big)=O_p\big(\frac{1}{B}\big).
\end{equation*}
If $B/n\to \infty$, then $E_*\big[\sum_{i=1}^n(\widehat{Cov}_i-Cov_i)^2\big]=o_p(1/n)$, which implies $\sum_{i=1}^n(\widehat{Cov}_i-Cov_i)^2=o_p(1/n)$, i.e., \eqref{IJ var error in terms of cov}.
% \begin{eqnarray*}
% \lvert H_k^*-{H_k^*}'\rvert \eta_{i,1}\eta_{i,2}]&=&E_*[(H_k^*-{H_k^*}')^2\cdot \mathbf{1}\{i_1=i_2=i\}]\\
% &\leq&\frac{1}{k^2n^2}E_*\big[\big(\sup_{x\in\mathcal{X}}\lvert h(x,\xi_i)-h(x,\xi_{i'_1}) \rvert+\sup_{x\in\mathcal{X}}\lvert h(x,\xi_i)-h(x,\xi_{i'_2}) \rvert\big)^2\big]\\
% &&\text{\ \ by Lemma \ref{}}
% \end{eqnarray*}

We then prove \eqref{point estimate error}. Note that, on one hand, $\tilde Z_{n,k}^{bag}$ is unbiased (for estimating $U_{n,k}$ and $V_{n,k}$ respectively) for both resampling with and without replacement. On the other hand, when proving \eqref{var estimate error} we have already shown that $Var_*(H_k^*)=O_p(1/k)$ for both cases. Therefore $E_*[(\tilde Z_{n,k}^{bag}-U_{n,k})^2]=O_p(1/(Bk))=o_p(1/n)$ and $E_*[(\tilde Z_{n,k}^{bag}-V_{n,k})^2]=O_p(1/(Bk))=o_p(1/n)$ for each case. For a non-negative random variable, if its conditional expectation is of order $o_p(1)$, then itself is also $o_p(1)$, hence $(\tilde Z_{n,k}^{bag}-U_{n,k})^2=o_p(1/n)$ and $(\tilde Z_{n,k}^{bag}-V_{n,k})^2=o_p(1/n)$ and \eqref{point estimate error} is proved.

Lastly, we prove \eqref{sqrt var error}. Consider the expression $\sqrt{a+b}-\sqrt{a}$. For any $\epsilon > 0$, if $a>\epsilon/2$ and $\lvert b\rvert \leq \epsilon/4$ we have $\lvert \sqrt{a+b}-\sqrt{a}\rvert\leq \frac{\lvert b\rvert}{\sqrt{\epsilon}}$, and if $0\leq a\leq \epsilon/2$ we have $\lvert \sqrt{a+b}-\sqrt{a}\rvert \leq \sqrt{\epsilon/2+b} + \sqrt{\epsilon/2}$. Thus we can bound the probability
\begin{eqnarray*}
    &&P(\lvert \sqrt{n\tilde{\sigma}^2_{IJ}+o_p(1)} - \sqrt{n}\tilde{\sigma}^2_{IJ}\rvert>2\sqrt{\epsilon})\\
    &\leq& P(n\tilde{\sigma}^2_{IJ}>\epsilon/2\text{ and }\lvert o_p(1)\rvert>\frac{\epsilon}{4})+P(n\tilde{\sigma}^2_{IJ}>\epsilon/2,\lvert o_p(1)\rvert\leq \frac{\epsilon}{4}\text{ and } \frac{\lvert o_p(1)\rvert}{\sqrt{\epsilon}}>2\sqrt{\epsilon})+\\
    &&\hspace{2em}P(n\tilde{\sigma}^2_{IJ}\leq \epsilon/2\text{ and }\sqrt{\frac{\epsilon}{2} + o_p(1)}+\sqrt{\frac{\epsilon}{2}}>2\sqrt{\epsilon})\\
    &\leq& P(\lvert o_p(1)\rvert>\frac{\epsilon}{4})+0 + P(o_p(1)>2(2-\sqrt{2})\epsilon)\to 0.
\end{eqnarray*}
This shows that $\sqrt{n\tilde{\sigma}^2_{IJ}+o_p(1)} - \sqrt{n}\tilde{\sigma}^2_{IJ}=o_p(1)$, hence \eqref{sqrt var error}. This completes the proof.\Halmos
\endproof
\proof{Proof of Corollary \ref{coverage}.}
We consider two cases, $k^2Var(g_k(\xi))\cdot n^{\frac{\delta}{4+2\delta}}> 1$ and $k^2Var(g_k(\xi))\cdot n^{\frac{\delta}{4+2\delta}}\leq 1$. If $k^2Var(g_k(\xi))\cdot n^{\frac{\delta}{4+2\delta}}> 1$, then we have $k^2Var(g_k(\xi))\cdot n^{\frac{\delta}{2+\delta}}>n^{\frac{\delta}{4+2\delta}}\to\infty$ and hence $\mathcal{Z}_{n,k}\Rightarrow N(0,1)$ by Theorem \ref{meta clt} (without replacement) or Theorem \ref{V thm} (with replacement). In this case we have
\begin{eqnarray*}
\liminf_{n\to\infty}P(\tilde \sigma_{IJ}\mathcal{Z}_{n,k}\leq z_{1-\alpha}\tilde \sigma_{IJ})&= &\liminf_{n\to\infty}P(\mathcal{Z}_{n,k}\leq z_{1-\alpha} \text{ or }\tilde \sigma_{IJ}=0)\\
&\geq &\liminf_{n\to\infty}P(\mathcal{Z}_{n,k}\leq z_{1-\alpha})=1-\alpha.
\end{eqnarray*}
If $k^2Var(g_k(\xi))\cdot n^{\frac{\delta}{4+2\delta}}\leq 1$, then we have $k^2Var(g_k(\xi))=o(1)$ and hence $\tilde \sigma^2_{IJ}=o_p(1/n)$ by Theorems \ref{consistency_IJ:U} and \ref{consistency_IJ:V} and the consistency of the approximated IJ variance estimate \eqref{var estimate error}, therefore $z_{1-\alpha}\tilde \sigma_{IJ} - \tilde \sigma_{IJ}\mathcal{Z}_{n,k} = o_p(1/\sqrt{n})$ and it holds that
\begin{equation*}
    \liminf_{n\to\infty}P(\tilde \sigma_{IJ}\mathcal{Z}_{n,k}+o_p\big(\frac{1}{\sqrt{n}}\big)\leq z_{1-\alpha}\tilde \sigma_{IJ})=1\geq 1-\alpha
\end{equation*}
Combining the two cases, we see that there exists some $o_p(1/\sqrt{n})$ random variable such that
\begin{equation*}
    \liminf_{n\to\infty}P(\tilde \sigma_{IJ}\mathcal{Z}_{n,k}+o_p\big(\frac{1}{\sqrt{n}}\big)\leq z_{1-\alpha}\tilde \sigma_{IJ})\geq 1-\alpha.
\end{equation*}
regardless of whether $k^2Var(g_k(\xi))\cdot n^{\frac{\delta}{4+2\delta}}>1$ or $k^2Var(g_k(\xi))\cdot n^{\frac{\delta}{4+2\delta}}\leq1$. From Theorem \ref{final_guarantee} we have $\tilde \sigma_{IJ}\mathcal{Z}_{n,k}=\tilde Z_{n,k}^{bag} - W_k - o_p\big(\frac{1}{\sqrt{n}}\big)$, and hence
\begin{eqnarray*}
    &&\liminf_{n\to\infty}P(\tilde Z_{n,k}^{bag} - W_k - o_p\big(\frac{1}{\sqrt{n}}\big) +o_p\big(\frac{1}{\sqrt{n}}\big)\leq z_{1-\alpha}\tilde \sigma_{IJ})\\
    &=&\liminf_{n\to\infty}P(\tilde \sigma_{IJ}\mathcal{Z}_{n,k}+o_p\big(\frac{1}{\sqrt{n}}\big)\leq z_{1-\alpha}\tilde \sigma_{IJ})\geq 1-\alpha.
\end{eqnarray*}
When the non-degeneracy condition holds, i.e., $k^2Var(g_k(\xi))>\epsilon$ for some $\epsilon>0$ as $n,k$ grow, then in the case of resampling without replacement we have
\begin{equation*}
    \frac{\sqrt{n}(U_{n,k}-W_k)}{k\sqrt{Var(g_k(\xi))}}\Rightarrow N(0,1)
\end{equation*}
by Theorem \ref{meta clt}. In the case of resampling with replacement we have
\begin{equation*}
    \frac{\sqrt{n}(V_{n,k}-W_k)}{k\sqrt{Var(g_k(\xi))}}\Rightarrow N(0,1)
\end{equation*}
by Theorem \ref{V thm}. On one hand, from Theorem \ref{consistency_IJ:U} (without replacement) or \ref{consistency_IJ:V} (with replacement) and the consistency of approximated IJ variance estimate \eqref{var estimate error} we have that $n\tilde \sigma^2_{IJ}/(k^2Var(g_k(\xi)))\to 1$ in probability. On the other hand, from \eqref{point estimate error} we have $\tilde Z_{n,k}^{bag}-V_{n,k}=o_p(1/\sqrt{n})$ and $\tilde Z_{n,k}^{bag}-U_{n,k}=o_p(1/\sqrt{n})$ respectively for with and without replacement, therefore by Slutsky's theorem
\begin{equation*}
    \frac{\tilde Z_{n,k}^{bag}-W_k}{\tilde \sigma_{IJ}}\Rightarrow N(0,1)
\end{equation*}
for both resampling with and without replacement. This leads to
\begin{equation*}
    \lim_{n\to\infty}P(\tilde Z_{n,k}^{bag} - z_{1-\alpha}\tilde \sigma_{IJ}\leq W_k)\to 1-\alpha.
\end{equation*}
This concludes the proof.\Halmos
\endproof

\section{Proof of Results in Section \ref{sec:properties}}\label{proof:se}
\proof{Proof of Proposition \ref{bound se}.}
% Using Lemma \ref{bound lemma} with $c=1$, we have
% \begin{align*}
% Var(g_k(\xi))&=E(E[H_k(\xi_1,\ldots,\xi_k)|\xi_1]-E[H_k(\xi_1,\ldots,\xi_k)])^2\\
% &\leq\frac{1}{k^2}E\left(E\left[\sup_{x\in\mathcal X}|h(x,\xi_1')-h(x,\xi_1)|\Big|\xi_1'\right]\right)^2\\
% &\leq\frac{1}{k^2}E\left(\sup_{x\in\mathcal X}|h(x,\xi_1')-h(x,\xi_1)|\right)^2\text{\ \ by Jensen's inequality}\\
% &\leq\frac{1}{k^2}E\left(\sup_{x\in\mathcal X}|h(x,\xi_1')|+\sup_{x\in\mathcal X}|h(x,\xi_1)|\right)^2\\
% &\leq\frac{4}{k^2}E\sup_{x\in\mathcal X}|h(x,\xi)|^2=O\big(\frac{1}{k^2}\big)\text{\ \ by Minkowski inequality and Assumption \ref{L2}}.
% \end{align*}
% This concludes the bound for $k^2Var(g_k(\xi))$.
A direct consequence of Proposition \ref{var bound for SAA kernel} is that $Var(H_k)=O(1/k)$, therefore $Var(\tilde Z_{n,k})=Var(H_k)/m=O(1/k)/m=O(1/n)$ and $Var(\hat Z_n)=Var(H_n)\allowbreak=O(1/n)$. By ANOVA decomposition we have $Var(U_{n,k})= Var(\mathring U_{n,k}) + Var(U_{n,k} - \mathring U_{n,k})$, where
\begin{equation*}
Var(\mathring U_{n,k})=\frac{k^2}{n}Var(g_k(\xi))= \frac{k}{n} Var(\mathring H_k)\leq \frac{k}{n} Var(H_k)=\frac{k}{n}O\big(\frac{1}{k}\big)=O\big(\frac{1}{n}\big),
\end{equation*}
and
\begin{eqnarray*}
Var(U_{n,k} - \mathring U_{n,k})&=&E[(U_{n,k} - \mathring U_{n,k})^2]\\
&\leq &\frac{k^2}{n^2}E[(H_k - \mathring H_k)^2]\text{\ \ by Lemma \ref{anova:variance}}\\
&\leq &\frac{k^2}{n^2}Var(H_k)=\frac{k^2}{n^2}O\big(\frac{1}{k}\big)=O\big(\frac{k}{n^2}\big)=O\big(\frac{1}{n}\big),
\end{eqnarray*}
hence $Var(U_{n,k})=O(1/n)$. As for $V_{n,k}$, we have shown in the proof of Theorem \ref{V thm} that $E[(U_{n,k}-V_{n,k})^2]=o(1/n)$ under the given resample sizes, hence $Var(V_{n,k})=O(1/n)$ as well.\Halmos
\endproof

%
% \section{Proof of Theorem \ref{compare var:asymptotic} and the Claim in Example \ref{efficiency_example}}\label{proof:efficiency_compare}
\proof{Proof of Theorem \ref{compare var:asymptotic}.}
We first prove the convergence of $nVar(U_{n,k})$ and $nVar(V_{n,k})$. Theorem \ref{Lipschitz characterization of limit variance} shows that $nVar(\mathring{U}_{n,k})=k^2Var(g_k(\xi))\to Var(E[h(x^*_Y,\xi)\vert \xi])$ as $n,k\to\infty$. When $k=o(n)$, we have shown $E[(U_{n,k}-\mathring{U}_{n,k})^2]=o(1/n)$ in the proof of Theorem \ref{main thm} using Lemma \ref{new_clt:U}, therefore $\lim_{n,k\to\infty}nVar(U_{n,k})=\lim_{n,k\to\infty} nVar(\mathring{U}_{n,k})=Var(E[h(x^*_Y,\xi)\vert \xi])$. As for $V_{n,k}$, we have shown in the proof of Theorem \ref{V thm} that $E[(U_{n,k}-V_{n,k})^2]=o(1/n)$, therefore $\lim_{n,k\to\infty}nVar(V_{n,k})=\lim_{n,k\to\infty} nVar(U_{n,k})=Var(E[h(x^*_Y,\xi)\vert \xi])$.

We then prove the convergence of $nVar(\tilde Z_{n,k})$ and $nVar(\hat Z_n)$. Note that $Var(\tilde Z_{n,k})=Var(H_k)/m$ and $Var(\hat Z_n)=Var(H_n)$, therefore it suffices to study the asymptotics of $Var(H_k)$. We already have the weak limit $\sqrt{k}(H_k - Z^*)\Rightarrow \inf_{x\in\mathcal{X}^*}Y(x)=Y(x^*_Y)$ from Theorem \ref{CLT SAA general}, and want to show uniform integrability for $k(H_k-Z^*)^2$ to conclude convergence of the first and second moments. To prove uniform integrability, we use the bound from Lemma \ref{continuity of SAA value} to write
\begin{eqnarray*}
E[(\sqrt{k}\lvert H_k - Z^* \rvert)^{2+\delta}]&\leq&E\Big[k^{1+\frac{\delta}{2}}\sup_{x\in\mathcal{X}}\lvert \frac{1}{k}\sum_{i=1}^kh(x,\xi_i) - Z(x) \rvert^{2+\delta}\Big]\\
&=&O(1)
\end{eqnarray*}
where the $O(1)$ bound comes from Proposition \ref{Lipschitz_k}. Therefore $kVar(H_k)\to Var(Y(x^*_Y))$, and the desired limit variance for $nVar(\tilde Z_{n,k})$ and $nVar(\hat Z_n)$ follows.

The statement $Var(E[h(x^*_Y,\xi)\vert \xi])\leq Var(Y(x^*_Y))$ follows because Theorem \ref{compare var} on finite-sample variance reduction states that $nVar(U_{n,k})\leq nVar(\tilde Z_{n,k})$ for any $k,n$ such that $n/k$ is an integer and hence taking $n,k$ to $\infty$ gives $Var(E[h(x^*_Y,\xi)\vert \xi])\leq Var(Y(x^*_Y))$.

If the optimal solution is essentially unique, we denote by $x^*$ one of the optimal solutions, and can write $Var(E[h(x^*_Y,\xi)\vert \xi])=Var(h(x^*,\xi))$ and $Var(Y(x^*_Y))=Var(Y(x^*))=Var(h(x^*,\xi))$, therefore it holds that $Var(E[h(x^*_Y,\xi)\vert \xi]) = Var(Y(x^*_Y))$.

Otherwise if the optimal solution is not essentially unique, we must have that $\sup_{x\in\mathcal{X}^*}Var(Y(x))>0$ and hence $Var(Y(x_Y^*))>0$. Suppose $Var(E[h(x^*_Y,\xi)\vert \xi]) = Var(Y(x^*_Y)) > 0$ and we derive contradictions. Note that $\hat Z_n=U_{n,n}=H_n$ where the resample size $k$ is chosen to be $n$, and the Hajek projection $\mathring{H}_n$ has a variance $Var(\mathring{H}_n)=nVar(g_n(\xi))$ hence $nVar(\mathring{H}_n)\to Var(E[h(x^*_Y,\xi)\vert \xi])$ by Theorem \ref{Lipschitz characterization of limit variance}. This implies that
\begin{eqnarray*}
nVar(H_n-\mathring{H}_n)&=&n(Var(H_n) - Var(\mathring{H}_n))\text{\ \ by Lemma \ref{anova:variance}}\\
&=&n(Var(\hat Z_n) - Var(\mathring{H}_n))\text{\ \ since $\hat Z_n=H_n$}\\
&\to& Var(Y(x^*_Y)) - Var(E[h(x^*_Y,\xi)\vert \xi])\\
&=&0.
\end{eqnarray*}
Therefore \eqref{range:k} holds with $k=n$, and subsequently Theorem \ref{meta clt} forces that $U_{n,n}$, i.e., $\hat Z_n$, has a weak limit of Gaussian distribution. However, Theorem \ref{CLT SAA general} states that the weak limit of the sequence $\hat Z_n$ must be $\inf_{x\in\mathcal{X}^*}Y(x)$ which is non-Gaussian because the Gaussian process $Y(x),x\in\mathcal{X}^*$ can not be reduced to a Gaussian variable, thus leading to a contradiction.\Halmos
\endproof

\proof{Proof of Theorem \ref{thm bias}.}
For $U_{n,k}$, note that each summand in its definition is an SAA value with distinct i.i.d.~data, and thus has mean exactly $W_k$. To show $E[V_{n,k}] \leq W_k$, we provide a monotonicity result for SAA with arbitrary weights on data:
\begin{lemma}[Generalized monotonicity]\label{general monotonicity}
Let $\xi_1,\ldots,\xi_k$ be i.i.d., and for any $p_i\geq 0,i=1,\ldots,k$ such that $\sum_{i=1}^kp_i=1$, let
$$H^{p_1,p_2,\ldots,p_k}_k(\xi_1,\ldots,\xi_k):=\min_{x\in\mathcal{X}}\sum_{i=1}^kp_ih(x,\xi_i).$$
In particular $H^{1/k,1/k,\ldots,1/k}_k$ is the SAA kernel $H_k$ with uniform weights on $k$ distinct data. We have
\begin{equation}\label{monotonicity with weights}
    E[H^{p_1,p_2,\ldots,p_k}_k(\xi_1,\ldots,\xi_k)]\leq E[H_k(\xi_1,\ldots,\xi_k)]
\end{equation}
for any such $p_1,p_2,\ldots,p_k$.
\end{lemma}
\proof{Proof.}
Denote by $\Pi$ the set of all $k!$ permutations on $\{1,2,\ldots,k\}$, then we can rewrite the SAA with uniform weights as
\begin{equation*}
    \sum_{i=1}^k\frac{1}{k}h(x,\xi_i)=\frac{1}{k!}\sum_{\pi\in\Pi}\sum_{i=1}^kp_{\pi(i)}h(x,\xi_i).
\end{equation*}
Therefore we have
\begin{eqnarray}
\notag E[H_k(\xi_1,\ldots,\xi_k)] &=& E\big[\min_{x\in\mathcal{X}}\sum_{i=1}^k\frac{1}{k}h(x,\xi_i)\big]\\
\notag&=&E\big[\min_{x\in\mathcal{X}}\frac{1}{k!}\sum_{\pi\in\Pi}\sum_{i=1}^kp_{\pi(i)}h(x,\xi_i)\big]\\
&\geq &E\big[\frac{1}{k!}\sum_{\pi\in\Pi}\min_{x\in\mathcal{X}}\sum_{i=1}^kp_{\pi(i)}h(x,\xi_i)\big]\label{monotonicity key step}\\
\notag&=&\frac{1}{k!}\sum_{\pi\in\Pi}E\big[\min_{x\in\mathcal{X}}\sum_{i=1}^kp_{\pi(i)}h(x,\xi_i)\big]\\
\notag&=&\frac{1}{k!}\sum_{\pi\in\Pi}E[H^{p_1,p_2,\ldots,p_k}_k(\xi_1,\ldots,\xi_k)]\text{\ \ since $\xi_i$'s are i.i.d.}\\
\notag&=&E[H^{p_1,p_2,\ldots,p_k}_k(\xi_1,\ldots,\xi_k)].
\end{eqnarray}
This concludes the lemma.\Halmos
\endproof
The monotonicity result from \citeAPX{mak1999monte} and \citeAPX{norkin1998branch}, stated as $E[H_{k-1}(\xi_1,\ldots,\xi_{k-1})]\leq E[H_k(\xi_1,\ldots,\xi_k)]$, is a special case of Lemma \ref{general monotonicity} where $p_i=1/(k-1)$ for $i\leq k-1$ and $p_k=0$. As a side note, such type of monotonicity result can in fact be even more general than Lemma \ref{monotonicity with weights}: The key step \eqref{monotonicity key step} of the proof is the exchange of a minimization and an expectation with respect to the uniform distribution over $\Pi$, and \eqref{monotonicity key step} remains valid if this expectation is with respect to any other distribution over the permutation set $\Pi$, i.e., $\min_{x\in\mathcal{X}}\sum_{\pi\in\Pi}f_{\pi}\sum_{i=1}^kp_{\pi(i)}h(x,\xi_i)\geq \sum_{\pi\in\Pi}f_{\pi}\min_{x\in\mathcal{X}}\sum_{i=1}^kp_{\pi(i)}h(x,\xi_i)$ for any $f_{\pi}$ such that $f_{\pi}\geq 0$ and $\sum_{\pi\in \Pi}f_{\pi}=1$. Therefore \eqref{monotonicity with weights} continues to hold with the right-hand side replaced by $E[H^{q_1,q_2,\ldots,q_k}_k(\xi_1,\ldots,\xi_k)]$ as long as the vector $(q_1,q_2,\ldots,q_k)$ lies in the convex hull of $\{(p_{\pi(1)}, p_{\pi(2)},\ldots,p_{\pi(k)}):\pi\in\Pi\}$. Using duality theory of linear programs and the rearrangement inequality, it can be shown that $\mathrm{ConvexHull}(\{(p_{\pi(1)}, p_{\pi(2)},\ldots,p_{\pi(k)}):\pi\in\Pi\})=\{(q_1,q_2,\ldots,q_k):\sum_{j=i}^kp_{(j)}\geq \sum_{j=i}^kq_{(j)}\ \forall \ 0\leq i\leq k,\sum_{i=1}^kq_i=1,q_i\geq 0\ \forall \ i\}$, where $q_{(j)}$ and $p_{(j)}$ are the $j$-th smallest component of $(q_1,\ldots,q_k)$ and $(p_1,\ldots,p_k)$ respectively.

% $E[H^{\sum_{\pi\in \Pi}q(\pi)p_{\pi(1)},\sum_{\pi\in \Pi}q(\pi)p_{\pi(2)},\ldots,\sum_{\pi\in \Pi}q(\pi)p_{\pi(k)}}_k(\xi_1,\ldots,\xi_k)]$.

To show the downward biasedness of $V_{n,k}$, note that each summand $H_k(\xi_{i_1},\ldots,\xi_{i_k})$ in \eqref{V} can be cast in the form $H^{p_1,p_2,\ldots,p_k}_k(\xi_1,\ldots,\xi_k)$ with $p_i=\sum_{j=1}^k\mathbf{1}\{i_j=i\}/k$ for each $i$. Therefore Lemma \ref{general monotonicity} immediately implies that $E[H_k(\xi_{i_1},\ldots,\xi_{i_k})]\leq E[H_k(\xi_1,\ldots,\xi_k)]=W_k$. Since this holds for each summand in \eqref{V}, we have $E[V_{n,k}]\leq W_k$.

To bound the bias, recall the relation \eqref{UV_decompose}
\begin{equation*}
n^k(U_{n,k}-V_{n,k})=\big(\sum_{s=1}^{k-l-1}c(n,k,s)\big)(U_{n,k}-R_{n,l})-\sum_{s=k-l}^{k-1}c(n,k,s)(A_{n,s}-U_{n,k})
\end{equation*}
for arbitrary but fixed integer $l\geq 0$. Note that $U_{n,k}$ is unbiased for $W_k$, and that $E[R_{n,l}]=O(1)$ since Assumption \ref{L2} implies for any indices $i_1,\ldots,i_k\in\{1,\ldots,n\}$ that $\abs{E[H_k(\xi_{i_1},\ldots,\xi_{i_k})]}\leq E[\sup_{x\in\mathcal X}|h(x,\xi)|]$. In the proof of Theorem \ref{V thm} we have shown that $E[\abs{A_{n,s}-U_{n,k}}]=O(1/k)$, $\sum_{s=1}^{k-l-1}c(n,k,s)=O((k^2/n)^{l+1}n^k)$ whenever $k=o(\sqrt n)$, and that $c(n,k,s)=O(k^{2(k-s)}n^{s})$ for $s\geq k-l$. Therefore
\begin{equation*}
n^k\abs{E[V_{n,k}]-W_k}\leq O\big(\big(\frac{k^2}{n}\big)^{l+1}n^k\big)+O(1/k)\sum_{s=k-l}^{k-1}O(k^{2(k-s)}n^{s}).
\end{equation*}
Since $k^2/n=o(1)$, it holds $\sum_{s=k-l}^{k-1}O(k^{2(k-s)}n^{s})=O(k^2n^{k-s-1}n^s)=O(k^{2}n^{k-1})$, which leads to $W_k-E[V_{n,k}]=O((k^2/n)^{l+1}+k/n)$ for any fixed $l\geq 0$.\Halmos
\endproof

\section{Additional Experiments}\label{sec:additional experiments}
This section contains extra experimental results that complement those in Section \ref{sec:numerics}. We consider one more problem that solves for the $(1-\alpha_1)$-level conditional value at risk (CVaR) of a standard normal variable $\xi$
\begin{equation}\label{cvar}
\min_{x\in \R}x+\frac{1}{\alpha_1} E[(\xi-x)_+]
\end{equation}
where $(\cdot)_+:=\max\{\cdot,0\}$ denotes the positive part. We set $\alpha_1=0.1$, namely, we are solving for the $90\%$-level CVaR of the standard normal. Figure \ref{fig:gap CRN varying B} summarizes results for bagging in computing bounds of optimality gaps for problems \eqref{IP} and \eqref{binary} under a fixed data size and growing bootstrap sizes. Figures \ref{fig:cvar varying N} and \ref{fig:gap CRN varying N} show results on bagging for problems \eqref{cvar}, \eqref{IP} and \eqref{binary} under fixed bootstrap size $b=500$ and growing data sizes. Figure \ref{fig:cvar optval} presents results of various methods on bounds of the optimal value for problem \eqref{cvar}. Figures \ref{fig:cvar gap}, \ref{fig:portfolio gap n=200}, \ref{fig:integer gap BC} and \ref{fig:binary gap n=200} contain additional results of various methods on bounds of optimality gaps. Figure \ref{fig:simplex opt val n=50} shows results of various methods on problem \eqref{simplex} with the data size $n=50$.

\begin{figure}[h]
    \centering
     \begin{subfigure}{0.49\textwidth}
         \centering
         \includegraphics[width=\textwidth]{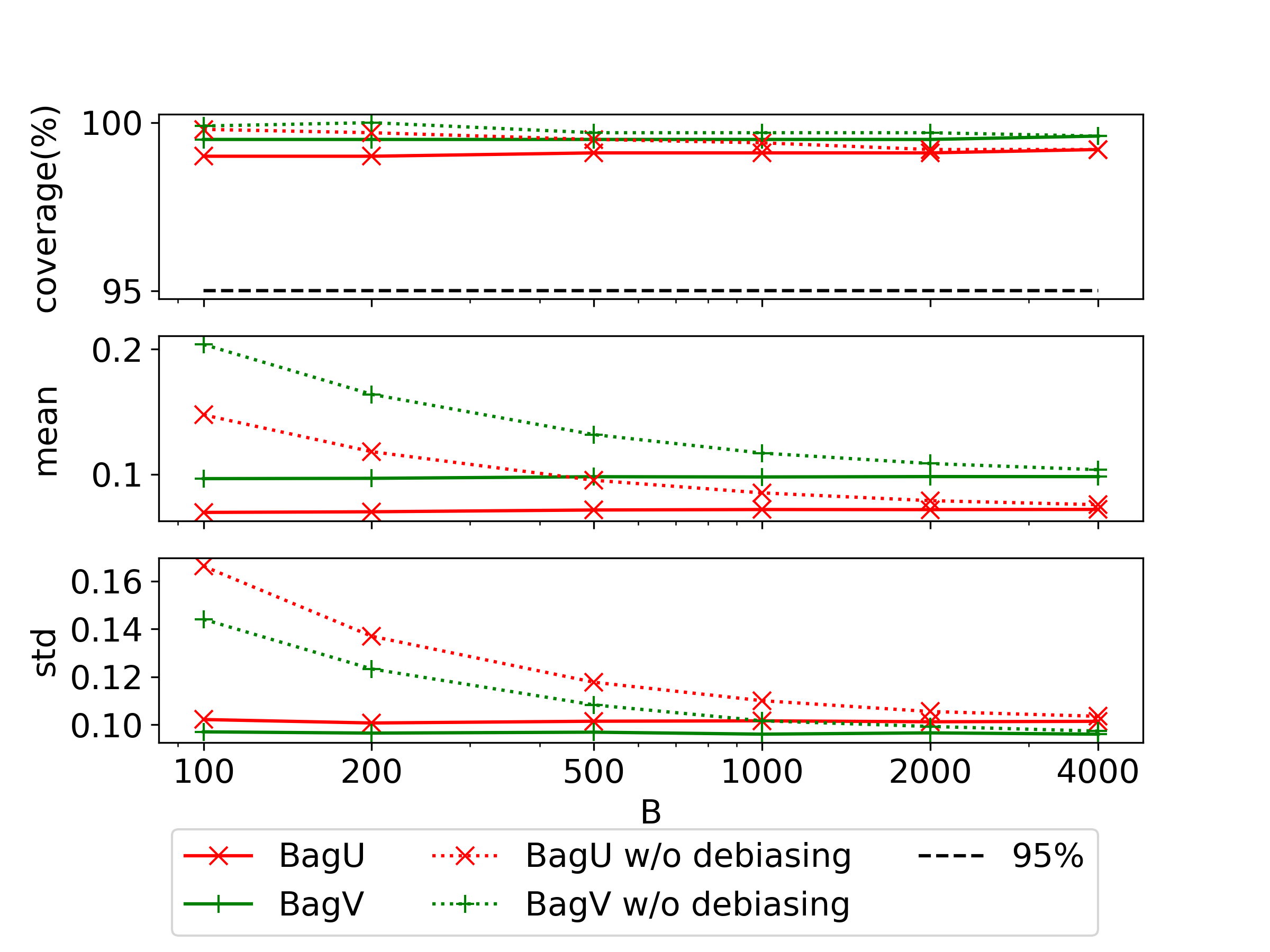}
         \caption{Integer program \eqref{IP}.}
         \label{fig:integer gap CRN varying B}
     \end{subfigure}
     \hfill
     \begin{subfigure}{0.49\textwidth}
         \centering
         \includegraphics[width=\textwidth]{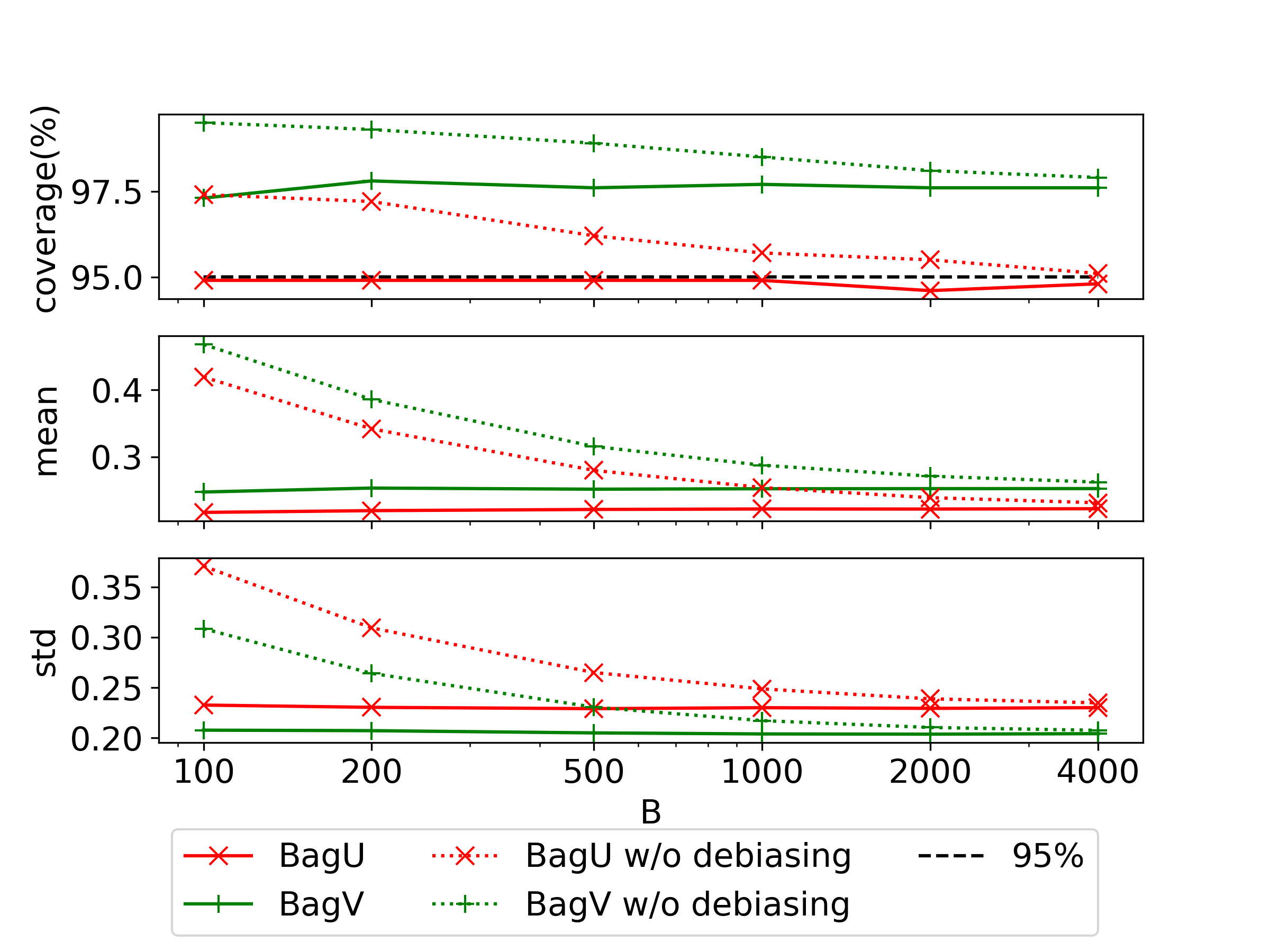}
         \caption{Simple linear program \eqref{binary}.}
         \label{fig:binary gap CRN varying B}
     \end{subfigure}
    \caption{Bounds of optimality gaps using CRN under fixed data size $n=1000,n_1=600,n_2=400$ and varying bootstrap sizes.}
    \label{fig:gap CRN varying B}
\end{figure}

\begin{figure}[h]
    \centering
         \begin{subfigure}{0.49\textwidth}
         \centering
         \includegraphics[width=\textwidth]{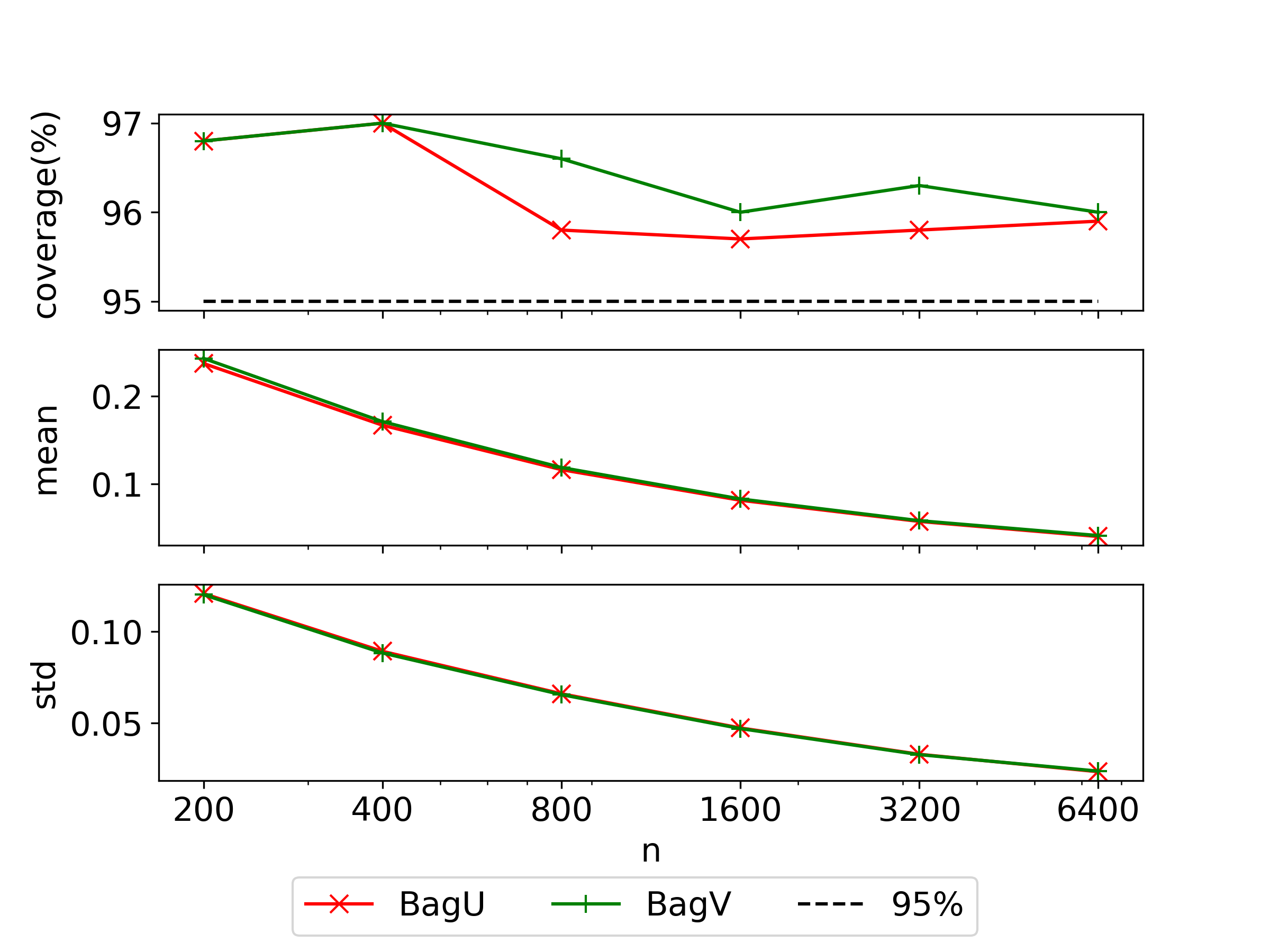}
         \caption{Bounds of the optimal value.}
         \label{fig:cvar optval varying N}
     \end{subfigure}
     \hfill
     \begin{subfigure}{0.49\textwidth}
         \centering
         \includegraphics[width=\textwidth]{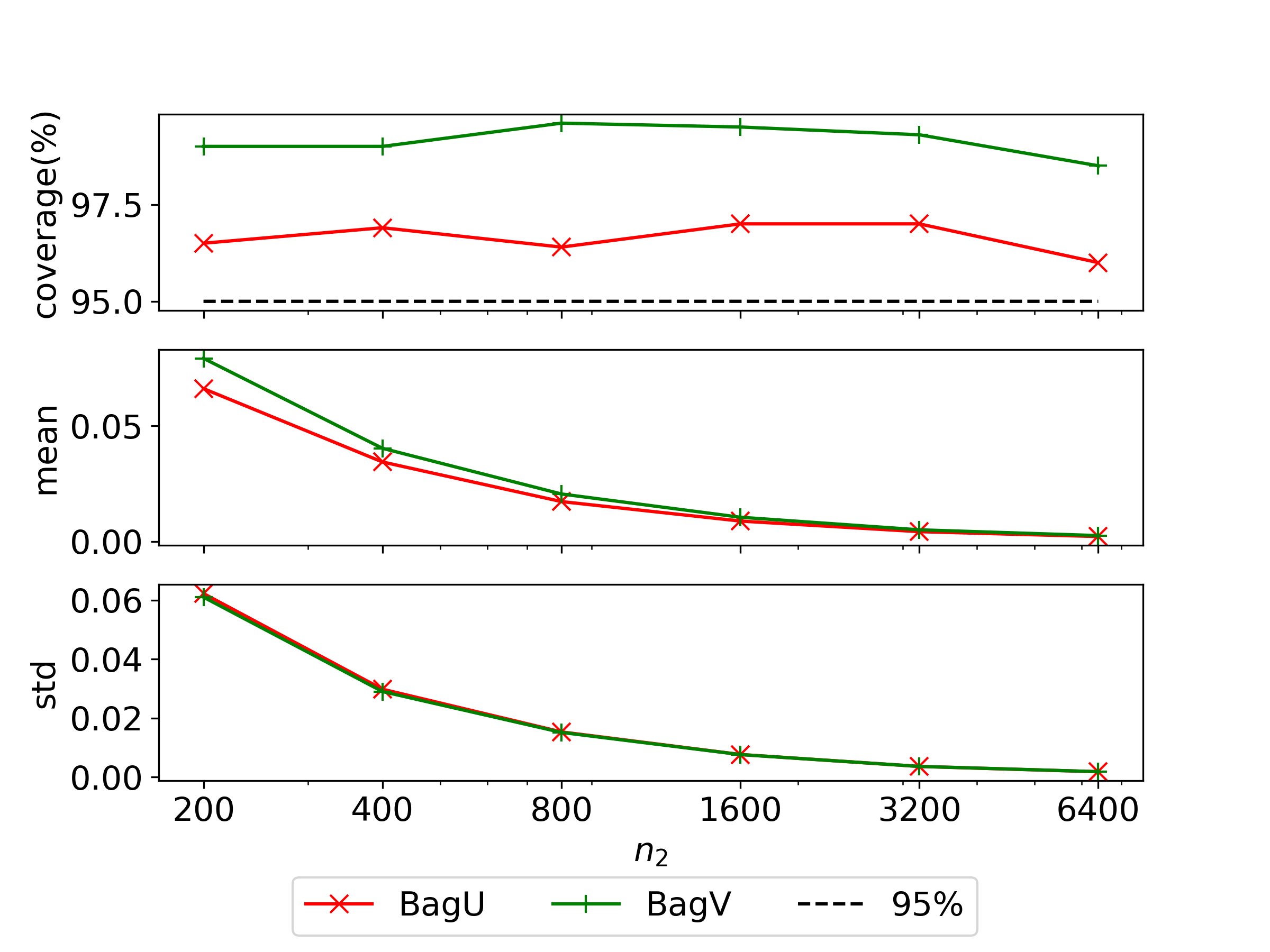}
         \caption{Bounds for optimality gaps.}
         \label{fig:cvar gap CRN varying N}
     \end{subfigure}
    \caption{Performance with fixed $B=500$ and growing data sizes for CVaR problem \eqref{cvar}.}
    \label{fig:cvar varying N}
\end{figure}

\begin{figure}[h]
    \centering
          \begin{subfigure}{0.49\textwidth}
         \centering
         \includegraphics[width=\textwidth]{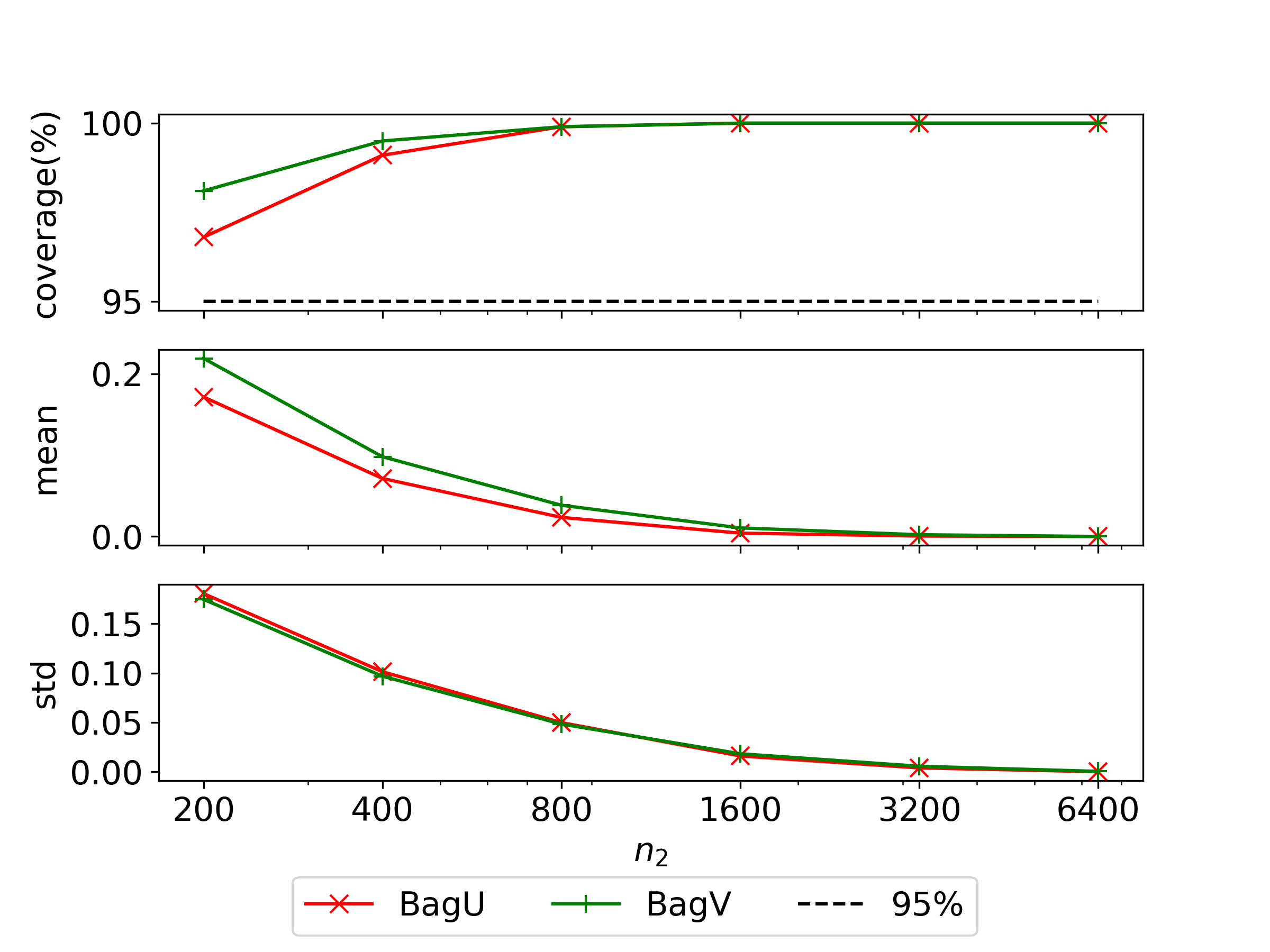}
         \caption{Integer program \eqref{IP}.}
         \label{fig:integer gap CRN varying N}
     \end{subfigure}
     \hfill
          \begin{subfigure}{0.49\textwidth}
         \centering
         \includegraphics[width=\textwidth]{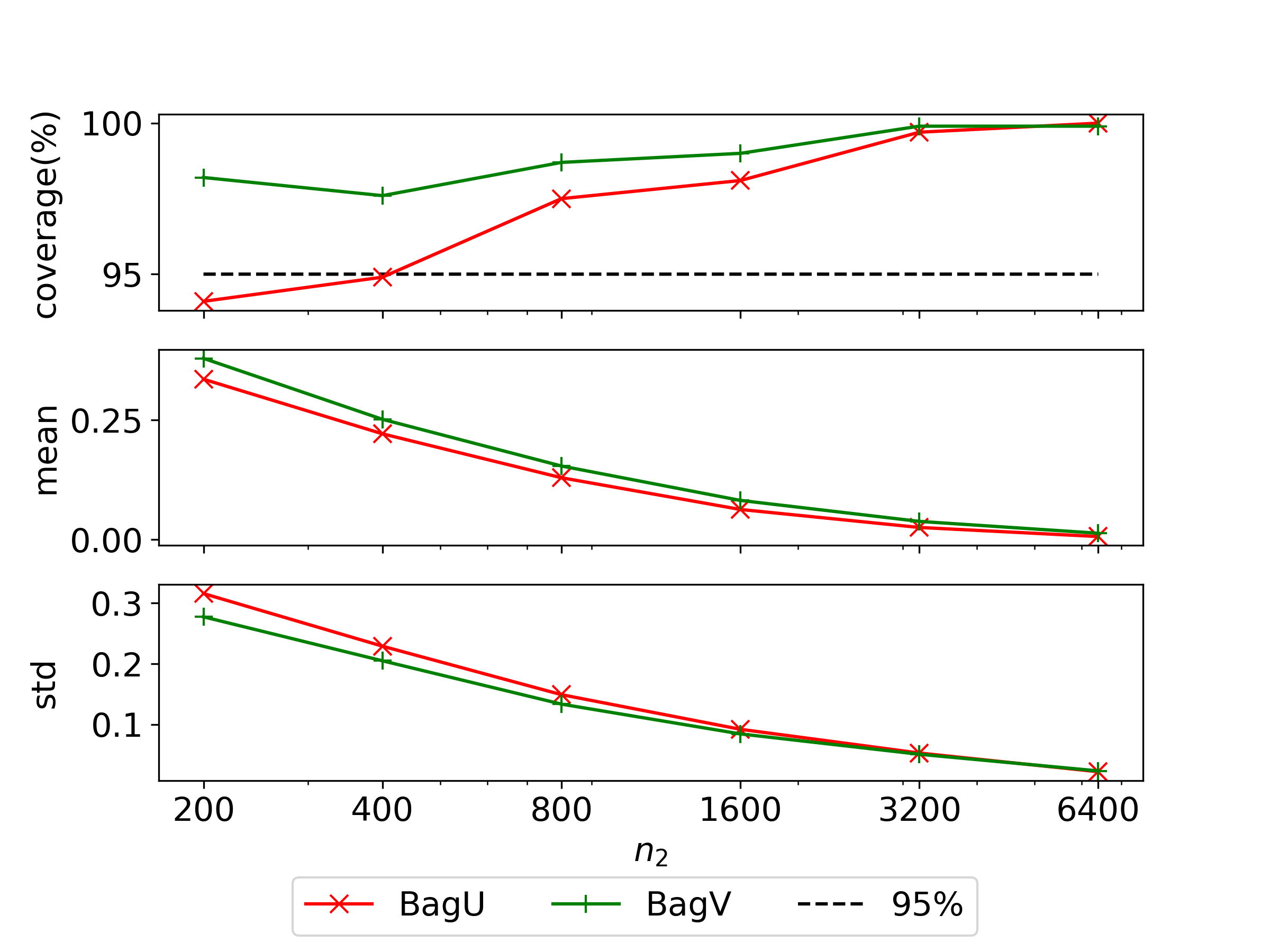}
         \caption{Simple linear program \eqref{binary}.}
         \label{fig:binary gap CRN varying N}
     \end{subfigure}
    \caption{Bounds of optimality gaps using CRN with fixed $B=500$ and growing data sizes.}
    \label{fig:gap CRN varying N}
\end{figure}

\begin{figure}[h]
    \centering
    \begin{subfigure}{0.49\textwidth}
         \centering
         \includegraphics[width=\textwidth]{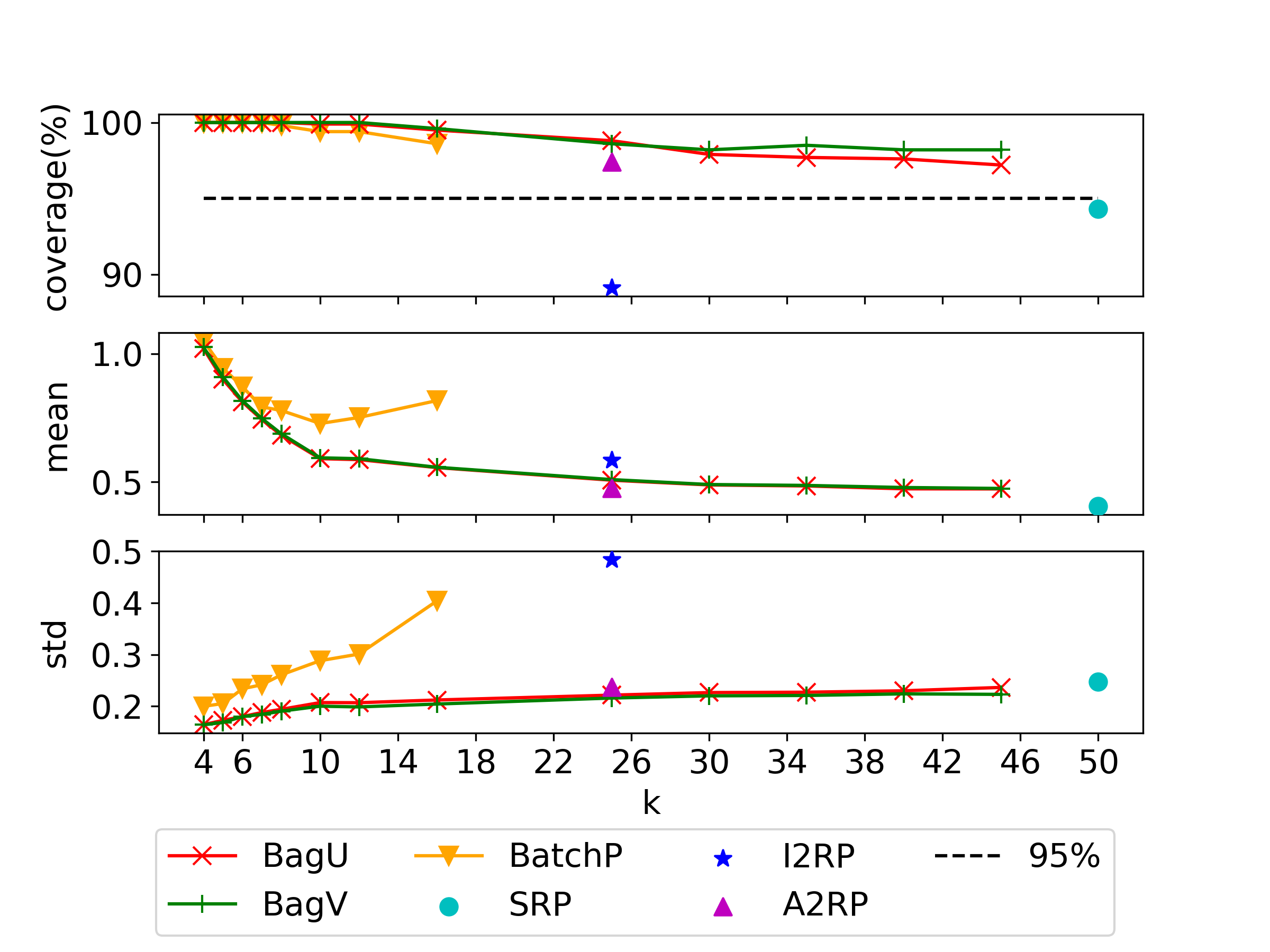}
         \caption{$n=50$}
         \label{fig:cvar optval 50}
     \end{subfigure}
     \hfill
     \begin{subfigure}{0.49\textwidth}
         \centering
         \includegraphics[width=\textwidth]{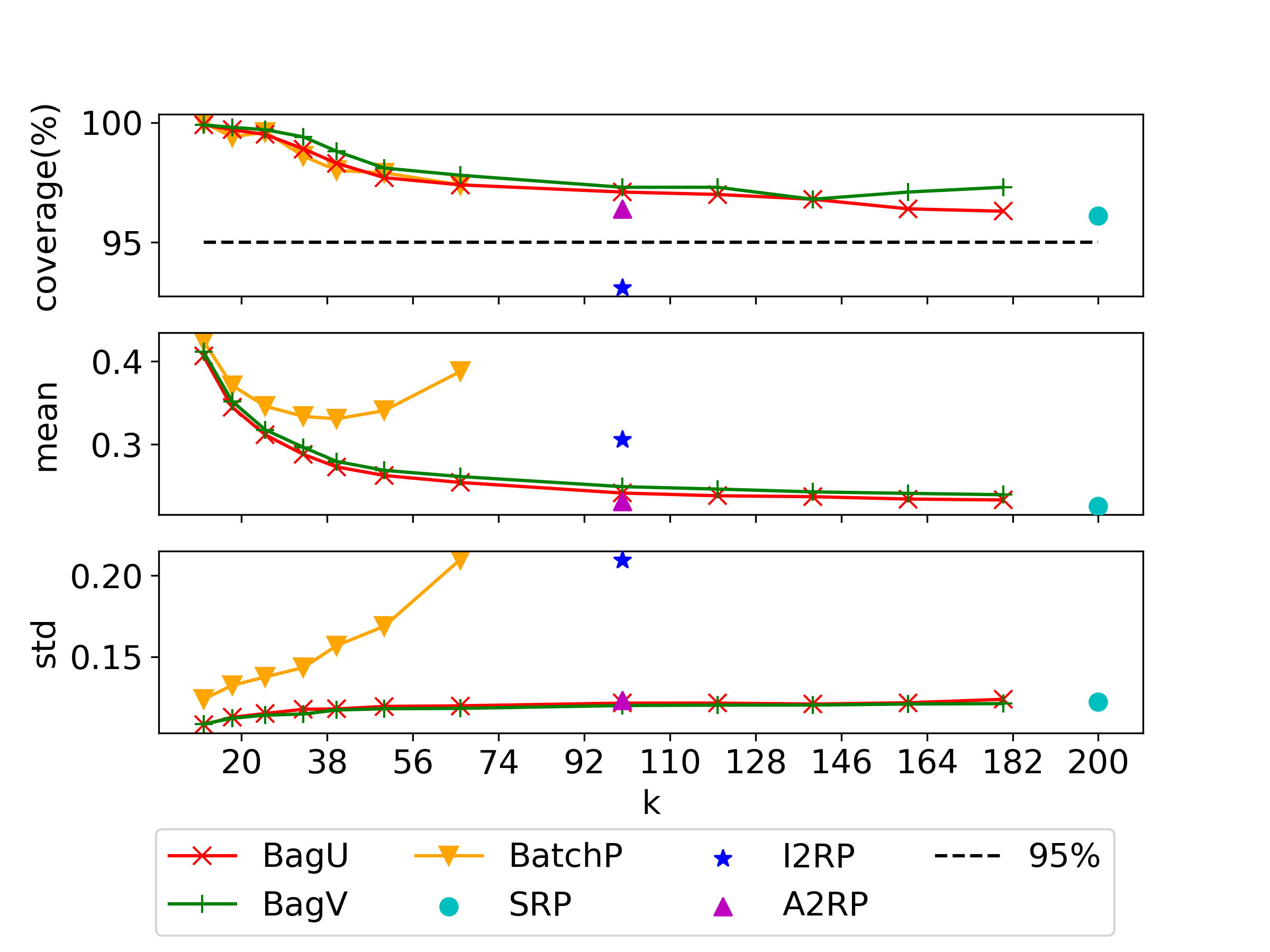}
         \caption{$n=200$}
         \label{fig:cvar optval 200}
     \end{subfigure}
    \caption{CVaR problem \eqref{cvar}. Lower bounds of optimal values.}
    \label{fig:cvar optval}
\end{figure}

\begin{figure}[h]
    \centering
    \begin{subfigure}{0.49\textwidth}
         \centering
         \includegraphics[width=\textwidth]{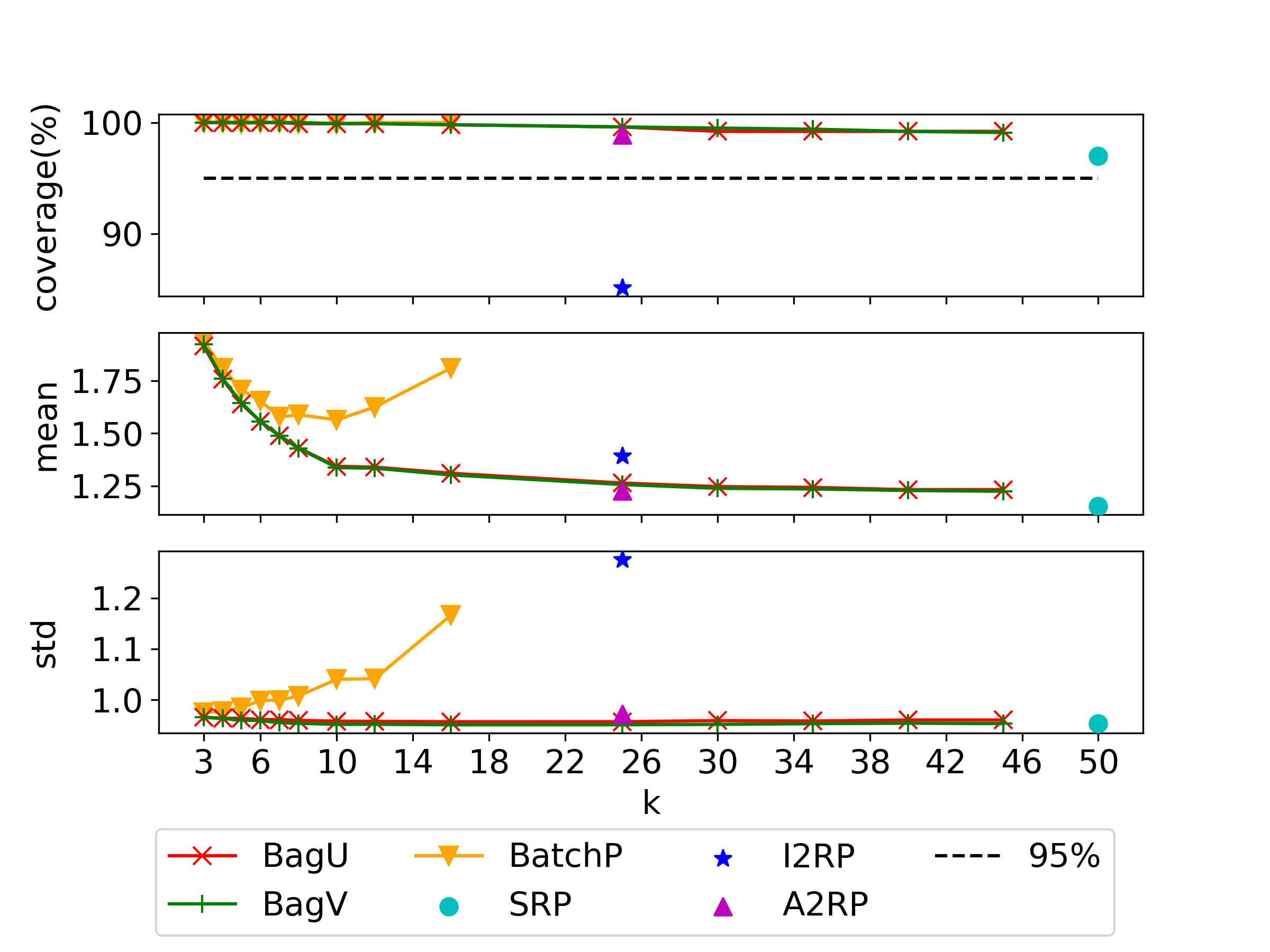}
         \caption{BC, $n=50, n_1=30, n_2=20$}
         \label{fig:cvar gap BC 50}
     \end{subfigure}
     \hfill
     \begin{subfigure}{0.49\textwidth}
         \centering
         \includegraphics[width=\textwidth]{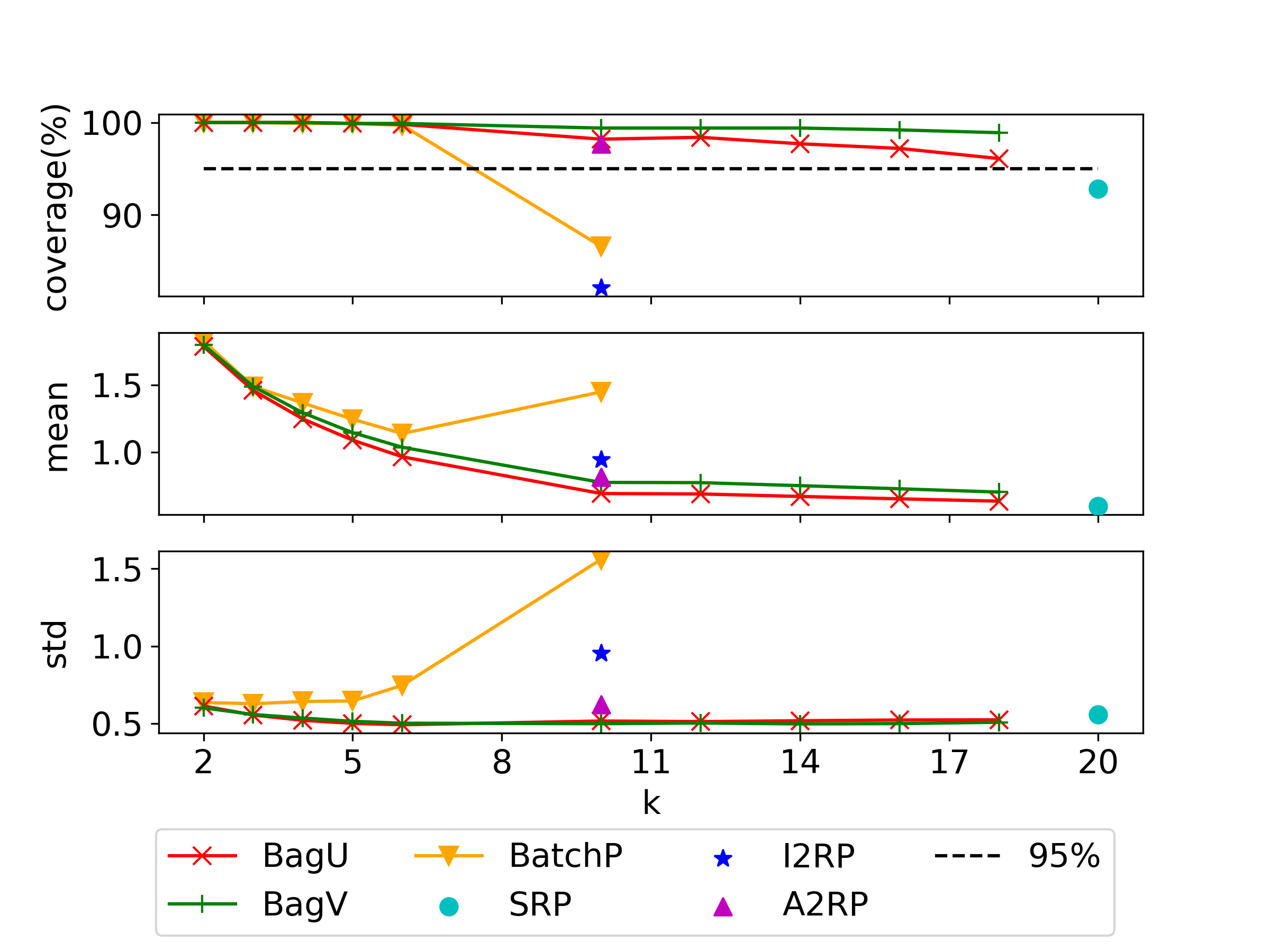}
         \caption{CRN, $n=50, n_1=30, n_2=20$}
         \label{fig:cvar gap CRN 50}
     \end{subfigure}\\
     \begin{subfigure}{0.49\textwidth}
         \centering
         \includegraphics[width=\textwidth]{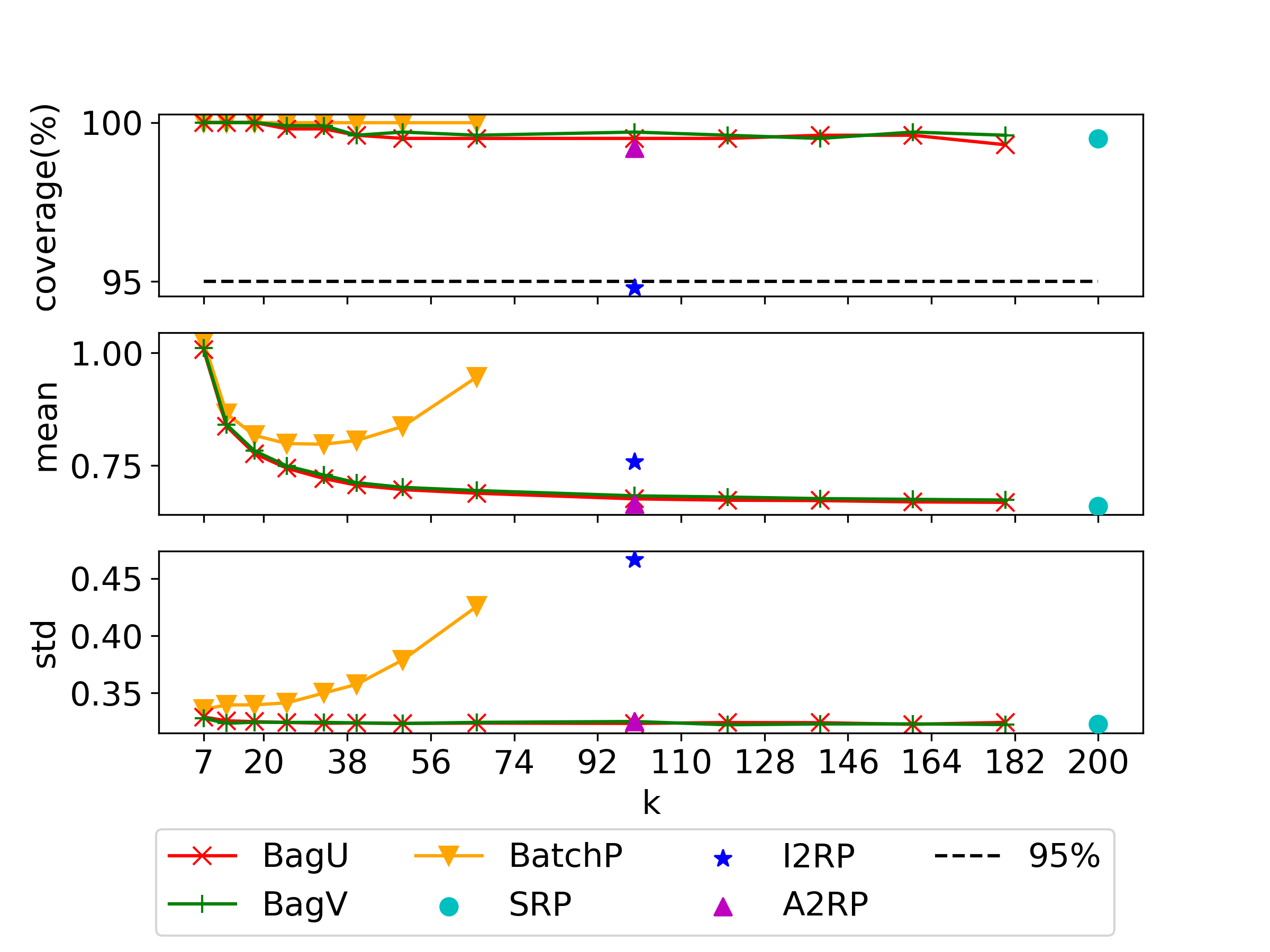}
         \caption{BC, $n=200, n_1=120, n_2=80$}
         \label{fig:cvar gap BC 200}
    \end{subfigure}
    \hfill
    \begin{subfigure}{0.49\textwidth}
         \centering
         \includegraphics[width=\textwidth]{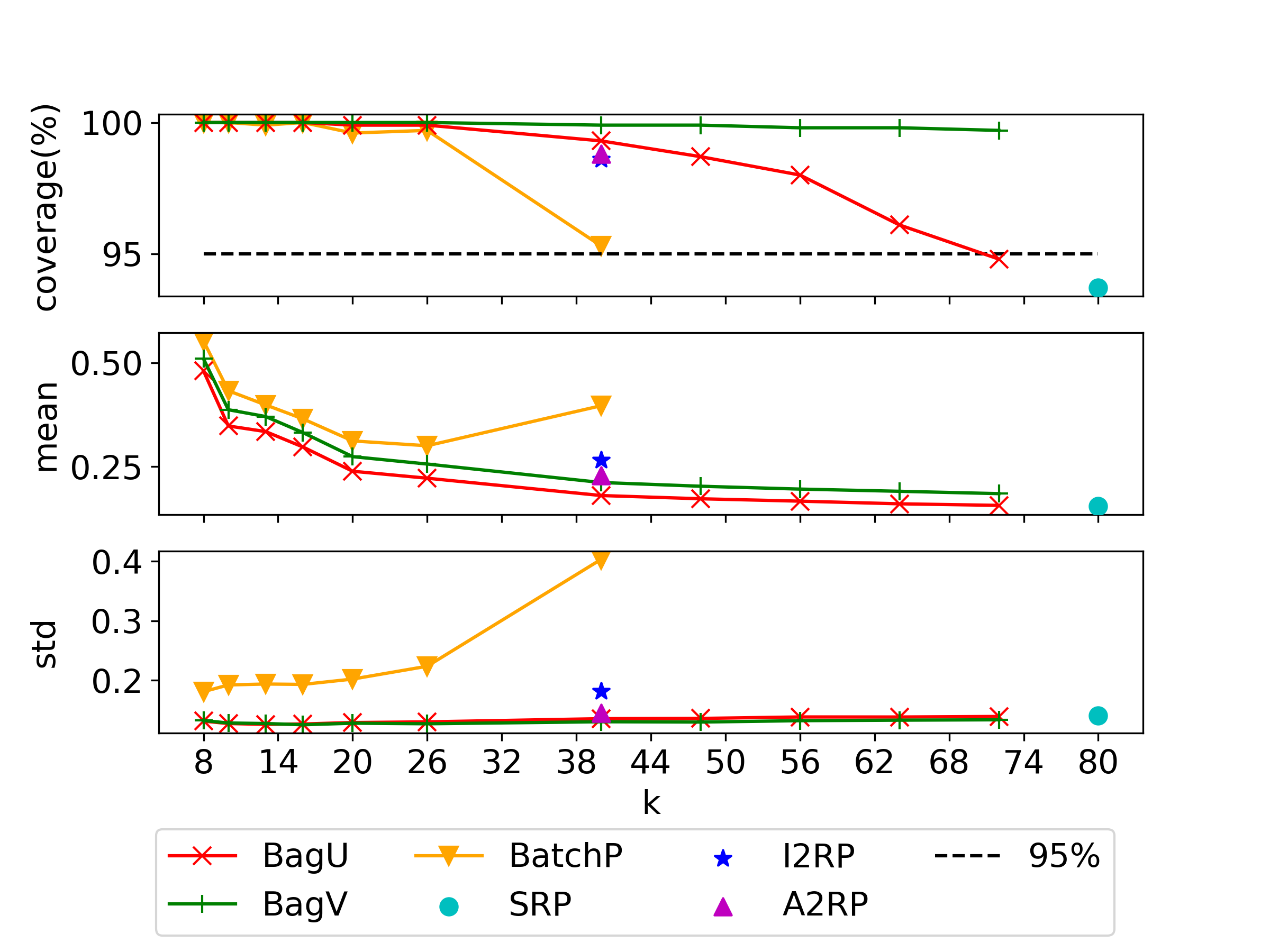}
         \caption{CRN, $n=200, n_1=120, n_2=80$}
         \label{fig:cvar gap CRN 200}
     \end{subfigure}
    \caption{CVaR problem \eqref{cvar}. Upper bounds of optimality gaps.}
    \label{fig:cvar gap}
\end{figure}

\begin{figure}[h]
    \centering
          \begin{subfigure}{0.49\textwidth}
         \centering
         \includegraphics[width=\textwidth]{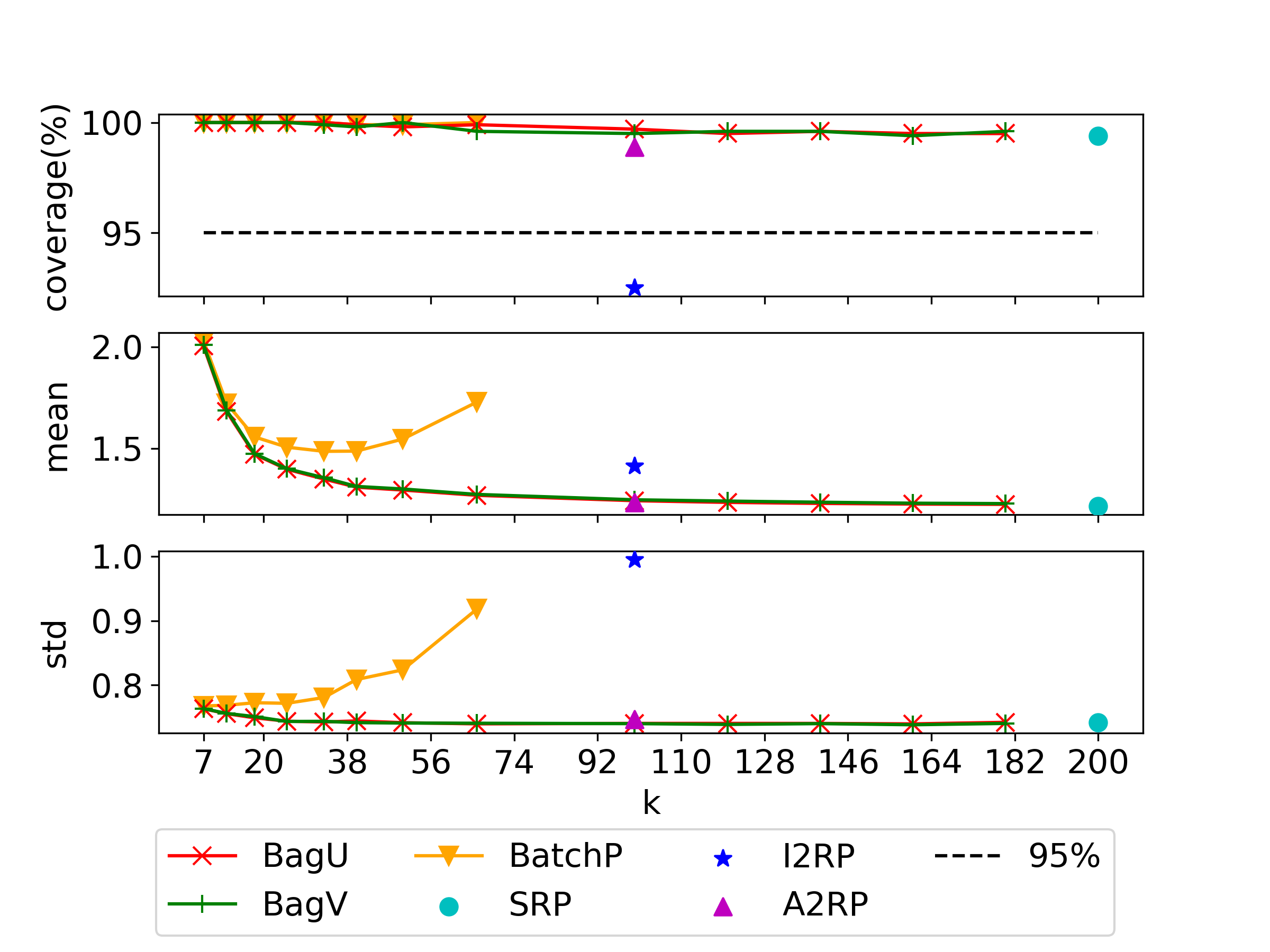}
         \caption{BC, $n=200, n_1=120, n_2=80$}
         \label{fig:portfolio gap BC 200}
    \end{subfigure}
    \hfill
    \begin{subfigure}{0.49\textwidth}
         \centering
         \includegraphics[width=\textwidth]{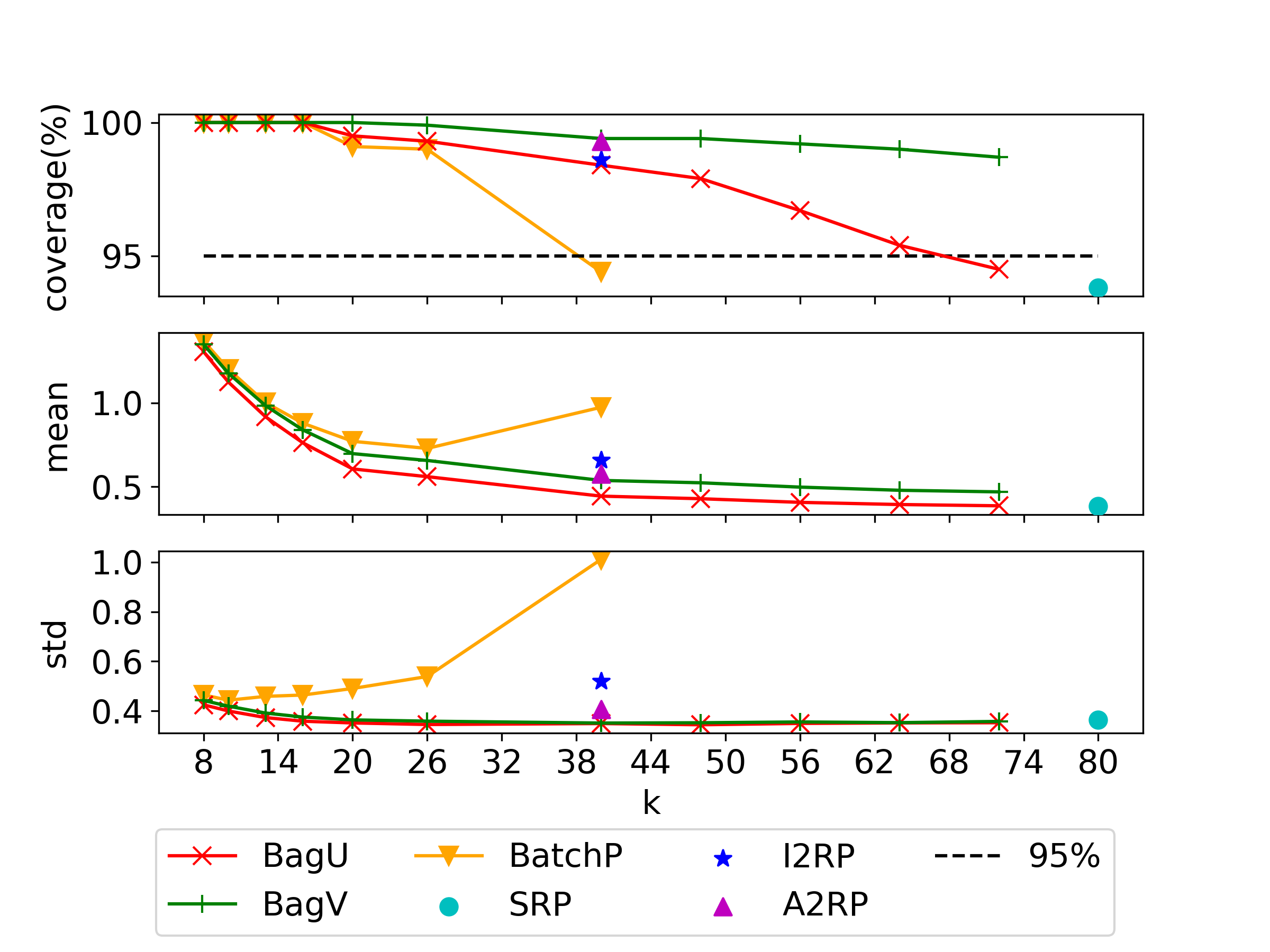}
         \caption{CRN, $n=200, n_1=120, n_2=80$}
         \label{fig:portfolio gap CRN 200}
     \end{subfigure}
    \caption{Portfolio problem \eqref{min_cvar}. Bounds of optimality gaps with $n=200$.}
    \label{fig:portfolio gap n=200}
\end{figure}

\begin{figure}[h]
    \centering
    \begin{subfigure}{0.49\textwidth}
         \centering
         \includegraphics[width=\textwidth]{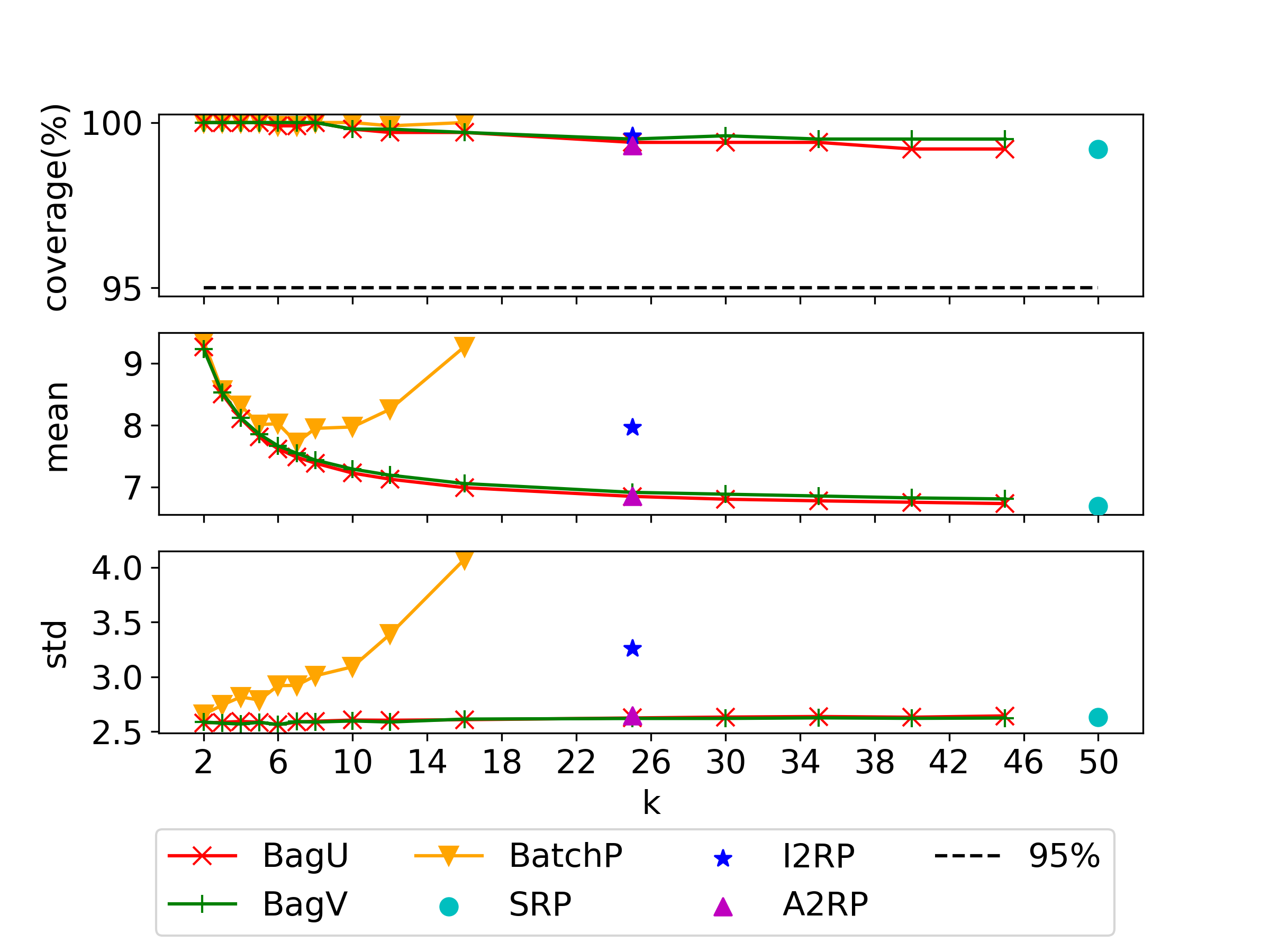}
         \caption{BC, $n=50, n_1=30, n_2=20$}
         \label{fig:integer gap BC 50}
     \end{subfigure}
     \hfill
     \begin{subfigure}{0.49\textwidth}
         \centering
         \includegraphics[width=\textwidth]{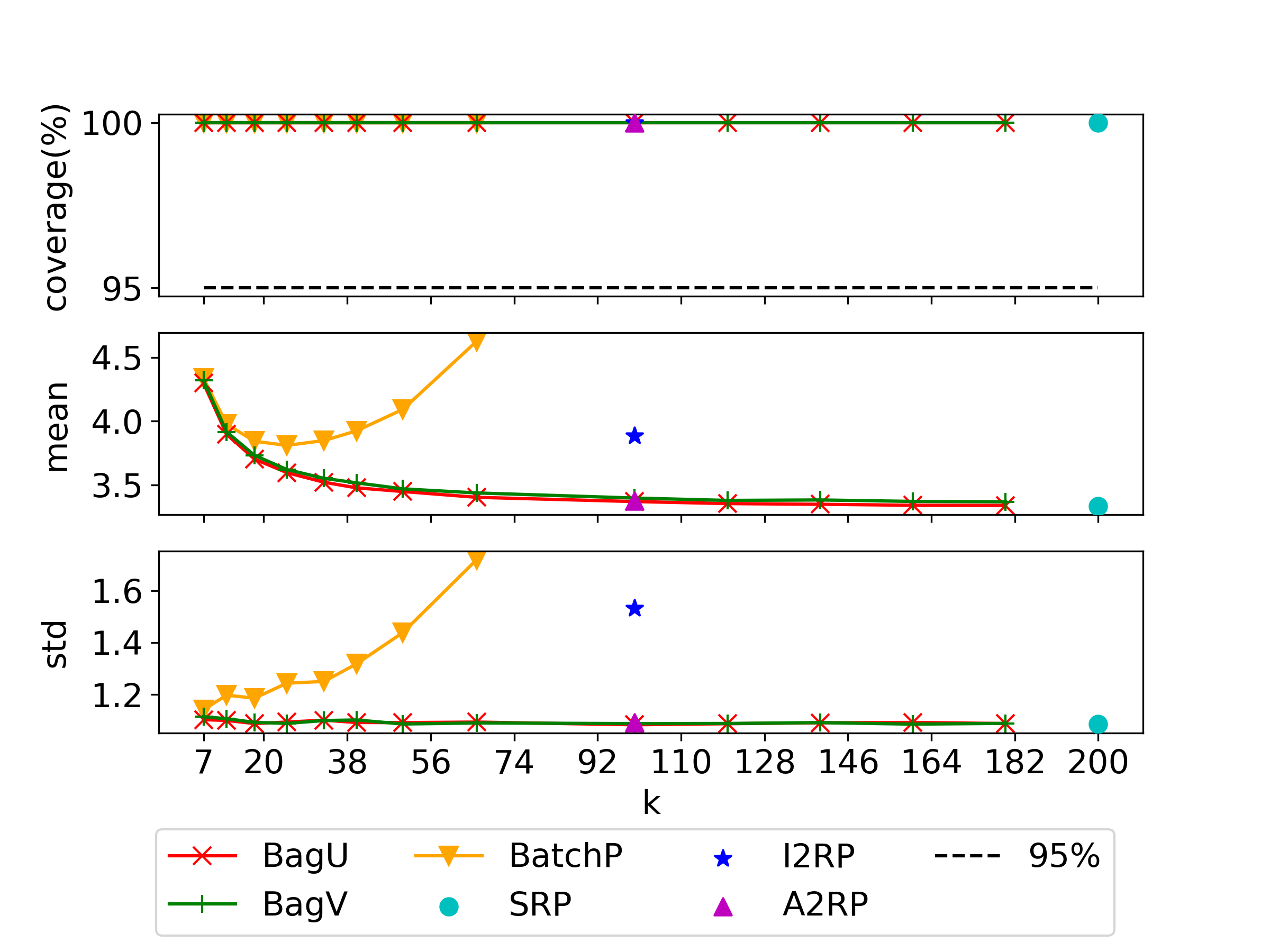}
         \caption{BC, $n=200, n_1=120, n_2=80$}
         \label{fig:integer gap BC 200}
    \end{subfigure}
    \caption{Integer problem \eqref{IP}. Bounds of optimality gaps via BC.}
    \label{fig:integer gap BC}
\end{figure}

\begin{figure}[h]
    \centering
     \begin{subfigure}{0.49\textwidth}
         \centering
         \includegraphics[width=\textwidth]{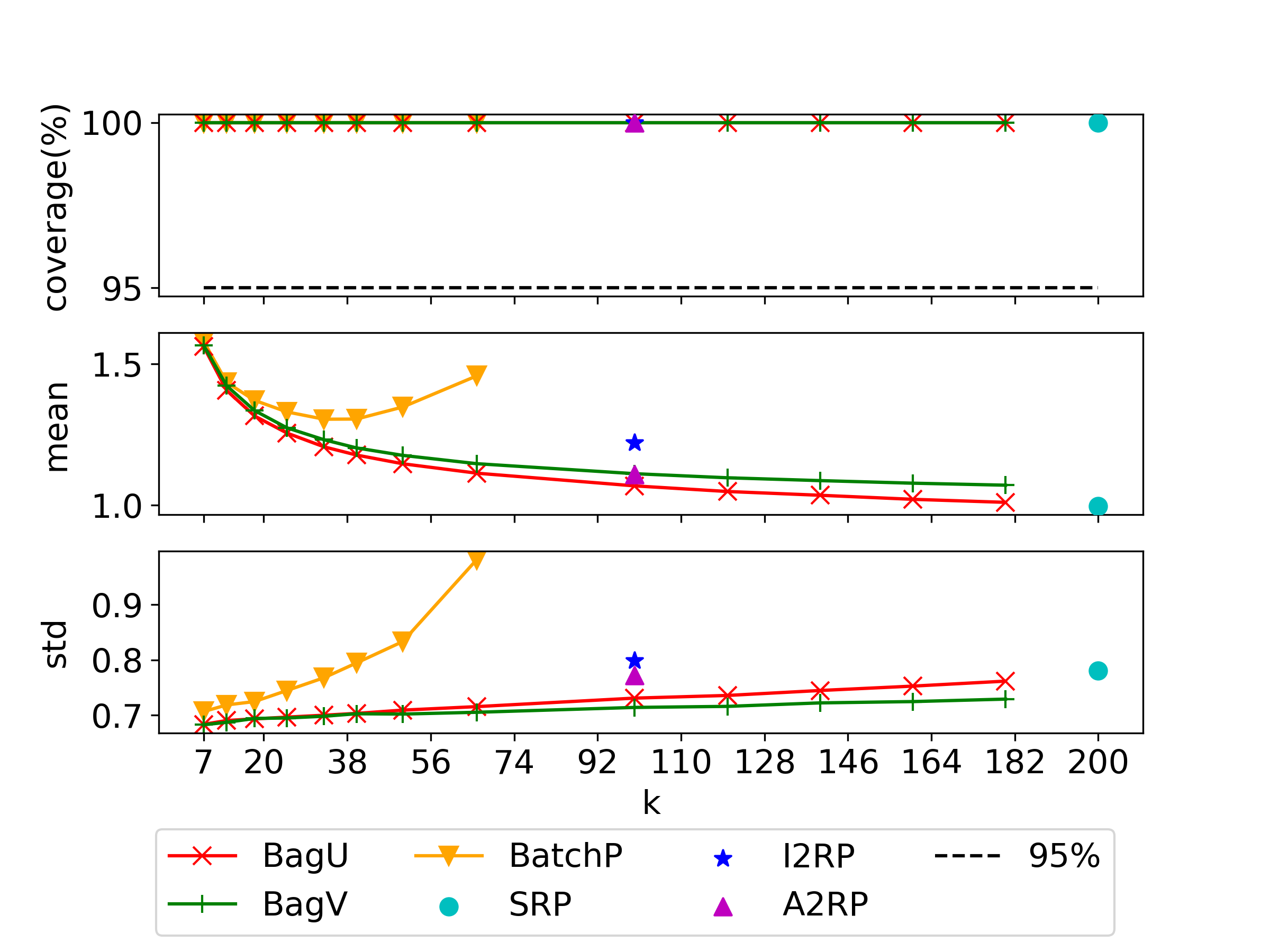}
         \caption{BC, $n=200, n_1=120, n_2=80$}
         \label{fig:binary gap BC 200}
    \end{subfigure}
    \hfill
    \begin{subfigure}{0.49\textwidth}
         \centering
         \includegraphics[width=\textwidth]{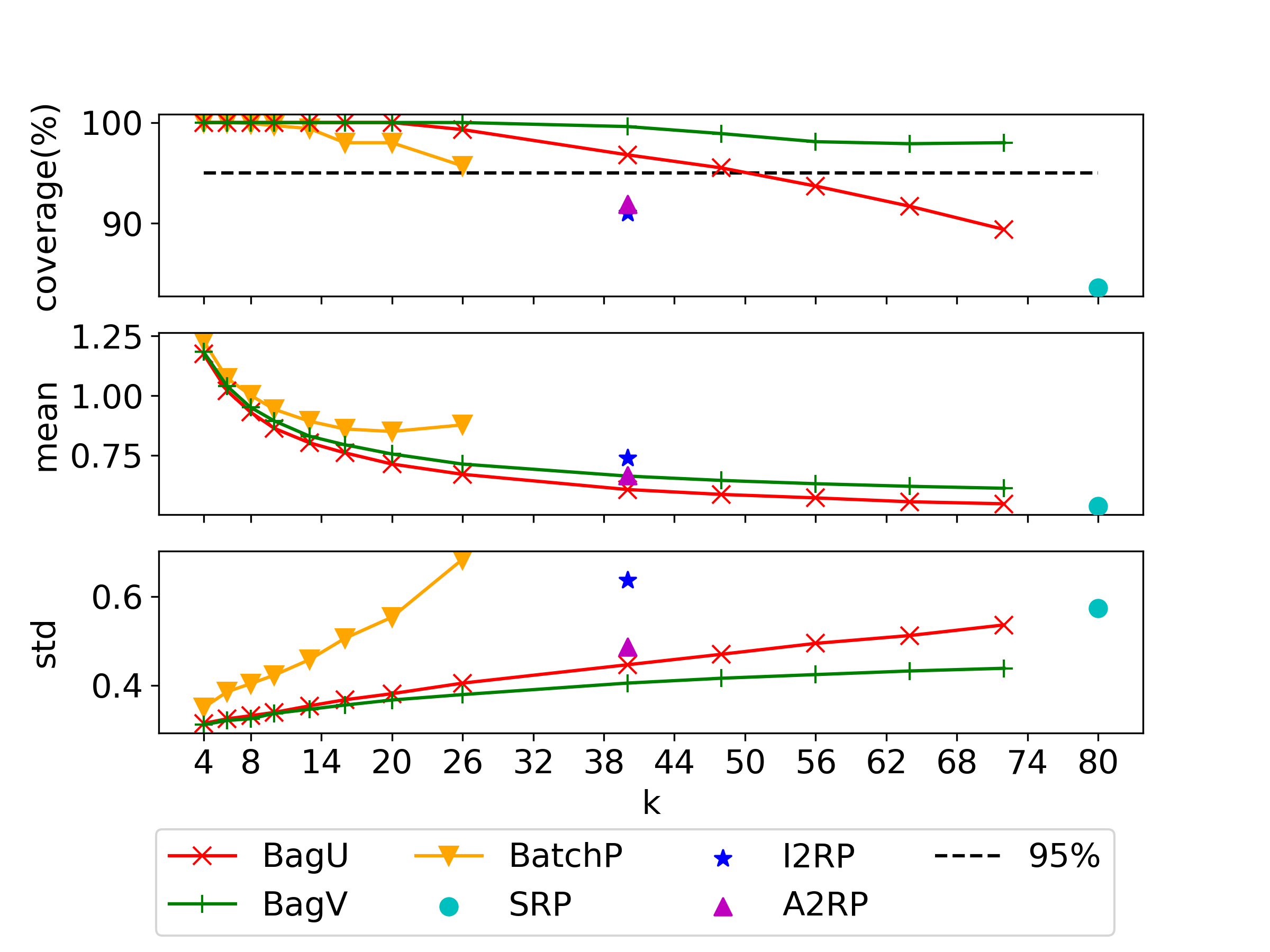}
         \caption{CRN, $n=200, n_1=120, n_2=80$}
         \label{fig:binary gap CRN 200}
     \end{subfigure}
    \caption{Simple linear problem \eqref{binary}. Bounds of optimality gaps with $n=200$.}
    \label{fig:binary gap n=200}
\end{figure}

\begin{figure}[h]
    \centering
    \begin{subfigure}{0.49\textwidth}
         \centering
         \includegraphics[width=\textwidth]{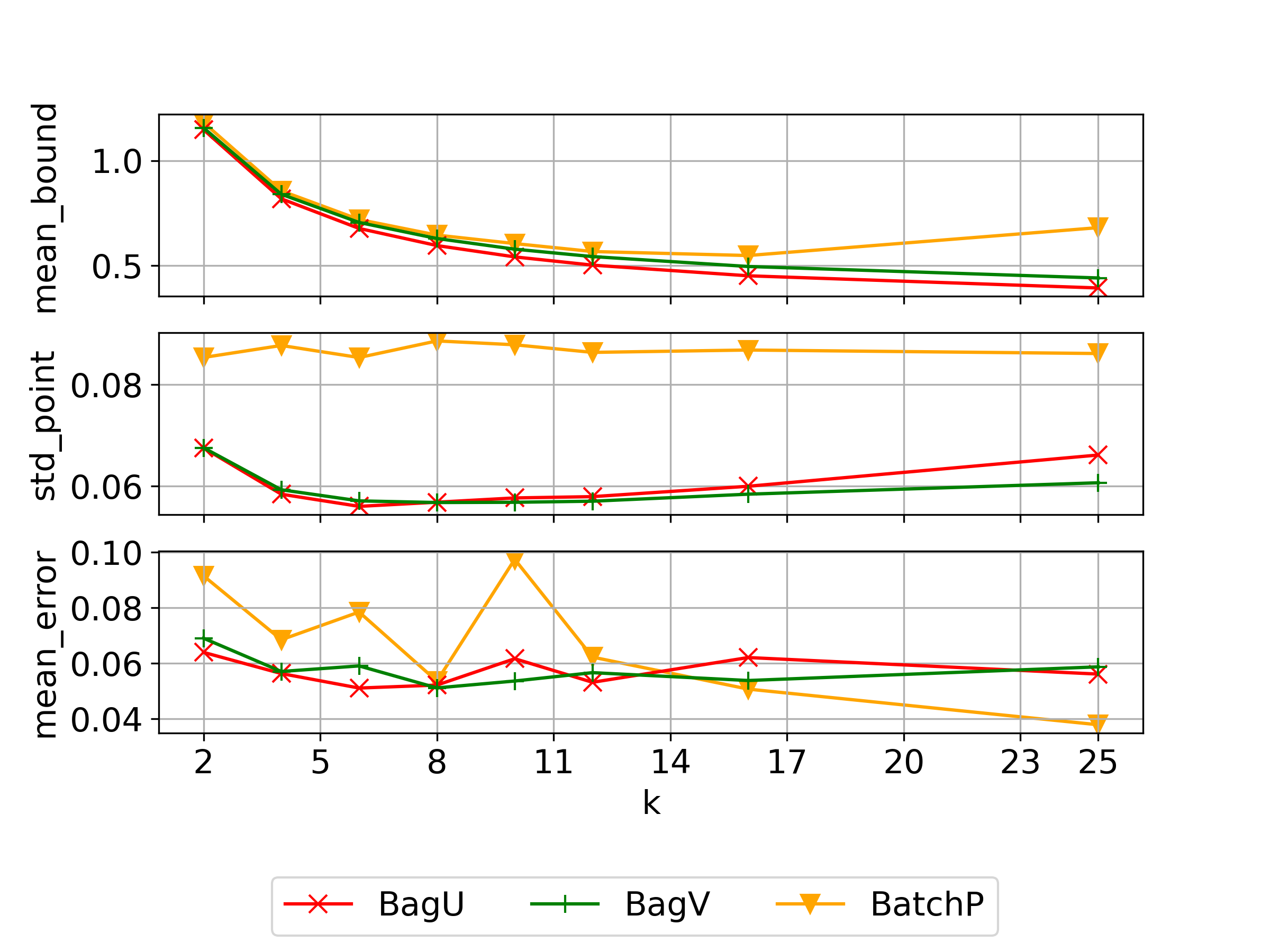}
         \caption{Bagging vs. BatchP, $n=50$}
         \label{fig:simplex with batching 50}
     \end{subfigure}
     \hfill
     \begin{subfigure}{0.49\textwidth}
         \centering
         \includegraphics[width=\textwidth]{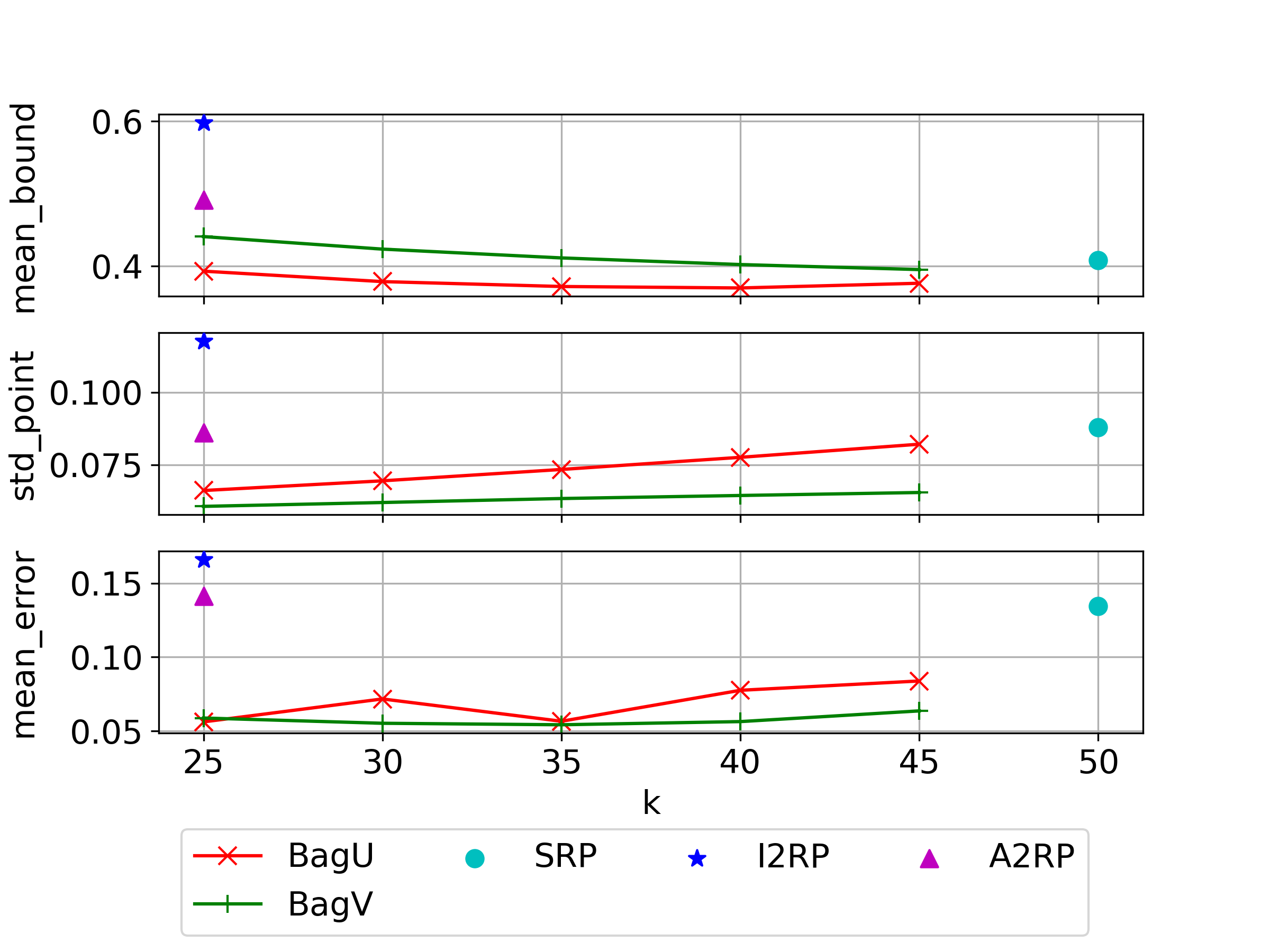}
         \caption{Bagging vs. SRP/I2RP/A2RP, $n=50$}
         \label{fig:simplex with SRP 50}
     \end{subfigure}
    \caption{Variance comparison on linear program \eqref{simplex} with $n=50$.}
    \label{fig:simplex opt val n=50}
\end{figure}

\bibliographystyleAPX{informs2014}
\bibliographyAPX{references}

\end{document}